\let\origsection=\section \def\section{\@ifstar{\origsection*}{\mysection}}
\def\mysection{\@startsection{section}{1}\z@{.7\linespacing\@plus\linespacing}{.5\linespacing}{\normalfont\scshape\centering\S}}
\definecolor{codelightgray}{gray}{0.8}
\definecolor{codeverylightgray}{gray}{0.9}
\renewcommand{\PrintDOI}[1]{\doi{#1}}
\DeclareRobustCommand{\rn}[1]{  {\fontencoding{OT2}\selectfont#1}}
\numberwithin{equation}{section}
\numberwithin{figure}{section}
\def\rmlabel{\upshape({\itshape \roman*\,})}
\def\nlabel{\upshape({\itshape \arabic*\,})}
\let\polishlcross=\l
\def\l{\ifmmode\ell\else\polishlcross\fi}
\def\paragraph#1{	\noindent\textbf{#1.}\enspace}
\let\sm=\setminus
\def\moverlay{\mathpalette\mov@rlay}
\def\mov@rlay#1#2{\leavevmode\vtop{   \baselineskip\z@skip \lineskiplimit-\maxdimen
		\ialign{\hfil$\m@th#1##$\hfil\cr#2\crcr}}}
\newcommand{\charfusion}[3][\mathord]{
	#1{\ifx#1\mathop\vphantom{#2}\fi
		\mathpalette\mov@rlay{#2\cr#3}
	}
	\ifx#1\mathop\expandafter\displaylimits\fi}
\newcommand{\dcup}{\charfusion[\mathbin]{\cup}{\cdot}}
\newcommand{\bigdcup}{\charfusion[\mathop]{\bigcup}{\cdot}}
\DeclareFontFamily{U}  {MnSymbolC}{}
\DeclareSymbolFont{MnSyC}         {U}  {MnSymbolC}{m}{n}
\DeclareFontShape{U}{MnSymbolC}{m}{n}{
	<-6>  MnSymbolC5
	<6-7>  MnSymbolC6
	<7-8>  MnSymbolC7
	<8-9>  MnSymbolC8
	<9-10> MnSymbolC9
	<10-12> MnSymbolC10
	<12->   MnSymbolC12}{}
\DeclareMathSymbol{\powerset}{\mathord}{MnSyC}{180}
\newcommand{\qedge}[7]{

	\ifx\relax#4\relax
		\def\qoffs{0pt}
	\else
		\def\qoffs{#4}
	\fi

	\def\qhedge{
		($#1+#3!\qoffs!-90:#2-#3$) --
		($#2+#1!\qoffs!-90:#3-#1$) --
		($#3+#2!\qoffs!-90:#1-#2$) -- cycle}

	\coordinate (12) at ($#1!\qoffs!90:#2$);
	\coordinate (13) at ($#1!\qoffs!-90:#3$);
	\coordinate (23) at ($#2!\qoffs!90:#3$);
	\coordinate (21) at ($#2!\qoffs!-90:#1$);
	\coordinate (31) at ($#3!\qoffs!90:#1$);
	\coordinate (32) at ($#3!\qoffs!-90:#2$);
	
	\def\nqhedge{
		(13) let \p1=($(13)-#1$), \p2=($(12)-#1$) in
			arc[start angle={atan2(\y1,\x1)}, delta angle={atan2(\y2,\x2)-atan2(\y1,\x1)-360*(atan2(\y2,\x2)-atan2(\y1,\x1)>0)}, x radius=\qoffs, y radius=\qoffs] --
		(21) let \p1=($(21)-#2$), \p2=($(23)-#2$) in
			arc[start angle={atan2(\y1,\x1)}, delta angle={atan2(\y2,\x2)-atan2(\y1,\x1)-360*(atan2(\y2,\x2)-atan2(\y1,\x1)>0)}, x radius=\qoffs, y radius=\qoffs] --
		(32) let \p1=($(32)-#3$), \p2=($(31)-#3$) in
			arc[start angle={atan2(\y1,\x1)}, delta angle={atan2(\y2,\x2)-atan2(\y1,\x1)-360*(atan2(\y2,\x2)-atan2(\y1,\x1)>0)}, x radius=\qoffs, y radius=\qoffs] --
		cycle}

		\ifx\relax#5\relax
		\def\qlwidth{1pt}
	\else
		\def\qlwidth{#5}
	\fi
	
		\ifx\relax#7\relax
		\fill \nqhedge;
	\else
		\fill[#7]\nqhedge;
	\fi

		\ifx\relax#6\relax
		\draw[line width=\qlwidth,rounded corners=\qoffs]\nqhedge;
	\else
		\draw[line width=\qlwidth,#6]\nqhedge;
	\fi
}
\let\epsilon=\varepsilon
\let\rho=\varrho
\let\theta=\vartheta
\def\FF{{\mathds F}}
\def\NN{{\mathds N}}
\def\ZZ{{\mathds Z}}
\def\HH{{\mathds H}}
\def\RR{{\mathds R}}
\def\CC{{\mathds C}}
\def\gth{\mathrm{girth}}
\def\GL{\mathrm{GL}}
\def\PGL{\mathrm{PGL}}
\def\PSL{\mathrm{PSL}}
\def\Cayley{\mathrm{Cayley}}
\def\Hom{\mathrm{Hom}}
\def\cofi{\mathrm{cf}}
\def\Sh{\mathrm{Sh}}
\def\ca{Apricot}    \def\cf{RoyalBlue!70}           \def\cc{Lavender!40} \def\cd{Yellow}
\def\ce{RoyalPurple!70}     \def\cb{Green!70}
\def\cg{Red}
\newcommand{\ccA}{\mathscr{A}}
\newcommand{\ccH}{\mathscr{H}}
\newcommand{\ccN}{\mathscr{N}}
\newcommand{\ccP}{\mathscr{P}}
\newcommand{\ccX}{\mathscr{X}}
\newcommand{\gb}{\mathfrak{b}}
\newcommand{\gv}{\mathfrak{v}}
\newcommand{\olS}{\overline{S}}
\newcommand{\olE}{\overline{E}}
\newcommand{\ole}{\overline{e}}
\theoremstyle{plain}
\newtheorem{thm}{Theorem}[section]
\newtheorem{prop}[thm]{Proposition}
\newtheorem{claim}[thm]{Claim}
\newtheorem{fact}[thm]{Fact}
\newtheorem{cor}[thm]{Corollary}
\newtheorem{lemma}[thm]{Lemma}
\theoremstyle{definition}
\newtheorem{dfn}[thm]{Definition}
\newtheorem{exmp}[thm]{Example}
\newtheorem{conj}[thm]{Conjecture}
\newtheorem{prob}[thm]{Problem}
\newtheorem{quest}[thm]{Question}
\let\lra=\longrightarrow
\let\phi=\varphi
\DeclareSymbolFont{stmry}{U}{stmry}{m}{n}
\DeclareMathSymbol\arrownot\mathrel{stmry}{"58}
\DeclareMathSymbol\Arrownot\mathrel{stmry}{"59}
\def\longarrownot{\mathrel{\mkern5.5mu\arrownot\mkern-5.5mu}}
\def\nlra{\longarrownot\longrightarrow}
\def\Gth{\mathrm{Girth}}
\def\NH{\mathrm{NH}}
\def\Ext{\mathrm{Ext}}
\def\GTH{\mathfrak{Girth}}
\begin{document}
\title[Graphs of large girth]{Graphs of large girth}
\author[Christian Reiher]{Christian Reiher}
\address{Fachbereich Mathematik, Universit\"at Hamburg, Hamburg, Germany}
\email{christian.reiher@uni-hamburg.de }

\subjclass[2010]{Primary: 05C15, 05D10, Secondary: 05C50, 05C63, 05C65.}
\keywords{girth, Moore graphs, Ramsey theory, partite constructions.}
\dedicatory{Dedicated to founding editor Jaroslav Ne\v{s}et\v{r}il}

\begin{abstract}
	This survey on graphs of large girth consists of two parts. The first 
	deals with some aspects of algebraic and extremal graph theory loosely 
	related to the Moore bound. Our point of departure for the second, Ramsey 
	theoretic, part are some constructions of graphs with large chromatic number 
	and large girth; this will lead us to a discussion of the recent girth 
	Ramsey theorem. Both parts can be enjoyed independently of each other.
\end{abstract}
	
\maketitle

\section{Introduction}
Unless something else is explicitly said---which is occasionally going to happen---the 
word `graph' always means `finite, simple, undirected graph'. 
 Tutte~\cite{Tutte47} introduced the concept of girth at the same 
time Jarik was born: The {\it girth} of a graph $G$, denoted  
by $\gth(G)$, is the length of a shortest cycle in $G$. So graphs of 
large girth contain no short cycles and, accordingly, one sets $\gth(G)=\infty$
for acyclic graphs $G$ (also known as forests). 
People coming from various different 
directions have contributed to the study of this graph invariant during the last 
seven decades and an enormous corpus of interesting results has been accumulated. 
Sacrificing breadth for depth, we will only focus on two aspects of this vast 
topic in the sequel. 

First, there are obvious extremal problems motivated by the observation that the 
absence of short cycles makes graphs somewhat `sparse'. Locally, a graph of large 
girth looks like a tree. In fact, local considerations alone 
show that graphs of large minimum degree and large girth need to have quite a lot 
of vertices. Quantitatively this is made more precise by the Moore bound 
(Theorem~\ref{thm:21}). The innocent looking question to what extent this bound is 
sharp will lead us to a plethora of exciting algebraic, geometric, and number 
theoretic constructions (\S\ref{subsec:21} and \S\ref{subsec:cages}). 
As proved by Alon, Hoory, and Linial~\cite{AHL}, the Moore bound generalises to 
irregular graphs. Our discussion of their result draws attention to its connection 
with Sidorenko's conjecture for paths (\S\ref{subsec:23}). For directed graphs the 
problem to bound the girth in terms of minimum degree and the number of vertices has a 
quite different character. In comparison to the undirected setting not much is known in 
this area. However, there are many beautiful and tantalising conjectures, the most 
notable of which is due to Caccetta and H\"{a}ggkvist~\cite{CH78}. Some of these 
problems will be presented in~\S\ref{subsec:CH}.     

Our second topic gives plenty of opportunities to describe several of Jarik's 
results. We begin with Erd\H{o}s's classical theorem on graphs of arbitrarily 
large chromatic number and girth~(\S\ref{subsec:41}). It is well-known 
that Erd\H{o}s provided no examples of such graphs. Jarik's first 
publication~\cite{Ne66} deals with explicit constructions of graphs with large 
chromatic number whose girth is at least~$8$. Together with some other early 
constructions due to Zykov~\cite{Zykov} and Tutte~\cite{UD54} his work is described 
in~\S\ref{subsec:32}. Throughout his life, Jarik frequently returned to the area
of explicit Ramsey theoretic constructions. As he writes himself in the partially 
autobiographic article~\cite{Ne09}, 
\begin{quotation}
	``Mathematically (and otherwise) the most important thing I did in seventies 
	and eighties was Ramsey theory and my collaboration with Vojt\v{e}ch R\"odl.''
\end{quotation}  

In those days, the two young men authored more than forty joint articles. 
Their perhaps most important innovation was the discovery of the partite 
construction method~\cite{NR81}. Until today it remains the by far most 
powerful and flexible construction principle in structural Ramsey theory 
known to mankind. 

We only had the pleasure to collaborate with 
Jarik once~\cite{BNRR}, but this work led to an important insight on partite 
constructions, which later helped us in the proof of the 
girth Ramsey theorem~\cite{girth}. Here we introduce the partite construction 
method in a very simple context, that is far remote from its true potential: 
the existence of hypergraphs with large chromatic number and large 
girth~(\S\ref{subsec:33}). The remainder of Section~\ref{sec:3}
contains some related problems and results that we found interesting for various 
reasons. This includes a discussion of Erd\H{o}s' conjecture that graphs of huge 
chromatic number have subgraphs of large girth and chromatic 
number~(\S\ref{sec:EC}). In~\S\ref{subsec:35} we look at the following Ramsey 
theoretic generalisation of girth and chromatic number: What can be said about the 
local structure of graphs $H$ such that for every $r$-colouring of $V(H)$ there 
is a monochromatic induced copy of a given graph~$F$? 
Proceeding with an infinitary topic we shall then talk about 
finite substructures, which need to appear
in graphs and hypergraphs of uncountable chromatic 
number (\S\ref{subsec:36}, \S\ref{subsec:37}).

The next and last section is devoted to edge colourings. Mostly we attempt to 
provide some context to the following recent result from~\cite{girth}, the proof 
of which depends heavily on Jarik's work alluded to in the above quote. 

\begin{thm}\label{thm:grt}
	For every graph $F$ that is not a forest and every number of colours $r$ 
	there exists a graph $H$ of the same girth as $F$ such that for every 
	$r$-colouring of $H$ there is a monochromatic induced copy of $F$.
\end{thm}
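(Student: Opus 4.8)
Since $F$ is not a forest, $g:=\gth(F)$ is a finite integer with $g\ge 3$. Any graph $H$ with the required arrowing property contains, under the constant colouring, a monochromatic induced copy of $F$ and hence a cycle of length~$g$, so the inequality $\gth(H)\le g$ holds automatically; the whole point is therefore to produce an $H$ with $\gth(H)\ge g$ that nevertheless arrows~$F$ under edge colourings. If one drops the girth requirement, such an $H$ is delivered by classical induced Ramsey theory, for which the partite construction of Ne\v{s}et\v{r}il and R\"odl~\cite{NR81} is the most flexible proof method. The plan is to rerun that construction, but to execute every pigeonhole step and every amalgamation so carefully that no cycle shorter than~$g$ is ever created.

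One works with $a$-partite graphs, $a=|V(F)|$, whose classes are independent sets, so that every cycle alternates among the classes and controlling the girth reduces to controlling how copies of the partite building blocks are allowed to overlap. The skeleton is the familiar one. Fix a template graph $\Gamma$ of girth at least~$g$ that obeys a suitable partition Ramsey statement (such $\Gamma$ are produced by combining the high-girth constructions from the first part of this survey with Ramsey's theorem). From $\Gamma$ one builds a ``first picture'' $P_0$, an $a$-partite graph that already enjoys the \emph{non-induced} edge Ramsey property for $F$ in a partite fashion. One then enumerates the copies of a fixed pattern inside a template and treats them one at a time: having reached an $a$-partite graph $P_{i-1}$, one forms $P_i$ by amalgamating many copies of $P_{i-1}$ over the substructure sitting on the $i$-th copy. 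A counting argument shows that the terminal graph $P:=P_n$ arrows~$F$ under edge colourings, and one takes $H$ to be the underlying graph of $P$. The driving engine is a ``partite lemma'' that disposes of one unit of the colour space in a single stroke, established in the classical argument by a Hales--Jewett-type product Ramsey pigeonhole.

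To keep the girth from falling below~$g$, two things must be arranged. First, the host object on which the pigeonhole of the partite lemma operates, which classically is a dense blow-up of the pattern and is saturated with triangles and $4$-cycles, has to be replaced by a sparse host of girth at least~$g$ without forfeiting its Ramsey strength; this compels one to base the partite lemma on sparse, high-girth structures rather than on blow-ups, and it is also what makes $\gth(P_0)\ge g$. Second, each amalgamation must glue the copies of $P_{i-1}$ over a root $R$ that lies on a copy coming from the high-girth template and is so widely spread inside $P_{i-1}$ that any cycle passing through two distinct copies is forced to have length at least~$g$, while any cycle contained in a single copy has length at least~$g$ by induction; simultaneously $R$ must be a ``closed'' substructure, so that the copy of $F$ read off at the end is genuinely induced.

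The heart of the matter is to show that these two demands are compatible, and in particular to compile a complete catalogue of the short cycles that an amalgamation could \emph{a priori} produce and to exclude each of them. This cycle analysis is genuinely intricate: it must track, layer by layer, the interface along which pictures are glued; it behaves differently according to the parity of~$g$; and it has to stay compatible with the inductive bookkeeping that secures the Ramsey property at every stage. I expect this analysis to be the main obstacle, and it is exactly the part of~\cite{girth} where essentially all of the work lies; the rest is a girth-sensitive reorganisation of the standard partite construction.
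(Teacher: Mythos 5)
Your proposal is an outline rather than a proof: everything that makes Theorem~\ref{thm:grt} hard is hidden in the sentence where you promise to ``compile a complete catalogue of the short cycles that an amalgamation could a priori produce and to exclude each of them,'' and you concede yourself that this is exactly where essentially all of the work in~\cite{girth} lies. The survey does not prove the theorem either---it is restated as Theorem~\ref{thm:66} with the end-of-proof symbol that, by the convention announced in the introduction, flags a result ``so deep that we made no effort to describe its proof''---so the fair comparison is with the ideas sketched in \S\ref{subsec:63}. Measured against those, your closing claim that ``the rest is a girth-sensitive reorganisation of the standard partite construction'' is precisely the point at which the plan fails: that reorganisation was carried out by Ne\v{s}et\v{r}il and R\"odl in~\cite{NR87}, it controls cycles of lengths $5$, $6$, and $7$, and it stalls there; excluding $8$-cycles remained open for almost forty years because no amount of care in choosing sparse hosts for the partite lemma and widely spread amalgamation roots suffices.

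Two ingredients of the actual argument are absent from your sketch. First, the proof cannot be run for graphs alone: all versions of the girth Ramsey theorem must be established simultaneously for linear hypergraphs of varying uniformities, because---as in Lov\'asz's construction and in the proof of Theorem~\ref{thm:EH}---girth increments are purchased by passing to higher-order structures, and the auxiliary uniformities are not bounded in terms of $F$. Second, the constituents of the pictures do not remain bipartite graphs whose overlaps can be catalogued by hand; in the course of the construction they develop into \emph{trains}, hypergraphs equipped with nested equivalence relations, and one needs partite lemmata applicable to trains, manufactured by iterating the extension process $\Ext(\Phi,\Psi)$ on pretrains and by repeatedly ``cleaning'' the resulting systems of copies with further partite constructions. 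What is actually proved is a hierarchy of increasingly general Ramsey statements (involving trains, Roman $\Gth$, and German $\GTH$) by an induction scheme resembling a transfinite induction up to $\omega^\omega$; Theorem~\ref{thm:grt} is only the bottom layer. None of this is a refinement of your two ``demands''---it is a different and much larger architecture, and without it the cycle analysis you defer cannot even be formulated.
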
 

Without the girth requirement this statement, known as the {\it induced Ramsey 
theorem for graphs}, predates the collaboration of Jarik and R\"odl.  
Nowadays its most transparent and generalisable proofs are based  
on the partite construction method (\S\ref{subsec:61}). 

We shall then devote 
some pages to the implicit question whether proving Theorem~\ref{thm:grt} 
with the girth constraint is worth a lot of effort. Our point of view is that 
the real question is to determine the local structure of Ramsey graphs. For 
instance, given two graphs~$F$ and~$G$ we would like to know whether for every 
sufficiently large 
number of colours~$r$ every Ramsey graph~$H$ of~$F$ needs to contain a copy 
of~$G$ (cf.\ Theorem~\ref{thm:19}). E.g., if~$G=C_n$ for 
some $n\in [3, \gth(F)-1]$, then the girth Ramsey theorem provides a negative answer. 
At present nobody knows whether Theorem~\ref{thm:grt} can be proved without 
answering such more general questions along the way. Due to space limitations 
we cannot give a meaningful description of the proof strategy involved here. 
Nevertheless, we use the occasion for outlining some of Jarik's joint ideas 
with R\"{o}dl (\S\ref{subsec:63}).     
Finally, we conclude with some speculations on the possibility of a transfinite girth 
Ramsey theory (\S\ref{subsec:64}).

\subsection*{Notation and terminology}
For every graph $G$ we denote by $\delta(G)$, $\Delta(G)$, $d(G)$, and $e(G)$
its minimum degree, maximum degree, average degree, and the number of its edges. 
Given a set $X$ and a nonnegative integer $k$ we write $X^{(k)}$ for the set of 
all $k$-element subsets of $X$, i.e., $X^{(k)}=\{e\subseteq X\colon |e|=k\}$.
A $k$-uniform hypergraph is a pair $H=(V, E)$ consisting of a set~$V$ 
of {\it vertices} and a set $E\subseteq V^{(k)}$ of {\it edges}. Unless the context 
suggests something to the contrary, our hypergraphs will tacitly be assumed to be 
finite. Notice that graphs are the same as $2$-uniform hypergraphs. 

Mathematicians will be referred to by their surnames. An exception is made 
for {\sc Jaroslav Ne\v{s}et\v{r}il}, in honor of whom these pages are written: 
he will respectfully be called `Jarik'. 

Being a survey, this article contains no new results, but sometimes we give `proofs'
of old results, especially when they convey instructive ideas typical for the flavour 
of some subject. Often these `proofs' are in reality only `sketches of proofs' 
or `main ideas of proofs', but we made no attempt to draw a line between `full proofs' 
and `sketches'. In each case, a reference to the literature is provided.  
When a statement is immediately followed  by the end-of-proof symbol `$\Box$',
it means that the result is either trivial or so deep that we made no effort 
to describe its proof. 
 
\section{Girth, degrees, and the number of vertices}

\subsection{Moore graphs}\label{subsec:21}
In most texts covering extremal graph theory, the first result containing the word 
`girth' provides a lower bound on the number of vertices that a graph can 
have when its minimum degree and girth are given. This estimate, often called the 
Moore bound, involves the function $n_0(d, g)$ defined for every real $d\ge 1$ and every integer $g\ge 3$ by
\[
	n_0(d, g)=
	\begin{cases}
		1+d\sum_{i=0}^{h-1} (d-1)^i   & \text{ if $g=2h+1$ is odd} \cr
		2\sum_{i=0}^{h-1} (d-1)^i & \text{ if $g=2h$ is even}.
	\end{cases}
\]

\begin{thm}[Moore bound]\label{thm:21}
	Every graph $G$ with $\delta(G)\ge d\ge 1$ and $\gth(G)\ge g\ge 3$ has at least
	$n_0(d, g)$ vertices. 
\end{thm}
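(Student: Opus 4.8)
The plan is to run a breadth-first search from a well-chosen root and count the vertices one level at a time, the point being that $\gth(G)\ge g$ forbids the ``extra'' edges that would let two branches of the search meet prematurely. First I would dispose of the non-integral case: since $\delta(G)$ is an integer, the hypothesis $\delta(G)\ge d$ is the same as $\delta(G)\ge\lceil d\rceil$, and because each term $d(d-1)^i$ and $(d-1)^i$ is a product of nonnegative nondecreasing functions on $[1,\infty)$, the quantity $n_0(d,g)$ is nondecreasing in $d$; so it suffices to prove the bound when $d$ is a positive integer.

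Suppose first that $g=2h+1$ is odd. I would fix a vertex $v$, a breadth-first search tree $T$ rooted at $v$, and the distance levels $L_j=\{w\in V(G):\dist(v,w)=j\}$ for $0\le j\le h$, so that $|L_0|=1$ and $|L_1|$ equals the degree of $v$, hence at least $d$. The crux is to prove that for every $j$ with $2\le j\le h$ each vertex $w\in L_{j-1}$ has exactly one neighbour in $L_{j-2}$ --- its parent in $T$ --- no neighbour in $L_{j-1}$, and therefore at least $d-1$ neighbours in $L_j$, and that two distinct vertices of $L_{j-1}$ have no common neighbour in $L_j$. Granting this, $|L_j|\ge(d-1)\,|L_{j-1}|$ for $2\le j\le h$, so $|L_j|\ge d(d-1)^{j-1}$ for $1\le j\le h$, and adding up the $h+1$ levels gives
\[
|V(G)|\ \ge\ \sum_{j=0}^{h}|L_j|\ \ge\ 1+d\sum_{i=0}^{h-1}(d-1)^i\ =\ n_0(d,2h+1).
\]

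If instead $g=2h$ is even, I would run the same argument but anchor the search at an edge $uv$: for $0\le j\le h-1$ set $L_j^u=\{w:\dist(w,u)=j,\ \dist(w,v)=j+1\}$ and define $L_j^v$ symmetrically, so $L_0^u=\{u\}$ and $L_0^v=\{v\}$. Since every vertex is strictly closer to $u$ than to $v$, or strictly closer to $v$, or equidistant from both, the $2h$ sets $L_0^u,\dots,L_{h-1}^u,L_0^v,\dots,L_{h-1}^v$ are automatically pairwise disjoint. The analogue of the claim above --- now for source levels $j$ with $0\le j\le h-2$ --- is that each vertex of $L_j^u$ (resp. $L_j^v$) has at least $d-1$ neighbours in $L_{j+1}^u$ (resp. $L_{j+1}^v$) and that no such ``downward'' neighbour is shared by two vertices of $L_j^u$ (resp. $L_j^v$); this yields $|L_j^u|,|L_j^v|\ge(d-1)^j$ for $0\le j\le h-1$, whence
\[
|V(G)|\ \ge\ \sum_{j=0}^{h-1}\bigl(|L_j^u|+|L_j^v|\bigr)\ \ge\ 2\sum_{i=0}^{h-1}(d-1)^i\ =\ n_0(d,2h).
\]

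The one place that needs genuine care --- and the only place the girth hypothesis is used --- is the verification of these structural claims, and I expect this bookkeeping, rather than anything conceptually deep, to be the main obstacle. In each case the scheme is the same: from a hypothetical offending edge $xy$ one joins $x$ and $y$ to the root along $T$ (in the even case, along the two search trees together with the edge $uv$), lets $t$ be the last vertex the two root-paths have in common, and follows the resulting closed walk from $t$ through $x$ and $y$ back to $t$. A short computation with the levels of $x$, $y$ and $t$ shows that this closed walk has length less than $g$, and, since $G$ is simple, that it contains a genuine cycle of length at least $3$ --- contradicting $\gth(G)\ge g$. I would pay particular attention to the degenerate configurations, such as $t$ coinciding with $x$ or with a parent of $y$, and to small values of $h$, to make sure the short cycle one extracts is actually present.
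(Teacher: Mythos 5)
Your argument is correct and is essentially the paper's own proof: breadth-first levels grown from a vertex (odd girth) or from an edge (even girth), with the girth hypothesis forcing a unique parent, no intra-level edges, and no shared children, so that each level is at least $d-1$ times the previous one. The only addition is the (routine) reduction from real $d$ to integer $d$, and the bookkeeping you flag at the end is exactly the part the paper also leaves as "straightforward."
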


\begin{proof}
	Suppose first that $g=2h+1$ is odd. Fix an arbitrary vertex $x$ of $G$. For each 
	integer~$i\ge 0$ let $D_i$ be the set of all vertices of $G$ having the distance~$i$
	from $x$ (see Figure~\ref{fig21A}). 
	So $D_0=\{x\}$, $D_1$ is the neighbourhood of $x$, and so on. Clearly 
	$D_0, \dots, D_h$ are mutually disjoint sets, and the main point is that, 
	with the possible exception of $D_h$, all these sets are independent. 
	This is because otherwise 
	we could build an odd cycle whose length would be at most~$2h-1$. Using the 
	assumption $\delta(G)\ge d$ it is now straightforward 
	to show $|D_i|\ge d(d-1)^{i-1}$ for every positive $i\le h$, whence 
		\[
		|V(G)|
		\ge 
		\sum_{i=0}^h |D_i|\ge 1+d\sum_{i=1}^h (d-1)^{i-1} 
		=
		n_0(d, g)\,.
	\]
		
	The case that $g=2h$ is even can be treated similarly, starting with an arbitrary 
	edge $xy$ of~$G$ as opposed to a single vertex (see Figure~\ref{fig21B}).   
\end{proof}

	\begin{figure}[h!]
		\begin{subfigure}[b]{0.47\textwidth}
			\centering
			\begin{tikzpicture}
		\coordinate (x) at (0,0);
		\coordinate (a) at (-1, 1);
		\coordinate (b) at (0,1);
		\coordinate (c) at (1,1);
		\foreach \i in {1,...,6} \coordinate (\i) at (-2.45+.7*\i,2);
		\foreach \i in {1,2,3,4,5,6,x,a,b,c} \fill (\i) circle (1pt);
		\draw (x) circle (4pt);
		\draw (x)--(a)--(1)--(a)--(2);
		\draw (x)--(b)--(3)--(b)--(4);
		\draw (x)--(c)--(5)--(c)--(6);	
		\draw (1) edge [bend left = 50] (3);
		\draw (2) edge [bend left = 50] (5);
		\draw (4) edge [bend left = 50] (6);
		\draw (2) edge [bend left = 40] (4);
		\draw (3) edge [bend left = 40] (5);
		\draw (1) edge [bend left = 50] (6);
		\def \w{.15};  		\draw (-1.05,1-\w) edge [out=180, in=270] (-1.05-\w,1);
		\draw (-1.05-\w,1) edge [out=90, in=180] (-1.05, 1+\w);
		\draw (1.05,1-\w) edge [out=0, in=270] (1.05+\w,1);
		\draw (1.05+\w,1) edge [out=90, in=0] (1.05,1+\w);
		\draw (-1.05,1-\w)--(1.05,1-\w);
		\draw (-1.05,1+\w)--(1.05,1+\w);
		\draw (-1.8,2-\w) edge [out=180, in=270] (-1.8-\w,2);
		\draw (-1.8-\w,2) edge [out=90, in=180] (-1.8, 2+\w);
		\draw (1.8,2-\w) edge [out=0, in=270] (1.8+\w,2);
		\draw (1.8+\w,2) edge [out=90, in=0] (1.8,2+\w);
		\draw (-1.8,2-\w)--(1.8,2-\w);
		\draw (-1.8,2+\w)--(1.8,2+\w);
		\draw [thick, dashed] (-2.75,2)--(-1.95,2);
		\draw [thick, dashed] (-2.75,1)--(-1.22,1);
		\draw [thick, dashed] (-2.75,0)--(-.17,0);
		\node at (0,-.3) {$x$};
		\node at (-3, 0) {$D_0$};
		\node at (-3, 1) {$D_1$};
		\node at (-3, 2) {$D_2$};
	\end{tikzpicture}
\vskip -.1cm
\caption{$d=3$, $g=5$}\label{fig21A}
		\end{subfigure}
	\hfill
	\begin{subfigure}[b]{0.47\textwidth}
		\centering
		\begin{tikzpicture}
			\coordinate (x) at (-.8,0);
			\coordinate (y) at (.8,0);
			\coordinate (a) at (-1.2, 1);
			\coordinate (b) at (-.4,1);
			\coordinate (c) at (.4,1);
			\coordinate (d) at (1.2,1);
			\foreach \i in {1,...,8} \coordinate (\i) at (-2.25+.5*\i,2);
			\foreach \i in {1,2,3,4,5,6,7,8,x,y,a,b,c,d} \fill (\i) circle (1pt);
			\draw (1)--(a)--(x)--(b)--(3);
			\draw (4)--(5)--(c)--(y)--(d)--(8);
			\draw (x)--(y);
			\draw (a)--(2);
			\draw (b)--(4);
			\draw (c)--(6);
			\draw (d)--(7);	
			\draw (1) edge [bend left = 40] (5);
			\draw (1) edge [bend left = 50] (7);
			\draw (2) edge [bend left = 50] (6);
			\draw (2) edge [bend left = 50] (8);
			\draw (3) edge [bend left = 40] (6);
			\draw (3) edge [bend left = 50] (7);
			\draw (4) edge [bend left = 50] (8);
		
			\node at (.8,-.3) {$y$};
			\node at (-.8,-.3) {$x$};
		\end{tikzpicture}
		\vskip -.1cm
		\caption{$d=3$, $g=6$}\label{fig21B}
	\end{subfigure}
	\caption{Proof of the Moore bound} \label{fig21}
	\end{figure}
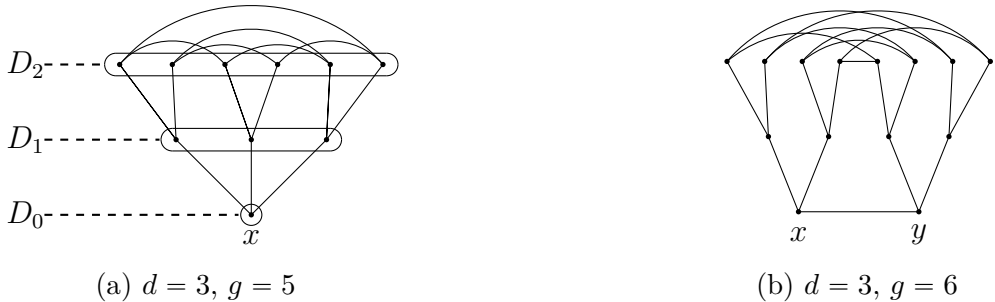
 
Despite the simplicity of its proof, the Moore bound is sharp for a surprisingly 
complex family of parameters, which is still not completely understood. Let us say that 
a graph $G$ is a {\it $(d, g)$-Moore graph} if $\delta(G)\ge d$, $\gth(G)\ge g$,
and $|V(G)|=n_0(d, g)$. It follows immediately from the above proof that any such 
graph must be $d$-regular and connected.  
 
Some small cases are quickly discussed. For instance, a $(d, 3)$-Moore graph is just 
a $d$-regular graph on $n_0(d, 3)=d+1$ vertices, so $G=K_{d+1}$ is the only example 
for $g=3$. Next, we have $n_0(d, 4)=2d$ and the only $d$-regular, triangle-free
graph on $2d$ vertices is the balanced, complete, bipartite graph~$K_{d, d}$ 
(e.g., by Mantel's theorem~\cite{Ma07}). Thus $K_{d, d}$ is the unique $(d, 4)$-Moore 
graph. 

The first nontrivial case is $g=5$. Note that $n_0(d, 5)=d^2+1$ and that, again 
by the proof of Theorem~\ref{thm:21}, two distinct vertices of a $(d, 5)$-Moore 
graph have a common neighbour if and only if 
they are non-adjacent. For $d=1, 2, 3$ the only such graphs can easily be seen 
to be the edge $K_2$, the pentagon $C_5$, and the so-called {\it Petersen graph} 
(see Figure~\ref{fig22}).

	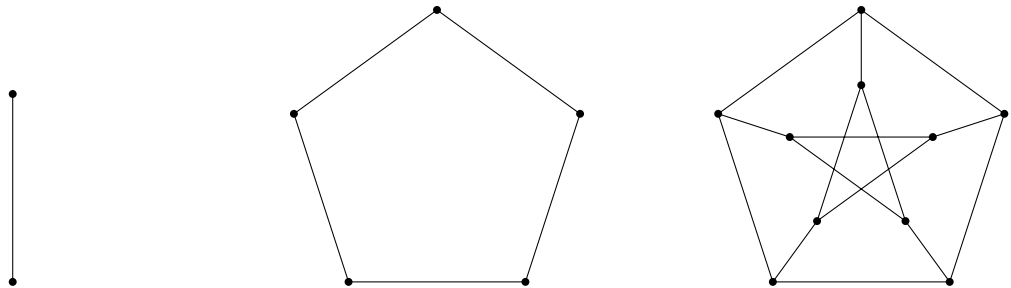
\begin{figure}[h!]
		\begin{subfigure}[b]{0.32\textwidth}
			\centering
			\begin{tikzpicture}
		\coordinate (a) at (0,0);
		\coordinate (b) at (0,2.5);  
		\draw (a)--(b);
		\foreach \i in {a,b}{\fill (\i) circle (1.5pt);}  	\end{tikzpicture}
		\end{subfigure}
	\hfill
	\begin{subfigure}[b]{0.32\textwidth}
		\centering
		\begin{tikzpicture}
			\foreach \i in {1,...,5}{
			\coordinate (\i) at (18+72*\i:2); 			\fill (\i) circle (1.5pt);
		}
		\draw (1)--(2)--(3)--(4)--(5)--cycle;
		\end{tikzpicture}
	\end{subfigure}
\hfill
\begin{subfigure}[b]{0.32\textwidth}
	\centering
	\begin{tikzpicture}
		\foreach \i in {1,...,5}{
			\coordinate (b\i) at (18+72*\i:1);
			\coordinate (a\i) at (18+72*\i:2);
			\draw (a\i)--(b\i);
			\fill (a\i) circle (1.5pt);
			\fill (b\i) circle (1.5pt);
		}
		\draw (a1)--(a2)--(a3)--(a4)--(a5)--cycle;
		\draw (b1)--(b3)--(b5)--(b2)--(b4)--cycle;
	\end{tikzpicture}
\end{subfigure}
	\caption{Edge, Pentagon, and Petersen graph}\label{fig22}
	\end{figure}
 
Hoffman and Singleton~\cite{HS60} constructed another such graph 
for $d=7$, and the same authors also established the following surprising result.     

\begin{thm}[Hoffman \& Singleton]
	If a $d$-regular graph $G$ on $d^2+1$ vertices satisfying $\gth(G)\ge 5$
	exists, then $d\in\{1, 2, 3, 7, 57\}$.
\end{thm}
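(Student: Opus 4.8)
The plan is to encode the hypotheses in the adjacency matrix $A$ of $G$ and then extract arithmetic constraints from its spectrum. Recall from the discussion just before the theorem that in such a graph --- a $(d,5)$-Moore graph --- two distinct vertices have a common neighbour precisely when they are non-adjacent, and since $\gth(G)\ge 5$ rules out two vertices sharing two common neighbours, exactly one in the non-adjacent case. Thus $G$ is strongly regular with parameters $(d^2+1,\,d,\,0,\,1)$, and since the $(u,v)$-entry of $A^2$ counts common neighbours of $u$ and $v$ (and equals $d$ when $u=v$) this is the same as the matrix identity
\[
	A^2 = dI + (J - I - A),
	\qquad\text{that is,}\qquad
	A^2 + A - (d-1)I = J,
\]
where $J$ is the all-ones matrix (the trivial case $d=1$, $G=K_2$, is covered below as well).

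Next I would diagonalise $A$. The constant vector spans an eigenspace for the eigenvalue $d$, and on its orthogonal complement $J$ acts as $0$, so the remaining eigenvalues of $A$ are the two roots $\theta_\pm=\tfrac12(-1\pm s)$ of $x^2+x-(d-1)=0$, with $s:=\sqrt{4d-3}$. Let $m_+$ and $m_-$ be their multiplicities. Comparing dimensions gives $m_++m_-=d^2$, and comparing traces --- using that $A$ has zero trace, $G$ being loopless --- gives $d+m_+\theta_+ + m_-\theta_-=0$; since $\theta_++\theta_-=-1$ and $\theta_+-\theta_-=s$, these combine into
\[
	(m_+ - m_-)\,s = d^2 - 2d.
\]

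The endgame is a case analysis of this last identity. If $s$ is irrational then, $m_+-m_-$ being an integer, we must have $m_+=m_-$ and $d^2-2d=0$, hence $d=2$ --- the pentagon. If $s$ is rational then it is automatically a positive integer, because $s^2=4d-3$ is one; substituting $d=\tfrac14(s^2+3)$ turns the identity into
\[
	m_+ - m_- = \frac{s^4 - 2s^2 - 15}{16\,s}\,,
\]
so $s$ must divide $s^4-2s^2-15$, and hence $s\mid 15$. This leaves $s\in\{1,3,5,15\}$, i.e.\ $d\in\{1,3,7,57\}$, and together with the irrational case we conclude $d\in\{1,2,3,7,57\}$.

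I do not expect a genuine obstacle here: the argument is short once one commits to working with the spectrum of $A$, and the only point demanding care is the final integrality step, where one should confirm that each surviving value of $s$ really does produce nonnegative integers $m_\pm$ (it does, so nothing further is excluded and nothing legitimate is lost). All the leverage in the theorem comes from the humble fact that eigenvalue multiplicities are nonnegative integers.
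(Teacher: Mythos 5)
Your proposal is correct and follows essentially the same route as the paper's proof: the identity $A^2+A=(d-1)I+J$, the eigenvalues $\tfrac12(-1\pm\sqrt{4d-3})$ on the complement of the all-ones vector, the trace equation $(m_+-m_-)\sqrt{4d-3}=d^2-2d$, and the divisibility $s\mid s^4-2s^2-15$ forcing $s\in\{1,3,5,15\}$. The only (harmless) cosmetic difference is that you split into rational/irrational cases for $s$ where the paper phrases it as ``unless $d=2$''.
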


\begin{proof}
		Set $n=d^2+1$ and consider any $d$-regular graph $G$ with vertex set $[n]$ 
	and $\gth(G)\ge 5$.
	Let $A\in \RR^{n\times n}$ be the adjacency matrix of $G$.
	Since the $(i, j)$-entry of $A^2$ is just the number of vertices $k$ such that 
	$ik, jk\in E(G)$, we have 
		\begin{equation}\label{eq:1616}
		A^2 + A = (d-1) \cdot I + J\,,
	\end{equation}
		where $I$ is the identity matrix of rank $n$ and~$J$ denotes 
	the $(n\times n)$-matrix all of whose entries are equal to $1$.
	Since $G$ is $d$-regular and connected,~$d$ is an eigenvalue of $A$ with 
	multiplicity~$1$, and 
	the corresponding eigenspace is spanned by the vector $\gb=(1, \dots, 1)^\top$. 
	Now let $\gv$ be an arbitrary further eigenvector 
	of $A$, say with eigenvalue~$\lambda$. Multiplying~\eqref{eq:1616}  
	with $\gv$ we obtain $\bigl(\lambda^2+\lambda-(d-1)\bigr)\gv=(\gb\gv)\gb$, 
	which entails $\lambda^2+\lambda-(d-1)=0$, because $\gb$ and $\gv$ are linearly 
	independent. Consequently, the eigenvalues 
	of $A$ other than $d$ are among $\lambda_{\pm}=(-1\pm\sqrt{4d-3})/2$.    
	Now let~$m_\pm$ denote the multiplicities of these eigenvalues. 
	Since $A$ has $n$ eigenvalues summing up to the trace of $A$, we obtain the 
	system of equations 
		\begin{align*}
		m_++m_-+1 &= d^2 + 1 \\ 
		\lambda_+m_++\lambda_-m_-+d &=0\,,
	\end{align*}
		which leads to $(m_+-m_-)\sqrt{4d - 3}=d^2-2d$. Unless $d=2$ this is 
	only possible if $4d-3$ is a perfect square, i.e., if there is an odd integer $s$
	such that $d=(s^2+3)/4$. In this case $s=\sqrt{4d-3}$ needs to divide 
	$16(d^2-2d)=s^4-2s^2-15$, whence $s\in\{1, 3, 5, 15\}$, i.e., 
	$d\in\{1, 3, 7, 57\}$.
\end{proof}

This result leaves the following major problem open.

\begin{quest}\label{qu:moore}
	Does there exist a $57$-regular graph $G$ on $3250$ vertices with $\gth(G)\ge 5$?
\end{quest}	

Such graphs are called `missing Moore graphs' in the literature. They have 
been studied intensively using a variety of combinatorial, spectral, 
and computational approaches. Moreover, starting with the work of 
Aschbach~\cite{Asch71}, group theoretic and representation theoretic methods 
have been employed as well. Special attention has been given to the possible 
automorphism groups of missing 
Moore graphs. Higman showed that such graphs cannot be vertex-transitive 
(see also~\cite{Cameron}); much more recently, Ma\v{c}aj
and \v{S}ir\'{a}\v{n}~\cite{MS10} improved this to $|\mathrm{Aut}(G)|\le 375$
for every missing Moore graph $G$. More information on this topic is contained 
in Dalf\'{o}'s survey~\cite{Dalfo}.

Why is Question~\ref{qu:moore} so difficult?
The most likely explanation might be that there are something like one billion 
non-isomorphic missing Moore graphs, all with very small automorphism groups. This 
would mean that there are so few of them that it is practically impossible to find 
any by a lucky guess or by an exhaustive search;
but, at the same time, there are so many of them, or the constraints of being 
$57$-regular and having girth $5$ are so `weak', that the search tree cannot be 
narrowed down substantially. With respect to some other very difficult combinatorial 
problems, a similar sentiment has recently been expressed more eloquently 
by Gowers~\cite{Gowers}. 
In the case of missing Moore graphs, it certainly does not help either that $3250$ 
vertices are, on the one hand, so few that contemporary methods of extremal 
and probabilistic graph theory become mute; but, on the other hand, more than three thousand vertices are so many that it is hard to deal with them in a concrete and 
explicit way. 

Before we proceed to larger girth, we quickly want to eliminate some small values 
of $d$. Due to $n_0(1, g)=2$ the edge $K_2$ can be viewed as a $(1, g)$-Moore graph for 
every $g\ge 3$. Only slightly more interestingly, we have $n_0(2, g)=g$ and thus the 
cycle $C_g$ is the only $(2, g)$-Moore graph. Henceforth we will always restrict 
our attention to the case $d\ge 3$.  

Even values of $g$ were studied in the PhD thesis of 
Singleton~\cites{Sing62, Sing66}, who made the astonishing discovery 
that here  $(d, g)$-Moore graphs can only exist if $g\in \{6, 8, 12\}$. 
At about the same time an equivalent algebraic result was obtained by Feit 
and Higman~\cite{FH}.
The odd case was solved independently by Damerell~\cite{Dam} and in joint work
of Bannai and Ito~\cite{BI73}. It turned out that for odd $g\ge 7$ there are no 
further Moore graphs, so that altogether the following result has been established. 
For a somewhat streamlined proof we refer to Biggs' textbook on algebraic
graph theory~\cite{Biggs}*{Theorem 23.6}.  

\begin{thm}
	Let $d\ge 3$ and $g\ge 5$. If there exists a $d$-regular graph $G$ on $n_0(d, g)$
	vertices with $\gth(G)\ge g$, then $g\in \{5, 6, 8, 12\}$. \qed
\end{thm}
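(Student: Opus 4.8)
The plan is to exploit how rigid Moore graphs are: first show that any such $G$ is a \emph{distance-regular} graph with a completely prescribed intersection array, then turn the resulting polynomial identity for its adjacency matrix into Diophantine constraints on the eigenvalue multiplicities, and finally run the case analysis that leaves only $g\in\{5,6,8,12\}$. For the structural step, write $g=2h+1$ or $g=2h$ and argue as in the proof of Theorem~\ref{thm:21}: fix $x\in V(G)$ and let $D_i$ be the set of vertices at distance $i$ from $x$. The equality $|V(G)|=n_0(d,g)$ forces every estimate in that proof to be tight, so each vertex of $D_i$ (for $0<i<h$) has exactly one neighbour in $D_{i-1}$ and exactly $d-1$ in $D_{i+1}$, and---because $G$ has no short cycles---these counts depend neither on the vertex nor on $x$. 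Hence $G$ is distance-regular with intersection array $\{d,d-1,\dots,d-1;1,\dots,1\}$ when $g=2h+1$ and $\{d,d-1,\dots,d-1;1,\dots,1,d\}$ when $g=2h$. Writing $A_i$ for the distance-$i$ adjacency matrix, the standard three-term recursion gives $A_i=v_i(A)$ for a polynomial $v_i$ of degree $i$, and $A_0+\dots+A_h=J$ collapses to a single identity
\[
	p(A)=J,\qquad p:=v_0+\dots+v_h,\qquad\deg p=h.
\]

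Next I would read off the spectrum. Since $G$ is $d$-regular and connected, $d$ is a simple eigenvalue with eigenvector $\gb$, and every other eigenvalue $\lambda$ lies in $\gb^{\perp}$, where $J$ acts as $0$; hence $p(\lambda)=0$. The adjacency algebra of a distance-regular graph of diameter $h$ has dimension $h+1$, so $A$ has exactly $h$ further eigenvalues $\lambda_1,\dots,\lambda_h$, which are therefore the (real and distinct) roots of $p$. Their multiplicities $m_1,\dots,m_h$ are positive integers with
\[
	\sum_j m_j=n_0(d,g)-1,\qquad\sum_j m_j\lambda_j=-d\quad(\text{since }\operatorname{tr}A=0),
\]
and the primitive idempotents of the algebra give a closed formula for each $m_j$ as a fixed rational function of $\lambda_j$ with rational coefficients. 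Two facts now power the analysis: Galois-conjugate eigenvalues have equal multiplicity, and the coefficients of $p$ are explicit in $d$---for instance $\sum_j\lambda_j=-1$ in the odd case, where the $v_i$ are monic.

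Now the case split. If $g=2h$, then $G$ is bipartite and is exactly the incidence graph of a generalized $h$-gon of order $(d-1,d-1)$; in particular $p(-d)=0$, and the Feit--Higman theorem~\cite{FH} forces $h\in\{2,3,4,6,8\}$, while the hypothesis $g\ge5$ removes $h=2$. For $h=8$ the integrality of the multiplicities of the non-principal eigenvalues makes $2(d-1)^2$ a perfect square, which is impossible for $d\ge3$; this excludes $g=16$ and leaves $g\in\{6,8,12\}$. If $g=2h+1$, then $h=1$ gives $G=K_{d+1}$ with $g=3$ (excluded), and $h=2$ is the girth-$5$ case settled by the Hoffman--Singleton theorem~\cite{HS60} already proved above. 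It remains to rule out $h\ge3$. Were $p$ irreducible over $\QQ$, all $m_j$ would equal a common value $m$, and $m\sum_j\lambda_j=-d$ would force $m=d$, whence $hd=n_0(d,2h+1)-1$ reduces to $h(d-2)=(d-1)^h-1$, which has no solution with $d\ge3$ and $h\ge3$. When $p$ factors one must instead follow the multiplicity attached to each Galois orbit of eigenvalues separately, using that the $\lambda_j$ are algebraic integers; this is the argument of Damerell~\cite{Dam} and, independently, of Bannai--Ito~\cite{BI73}, with a streamlined unified treatment of both parities in Biggs~\cite{Biggs}*{Theorem 23.6}, which is the write-up I would follow.

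I expect the structural and spectral steps to be routine; the whole difficulty is number-theoretic and concentrated in two spots. The first is the perfect-square condition that excludes the generalized octagon with equal parameters. The second---and much more serious---is the reducible subcase of the odd analysis: once $p$ factors, the crude ``irreducible $\Rightarrow$ equal multiplicities'' dichotomy breaks down, and ruling out---for every $h\ge3$---all assignments of multiplicities simultaneously compatible with the trace identity, with integrality, and with the eigenvalues being algebraic integers requires precisely the delicate estimates of Damerell and of Bannai--Ito.
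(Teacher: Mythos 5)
The survey offers no proof of this theorem to compare against: by the convention announced in its introduction, the terminal $\Box$ marks a result whose proof the author made no effort to describe, and the surrounding text simply attributes the even case to Singleton and to Feit--Higman, the odd case to Damerell and to Bannai--Ito, and refers to Biggs for a streamlined write-up. Your outline is precisely the route those sources take, and the steps you actually carry out are correct: tightness of the Moore bound does force distance-regularity with the stated intersection array; the identity $p(A)=J$ with $\deg p=h$ does confine the non-principal spectrum to the $h$ roots of $p$; in the even case the graph is the incidence graph of a generalized $h$-gon of order $(d-1,d-1)$, and Feit--Higman together with the requirement that $2(d-1)^2$ be a perfect square for octagons leaves exactly $g\in\{6,8,12\}$; and your computation $m=d$ followed by $h(d-2)=(d-1)^h-1$ correctly disposes of the subcase where $p$ is irreducible over $\QQ$ for odd $g\ge 7$. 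The one caveat --- which you flag yourself --- is that the reducible subcase of the odd analysis is not executed but delegated to Damerell and to Bannai--Ito; that subcase is the actual substance of the theorem for odd girth and occupies the bulk of those papers, so as a self-contained argument your proposal stops short at exactly that point. Judged against the paper, however, which proves nothing here, your proposal is strictly more informative: it makes the reduction to the cited number-theoretic work explicit and verifies everything on the near side of it.
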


In the study of Moore graphs with even girth the following observation is often 
useful. 

\begin{lemma}\label{lem:bip}
	If $g\ge 4$ is even and $d\ge 2$, then every $(d, g)$-Moore graph is bipartite. 
\end{lemma}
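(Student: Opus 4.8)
The plan is to pick one vertex $x$ of the Moore graph $G$ and prove that the two-colouring of $V(G)$ by the parity of the distance to $x$ is proper; since $G$ is connected (as observed right after Theorem~\ref{thm:21}), this yields bipartiteness. Write $g=2h$ and let $D_i$ be the set of vertices at distance exactly $i$ from $x$. Because any edge of $G$ joins two vertices in the same class $D_i$ or in two consecutive classes, it is enough to show that no $D_i$ contains an edge of $G$.

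First I would carry out a distance-layer analysis from $x$, in the spirit of the proof of Theorem~\ref{thm:21}, but now keeping exact account of the sizes. For $1\le i\le h-1$ one has: $D_i$ is independent (an edge inside $D_i$ together with two shortest $x$-paths closes up into a cycle of length at most $2i+1<g$); every vertex of $D_i$ has a unique neighbour in $D_{i-1}$ (two of them would give two distinct $x$-paths, hence a cycle of length at most $2i<g$); and consequently $|D_i|=d(d-1)^{i-1}$ by induction, using that $G$ is $d$-regular. Summing and simplifying, $\sum_{i=0}^{h-1}|D_i|=1+d\sum_{j=0}^{h-2}(d-1)^j=n_0(d,g)-(d-1)^{h-1}$, so exactly $(d-1)^{h-1}$ vertices of $G$ lie at distance at least $h$ from $x$.

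The main obstacle is the last class $D_h$: an edge inside $D_h$ only forces a cycle of length at most $2h+1$, and $g=2h$ does permit cycles of length $2h+1$, so the girth alone is not enough here. To get around this I would count the incidences between $D_{h-1}$ and $D_h$. Each of the $d(d-1)^{h-2}$ vertices of $D_{h-1}$ has exactly one neighbour in $D_{h-2}$ and none in $D_{h-1}$, hence $d-1$ neighbours in $D_h$, giving $d(d-1)^{h-1}$ incidences in total; since each vertex of $D_h$ is incident to at most $d$ of them, $|D_h|\ge(d-1)^{h-1}$. Comparing with the count from the previous paragraph forces $|D_h|=(d-1)^{h-1}$ and $D_{h+1}=D_{h+2}=\dots=\emptyset$, and, more importantly, equality in the incidence count means that every vertex of $D_h$ sends all $d$ of its edges into $D_{h-1}$, so $D_h$ is independent as well. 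Thus no $D_i$ spans an edge, the distance-parity colouring is proper, and $G$ is bipartite. (For $g=4$ one could instead quote the earlier identification of $K_{d,d}$ as the unique $(d,4)$-Moore graph, but the argument above applies equally when $h=2$, the inductive step then being vacuous.)
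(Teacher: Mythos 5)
Your proof is correct, but it takes a genuinely different route from the one in the paper. The paper argues by contradiction with a shortest odd cycle $C$: it first shows $C$ must be geodetic, then reruns the even case of the proof of Theorem~\ref{thm:21} with an edge $v_1v_2$ of $C$ in the distinguished r\^ole; since $|V(G)|=n_0(d,g)$, the resulting tree of depth $g/2-1$ already exhausts $V(G)$, so the cycle vertex opposite to $v_1v_2$ is too close to one of $v_1$, $v_2$, forcing $|C|\le g$ and a parity contradiction. You instead do a breadth-first analysis from a single vertex, compute the exact sizes $|D_i|=d(d-1)^{i-1}$ of the first $h-1$ layers, and then use a double count of the edges between $D_{h-1}$ and $D_h$ together with the exact vertex count $n_0(d,g)$ to conclude that $D_h$ has size exactly $(d-1)^{h-1}$ and is independent, so the distance-parity colouring is proper. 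Both arguments rest on the same girth facts (no edges inside, and unique back-neighbours from, the layers $D_1,\dots,D_{h-1}$), and both crucially exploit that the Moore bound is attained with equality. The paper's version is shorter because it reuses the tree of Figure~\ref{fig21B} as a black box; yours is slightly longer but yields more structural information along the way -- the diameter of $G$ is $h$, and every vertex of the top layer sends all $d$ of its edges back to $D_{h-1}$ -- which is essentially the first step towards the distance-regularity of Moore graphs and fits well with the incidence-geometry descriptions in Theorems~\ref{thm:2014} and~\ref{thm:2015}.
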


\begin{proof}
	Otherwise let $C=v_1\dots v_n$ be a shortest odd cycle in $G$. This cycle needs 
	to be geodetic, i.e., it predicts the distances of its vertices correctly. This 
	is because if two vertices $v_i$, $v_j$ could be connected by a path $P$ that is 
	shorter than both $v_i$-$v_j$-paths in $C$, then $P$ together with one of these 
	paths would create a closed walk of some odd length $n'<n$. But any such closed 
	walk would need to contain an odd cycle that contradicted the minimal choice 
	of~$n$. 
	
	Let us now run the proof of Theorem~\ref{thm:21} with the edge $v_1v_2$ in the 
	distinguished r\^{o}le. The vertex $v_{2+g/2}$ needs to appear somewhere in 
	Figure~\ref{fig21B} and thus its distance from at least one of  $v_1$ or $v_2$ is 
	beneath $g/2$. As $C$ is geodetic, this implies $n\le g$.
	But due to $\gth(G)\ge g$ and the fact that $g$, $n$ have different parities 
	this is absurd.  
\end{proof}

It is now natural to investigate the sets 
\begin{equation}\label{eq:1550}
	A_g
	=
	\{d\ge 2\colon \text{there exists a $(d+1, g)$-Moore graph}\}
\end{equation}
for $g=6, 8, 12$ (the reason why we wrote $d+1$ rather than $d$ will soon become 
apparent). Before summarising the known results on these sets, we briefly digress 
into projective geometry, referring to the two-volume treatise by Veblen and 
Young~\cites{VY1, VY2} for further background. 

Let us recall that a {\it projective plane} is given by a set of {\it points}, 
a set of {\it lines}, and an {\it incidence relation} between points and lines 
such that (i) any two distinct points determine a unique line, (ii) any two distinct 
lines intersect in a unique point, (iii) and there exist four points no three 
of which are collinear. The smallest projective plane is the {\it Fano plane} 
depicted in Figure~\ref{fig23A}.

	\begin{figure}[h!]
		\begin{subfigure}[b]{0.43\textwidth}
			\centering
			\begin{tikzpicture}
				\def\s{2.5}     				\def\h{1.732*\s}
				\def\m{.577*\s} 
		\coordinate (a) at (-\s,0);
		\coordinate (b) at (0,\h);
		\coordinate (c) at (\s,0); 
		\coordinate (d) at ($(a)!.5!(b)$) ; 
		\coordinate (e) at ($(b)!.5!(c)$) ; 
		\coordinate (f) at ($(a)!.5!(c)$) ; 
		\coordinate (s) at (0, \m);
		\draw (a)--(b)--(c)--cycle;
		\draw (a)--(e);
		\draw (b)--(f);
		\draw (c)--(d);
		\draw (s) circle (\m);
		\foreach \i in {a,b,c,d,e,f,s}{\fill (\i) circle (1.5pt);}  
	\end{tikzpicture}
\vskip .1cm  \caption{Fano plane}\label{fig23A}
		\end{subfigure}
	\hfill
	\begin{subfigure}[b]{0.43\textwidth}
		\centering
		\begin{tikzpicture}
			\def\s{1}
			\def\h{0.866*\s}
			
			\foreach \i in {0,2,4}
				\foreach \j in {-1,...,7}
				{	\coordinate (\i\j) at (\j*\s,\i*\h);}
		
				\foreach \i in {-1,1,3,5}
				\foreach \j in {-1,...,7}
					{	\coordinate (\i\j) at (\j*\s-.5*\s,\i*\h);}

			\fill [\ca] (57)--(46)--(37)--(47);
			\fill [\ca] (52)--(42)--(43)--(54)--cycle;
			\fill [\ca] (20)--(11)--(00)--(10)--cycle;
			\fill [\ca] (03)--(14)--(15)--(05)--cycle;
			\fill [\cb] (02)--(03)--(14)--(23)--(22)--(12)--(02);
			\fill [\cb] (55)--(45)--(46)--(57)--(55);
			\fill [\cc] (45)--(46)--(37)--(26)--(25)--(35)--(45);
			\fill [\cc] (31)--(32)--(42)--(52)--(31);
			\fill [\cd] (00)--(11)--(12)--(02)--(00);
			\fill [\cd] (54)--(55)--(45)--(35)--(34)--(43)--(54);
			\fill [\ce] (20)--(31)--(32)--(22)--(12)--(11)--(20);
			\fill [\ce] (05)--(15)--(25)--(26)--(05);
			\fill [\cf] (32)--(42)--(43)--(34)--(23)--(22)--(32);
			\fill [\cg] (34)--(35)--(25)--(15)--(14)--(23)--(34);
			
			\fill [white] ($(53)+(0,.1)$)--(53)--(47)--($(47)+(.1,0)$)--($(57)+(.1,.1)$)--cycle;
			\fill [white] (51)--($(10)+(-.1,0)$)--(10)--(53)--($(53)+(0,.1)$)--cycle;
			\fill [white] (10)--(04)--($(04)+(0,-.1)$)--($(00)+(-.1,-.1)$)--($(10)+(-.1,0)$)--cycle;
			\fill [white] (04) --(47)--($(47)+(.1,0)$)--($(06)+(0,-.1)$)--($(04)+(0,-.4)$)--cycle;
						
					\fill [\cb] (12)--(02)--(03)--(14)--(12);	
						\fill [\ce] (11)--(20)--(31)--(32)--(11);
						\fill [\cd] (54)--(55)--(45)--(35)--(34)--(43)--(54);
							\fill [\cc] (45)--(46)--(37)--(26)--(25)--(35)--(45);
							\fill [\ca] (20)--(2-1)--(1-1)--(0-1)--(00)--(11)--(20);
					
						\fill [white, opacity=0.8] (10)--(04)--($(04)+(0,-.1)$)--($(00)+(-.1,-.1)$)--($(10)+(-.1,0)$)--cycle;
							\fill [white, opacity=.7] ($(53)+(0,.1)$)--(53)--(47)--($(47)+(.1,0)$)--($(57)+(.1,.1)$)--cycle;
							\fill [white, opacity=0.8] (51)--($(10)+(-.1,0)$)--(10)--(53)--($(53)+(0,.1)$)--cycle;
								\fill [white, opacity=0.7] (04) --(47)--($(47)+(.1,0)$)--($(06)+(0,-.1)$)--($(04)+(0,-.4)$)--cycle;
								
								\fill [white, opacity=0.8] ($(10)+(-.1,0)$)--($(40)+(.23,0)$)--(3-1)--(-1-1)--($(00)+(-.1,-.1)$)--($(10)+(-.1,0)$);
								
								\draw [dashed] (10) -- ($(1-1)!.2!(2-1)$);
								\draw [dashed] (10) -- ($(1-1)!.7!(0-1)$);
							 
					\draw [dashed] ($(11)!.24!(00)$)--(00)--(0-1)--(1-1)--(2-1)--(20);

			\draw [shorten <=23pt, shorten >=23pt] (00)--(11)--(12)--(22)--(23)--(14)--(15)--(05);
			\draw (22)--(32)--(42)--(43)--(34)--(23);
			\draw (15)--(25)--(35)--(34);
			\draw [shorten >= 6](35)--(45)--(46)--(37);
			\draw [dashed] ($(11)!.85!(20)$)--(20)--(31)--($(31)!.5!(32)$);
			\draw [dashed] ($(12)!.57!(02)$)--(02)--(03)--($(03)!.2!(14)$);
			\draw [ dashed] ($(25)!.57!(26)$)--(26)--(37)--($(37)!.2!(46)$);
			\draw [dashed] ($(45)!.57!(55)$)--(55)--(54)--($(54)!.2!(43)$);
			\draw [shorten <=23pt] (52)--(42);
			\draw [shorten >= 6pt] (43)--(54);
			\draw [shorten <= 15pt]  (55)--(45);
			\draw  (46)--($(46)!.2!(57)$);
			\draw [shorten <= 14pt] (26)--(25);
			\draw [shorten <= 6pt] (03)--(14);
			\draw [shorten >= 15pt] (12)--(02);
			\draw [shorten <=6pt] (20)--(11);
			\draw [shorten <=15] (31)--(32);
	
		\draw [ultra thick]  (10)--(04)--(47)--(53)--cycle;		
		
		\foreach \i in {11,12,14,15,22,23,25,32,34,35,42,43,45,46} \fill (\i) circle (1.5pt);

		\end{tikzpicture}
	\vskip -.3cm  	\caption{Heawood graph}\label{fig23B}
\end{subfigure}
	\caption{The smallest projective plane and a tiling of the torus (black rhombus whose opposite sides 
	are identified) with seven hexagons}
	\label{fig:23}
	\end{figure}
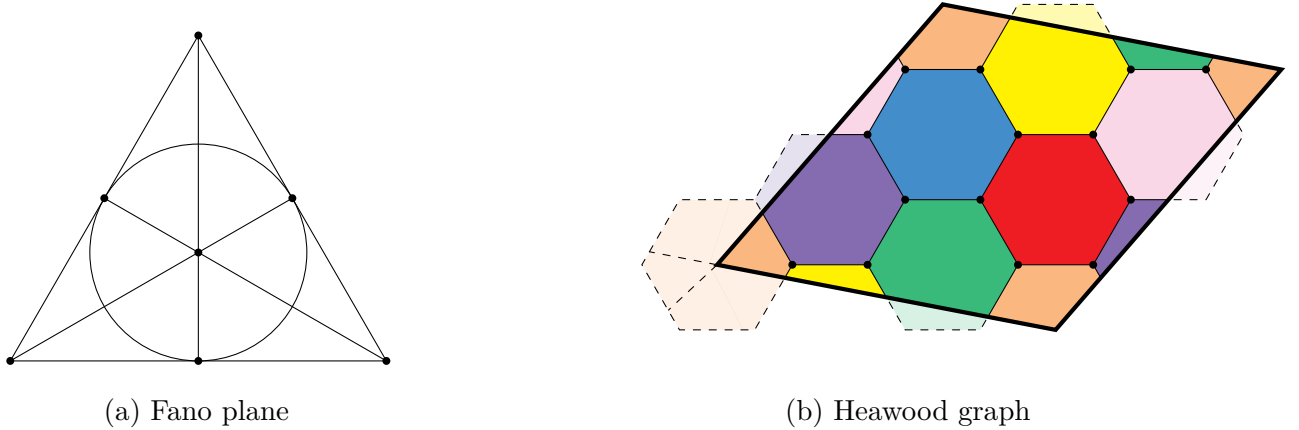

It is well known that for each finite projective plane there 
exists an integer $n$, called its {\it order}, such that every line contains $n+1$ 
points, through every point there pass $n+1$ lines, and the total numbers of points and 
lines are $n^2+n+1$ each. For every finite field~$F$ we can construct a projective 
plane of order $|F|$ whose points and lines are the one- and two-dimensional linear 
subspaces of $F^3$, respectively; the incidence relation of this 
plane is inclusion. Thereby one obtains for every prime power~$n$ a projective 
plane of order~$n$. Some finite projective planes that do not arise from this  
construction have been discovered, but the orders of all of them are still 
prime powers. In fact, the following problem is wide open.

\begin{conj}[Strong prime power conjecture]
	If a projective plane of order $n$ exists, then $n$ is a prime power.
\end{conj}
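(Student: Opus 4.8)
No proof of this conjecture is known---it is one of the oldest open problems in combinatorics---so what follows is not a proof plan but a description of the only systematic line of attack together with the point at which it breaks down. The natural starting point is to translate the existence of a projective plane of order $n$ into integral linear algebra: if $N\in\{0,1\}^{v\times v}$ is its point--line incidence matrix, where $v=n^2+n+1$, then counting flags through one point and through a pair of points yields
\[
	NN^\top = nI+J \qquad\text{and}\qquad NJ=JN=(n+1)J\,.
\]
Since $N\in\GL_v(\QQ)$ realises the congruence $N I N^\top = nI+J$, the quadratic form $x\mapsto x^\top(nI+J)x$ is equivalent over $\QQ$ to the standard form $x\mapsto x^\top x$, and the plan is to mine this rational equivalence for arithmetic information.

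Working prime by prime, the Hasse--Minkowski principle says the two forms are $\QQ$-equivalent if and only if they are equivalent over $\RR$ and over every $\QQ_p$, and computing the relevant Hilbert symbols converts the $p$-adic conditions into the Bruck--Ryser--Chowla congruences. For a plane $v$ is always odd, so one obtains that $z^2=nx^2+(-1)^{(v-1)/2}y^2$ has a nontrivial integral solution; this is vacuous when $n\equiv 0,3\pmod 4$, but when $n\equiv 1,2\pmod 4$ it forces $n$ to be a sum of two squares. This excludes $n=6,14,21,22,\dots$, and together with the computer-assisted exclusion of $n=10$ by Lam, Thiel and Swiercz it accounts for every order presently known to be impossible.

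The decisive obstacle is that these conditions come nowhere near forcing $n$ to be a prime power. The smallest open value is $n=12$, which is no prime power and is untouched by Bruck--Ryser (it is $\equiv 0\pmod 4$), and the open non-prime-power orders continue $15,18,20,\dots$ with no visible pattern. To settle even the \emph{weak} prime power conjecture---finiteness of the set of non-prime-power orders---one would need a genuinely new invariant of the design, sensitive to the factorisation of $n$ rather than to quadratic-residue data alone: plausible sources are constraints from a putative automorphism group, from the $\FF_p$-codes spanned by the rows of $N$, or from finer arithmetic invariants of the incidence structure, but nothing of the kind is presently known to help. The only realistic programme is therefore the modest one of sharpening the exclusion criteria for symmetric designs and, in parallel, attacking individual orders such as $n=12$ by structural or computational means; I expect the honest conclusion to be that the conjecture lies well beyond current techniques.
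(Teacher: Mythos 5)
The statement is an open conjecture, and the paper accordingly offers no proof --- it explicitly calls the problem ``wide open'' and notes that even the order $12$ case is undecided. You correctly recognise this, and your account of the Bruck--Ryser--Chowla obstruction and of why it falls far short of the conjecture is accurate and consistent with the paper's treatment.
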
 

Currently it is not even known whether a projective plane of order~$12$ exists
and it would not contradict known results if one counter-conjectured that 
projective planes of order~$n$ exist whenever $n$ is a sufficiently large multiple 
of $4$. 

There is also another construction of projective planes that on first sight might 
seem preferable, as it only requires an additive structure rather than a field 
structure. A {\it perfect difference set of order $n$} is a subset $K$ of the cyclic 
group $\ZZ/(n^2+n+1)\ZZ$ such that $|K|=n+1$ and every nonzero residue class 
modulo $n^2+n+1$ can be expressed
(uniquely) as a difference of two members of $K$. For instance, $\{0, 1, 4, 6\}$
is a perfect difference set of order $3$. From any perfect difference set $K$ of
order $n$ we can construct a projective plane of order $n$ whose points are the 
residue classes modulo $n^2+n+1$ and whose lines are the translates of $K$. 
It has been shown by Singer~\cite{Singer} that for every prime power $n$ there exists 
a perfect difference set of order $n$. 

\begin{conj}[Weak prime power conjecture]
	If a perfect difference set of order $n$ exists, then $n$ is a prime power.
\end{conj}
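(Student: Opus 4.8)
This conjecture is open; below I sketch what is known and how one might attack it. A preliminary observation is that the weak conjecture \emph{follows} from the strong one: a perfect difference set of order $n$ produces a projective plane of order $n$ carrying a cyclic automorphism group that acts regularly on points, so if all projective plane orders are prime powers then so are all perfect-difference-set orders. Hence the weak conjecture is the more modest of the two, and the reason it is nonetheless unsettled is that the extra cyclic structure has so far only been harnessed through \emph{multiplier} arguments. Concretely, a perfect difference set $K$ of order $n$ is a difference set with parameters $(n^2+n+1,\,n+1,\,1)$ in $\ZZ/(n^2+n+1)\ZZ$; since $n^2+n+1\equiv 1\pmod p$ for every prime $p\mid n$ and the index $\lambda=1$ is smaller than every prime, Hall's multiplier theorem applies and shows that every such $p$ is a numerical multiplier, i.e.\ $pK=K+s_p$ for some residue $s_p$. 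A standard argument, using that $n+1$ and $n^2+n+1$ are coprime, then lets one assume that $K$ is fixed setwise by all of its multipliers at once, so that $K$ is a union of orbits of the maps $x\mapsto px$ ($p\mid n$) acting on $\ZZ/(n^2+n+1)\ZZ$.

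The plan would be to combine three families of constraints in the hope that they leave only prime powers. First, the orbit structure just described forces delicate congruence relations between $n$ and $n^2+n+1$: for instance $x\mapsto px$ has exactly $\gcd(p-1,\,n^2+n+1)$ fixed points, and matching the available orbit lengths against $|K|=n+1$ already eliminates many small composite orders (the textbook case $n=10$, with $2$ a multiplier, is disposed of this way --- by contrast, ruling out every projective plane of order $10$ required a notorious computer search of Lam, Thiel and Swiercz). Secondly, one invokes the Bruck--Ryser--Chowla theorem: a projective plane of order $n\equiv 1$ or $2\pmod 4$ can exist only if $n$ is a sum of two integer squares, which removes an infinite family of non-prime-power candidates such as $n=6,14,21,22,\dots$. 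Thirdly, one brings in character sums: since every nonzero residue is uniquely a difference from $K$, one has $|\chi(K)|^2=n$ for every nontrivial character $\chi$ of $\ZZ/(n^2+n+1)\ZZ$, so $\chi(K)$ is an algebraic integer of absolute value $\sqrt n$ in a cyclotomic field, and Schmidt's field-descent method converts the (non-)existence of such integers into yet further exclusions. Taken together, these tools dispose of every small non-prime-power order; indeed the conjecture has been verified by computer for all $n$ below two million by Baumert and Gordon.

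The hard part --- and the reason the problem remains open --- is that none of these methods is uniform in $n$. The multiplier theorem yields, for each individual candidate order, a finite computation that happens to produce a contradiction, but no single congruence, inequality, or algebraic invariant is known to be sensitive to whether $n$ is a prime power; in particular nothing a priori excludes a composite $n$ all of whose prime divisors are `harmless' as multipliers. A genuine proof would seem to require either a new invariant of the integral group ring $\ZZ[\ZZ/(n^2+n+1)\ZZ]$ that detects the factorisation type of $n$, or a reduction of the weak conjecture to the strong one --- which would mean showing that a projective plane of composite order, should one exist, may be taken cyclic, something nobody knows how to do. For these reasons the statement is best recorded as an open problem accompanied by the partial progress above, rather than as a theorem.
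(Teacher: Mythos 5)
This statement is a conjecture, and the paper offers no proof of it; you correctly refrain from claiming one and instead record it as open, which is exactly the paper's treatment. Your preliminary observation that the weak conjecture follows from the strong one via the passage from a perfect difference set to a cyclic projective plane is precisely the remark made in the paper. The only factual slip in your survey is the range of the computational verification: the evidence cited in the paper reaches $2\cdot 10^{9}$, not two million.
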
 

In light of the above construction, the strong conjecture implies the weak one. 
However, there is much more computational evidence for the weak conjecture (reaching 
up to $2\cdot 10^{9}$, see~\cite{BG}). 
Peluse~\cite{Peluse} has recently obtained spectacular progress on the weak 
conjecture by proving that for every $N$ the number of all $n\le N$ such that 
a perfect difference set of order~$n$ exists is indeed $(1+o(1))N/\log N$. 
Her profound work combines biquadratic reciprocity, various sieve methods,
and difficult counting techniques for lattice points on hyperboloids.  

The relevance of projective planes to Moore graphs of even girth was apparently 
first understood by K\`arteszi~\cite{Kart}, who obtained one direction of the 
following result that we find in the PhD thesis of Singleton~\cites{Sing62, Sing66} 
(see also Longyear~\cite{Longyear}).

\begin{thm}[Singleton]\label{thm:2014}
	For every $d\ge 2$ there is a bijective correspondence between projective 
	planes of order $d$ and $(6, d+1)$-Moore graphs.  
\end{thm}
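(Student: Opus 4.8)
The plan is to exhibit the correspondence explicitly in both directions and then check that the two constructions are mutually inverse. First, to a projective plane $\pi$ of order $d$ I would associate its \emph{incidence graph} (Levi graph) $G=I(\pi)$, whose two vertex classes are the points and the lines of $\pi$ and in which a point is joined to a line exactly when the two are incident. Since through every point pass $d+1$ lines and every line carries $d+1$ points, $G$ is $(d+1)$-regular; since $\pi$ has $d^2+d+1$ points and equally many lines, $G$ has $2(d^2+d+1)=n_0(d+1,6)$ vertices. Being bipartite, $G$ has no cycle of length $3$ or $5$, and a $4$-cycle would exhibit two distinct points lying on two distinct common lines, contradicting axiom~(i); hence $\gth(G)\ge 6$ and $G$ is a $(d+1,6)$-Moore graph. (Axiom~(iii) supplies three noncollinear points $a,b,c$, and these together with the lines $ab$, $bc$, $ca$ form a $6$-cycle $a,ab,b,bc,c,ca$, so in fact $\gth(G)=6$, though only the inequality is needed here.)

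For the converse I would take an arbitrary $(d+1,6)$-Moore graph $G$ and reconstruct a plane. By Lemma~\ref{lem:bip} (applicable since $6$ is even and $d+1\ge 3$), $G$ is bipartite; being $(d+1)$-regular it has two vertex classes $X$ and $Y$ of equal size $\tfrac12 n_0(d+1,6)=d^2+d+1$, and, being connected (as noted after Theorem~\ref{thm:21}), it has only this one bipartition. The crucial step is a counting argument modelled on the proof of Theorem~\ref{thm:21}: fix $x\in X$ and run a breadth-first search. Its $d+1$ neighbours lie in $Y$, and because $\gth(G)\ge 6$ the sets of their neighbours other than $x$ are pairwise disjoint and disjoint from $\{x\}\cup N(x)$, so precisely $1+(d+1)d=d^2+d+1$ vertices of $X$ lie within distance $2$ of $x$; as $G$ is bipartite this accounts for all of $X$. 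Hence any two vertices of $X$ are at distance exactly $2$, i.e.\ have a common neighbour in $Y$, necessarily unique (two would yield a $4$-cycle), and by symmetry the same holds for $Y$. Declaring the elements of $X$ to be points, those of $Y$ to be lines, and adjacency to be incidence, these two facts are precisely axioms~(i) and~(ii); each line carries its $d+1$ neighbours, so lines have $d+1<d^2+d+1$ points and not all points are collinear, and for three noncollinear points $a,b,c$ the three lines $ab$, $bc$, $ca$ pairwise meet only in $a$, $b$, $c$ and so cover only $3(d+1)-3=3d<d^2+d+1$ points (this is where $d\ge 2$ enters), whence some fourth point lies off all three and axiom~(iii) holds. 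Thus $G$ yields a projective plane, of order $d$ since its lines have $d+1$ points.

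It remains to observe that the two maps are inverse to one another, which is immediate: $I(\pi)$ carries the point set of $\pi$ as one of its colour classes, and reading a plane off $I(\pi)$ with that class as the points returns $\pi$; conversely the plane built from $G$ has $G$ as its incidence graph. I expect the reverse direction to be the only real content, and within it the counting step: it is the combination of $\gth(G)\ge 6$ with the \emph{exact} equality $|V(G)|=n_0(d+1,6)$ that forces the breadth-first tree rooted at $x$ to exhaust the colour class of $x$, and this is what promotes the weak-looking hypothesis ``regular, bipartite, girth at least $6$, correct order'' to the full strength of the projective-plane axioms. One mild subtlety worth flagging is that the colour classes of a $(d+1,6)$-Moore graph form an unordered pair, so designating one of them as ``points'' is a choice; the two choices produce a plane and its dual, so the correspondence is bijective in the strict sense once one works with $2$-coloured Moore graphs (or identifies each plane with its dual).
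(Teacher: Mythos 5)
Your proposal is correct and follows the same route as the paper: the incidence (Levi) graph of the plane in one direction, and in the other direction an appeal to Lemma~\ref{lem:bip} followed by reading the bipartition classes as points and lines. You supply more detail than the paper does — in particular the breadth-first count showing that equality in the Moore bound forces any two vertices in one class to have a (necessarily unique) common neighbour, which is the existence half of axioms (i) and (ii) that the paper leaves implicit — and your remark that the unordered bipartition makes the correspondence one with planes-up-to-duality is a fair refinement of the informally stated bijection.
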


\begin{proof}
	Given a projective plane $\varUpsilon$ of order $d$ we construct a 
	bipartite $(d+1)$-regular graph~$B_\varUpsilon$ of girth at least $6$ with 
	$n_0(d+1, 6)=2(d^2+d+1)$ vertices as follows: The two vertex classes 
	of~$B_\varUpsilon$ are the sets of points and lines of $\varUpsilon$;
	edges are determined by incidence, i.e., a point~$p$ is joined to a line $\ell$
	by an edge of $B_\varUpsilon$ if and only if $\ell$ passes through $p$. 
	Notice that the absence of four-cycles in $B_\Upsilon$ follows from the fact 
	that two distinct lines cannot intersect in more than one point. 
		
	Now suppose, conversely, that a $(6, d+1)$-Moore graph is given. 
	Lemma~\ref{lem:bip} tells us that $G$ is bipartite and thus we can obtain 
	an incidence structure with points and lines by reversing the above construction. 
	The first two axioms of a projective plane follow from the fact that~$G$ contains 
	no four-cycles, and the non-degeneracy axiom can be derived from $d\ge 2$.
\end{proof}

As a little fun fact we point out that the $(3, 6)$-Moore graph derived in this 
way from the Fano plane, called the {\it Heawood graph}, corresponds to the 
well-known tiling of a torus with seven mutually touching hexagons (see 
Figure~\ref{fig23B}).
An alternative drawing of this graph is shown in Figure~\ref{fig21B}.
Concerning the set $A_6$ introduced in~\eqref{eq:1550} Theorem~\ref{thm:2014} yields
\[
	A_6=\{\text{orders of finite projective planes}\}\,,
\]
which illustrates the relevance of the strong prime power conjecture to 
algebraic and extremal graph theory.

Continuing with projective geometry, we recall that Veblen and Young~\cites{VY1, VY2}
define a {\it projective space} to be an incidence structure with points and lines 
satisfying the following four axioms: (i) any two distinct points determine a unique 
line; (ii) if $p$, $q$, $r$, $s$ are four distinct points such that the lines 
$pq$, $rs$ are distinct and intersect, then the lines $pr$ and $qs$ intersect as well; 
(iii) every line passes through at least three points; (iv) and there exist two 
non-intersecting lines. Generalising a construction mentioned earlier one can 
define for every field $F$ and every dimension $n\ge 3$ a projective space $P_n(F)$ 
whose points and lines are the one- and two-dimensional linear subspaces of $F^{n+1}$. 
In sharp contrast with the planar case, however, all finite projective spaces can be 
shown to be of this form. 
Roughly speaking this is because the availability of a third dimension allows us
to prove Desargues's theorem (see Figure~\ref{fig24}), which in turn means that 
coordinates from a skew field can be introduced. To conclude the argument one finally 
appeals to a theorem of Wedderburn~\cite{Wedderburn} (see also~\cites{Witt, Ted}), 
which asserts that all finite skew fields are commutative. 

	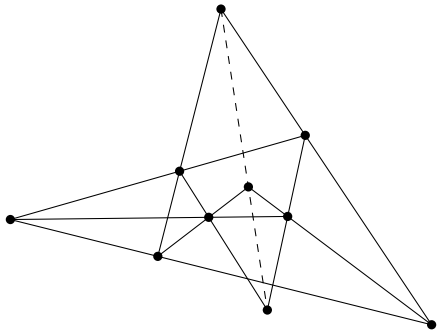
\begin{figure}[h!]
	\begin{tikzpicture}[scale=.7]
		
		\coordinate (a) at (0,0);
		\coordinate (b) at (8,-2);
		\coordinate (c) at (4,4);
		\coordinate (d) at ($(a)!.35!(b)$);
		\coordinate (e) at ($(b)!.6!(c)$);
		\coordinate (f) at ($(a)!.6!(b)$);
		\coordinate (g) at ($(c)!.65!(f)$);
		\coordinate (h) at ($(c)!1.1!(f)$);
		
		 \path[name path=line 1] (a) --(e);
		\path[name path=line 2] (c) -- (d);
		\path [name intersections={of=line 1 and line 2,by=K}];
		
		 \path[name path=line 3] (K) --(h);
		\path[name path=line 4] (g) -- (d);
		\path [name intersections={of=line 3 and line 4,by=L}];
		
		 \path[name path=line 5] (b) --(g);
		\path[name path=line 6] (e) -- (h);
		\path [name intersections={of=line 5 and line 6,by=M}];
		
		\foreach \i in {a,b,c,d,e,g,h,K,L,M} \fill (\i) circle (2.5pt);
					
		\draw (a)--(b)--(c)--(d);
		\draw (a)--(e);
		\draw [dashed] (c)--(h);
		\draw (d)--(g)--(b);
		\draw (e)--(h)--(K);
		\draw (a)--(M);

	\end{tikzpicture}
	\caption{Desargues's theorem states that the three points on the dashed line are collinear, provided that the nine triples on the solid lines are.}
	\label{fig24}
	\end{figure}
 
With respect to three-dimensional projective spaces $P_3(F)$ we need a few more 
concepts. For every nonzero vector $p=(p_1, p_2, p_3, p_4)\in F^4$ we denote the 
subspace of $F^4$ generated by $p$, which is a point of $P_3(F)$, 
by $[p_1, p_2, p_3, p_4]$.
Three-dimensional linear subspaces of $F^4$ are called the {\it planes} of $P_3(F)$.
With the standard scalar product in mind, we can represent planes in the form 
\[
	[p_1, p_2, p_3, p_4]^\perp
	=
	\Bigl\{[x_1, x_2, x_3, x_4]\in P_3(F)\colon \sum_{i=1}^4 p_ix_i=0\Bigr\}\,.
\]

A {\it polarity} of $P_3(F)$ is a bijective map $\pi$ from the points to the planes 
that reverses the incidence relation. That is, for any two points $p$, $q$ it is 
demanded that $p\subseteq \pi(q)$ holds if and only if $q\subseteq \pi(p)$. 
For instance, the map $p\longmapsto p^\perp$ is a polarity. A {\it null polarity}
is a polarity~$\pi$ with the additional property that $p\subseteq \pi(p)$ holds for 
every point $p$. This happens, for example, for the `symplectic' polarity 
$[p_1, p_2, p_3, p_4]\longmapsto [p_2, -p_1, p_4, -p_3]^\perp$.
We proceed with a result that is, again, from Singleton's PhD 
thesis~\cites{Sing62, Sing66}. The statement becomes more transparent when
we present $(d+1, 8)$-Moore graphs as bipartite graphs $(P, L, E)$ with 
vertex partition $P\dcup L$ and $E\subseteq P\times L$. By Lemma~\ref{lem:bip}
this causes no loss of generality. 

\begin{thm}[Singleton]\label{thm:2015}
	For every $d\ge 2$ there is a bijective correspondence between 
	$(d+1, 8)$-Moore graphs $(P, L, E)$ and pairs $(\Sigma, \nu)$ 
	consisting of a $3$-dimensional projective space of order~$d$ 
	and a null polarity $\nu$ of $\Sigma$.  
\end{thm}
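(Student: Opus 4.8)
The plan is to establish the two directions of the correspondence separately; the first is routine and the second carries all the weight. \emph{From a pair to a graph.} Given a $3$-dimensional projective space $\Sigma$ of order $d$ and a null polarity $\nu$ of it, I would call a line $\ell$ of $\Sigma$ \emph{absolute} if $\ell\subseteq\nu(p)$ holds for every point $p$ on $\ell$, and let $B_{(\Sigma,\nu)}$ be the bipartite graph whose vertex classes are the point set $P$ of $\Sigma$ and the set $L$ of absolute lines of $\Sigma$, a point $p$ being joined to an absolute line $\ell$ whenever $p$ lies on $\ell$. To see that $B_{(\Sigma,\nu)}$ is a $(d+1,8)$-Moore graph I would argue exactly as in the proof of Theorem~\ref{thm:2014}: the absence of $4$-cycles is immediate from the fact that two points span a unique line; the absence of $6$-cycles reduces to the elementary fact that the absolute lines inside a fixed plane of $\Sigma$ form a pencil through a single point (so there is no absolute triangle), which one reads off after coordinatising $\Sigma$ as $P_3(F)$ with $|F|=d$ --- legitimate by the classification of finite projective spaces recalled above --- and representing $\nu$ by a non-degenerate alternating form; a short computation in this symplectic picture shows that the absolute lines through a point $p$ are precisely the $d+1$ lines through $p$ lying in the plane $\nu(p)$, so $B_{(\Sigma,\nu)}$ is $(d+1)$-regular; and since $|P|=|L|=d^3+d^2+d+1$ the graph has exactly $n_0(d+1,8)$ vertices. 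Theorem~\ref{thm:21}, together with the remark that equality in the Moore bound forces regularity, then completes this direction.

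\emph{From a graph to a pair.} Starting from a $(d+1,8)$-Moore graph $(P,L,E)$, I would first record that it is $(d+1)$-regular by the remark after Theorem~\ref{thm:21} and bipartite with parts $P$ and $L$ by Lemma~\ref{lem:bip}, and that the conditions on girth, degree and order are together precisely the statement that $(P,L)$, with incidence given by adjacency, is a generalised quadrangle of order $(d,d)$. I would then set up the relevant closure operators: call $x,y\in P$ \emph{perpendicular} when they are adjacent, write $p^{\perp}$ for the set consisting of $p$ and all points perpendicular to it, and put $\{x,y\}^{\perp}=x^{\perp}\cap y^{\perp}$ and $\{x,y\}^{\perp\perp}=\bigcap_{z\in\{x,y\}^{\perp}}z^{\perp}$. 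A direct count gives $|p^{\perp}|=1+(d+1)d=d^2+d+1$ for every $p$, and the symmetry $q\in p^{\perp}\iff p\in q^{\perp}$ shows that $p\longmapsto p^{\perp}$ will be an incidence-reversing involution with $p\in p^{\perp}$ always --- hence a candidate \emph{null} polarity --- once the sets $p^{\perp}$ have been identified as planes.

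The heart of the argument, and the step I expect to be the main obstacle, is to prove that the quadrangle has all points \emph{regular}, i.e.\ that for all non-perpendicular $x,y\in P$ the set $\{x,y\}^{\perp\perp}$ has exactly $d+1$ elements and is determined by any two of its members. (The weaker fact $|\{x,y\}^{\perp}|=d+1$ holds in every generalised quadrangle of order $(d,d)$; it is the corresponding statement for $\{x,y\}^{\perp\perp}$ that fails in general and that here has to be forced out of the tightness of the Moore bound, presumably by a careful double count inside the quadrangle.) Granting regularity, the remaining work is bookkeeping. I would declare the \emph{lines} of $\Sigma$ to be the members of $L$ together with the \emph{hyperbolic lines} $\{x,y\}^{\perp\perp}$ (for non-perpendicular $x,y$), and the \emph{points} to be the members of $P$; then two points lie on a unique common line --- the quadrangle line if they are perpendicular, the hyperbolic line through them otherwise --- every line carries $d+1\ge 3$ points, two disjoint absolute lines exist, and the Veblen--Young exchange axiom follows from a position argument combining regularity with the quadrangle axioms, so $\Sigma$ is a projective space. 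Counting points and points-per-line then forces $\Sigma$ to be $3$-dimensional of order $d$, and $\nu\colon p\longmapsto p^{\perp}$ is a null polarity of $\Sigma$ whose absolute lines are exactly the members of $L$. Finally, checking that the two assignments are mutually inverse is immediate once one observes that the hyperbolic lines of $B_{(\Sigma,\nu)}$ are precisely the non-absolute lines of $\Sigma$, a one-line computation in the symplectic picture.
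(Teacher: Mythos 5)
Your two constructions coincide with the paper's: in the direction from $(\Sigma,\nu)$ to a graph your \emph{absolute} lines are exactly the paper's \emph{special} lines, and in the converse direction your map $p\longmapsto p^{\perp}$ is exactly the paper's polar plane $\nu(p)=\{p'\colon d(p,p')\le 2\}$ (the paper then declares lines to be intersections of two such planes, which, once unwound, are precisely your traces $\{x,y\}^{\perp}$ and spans $\{x,y\}^{\perp\perp}$). The first direction of your argument is correct and supplies the verifications the paper omits. In the second direction you have put your finger on exactly the step the paper hides behind ``It can be shown'', namely the regularity of all points of the associated generalised quadrangle; that diagnosis is right, and it is also what makes the paper's ``intersection of two planes'' well defined.

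The gap is in how you propose to close that step. A $(d+1,8)$-Moore graph carries no information beyond being the incidence graph of a generalised quadrangle of order $(d,d)$: the conditions ``bipartite, girth $\ge 8$, degree $d+1$, $n_0(d+1,8)$ vertices'' \emph{are} the quadrangle axioms, so there is no residual ``tightness of the Moore bound'' from which a double count could extract anything further. And regularity of the designated point class is genuinely not a consequence of those axioms: for every odd prime power $d$ the quadrangle $Q(4,d)$, the point--line dual of the symplectic quadrangle $W(d)$, has order $(d,d)$ and no regular points whatsoever, yet its incidence graph with its natural bipartition is a perfectly good $(d+1,8)$-Moore graph $(P,L,E)$. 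So no counting argument can prove what you need for the class $P$ as given; at the very least one must be permitted to interchange the roles of $P$ and $L$, and to get the bijective correspondence in the strong form stated one in effect needs to know that every generalised quadrangle of order $(d,d)$ is $W(d)$ or its dual. That is the real content buried in this theorem, and neither your sketch nor the paper's discharges it.
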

     
\begin{proof}
	Suppose first that some $(d+1, 8)$-Moore graph $G=(P, L, E)$ is given. 
	For every point $p\in P$ we call $\nu(p)=\{p'\in P\colon d(p, p')\le 2\}$
	its {\it polar plane}. By a {\it line} we mean an intersection of two planes.
	It can be shown that the points and lines form a 
	$3$-dimensional projective space $\Sigma$ of order $d+1$ and that the 
	map $p\longmapsto \nu(p)$ is a null polarity of $\Sigma$. 
		
	In the converse direction, let $\nu$ be a null polarity of a $3$-dimensional 
	space $\Sigma$ of order $d+1$. Call a line $\ell$ {\it special} if for every 
	point $p$ on $\ell$ the plane $\nu(p)$ contains $\ell$. The incidence graph 
	between the points of $\Sigma$ and the special lines is 
	the desired $(d+1, 8)$-Moore graph. 
\end{proof}

Thus we are in the curious situation that while nobody can decide whether $12\in A_6$
is true or not, the set $A_8$ has been described explicitly as 
\[ 
	A_8=\{\text{prime powers}\}\,.
\]

The available results on $A_{12}$ are by far less complete. Benson~\cite{Benson} proved 
that $A_{12}$ contains all prime powers, but so far no analogue of Theorem~\ref{thm:2014} and Theorem~\ref{thm:2015} is known.

\begin{prob}
	Classify $(d+1, 12)$-Moore graphs in terms of projective geometry.
\end{prob}

It is also unknown whether Benson's result $A_{12}\supseteq\{\text{prime powers}\}$
holds with equality. 

\begin{quest}
	Does there exist a $(d+1, 12)$-Moore graph such that $d$ is not a prime power?
\end{quest}

Inspired by Peluse's asymptotic prime power theorem one can
also ask whether the number of all $d\le N$ such that some $(d+1, 12)$-Moore 
graph exists is $(1+o(1))N/\log N$.

\subsection{Cages and upper bounds}\label{subsec:cages}
Having thus seen that there are many pairs $(d, g)$ for which the Moore bound 
fails to be sharp, one may wish to study the following objects.

\begin{dfn}
	Given two integers $d\ge 2$ and $g\ge 3$ a {\it $(d, g)$-cage} is a $d$-regular 
	graph~$G$ with $\gth(G)\ge g$ which has as few vertices as possible. We shall 
	write $f(d, g)$ for this minimal number of vertices.   
\end{dfn}

A dynamic survey on cages is maintained by Exoo and Jajcay~\cite{EJS}.
The existence of cages, that is the fact that for every $d\ge 2$ there are
$d$-regular graphs of arbitrarily large girth, was first established by 
Sachs~\cite{Sachs}, who then informed Erd\H{o}s that the upper bound 
on~$f(d, g)$ his argument would yield seemed very weak to him\footnote[1]{It should
be pointed out, however, that the focus of~\cite{Sachs} is not so much on bounding 
the function $f(d, g)$ efficiently, but rather on constructing $d$-regular graphs
of large girth with additional structural properties, such as Hamiltonicity and 
the existence of certain kinds of factorisations.}. 
In subsequent joint work of Erd\H{o}s and Sachs~\cite{ES63} the following bound 
was produced. 

\begin{thm}[Erd\H{o}s \& Sachs]
	If $d\ge 2$ and $g\ge 3$, then $f(d, g)\le 4\sum_{i=0}^{g-2}(d-1)^i$.
\end{thm}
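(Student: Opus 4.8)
The plan is to give a greedy/extremal argument: start from a graph that is already $d$-regular but possibly has small girth, and repair it locally while controlling the number of vertices. Concretely, I would fix $d\ge 2$, $g\ge 3$, and among all $d$-regular graphs $G$ with $\gth(G)\ge g$ and at most $N:=4\sum_{i=0}^{g-2}(d-1)^i$ vertices, choose one that is "as girth-free as possible" in a suitable sense; then argue that if it actually has girth exactly $g$ we are done, and otherwise derive a contradiction. Since existence of \emph{some} $d$-regular graph of girth $\ge g$ is already known (Sachs), the only content is the bound on $|V(G)|$, so the extremal set is nonempty.

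First I would set up the right minimising quantity. A clean choice: among all graphs $G$ with $\delta(G)\ge d$, $\gth(G)\ge g$, and $|V(G)|\le N$, take one minimising the number of vertices, and among those one minimising the number of vertices of degree $>d$ — or, in the Erd\H{o}s–Sachs style, work with graphs that are $d$-regular except possibly at a bounded number of "defect" vertices, and minimise the total defect. The key local move is: if $G$ has girth $\ge g$ but is \emph{not} $d$-regular (it has two vertices of degree $<d$, possibly after a parity fix with one extra vertex), pick two vertices $u,v$ of degree $<d$ that are at distance $\ge g-1$ from each other and add the edge $uv$; this keeps the girth $\ge g$ and reduces the defect. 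If no such far-apart pair of low-degree vertices exists, then all low-degree vertices lie within a ball of radius $g-2$ around some low-degree vertex. The point of the Moore-type counting (the argument behind Theorem~\ref{thm:21}) is that such a ball has at most roughly $n_0(d,g)\le N$ vertices; then one shows that a disjoint \emph{new} copy of a tree, or a second component already of girth $\ge g$, can be attached or that the configuration is impossible below $N$ vertices — the precise bookkeeping is what produces the constant $4$ rather than $2$ in front of $\sum_{i=0}^{g-2}(d-1)^i$.

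The cleanest route to the factor $4$: run two balls at once. Whenever the current graph is not $d$-regular, either we can add an edge between two far low-degree vertices (decreasing defect at no vertex cost), or all low-degree vertices are confined to two balls of radius $\le g-2$, whose vertex counts are each at most $\sum_{i=0}^{g-2}(d-1)^i\cdot(\text{something})$; adding up the contributions of distance layers $0,1,\dots,g-2$ from each of two "centres" (an edge's two endpoints, as in Figure~\ref{fig21B}) and allowing one auxiliary parity vertex yields the stated bound $4\sum_{i=0}^{g-2}(d-1)^i$. Then, once $G$ is $d$-regular with girth $\ge g$ and $|V(G)|\le N$, we are done — and if the girth came out strictly larger than $g$ that is even better for the inequality, since we only claimed $f(d,g)\le N$.

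\textbf{Main obstacle.} The delicate point is the parity/degree-sequence bookkeeping: when we add an edge between two low-degree vertices we must make sure such a pair at distance $\ge g-1$ genuinely exists unless the low-degree vertices are trapped in small balls, and we must handle the case where only \emph{one} low-degree vertex remains (a parity obstruction), which forces either adjoining a fresh small gadget of girth $\ge g$ or a local surgery (delete an edge far away, reroute). Making these surgeries preserve girth $\ge g$ \emph{and} keep the vertex count below $4\sum_{i=0}^{g-2}(d-1)^i$ simultaneously — rather than below, say, a much larger bound — is exactly where the Erd\H{o}s–Sachs argument earns its constant, and I expect that to be the step requiring real care rather than the rest, which is routine ball-counting as in the proof of the Moore bound.
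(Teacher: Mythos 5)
Your overall architecture (an extremal choice followed by Moore-type ball counting, with the factor $4$ coming from two balls of radius $g-2$) points in the right direction, but there are two concrete gaps. First, the non-emptiness of your extremal class does not follow from Sachs's existence theorem: that theorem provides \emph{some} $d$-regular graph of girth $\ge g$ with no control on its order, so it need not have at most $N=4\sum_{i=0}^{g-2}(d-1)^i$ vertices and hence need not belong to the class you are optimising over. The paper resolves this by an induction on $d$: fixing an even $n\ge 4h(d)$ with $h(d)=\sum_{i=0}^{g-2}(d-1)^i$, it considers all $n$-vertex graphs with every degree in $\{d-1,d\}$ and girth at least $g$, and this class is non-empty because the induction hypothesis supplies a $(d-1)$-regular graph on exactly $n$ vertices (note $n\ge 4h(d)\ge 4h(d-1)$). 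The number of vertices is then held fixed throughout and one \emph{maximises the number of edges}, rather than minimising vertices or ``defect''.

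Second, and more seriously, you are missing the switching surgery that drives the whole argument. Your repair move only covers the case where two deficient vertices are far apart (add the edge between them); when the deficient vertices $x,y$ are trapped near each other you propose to attach a disjoint tree or a second component, or to declare the configuration impossible --- the former increases the vertex count (and cannot yield a $d$-regular graph anyway), and the latter is not argued. The actual step is: the set $Z$ of vertices at distance at least $g-1$ from both $x$ and $y$ has size at least $n-2h(d)\ge n/2$; by edge-maximality every $z\in Z$ has degree $d$ (else one could add $xz$); an edge count then forces $Z$ to span an edge $x'y'$; and the graph $G+xx'+yy'-x'y'$ still has all degrees in $\{d-1,d\}$ and girth at least $g$ but one more edge, contradicting maximality. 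Your parenthetical ``delete an edge far away, reroute'' gestures at this, but it is the main engine of the proof for the generic case of two deficient vertices, not a patch for a parity corner case, and without it the argument does not close.
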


\begin{proof}
	Fix $g$ and put $h(d)=\sum_{i=0}^{g-2}(d-1)^i$ for every $d\ge 2$. 
	We want to show the following statement by induction on $d$.
	\begin{quotation}
		For every even $n\ge 4h(d)$ there is a $d$-regular graph on $n$ vertices 
		whose girth is at least $g$.
	\end{quotation}
	
	In the base case, $d=2$, this is exemplified by the even cycle $C_n$, 
	because $4h(2)=4(g-1)\ge g$. Now suppose $d\ge 3$, that $n\ge 4h(d)$ is even, 
	and that the above statement holds for $d-1$ in place of $d$. Consider the 
	class $\ccA$ of all $n$-vertex graphs $G$ such that 
		\begin{enumerate}[label=\nlabel]
		\item\label{it:231} all vertices of $G$ have degree $d-1$ or $d$;
		\item\label{it:232} and $\gth(G)\ge g$.
	\end{enumerate}
		
	The induction hypothesis implies $\ccA\ne\varnothing$. Thus we can pick a 
	graph $G\in \ccA$ with the maximal number of edges. If $G$ is $d$-regular we are 
	done, so assume from now on that this is not the case. For parity reasons, this 
	implies that $G$ has two distinct vertices $x$, $y$ of degree~$d-1$. As in the 
	proof of the Moore bound at most $h(d)$ vertices  
	have distance at most~$g-2$ from $x$, and the same holds for $y$, too. Thus the 
	set $Z$ of all vertices that have distance at least $g-1$ from both $x$, $y$
	satisfies $|Z|\ge n-2h(d)\ge n/2$ (see Figure~\ref{fig25a}). 
	Each vertex~${z\in Z}$ has degree~$d$, since 
	otherwise we could simply add the edge~$xz$ without creating a cycle 
	violating~\ref{it:232}, contrary to the maximality of~$e(G)$. 
	
	By counting the edges between $Z$ and the rest of $G$ we see that $Z$ cannot be 
	independent. Let $x'y'$ be an arbitrary edge connecting two vertices in $Z$. 
	The graph $G'$ obtained from $G$ by adding the edges $xx'$, $yy'$ and 
	deleting~$x'y'$ can be shown 
	to contradict the maximality of~$e(G)$ (see Figure~\ref{fig25b}). 
\end{proof}

	\begin{figure}[h!]
		\begin{subfigure}[b]{.45\textwidth}
			\centering
			\begin{tikzpicture}
				\def\ha{.45}
				\def\hb{.4}
				\def\hc{.3}
				\def\da{.7}
				\def\db{.35}
				\def\dc{.19}
				
				\draw (-2.5,-.7) rectangle (2.5,.3);
				\coordinate (x1) at (-1.05,-.2);
				\coordinate (y1) at (1.05,-.2);
				\coordinate (x) at (-1.05,-2.2);
				\coordinate (y) at (1.05,-2.2);
				\coordinate (a1) at ($(x)+(-\da,\ha)$);
					\coordinate (a2) at ($(x)+(\da,\ha)$);
						\coordinate (a3) at ($(y)+(-\da,\ha)$);
							\coordinate (a4) at ($(y)+(\da,\ha)$);
				\coordinate (b1) at ($(a1)+(-\db,\hb)$);
					\coordinate (b2) at ($(a1)+(\db,\hb)$);
						\coordinate (b3) at ($(a2)+(-\db,\hb)$);
							\coordinate (b4) at ($(a2)+(\db,\hb)$);
							\coordinate (b5) at ($(a3)+(-\db,\hb)$);
							\coordinate (b6) at ($(a3)+(\db,\hb)$);
							\coordinate (b7) at ($(a4)+(-\db,\hb)$);
							\coordinate (b8) at ($(a4)+(\db,\hb)$);
							
							\coordinate (c1) at ($(b1)+(-\dc,\hc)$);
							\coordinate (c2) at ($(b1)+(\dc,\hc)$);
							\coordinate (c3) at ($(b2)+(-\dc,\hc)$);
							\coordinate (c4) at ($(b2)+(\dc,\hc)$);
							\coordinate (c5) at ($(b3)+(-\dc,\hc)$);
							\coordinate (c6) at ($(b3)+(\dc,\hc)$);
							\coordinate (c7) at ($(b4)+(-\dc,\hc)$);
							\coordinate (c8) at ($(b4)+(\dc,\hc)$);
							\coordinate (c9) at ($(b5)+(-\dc,\hc)$);
							\coordinate (c10) at ($(b5)+(\dc,\hc)$);
							\coordinate (c11) at ($(b6)+(-\dc,\hc)$);
							\coordinate (c12) at ($(b6)+(\dc,\hc)$);
							\coordinate (c13) at ($(b7)+(-\dc,\hc)$);
							\coordinate (c14) at ($(b7)+(\dc,\hc)$);
							\coordinate (c15) at ($(b8)+(-\dc,\hc)$);
							\coordinate (c16) at ($(b8)+(\dc,\hc)$);
				
				\draw (x1)-- (y1);
				
				\draw (x)--(a1)--(b1)--(c1)--(b1)--(c2);
				\draw (a1) --(b2)--(c3)--(b2)--(c4);
				\draw (x)--(a2)--(b3)--(c5)--(b3)--(c6);
				\draw (a2) --(b4)--(c7)--(b4)--(c8);
				
				\draw (y)--(a3)--(b5)--(c9)--(b5)--(c10);
				\draw (a3) --(b6)--(c11)--(b6)--(c12);
				\draw (y)--(a4)--(b7)--(c13)--(b7)--(c14);
				\draw (a4) --(b8)--(c15)--(b8)--(c16);

		\draw [rounded corners=10] (-1.05,-2.4)--(-1.7,-2)--(-2.55,-.95)--(.45,-.95)--(-.4,-2)--cycle;
		\draw [rounded corners=10] (1.05,-2.4)--(1.7,-2)--(2.55,-.97)--(-.45,-.97)--(.4,-2)--cycle;

				\foreach \i in {x,y,x1,y1} \fill (\i) circle (1.5pt);
				\node [left] at (x1) {$x'$};
				\node [right] at (y1) {$y'$};
				\node [below] at (x) {$x$};
				\node [below] at (y) {$y$};
				\node at (-2.9,-.2) {\large $Z$};
				
				\foreach \i in {1,...,16} \fill (c\i) circle (1pt);
					\foreach \i in {1,...,8} \fill (b\i) circle (1pt);
						\foreach \i in {1,...,4} \fill (a\i) circle (1pt);
				
			\end{tikzpicture}
		\caption{The set $Z$}
		\label{fig25a}
		\end{subfigure}
	\hfill
	\begin{subfigure}[b]{.45\textwidth}
		\centering
		\begin{tikzpicture}
				\draw (-2.5,-.7) rectangle (2.5,.3);
			\draw (x1)-- (x);
			\draw (y1)-- (y);
			\foreach \i in {x,y,x1,y1} \fill (\i) circle (1.5pt);
			
							\draw [rounded corners=10] (1.05,-2.4)--(1.7,-2)--(2.55,-.98)--(-.45,-.98)--(.4,-2)--cycle;
					\draw [rounded corners=10] (-1.05,-2.4)--(-1.7,-2)--(-2.55,-.95)--(.45,-.95)--(-.4,-2)--cycle;
			
			\node [left] at (x1) {$x'$};
			\node [right] at (y1) {$y'$};
			\node [below] at (x) {$x$};
			\node [below] at (y) {$y$};

		\end{tikzpicture}
	\caption{The graph $G'$}
	\label{fig25b}
	\end{subfigure}
	\caption{Proof of the Erd\H os-Sachs theorem.}
	\label{fig:25}
	\end{figure}
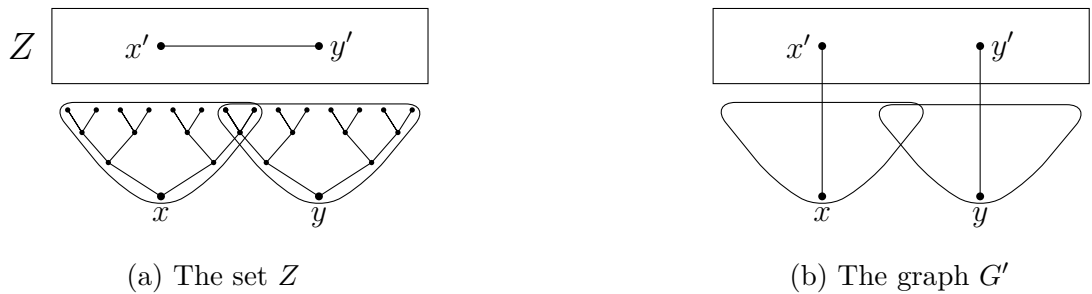
 
In the regime $d, g\to\infty$ the Moore bound and the Erd\H{o}s-Sachs theorem
yield the asymptotic relations
\[
	d^{(1/2+o(1))g} 
	\le
	f(d, g)
	\le 
	d^{(1+o(1))g}\,.
\]

The upper bound cannot be improved by a straightforward probabilistic attempt.
However, Lubotzky, Phillips, and Sarnak~\cite{LPS88} discovered an explicit number 
theoretic construction leading to the 
superior bound 
\begin{equation}\label{eq:34}
	f(d, g)
	\le 
	d^{(3/4+o(1))g}\,.
\end{equation}

Before describing their graphs we agree on some notation and terminology. 
We say that a subset $S$ of a (finite or infinite) group $\Gamma$ is {\it symmetric}
if $S^{-1}=S$, i.e., if $S$ is closed under taking inverses. When we have this 
situation and $1\not\in S$, then the {\it Cayley graph} $\Cayley(\Gamma, S)$ 
is defined to be the graph on $\Gamma$ with all edges of the form $\{g, gs\}$, 
where $g\in G$ and $s\in S$. 
Roughly speaking, the girth of this graph is large if the members of $S$ satisfy 
no `short' nontrivial relation. For instance, if $S$ contains two distinct elements 
$a$ and $b$ which commute but are not inverse to each other, then through every 
vertex $x$ there passes a four-cycle $x-xa-xab-xb$. At the other extreme, if $\Gamma$
is freely generated by a set $T$, then $\Cayley(\Gamma, T\cup T^{-1})$ is a tree 
all of whose vertices have degree $2|T|$. The graphs of Lubotzky, Phillips, and 
Sarnak can be viewed as `finite quotients' of this example. 

Concerning their underlying groups, we recall that for every field $F$ 
the {\it general linear group}~$\GL(2, F)$ consists of all invertible 
$(2\times 2)$-matrices with entries from $F$. Its centre is the group of
non-zero scalar multiples of the identity matrix; the quotient  
of $\GL(2, F)$ modulo its centre is called the {\it projective linear group} 
$\PGL(2, F)$. On this group determinants are only well-defined up to 
multiplication by squares in $F^\times$. Thus if $|F|$ is an odd integer, 
which we shall assume from now on, then $\PGL(2, F)$ has a subgroup of 
index $2$ consisting of all cosets containing a representative whose determinant 
is $1$. It is called the {\it projective special linear group} and denoted 
by $\PSL(2, F)$. One confirms easily that $|\PSL(2, F)|=\frac12 |F|(|F|^2-1)$.

We proceed with some considerations that will eventually lead us to the 
generating set $S$ of the Cayley graph we wish to define. Fix a prime 
number~$p$ such that $p\equiv 1\pmod{4}$. A result due to Jacobi~\cite{Jacobi}*{\S66}
(see also~\cite{HW}*{Theorem~386}) informs us that there are $8(p+1)$ quadruples 
of integers whose 
squares sum up to $p$. Hence there are $p+1$ quadruples $(a, b, c, d)$ such that 
$a$ is a positive odd integer, $b$, $c$, $d$ are even integers, 
and $p=a^2+b^2+c^2+d^2$. Let us now write
\[
	\HH_\ZZ=\{a+bi+cj+dk\colon a, b, c, d\in \ZZ\}
\]
for the ring of {\it integer quaternions}. Our $p+1$ integer quadruples correspond
to a set $W_p\subseteq \HH_\ZZ$ of $p+1$ quaternions with norm $p$; they come 
in $(p+1)/2$ conjugate pairs. We shall require the following easy fact 
from quaternion arithmetic a proof of which is sketched in~\cite{LPS88}*{Lemma~3.1}.

\begin{fact}\label{f:H}
	If $\alpha_1, \dots, \alpha_t\in W_p$ and the product $\alpha_1\cdots\alpha_t$
	is divisible\footnote[1]{Since $p$ is in the centre of $\HH_\ZZ$, there is no 
	need to distinguish left- and right divisibility here.} by $p$, then there is 
	some $i\in [t-1]$ such that $\alpha_i$ and $\alpha_{i+1}$ are conjugates. \qed
\end{fact}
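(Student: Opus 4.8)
The plan is to reduce the whole problem modulo $p$. Since $p$ is an odd prime, the quotient ring $\HH_\ZZ/p\HH_\ZZ$ is a four-dimensional $\FF_p$-algebra which, there being no quaternion division algebra over a finite field to obstruct it (Wedderburn), is isomorphic to the matrix ring $M_2(\FF_p)$. Fix such an isomorphism and let $\phi\colon\HH_\ZZ\to M_2(\FF_p)$ be the resulting reduction homomorphism, with kernel $p\HH_\ZZ$; under $\phi$ the quaternion norm $N$ becomes the determinant and the quaternion trace $T(\xi)=\xi+\bar\xi$ becomes the matrix trace. The hypothesis that $p$ divides $\alpha_1\cdots\alpha_t$ is then exactly the statement $\phi(\alpha_1)\cdots\phi(\alpha_t)=0$ in $M_2(\FF_p)$, and what we must produce is an index $i$ with $\phi(\alpha_i)\phi(\alpha_{i+1})=0$, which will turn out to force $\alpha_{i+1}=\bar\alpha_i$.

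First I would pin down the matrices $\phi(\alpha)$ for $\alpha=a+bi+cj+dk\in W_p$. Their determinant is $N(\alpha)=p\equiv 0$, while their trace is $2a$, which is nonzero since $1\le a\le\sqrt p<p$ and $p$ is odd; hence $\phi(\alpha)$ has rank exactly $1$, say $\phi(\alpha)=u_\alpha x_\alpha^\top$ for suitable nonzero $u_\alpha\in\FF_p^2$ (spanning its image) and row vector $x_\alpha^\top$. From $\alpha\bar\alpha=N(\alpha)=p$ we get $\phi(\alpha)\phi(\bar\alpha)=0$, and comparing the one-dimensional image of $\phi(\bar\alpha)$ with the one-dimensional kernel of $\phi(\alpha)$ this forces $\mathrm{Im}\,\phi(\bar\alpha)=\ker\phi(\alpha)$, and symmetrically.

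The core step is the claim that for $\alpha,\beta\in W_p$ one has $\phi(\alpha)\phi(\beta)=0$ precisely when $\beta=\bar\alpha$. One direction is the previous paragraph. For the converse, $\phi(\alpha)\phi(\beta)=0$ forces $\mathrm{Im}\,\phi(\beta)=\ker\phi(\alpha)=\mathrm{Im}\,\phi(\bar\alpha)$, so after rescaling we may take $u_\beta=u_{\bar\alpha}$, whence $\phi(\beta-\bar\alpha)=u_\beta(x_\beta-x_{\bar\alpha})^\top$ has rank at most $1$; thus $p\mid N(\beta-\bar\alpha)=2p-T(\beta\alpha)$, i.e.\ $p\mid T(\beta\alpha)$. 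Now write $\xi=\beta\alpha$, so $N(\xi)=N(\beta)N(\alpha)=p^2$, and combine two elementary facts: first $|T(\xi)|=2|\mathrm{Re}\,\xi|\le 2\sqrt{N(\xi)}=2p$; second, the defining parity of $W_p$ — real part odd, the other three coordinates even — makes $\mathrm{Re}(\beta\alpha)=a_\beta a_\alpha-b_\beta b_\alpha-c_\beta c_\alpha-d_\beta d_\alpha$ odd, so $T(\xi)\equiv 2\pmod 4$. Together with $p\mid T(\xi)$ and $p$ odd this leaves only $T(\xi)=\pm 2p$. If $T(\xi)=2p$ then $N(\xi-p)=p^2-pT(\xi)+p^2=0$, so $\beta\alpha=p$ and $\beta=p\alpha^{-1}=\bar\alpha$; if $T(\xi)=-2p$ then likewise $\beta\alpha=-p$, forcing $\mathrm{Re}\,\beta=-\mathrm{Re}\,\alpha<0$, which contradicts $\beta\in W_p$. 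This proves the claim.

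With the claim in hand the Fact follows by a telescoping identity: writing $\phi(\alpha_i)=u_ix_i^\top$ for each $i$, one has $\phi(\alpha_1)\cdots\phi(\alpha_t)=\bigl(\prod_{i=1}^{t-1}x_i^\top u_{i+1}\bigr)\,u_1x_t^\top$, and since $u_1x_t^\top\ne 0$ the vanishing of the left side forces $x_i^\top u_{i+1}=0$ for some $i\in[t-1]$; but that equality is exactly $\phi(\alpha_i)\phi(\alpha_{i+1})=0$, and the claim gives $\alpha_{i+1}=\bar\alpha_i$, so $\alpha_i$ and $\alpha_{i+1}$ are conjugate. I expect the main obstacle to be the claim — concretely, the passage from "$\phi(\alpha)$ and $\phi(\bar\alpha)$ cut out the same line in $\PP^1(\FF_p)$" to the rigid conclusion $\beta=\bar\alpha$: it is here, and essentially only here, that the arithmetic normalisation of $W_p$ (the choice of one associate representative per class, encoded in the parity conditions) must be used, through the congruence $T(\beta\alpha)\equiv 2\pmod 4$ that eliminates the otherwise admissible value $T(\beta\alpha)=0$.
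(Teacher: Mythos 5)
The paper itself gives no proof of this fact: it carries a terminal \qed{} and a pointer to \cite{LPS88}*{Lemma 3.1}, whose sketch runs through the arithmetic of the quaternion order --- unique factorisation of quaternions of norm $p^t$ into elements of norm $p$, against which divisibility by $p=\alpha\overline{\alpha}$ forces an adjacent conjugate pair. Your argument is correct and complete, and it takes a genuinely different, more linear-algebraic route. Reducing modulo $p$ turns each $\alpha\in W_p$ into a rank-one matrix (determinant $p\equiv 0$, trace $2a\not\equiv 0$ since $0<2a\le 2\sqrt p<p$); the telescoping identity for products of rank-one matrices localises the vanishing of $\phi(\alpha_1)\cdots\phi(\alpha_t)$ to a single scalar $x_i^\top u_{i+1}$, hence to a single consecutive pair; and the rigidity step $\phi(\alpha)\phi(\beta)=0\Rightarrow\beta=\overline{\alpha}$ is settled by combining $p\mid T(\beta\alpha)$, the bound $|T(\beta\alpha)|\le 2\sqrt{N(\beta\alpha)}=2p$, positive definiteness of the norm form, and the parity normalisation of $W_p$, which is exactly what excludes $T(\beta\alpha)=0$ and the sign ambiguity $\beta=-\overline{\alpha}$. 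What your approach buys is self-containedness --- only Wedderburn's little theorem (or, better, the explicit representation \eqref{eq:0149} read modulo $p$, available since $p\equiv1\pmod 4$) and elementary facts about the norm are needed, with no appeal to noncommutative unique factorisation --- and it isolates precisely where the choice of associate representatives in $W_p$ enters; it is, in effect, the statement that the walk on the $(p+1)$-regular tree of $\PGL(2,\QQ_p)$ determined by $\alpha_1,\dots,\alpha_t$ must backtrack, made explicit. Two cosmetic remarks: the identification of norm with determinant and quaternion trace with matrix trace is cleanest via the explicit matrix model rather than an abstract isomorphism, and in the final case the conclusion $\beta=-\overline{\alpha}$ gives $\mathrm{Re}\,\beta=-\mathrm{Re}\,\overline{\alpha}=-\mathrm{Re}\,\alpha$, as you intend.
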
   

Intuitively speaking, this means that $W_p$ behaves like $T\cup T^{-1}$, where $T$
freely generates a group, and conjugation corresponds to taking inverses. In order 
to build a Cayley graph from this situation, we recall that the quaternion algebra 
has a two-dimensional complex representation. In particular, non-zero 
quaternions $a+bi+cj+ck$ multiply in the same way as matrices 
\begin{equation}\label{eq:0041}
	\begin{pmatrix}
		\phantom{-}a+bi & c+di \\
		-c+di & a-bi
	\end{pmatrix}
	\in \GL(2, \CC)\,.
\end{equation}

As we are aiming for a finite structure, we shall take another prime number $q\ne p$ 
and work with the finite field $\FF_q=\ZZ/q\ZZ$ as opposed to~$\CC$. Moreover, we 
demand $q\equiv 1\pmod{4}$, because then there exists an integer $f$ such that 
$f^2+1$ is divisible by $q$. Thus $f$ can play the r\^{o}le of $i$ in~\eqref{eq:0041}.
Let us write $S_{p,q}\subseteq\GL(2, \FF_q)$ for the image of $W_p$ under the map 
\begin{equation}\label{eq:0149}
	a+bi+cj+dk\longmapsto
		\begin{pmatrix}
		\phantom{-}a+bf & c+df \\
		-c+df & a-bf
	\end{pmatrix}
\end{equation}
and $\olS_{p, q}$ for the corresponding set in $\PGL(2, \FF_q)$. 
Clearly the matrices in $S_{p,q}$ have determinant~$p$. Moreover, 
conjugate quaternions $\alpha, \overline{\alpha}\in W_p$ represent 
inverse cosets in $\PGL(2, \FF_q)$. So $\olS_{p, q}$ is a 
symmetric subset of $\PGL(2, \FF_q)$ and one checks easily that
\[
	|\olS_{p, q}|=|S_{p, q}|=|W_p|=p+1\,.
\]

\begin{thm}[Lubotzky, Phillips \& Sarnak]\label{thm:lps}
	Let $p$ and $q$ be distinct primes such that $p$
	is a quadratic nonresidue modulo $q$ and $p, q\equiv 1\pmod{4}$. 
	If $t\ge 2$ denotes a further integer 
	such that $q^4>4p^t$, then the girth of the Cayley graph 
		\[
		G_{p, q}=\Cayley(\PGL(2, \FF_q), \olS_{p, q})
	\]
		exceeds $t$. Moreover, $G_{p, q}$ is bipartite. 
\end{thm}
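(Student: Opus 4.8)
The two assertions are handled by quite different means, and it is convenient to settle the bipartiteness first, since it removes all odd cycles and shortens the girth estimate.

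\emph{Bipartiteness.} Recall from the discussion preceding the theorem that $\PSL(2,\FF_q)$ is a subgroup of index $2$ in $\PGL(2,\FF_q)$, namely the set of cosets having a determinant-one representative; equivalently it is the kernel of the homomorphism $\PGL(2,\FF_q)\to\FF_q^\times/(\FF_q^\times)^2$ induced by the determinant. Every matrix in $S_{p,q}$ has determinant $p$, and $p$ is a non-square in $\FF_q^\times$ by the quadratic nonresidue hypothesis, so no element of $\olS_{p,q}$ lies in $\PSL(2,\FF_q)$. Consequently every edge of $G_{p,q}$ runs between $\PSL(2,\FF_q)$ and its complement, and these two classes form a bipartition.

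\emph{Girth.} Suppose towards a contradiction that $G_{p,q}$ contains a cycle of length $\ell\le t$. Since $G_{p,q}$ is simple and bipartite, $\ell=2r$ with $r\ge2$; using vertex-transitivity we may assume the cycle runs through the identity, so it corresponds to a cyclically reduced word $s_1\cdots s_\ell=e$ in $\PGL(2,\FF_q)$ with each $s_i\in\olS_{p,q}$, where `reduced' means $s_{i+1}\ne s_i^{-1}$ for every $i$ (a shortest cycle cannot revisit a vertex). Lift each $s_i$ to the quaternion $\alpha_i\in W_p$ mapping to it under the bijection $W_p\to\olS_{p,q}$. Since $\alpha\overline\alpha=p$ for $\alpha\in W_p$ and the integer $p$ becomes the identity of $\PGL(2,\FF_q)$, conjugate quaternions represent mutually inverse cosets, so reducedness says exactly that no two consecutive $\alpha_i,\alpha_{i+1}$ are conjugate. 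Put $\alpha=\alpha_1\cdots\alpha_\ell$, a quaternion of norm $p^{2r}$. The relation $s_1\cdots s_\ell=e$ means that the image of $\alpha$ under the representation \eqref{eq:0149} is a scalar matrix, and a direct computation with \eqref{eq:0149} shows this forces $q$ to divide the three imaginary components $b,c,d$ of $\alpha=a+bi+cj+dk$. Moreover, by Fact~\ref{f:H} the product $\alpha$ is not divisible by $p$ (otherwise two consecutive factors would be conjugate, contradicting reducedness); together with $\gcd(a,b,c,d)^2\mid p^{2r}$ this makes $\alpha$ primitive, so $b,c,d$ are not all zero and hence $a^2<p^{2r}$, which gives $0<p^r-a$ and $0<p^r+a$.

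Now write $b=q\beta$, $c=q\gamma$, $d=q\delta$ with integers $\beta,\gamma,\delta$; the norm identity $a^2+b^2+c^2+d^2=p^{2r}$ becomes
\[
	(p^r-a)(p^r+a)=q^2(\beta^2+\gamma^2+\delta^2)\,.
\]
As $q$ is an odd prime different from $p$ it does not divide $2p^r$, hence it divides at most one of the factors $p^r\pm a$ (it would otherwise divide their sum); since the right-hand side is a positive multiple of $q^2$, it follows that $q^2$ divides that one factor, say $p^r-a=q^2u$ with an integer $u\ge1$ (the symmetric case $q^2\mid p^r+a$ is identical). Then $p^r+a=2p^r-q^2u$, and since $p^r+a>0$ we get $q^2u<2p^r$. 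But the hypothesis $q^4>4p^t\ge4p^{2r}$ gives $p^r<q^2/2$, whence $q^2u<q^2$ and $u<1$, contradicting $u\ge1$. Therefore $G_{p,q}$ has no cycle of length at most $t$, i.e.\ $\gth(G_{p,q})>t$.

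The only step requiring genuine care is the dictionary between the combinatorics of the Cayley graph and quaternion arithmetic: one must verify that $W_p\to\olS_{p,q}$ is a bijection intertwining quaternionic conjugation with inversion in $\PGL(2,\FF_q)$ (so that a non-backtracking closed walk becomes a product of members of $W_p$ with no two consecutive factors conjugate), and then invoke Fact~\ref{f:H} to conclude that such a product is not divisible by $p$. Everything after that is a short estimate with quaternion norms and the size hypothesis $q^4>4p^t$.
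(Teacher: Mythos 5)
Your proof is correct and follows essentially the same route as the paper: bipartiteness via the non-square determinant, and the girth bound via lifting a shortest cycle to a product of quaternions in $W_p$, reducing the scalar-matrix condition to $q\mid b,c,d$, taking norms to get $(p^{r}-a)(p^{r}+a)=q^2(\beta^2+\gamma^2+\delta^2)$, and combining $q^2$ dividing one factor with $p^{r}<q^2/2$. The only (harmless) difference is the order of deductions — you invoke Fact~\ref{f:H} at the outset to force the imaginary part to be nonzero and then derive a numerical contradiction, whereas the paper first deduces $X=Y=Z=0$ from the size bound and only then applies Fact~\ref{f:H}; this is just a contrapositive rearrangement of the identical ingredients.
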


\begin{proof}
	As the determinants of the matrices in $S_{p, q}$ fail to be squares in $\FF_q$,
	every edge of $G_{p, q}$ has exactly one endvertex in $\PSL(2, \FF_q)$ and, 
	therefore, $G_{p, q}$ is indeed bipartite.
	
	Now consider a cycle $x_1-x_2-\dots-x_r$ of length $r=\gth(G_{p, q})$
	in $G_{p, q}$. The cosets 
	$\overline{\beta}_\rho\in\olS_{p, q}$ defined by 
	$x_{\rho+1}=x_\rho\overline{\beta}_\rho$
	for every index $\rho\in \ZZ/r\ZZ$ have the property 
	that $\overline{\beta}_1\cdots\overline{\beta}_r$ is the neutral element 
	of $\PGL(2, \FF_q)$.
	Therefore there is some $w\in \FF_q^\times$ such that 
		\[
		\beta_1\cdots\beta_r
		=
		\begin{pmatrix}
		 w & 0 \\
		 0 & w
		\end{pmatrix}
	\]
		holds for the corresponding matrices $\beta_1, \dots, \beta_r\in S_{p, q}$.
	Back to quaternions this means that there are integers $W$, $X$, $Y$, $Z$
	such that 
		\begin{equation}\label{eq:0204}
		\alpha_1\cdots\alpha_r=W+q(Xi+Yj+Zk)\,,
	\end{equation}
		where $\alpha_\rho\in W_p$ denotes the preimage of $\beta_\rho$ with respect to 
	the map~\eqref{eq:0149}. Taking the norms of both sides we deduce 
		\[
		p^r=W^2+q^2(X^2+Y^2+Z^2)\,.
	\]
		Since $G_{p, q}$ is bipartite, we also know that $r$ is even.
	So $(p^{r/2}+W)(p^{r/2}-W)$ is divisible by $q^2$ and due to 
	$q\not\in\{2, p\}$ this is only possible if $q^2$ divides one factor of 
	this product. 
	
	Let us now assume for the sake of contradiction that $r\le t$.
	By our assumption $4p^t<q^4$ this yields $p^{r/2}<q^2/2$ and in combination 
	with $|W|\le p^{r/2}$ we learn $|p^{r/2}\pm W|<q^2$. Altogether 
	we must have $W=\pm p^{r/2}$ and $X=Y=Z=0$. So~\eqref{eq:0204} tells us,
	in particular, that $\alpha_1\cdots\alpha_r$ is divisible by $p$. 
	Owing to Fact~\ref{f:H} this means that for some $\rho\in [r-1]$ the 
	quaternions $\alpha_\rho$, $\alpha_{\rho+1}$ are conjugates. 
	Consequently $\overline{\beta}_\rho$, $\overline{\beta}_{\rho+1}$ are 
	inverse to each other, which in turn implies $x_\rho=x_{\rho+2}$.
	This contradiction to our assumption that $x_1-\dots-x_r$ be a cycle 
	proves $\gth(G_{p, q})=r>t$. 
\end{proof}

Let us now connect this result to the problem of bounding $f(d, g)$. 
It is not difficult to see that for every $d\le p+1$ the graph~$G_{p, q}$
has a $d$-regular subgraph. Indeed, if $d$ is even we just need to 
replace~$\olS_{p, q}$ by a subset of size $d/2$, and to cover the odd case 
as well one can exploit that Cayley graphs have cycle factors corresponding 
to the left cosets of a cyclic subgroup. Thus given $d$ and $g$ we first 
determine the least prime $p\ge d-1$ with $p\equiv 1\pmod{4}$; next
we choose the least prime $q>(4p^g)^{1/4}$ distinct from $p$ such that $p$ is a quadratic non-residue modulo $q$ and $q\equiv 1\pmod{4}$. We then have 
$f(d, q)\le |\PGL(2, \FF_q)|<q^3$. By standard results on 
primes in arithmetic progressions and quadratic reciprocity
we have $p=(1+o(1))d$ and $q=(\sqrt{2}+o(1))p^{g/4}$
(as $d, g\lra\infty$), which proves~\eqref{eq:34}. For the background 
in multiplicative number theory required here we refer to Davenports's 
textbook~\cite{Davenport}. 

It is open whether the constant~$3/4$ appearing in~\eqref{eq:34} can be 
replaced by any smaller number, but 
there have been some other minor improvements during the last decades. 
For the sake of completeness, we quote the current world record~\cite{LUW}.

\begin{thm}[Lazebnik, Ustimenko \& Woldar]
	Let $d\ge 3$ and $g\ge 5$ be given. If $q$ denotes the least odd prime power 
	with $q\ge d$, then 
		\[
		f(d, g)\le 2d q^{3g/4-a}\,,
	\]
		where $a=4, 11/4, 7/2, 13/4$ for $g\equiv 0, 1, 2, 3\pmod{4}$. \qed
\end{thm}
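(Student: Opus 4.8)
The plan is to realise the bound via the explicit construction of Lazebnik, Ustimenko, and Woldar. For a prime power $q$ and an integer $k\ge 2$ one builds a bipartite graph $D(k,q)$ whose two colour classes are copies of $\FF_q^k$, thought of as ``points'' $(p)=(p_1,p_2,p_3,\dots)$ and ``lines'' $[\ell]=(\ell_1,\ell_2,\ell_3,\dots)$, declared incident when a carefully chosen staircase of bilinear relations holds, each of which expresses one fresh coordinate of $[\ell]$ (resp.\ $(p)$) linearly in terms of the already-seen coordinates and one of the two leading coordinates $p_1,\ell_1$ --- schematically $\ell_2-p_2=\ell_1p_1$, $\ell_3-p_3=\ell_1p_2$, and so on, with the r\^{o}les of $p_1$ and $\ell_1$ alternating down the staircase. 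Since such a relation pins down one new coordinate of an incident line from $(p)$ and the single free parameter $\ell_1$, the graph $D(k,q)$ is $q$-regular on $2q^k$ vertices. Everything then rests on two structural facts: \textbf{(a)} the girth grows linearly in $k$, namely $\gth\bigl(D(k,q)\bigr)\ge k+5$; and \textbf{(b)} for $k\ge 6$ the graph $D(k,q)$ disconnects into at least $q^{t-1}$ mutually isomorphic components, with $t=\lfloor(k+2)/4\rfloor$, so one component $CD(k,q)$ is still a $q$-regular graph of girth at least $k+5$, but now on at most $2q^{\,k-t+1}$ vertices.

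For \textbf{(a)} the idea is that a closed walk $(p^{0})[\ell^{1}](p^{1})\cdots(p^{0})$ of length $2m$ is the same thing as a solution of the incidence equations strung along the walk; reading those equations coordinate by coordinate and exploiting that the staircase relentlessly propagates the leading coordinates forward, one shows that a walk too short to sweep through all $k$ coordinates is forced to repeat two consecutive points or two consecutive lines, hence is not a cycle. This is an intricate induction on $k$ and is, I expect, the main obstacle. For \textbf{(b)} one exhibits enough invariants: suitable $\FF_q$-valued functions of the coordinates --- certain alternating combinations of the point- and line-coordinates --- that take equal values at the two endpoints of every edge, so are constant on connected components; counting the jointly attainable value-vectors produces the component count and thus the bound on $|V(CD(k,q))|$. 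If a self-contained treatment of \textbf{(a)} and \textbf{(b)} turned out to be too long I would simply quote them.

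Granting this, the theorem becomes bookkeeping. Given $d\ge 3$ and $g\ge 5$, let $q$ be the least odd prime power with $q\ge d$, and choose $k$ to be the least integer, of whichever parity maximises $t$ in \textbf{(b)}, satisfying $k+5\ge g$; then $k=g-5+O(1)$, $t=\lfloor(k+2)/4\rfloor$, and evaluating $k-t+1$ separately in the four residue classes of $g$ modulo $4$ gives $k-t+1\le 3g/4-a$ with $a=4,\,11/4,\,7/2,\,13/4$ respectively. Hence $CD(k,q)$ is a $q$-regular graph of girth at least $g$ on at most $2q^{\,3g/4-a}$ vertices. Finally one descends from degree $q$ to degree $d$: as $CD(k,q)$ is bipartite, K\H{o}nig's edge-colouring theorem writes its edge set as a disjoint union of $q$ perfect matchings, and the union of any $d$ of them is a $d$-regular spanning subgraph of the same girth; absorbing the routine passage between $d$ and the nearby prime power $q$ into the harmless prefactor then yields $f(d,g)\le 2d\,q^{3g/4-a}$, as claimed.
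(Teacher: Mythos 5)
The survey states this theorem without proof (the terminal $\square$ marks it as quoted), so your proposal has to be measured against the actual argument of Lazebnik, Ustimenko, and Woldar. You have correctly identified their construction: the algebraically defined bipartite graphs $D(k,q)$, the girth bound $\gth(D(k,q))\ge k+5$, and the component count $q^{t-1}$ with $t=\lfloor(k+2)/4\rfloor$, yielding a $q$-regular component $CD(k,q)$ of girth at least $k+5$ on at most $2q^{k-t+1}$ vertices. Up to that point the outline is sound (modulo quoting the two hard structural facts).

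The gap is in the final bookkeeping, and it is not absorbable into the prefactor. Choosing $k$ minimal of the right parity with $k+5\ge g$, one finds in each residue class that $k-t+1=3g/4-a+1$, not $\le 3g/4-a$ as you assert: for instance $g=4m$ forces $k=4m-5$, $t=m-1$, hence $k-t+1=3m-3=\tfrac34g-3$ while $a=4$. So $CD(k,q)$ has up to $2q^{3g/4-a+1}$ vertices, one factor of $q$ more than you claim, and your K\H{o}nig-based descent cannot recover this: a $d$-regular \emph{spanning} subgraph lives on the same vertex set, so it only gives $f(d,g)\le 2q\cdot q^{3g/4-a}$, which exceeds the asserted $2d\,q^{3g/4-a}$ by the factor $q/d>1$ whenever $d$ is not itself an odd prime power. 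The missing idea is that the degree reduction must delete vertices, not merely edges. In $D(k,q)$ the neighbours of a point $(p)$ are in bijection with the values of $\ell_1\in\FF_q$ and the neighbours of a line $[\ell]$ with the values of $p_1$; hence the subgraph induced on the points with $p_1\in B$ and the lines with $\ell_1\in A$, for fixed $d$-element sets $A,B\subseteq\FF_q$, is $d$-regular, inherits the girth bound, and retains only a $d/q$ proportion of each side. Applied inside $CD(k,q)$ this gives $2d\,q^{k-t}=2d\,q^{3g/4-a}$ vertices, which is exactly where the prefactor $2d$ (rather than $2q$) in the statement comes from.
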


Let us conclude this subsection with a historical remark. 
Both the Moore bound and the concept of cages are often attributed to 
Tutte's article~\cite{Tutte47}. But, while this work is certainly related to our topic, 
it studies a somewhat different problem. Tutte begins by defining an {\it $s$-arc}
in a graph to be a walk of length $s$ with the property that any two consecutive 
edges are distinct (but there may be other repetitions of vertices and edges). 
For expository purposes let us call a connected, cubic graph {\it $s$-strong} 
if its automorphism group acts transitively on its $s$-arcs.\footnote[1]{Actually 
Tutte himself uses the term ``$s$-regular'' instead of $s$-strong, which could 
for obvious reasons seem confusing to the contemporary reader.} 
Tutte proves that every $s$-strong graph $G$ satisfies $\gth(G)\ge 2s-2$. 
By a {\it cage of order $m$} he understands a connected cubic graph $G$ of girth $m$ 
which is ``as strong as possible'', i.e., $(\lfloor m/2\rfloor+1)$-strong.
His main result, proved by group theoretic means, asserts that 
there exist only six cages, notably the graphs $K_2$, $K_4$, $K_{3, 3}$, 
the Petersen graph, the Heawood graph, and 
a graph known today as the unique $(3, 8)$-Moore graph.  
 
\subsection{Average degree}\label{subsec:23}
In his book on extremal graph theory~\cite{Boll-Ex} Bollob\'as poses the question 
whether the Moore bound remains valid when the minimum degree condition gets 
weakened to an average degree condition. This problem remained open for quite 
a long time until it was finally settled in~\cite{AHL}.

\begin{thm}[Alon, Hoory \& Linial]\label{thm:AHL1}
	Every graph $G$ with 
		\[
		d(G)\ge d\ge 2
		\quad \text{ and } \quad 
		\gth(G)\ge g\ge 3
	\]
		has at least $n_0(d, g)$ vertices. 
\end{thm}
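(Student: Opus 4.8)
The plan is to prove the statement by a random-walk / spectral argument on a universal cover-type tree, following the Alon--Hoory--Linial strategy. First I would reduce to the case where $G$ is connected and $d$-regular-ish: more precisely, since deleting vertices of low degree only decreases the average degree by a controlled amount, one may pass to a subgraph with minimum degree at least $\lceil d\rceil$ if $d$ is an integer, but the real point of the theorem is that we cannot do this in general, so instead I would work with the (possibly irregular) graph directly and analyse the \emph{non-backtracking walk} on $G$. Let $N$ denote the number of vertices and let $B$ be the $N\times N$ matrix whose $(u,v)$ entry is $1$ if $uv\in E(G)$; the key object is the count of non-backtracking walks of length $\ell$ between ordered pairs of vertices, which, because $\gth(G)\ge g$, equals the number of vertices at distance exactly $\ell$ for $\ell\le \lfloor (g-1)/2\rfloor$ when started from a fixed vertex (this is exactly the argument underlying Theorem~\ref{thm:21}).

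The heart of the matter is a convexity/entropy estimate. Consider the probability distribution obtained by starting at a uniformly random vertex $x$ and performing a non-backtracking random walk where at each step we move to a uniformly random neighbour other than the one just visited. I would track the quantity $p_\ell(x)$ = probability of being at $x$ after $\ell$ steps, or rather the ``return probability'' $\sum_x p_\ell(x,x)$, and bound it below using the fact that the walk spreads out: because there are no short cycles, the walk of length up to roughly $g/2$ behaves exactly like a walk on a tree. The clean way to package this is: if $\mathbf{p}$ is the degree-proportional stationary-like vector (i.e.\ $p_v = \deg(v)/(2e(G))$), then one shows
\[
	\sum_{v} p_v \cdot (\text{number of vertices within distance } h \text{ of } v)
	\ge n_0(d,g),
\]
by a convexity argument (Jensen applied to the function counting tree-neighbourhoods, using that $x\mapsto n_0(x,g)$ is convex in $x$ for fixed $g$) combined with $d(G)\ge d$. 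Since the left side is at most $N$ — each vertex is within distance $h$ of itself and the neighbourhoods, while overlapping, are each contained in $V(G)$, and one needs the averaging to be set up so the bound is genuinely $\le N$ — one concludes $N\ge n_0(d,g)$. The even-girth case $g=2h$ is handled the same way but centering the count on edges rather than vertices, paralleling the two cases in the Moore bound proof.

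The main obstacle, and the reason this was open for a long time, is making the convexity step actually yield $n_0(d,g)$ rather than something weaker: naively averaging $|D_{\le h}(v)|$ over $v$ with weights $p_v$ and applying Jensen to $n_0(\cdot,g)$ goes the \emph{wrong} direction unless one is careful, because $|D_{\le h}(v)|$ depends on the full degree sequence along branches of the local tree, not just on $\deg(v)$. The fix — and the genuinely clever part of the AHL proof — is to set up a multi-step recursion: define $f_\ell(v)$ via a suitable weighted count of length-$\ell$ non-backtracking walks out of $v$, establish a recursive inequality $f_{\ell+1} \ge$ (something convex in the $f_\ell$ values of neighbours) using that each vertex has degree $\ge d$, and then iterate, invoking convexity of the relevant one-variable functions at each stage. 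I would expect to need the inequality, valid for $t\ge 1$ and integers $\ell\ge0$, that the minimum of $\sum (\text{neighbour contributions})$ subject to a degree-sum constraint is attained at the regular configuration; this is where Sidorenko-type reasoning for paths enters, as the introduction hints. Once that recursion is in hand, summing over $v$ with the degree-proportional weights and telescoping gives $N\ge n_0(d,g)$ directly.
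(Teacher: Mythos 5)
You have the right skeleton---non-backtracking walks ($s$-arcs), an iterated AM--GM/entropy estimate with degree-proportional weights, the link to Sidorenko for paths, and the split into an edge-centred even case and a vertex-centred odd case---and this is indeed how the paper proceeds (Lemma~\ref{lem:2140} and the proof of Theorem~\ref{thm:AHL2}). But there is a genuine gap exactly at the step you flag as ``the genuinely clever part'', together with an error that would derail the recursion. You assert that the recursive inequality for $f_{\ell+1}$ uses ``that each vertex has degree $\ge d$''. That hypothesis is not available---only the \emph{average} degree is $\ge d$, and, as you yourself note, one cannot reduce to the regular case. The only legitimate reduction is to $\delta(G)\ge 2$ (deleting vertices of degree $0$ or $1$ does not decrease the average degree), and that weaker bound is needed merely so that $d(v)-1\ge 1$ and the logarithms of arc counts make sense.

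The missing idea is the correct ``effective branching factor'': the degree-weighted geometric mean $\Lambda(G)=\prod_{v}\bigl(d(v)-1\bigr)^{d(v)/nd}$. The recursion (weighted AM--GM applied to $\sum_{(x,y)}\log A^{(t)}_{xy}$) yields $A^{(s)}\ge dn\,\Lambda^{s-1}$ using only $\delta(G)\ge 2$; the average-degree hypothesis enters \emph{once}, through the single convexity inequality $\Lambda(G)\ge d(G)-1$ (Jensen for the convex function $x\mapsto(x+1)\log x$, inequality~\eqref{eq:1144}). Your ``minimum is attained at the regular configuration'' heuristic is the right instinct, but it must be applied to $\Lambda$, not to the walk counts or to $n_0(\cdot,g)$ directly---that is what makes Jensen point the right way; without naming $\Lambda$ your multi-step recursion has no invariant to telescope, and as you observe, averaging $|D_{\le h}(v)|$ and applying convexity of $n_0$ fails. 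Two further points: in the odd case one also needs $d\Lambda\ge(d-1)(\Lambda+1)$ (again a consequence of $\Lambda\ge d-1$) to pass from arc counts rooted at ordered edges to a tree of size $n_0(\Lambda+1,g)$ rooted at a single vertex; and the ``return probability'' $\sum_x p_\ell(x,x)$ is a red herring---girth enters only through the fact that distinct short arcs from a fixed root end at distinct vertices, which converts the arc count into a vertex count.
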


Notice that in the situation considered here, if $G$ has a vertex of degree $0$ 
or $1$, then we can remove it without decreasing the average degree, and apply 
induction. Thus it suffices to prove Theorem~\ref{thm:AHL1} for graphs $G$ with
$\delta(G)\ge 2$. Under this assumption Alon et al.\ obtained a slightly stronger 
result involving a parameter they denote by $\Lambda(G)$. If~$G$ has~$n$ vertices and 
degree sequence $(d_1, \dots, d_n)$, the definition of this graph invariant reads
\[
	\Lambda(G)=\prod_{i=1}^n (d_i-1)^{d_i/nd}\,,
\]
where $d=d(G)$ is the average 
degree of $G$. As the function $x\longmapsto (x+1)\log x$ is convex on~$\RR_{\ge 1}$,
we have 
\begin{equation}\label{eq:1144}
	\Lambda(G)\ge d(G)-1\,.
\end{equation}
So altogether the following estimate strengthens Theorem~\ref{thm:AHL1}.

\begin{thm}[Alon, Hoory \& Linial]\label{thm:AHL2}
	Let $G$ be a graph with $\delta(G)\ge 2$. If $\gth(G)\ge g\ge 3$,
	then
		\[
		|V(G)|\ge n_0(\Lambda(G)+1, g)\,.
	\]
	\end{thm}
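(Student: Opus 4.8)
The plan is to adapt the local tree-counting argument behind the Moore bound (Theorem~\ref{thm:21}) to a weighted setting, where instead of bounding the number of vertices at distance $\le h$ from a fixed vertex we bound the total number of \emph{non-backtracking walks} of a given length emanating from all vertices at once. Write $h=\lfloor (g-1)/2\rfloor$, so that for $g=2h+1$ we have geodesic balls of radius $h$ that are trees, and for $g=2h+2$ geodesic balls of radius $h$ around an edge are trees. For each vertex $v$ and each $k\le h$ let $N_k(v)$ be the number of non-backtracking walks of length $k$ starting at $v$; since $\gth(G)\ge g$ these walks all land on distinct vertices, so $\sum_v N_k(v)\le n|V(G)|\cdot(\text{something})$ — more precisely $\sum_{k=0}^{h}\sum_v N_k(v)\le n^2$ in the odd case because for fixed $v$ the endpoints are distinct vertices and, summing the other way, each ordered pair $(v,w)$ with $\dist(v,w)\le h$ is counted once. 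The key point is that $N_k(v)$ can be bounded \emph{below} by a product over the degrees along the walk, and Jensen/AM–GM applied with the weights $d_i/(nd)$ — exactly the weights defining $\Lambda(G)$ — converts this product bound into the quantity $(\Lambda(G)+1)^{\text{power}}$ we want.

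Concretely, I would first record the recursion $N_{k+1}(v)=\sum_{u\sim v}(d_u-1)\cdot[\text{walks of length }k\text{ from }u\text{ avoiding }v]$, which in the tree regime $k\le h-1$ simplifies because no backtracking constraint beyond the immediate one can bind. This gives $N_k(v)\ge$ a sum over length-$k$ paths $v=v_0v_1\cdots v_k$ of $\prod_{i=1}^{k-1}(d_{v_i}-1)$ (or the analogous product in the odd/even bookkeeping). Next, sum over all $v$ and all $k$ up to $h$; the left-hand side is at most $n|V(G)|$ in the even case (counting pairs of vertices with a geodesic of length $\le h$ through a fixed central edge) and analogously in the odd case. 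The right-hand side is a sum, over all non-backtracking walks of length $\le h$ in $G$, of a product of $(d_i-1)$'s over interior vertices. Now I apply the weighted AM–GM inequality: the average of $(d_i-1)$ over a vertex chosen with probability $d_i/(nd)$ is $\ge \prod_i (d_i-1)^{d_i/(nd)}=\Lambda(G)$ by convexity of $x\mapsto(x+1)\log x$ (this is exactly inequality~\eqref{eq:1144}'s mechanism, but retaining the full product). Carefully tracking which vertices get which weight — a non-backtracking walk visits a vertex $v_i$ with multiplicity proportional to $d_{v_i}-1$ times the number of ways to extend on either side — one arranges that the combined sum telescopes to $n_0(\Lambda(G)+1,g)$.

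The main obstacle, and the part that needs the most care, is the bookkeeping that matches the \emph{empirical degree distribution} along random non-backtracking walks to the specific weights $d_i/(nd)$ in the definition of $\Lambda(G)$. A naive application of AM–GM gives $\Lambda(G)$ with the wrong exponents, because a uniformly random walk oversamples high-degree vertices in a way that depends on walk length; one must choose the stationary-like measure on walks (or on edges) so that the induced vertex measure is exactly $d_i/\sum_j d_j = d_i/(nd)$, and then verify that the length-$k$ walk counts, when averaged against this measure, are governed by powers of $\Lambda(G)$ rather than of the average degree. Alon, Hoory and Linial handle this by setting up the count in terms of the \emph{non-backtracking walk matrix} $B$ on directed edges and observing that the relevant row sums are exactly the $(d_i-1)$'s, so that a spectral/Perron argument with the measure $d_i(d_i-1)$ on directed edges does the matching automatically; I expect to follow that route. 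A secondary subtlety is the boundary case — when $g$ is even one starts from an edge and the outermost shell $D_h$ on each side must be handled as in Lemma~\ref{lem:bip} and the proof of Theorem~\ref{thm:21}, and one should double-check that the reduction to $\delta(G)\ge 2$ (already noted in the text) is what lets every interior step of a non-backtracking walk have at least one continuation, so that $\Lambda(G)\ge 1$ and the whole estimate is non-vacuous.
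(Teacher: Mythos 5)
Your plan is essentially the paper's proof: the paper also counts non-backtracking walks ($s$-arcs, indexed by their initial directed edge), lower-bounds their number by powers of $\Lambda(G)$ via the weighted AM--GM inequality, and then uses the girth hypothesis to identify arc endpoints with the distinct vertices of the trees from the proof of Theorem~\ref{thm:21}. The ``main obstacle'' you identify --- matching the empirical degree distribution along walks to the weights $d_i/(nd)$ --- is resolved in the paper not by a spectral/Perron argument on the non-backtracking walk matrix but by iterating AM--GM one step at a time: writing $A^{(t)}_{xy}=\sum_{z\in N(y)\sm\{x\}}A^{(t-1)}_{yz}$ and applying \eqref{eq:WAG2} to $\sum_{(x,y)\in\olE}\log A^{(t)}_{xy}$, each vertex $y$ automatically receives total weight $d(y)$ because it is the head of exactly $d(y)$ directed edges; this yields $\sum_{(x,y)\in\olE}\log A^{(t)}_{xy}\ge dn\log\Lambda+\sum_{(x,y)\in\olE}\log A^{(t-1)}_{xy}$ and hence $A^{(s)}\ge dn\Lambda^{s-1}$ (Lemma~\ref{lem:2140}), with no oversampling issue. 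Your one-shot application of AM--GM to the sum over all walks of the products of the $(d_i-1)$ is exactly the ``naive'' version you warn against, so you should commit to the step-by-step iteration rather than defer to the spectral route.

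The one substantive point your sketch does not anticipate is the endgame in the odd case. The tree rooted at a vertex $y$ has $d(y)$ branches at the root but only $d(y)-1$ at interior vertices, so to convert the global arc count $dn(\Lambda+\dots+\Lambda^h)$ into a per-vertex quantity of the form $\bigl(d(y)-1\bigr)(\Lambda+1)(1+\dots+\Lambda^{h-1})$ --- which is what makes the averaging produce $n_0(\Lambda+1,g)$ for some single root --- the paper needs the additional inequality $d\Lambda\ge(d-1)(\Lambda+1)$, which is where \eqref{eq:1144}, i.e.\ $\Lambda\ge d-1$, enters. Without this step the odd case does not close. The even case works as you describe: one averages over edges, finds a single edge $xy$ with $\sum_{i=1}^h\bigl(A^{(i)}_{xy}+A^{(i)}_{yx}\bigr)\ge n_0(\Lambda+1,g)$, and the girth bound guarantees these arcs end at distinct vertices.
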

 
We would like to emphasise a similarity between the proof of this result and 
the entropy based proof of Sidorenko's conjecture for paths. Thus it is our next task 
to provide a brief introduction to the latter topic. Given two graphs $F$ and $G$  
we write $\Hom(F, G)$ for the set of homomorphisms from $F$ to $G$.
The probability $t(F, G)=|\Hom(F, G)|/|V(G)|^{V(F)|}$
that a random map from $V(F)$ to $V(G)$ is in $\Hom(F, G)$ is called the 
{\it homomorphism density} from~$F$ to~$G$. The following conjecture of 
Sidorenko~\cite{Sid} (see also Simonovits~\cite{Sim}) is arguably the most 
important problem on graph homomorphism densities. 

\begin{conj}[Sidorenko]
	For every bipartite graph $F$ and every graph $G$ we have 
		\[
		t(F, G)\ge t(K_2, G)^{e(F)}\,.
	\]
	\end{conj}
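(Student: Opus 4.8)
Candidly, this is one of the central open problems in extremal graph theory, so the plan below is a route to the largest class of bipartite $F$ for which the inequality is currently provable, together with a flag on where it breaks down. Taking logarithms, the claim becomes $\log t(F,G)\ge e(F)\log t(K_2,G)$, which invites an entropy argument of the same flavour as the proof of Theorem~\ref{thm:AHL2}. Write $p=t(K_2,G)=2e(G)/|V(G)|^2$ for the edge density. The plan is to construct, for every host graph $G$, a random homomorphism $\phi\colon V(F)\to V(G)$, supported on $\Hom(F,G)$, whose Shannon entropy satisfies
\[
	H(\phi)\ \ge\ |V(F)|\log|V(G)|\ +\ e(F)\log p\,;
\]
since $H(\phi)\le\log|\Hom(F,G)|$ for any random variable supported on that set, this yields $|\Hom(F,G)|\ge|V(G)|^{|V(F)|}p^{e(F)}$, i.e.\ the conjecture. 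So everything reduces to building a high-entropy distribution on homomorphisms.

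First I would fix a bipartition $V(F)=X\dcup Y$ and reveal the vertices of $F$ one at a time in a well-chosen order, starting from a uniformly random edge of $G$ and, at each later step, choosing the image of the new vertex from the conditional distribution of a neighbour of the already-placed vertices. For a \emph{forest} $F$ the chain rule for entropy together with convexity (precisely the two ingredients behind Theorem~\ref{thm:AHL2}) deliver the displayed bound, recovering the Blakley--Roy path inequality and more. The key difficulty for general $F$ is the entropy lost when a newly revealed vertex has two or more neighbours already placed: this is exactly where cycles in $F$ bite. Following the known cases (even cycles, complete bipartite graphs, and --- via dependent random choice, in the manner of Conlon--Fox--Sudakov --- any $F$ with a vertex adjacent to all of $X$ or all of $Y$), I would absorb this loss with a Cauchy--Schwarz/H\"older step; iterating it settles the conjecture for every $F$ admitting a tree-like or ``reflection'' decomposition in the sense of Kim--Lee--Lee and Conlon--Kim--Lee--Lee --- grids, hypercubes, subdivided graphs, and many more.

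Second, for the cleanest form of the tensoring step I would pass to the graphon reformulation, $t(F,W)\ge t(K_2,W)^{e(F)}$ for every symmetric measurable $W\colon[0,1]^2\to[0,\infty)$, which is equivalent to the statement for finite $G$. Both sides are multiplicative under $W\mapsto W^{\otimes k}$, so it suffices to verify the inequality in a neighbourhood of the constant graphon $p$ --- a ``local Sidorenko'' statement that is a second-order computation in the eigenvalues of $W-p$ (Lov\'asz; Li--Szegedy). Szegedy's information-theoretic programme marries exactly these two devices, step-by-step conditioning and a tensor/limit reduction, to reach still larger families of $F$.

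The main obstacle is the gap left by all of the above. There are sparse bipartite graphs $F$ --- for instance various cubic bipartite graphs and certain incidence-type graphs --- for which no tree-like decomposition is known, the Cauchy--Schwarz step has nothing to anchor to, and the local estimate does not visibly globalise, yet no counterexample can be produced either. For such $F$ I would expect to be stuck, and so would everyone else at present: I would therefore present the argument above as a proof only for the (large but incomplete) family of tree-decomposable bipartite graphs and leave the conjecture in full generality open, as it indeed remains.
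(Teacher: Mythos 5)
You have correctly recognised that this statement is not a theorem of the paper at all: it is Sidorenko's conjecture, stated as an open problem, and the survey offers no proof of it. The only thing the paper actually proves in this direction is the special case of paths, Theorem~\ref{thm:BR} (Blakley--Roy), via iterated applications of the weighted AM--GM inequality \eqref{eq:WAG1}--\eqref{eq:WAG2} in the entropy spirit you describe, and it explicitly flags the case $F=M$ ($K_{5,5}$ minus a Hamiltonian cycle, Figure~\ref{fig29}) as the long-standing smallest unsolved instance. Your proposal is therefore appropriately honest: the entropy formulation you set out (a distribution on $\Hom(F,G)$ with $H(\phi)\ge |V(F)|\log|V(G)|+e(F)\log p$) is the standard and correct reduction, your catalogue of families where it can be pushed through (forests, even cycles, complete bipartite graphs, the Conlon--Fox--Sudakov dependent random choice cases, the tree-arrangeable and reflection-group decompositions, Szegedy's information-theoretic programme, and the local/tensor reduction of Lov\'asz and Li--Szegedy) is accurate, and the obstruction you name --- bipartite $F$ with no known tree-like or reflection decomposition, where the Cauchy--Schwarz step has nothing to anchor to --- is precisely why the conjecture remains open. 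In short: there is no paper proof to compare against, you rightly did not manufacture one, and your outline of the partial results is consistent with the literature the paper cites; the only caveat is that what you present should be read as a survey of known special cases rather than a proof strategy expected to close the general statement.
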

  
The restriction that $F$ needs to be bipartite is certainly necessary, because 
for non-bipartite graphs~$F$ every bipartite graph $G$ of positive density is a 
counterexample. The long standing `smallest unsolved case' is the following. 

\begin{prob}
	Let $M$ be the bipartite graph obtained from $K_{5, 5}$ by removing a Hamiltonian 
	cycle (see Figure~\ref{fig29}). Prove or disprove that Sidorenko's conjecture holds 
	for $F=M$. 
\end{prob}

\usetikzlibrary{calc}
	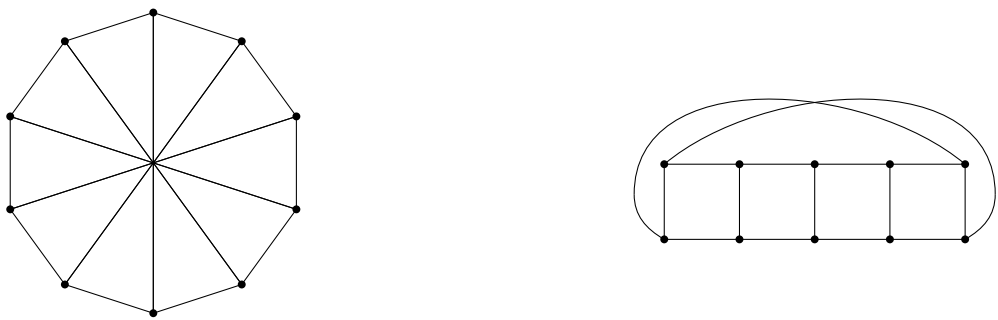
\begin{figure}[h!]
		\begin{subfigure}[b]{0.47\textwidth}
			\centering
			\begin{tikzpicture}
	\foreach \i in {0,...,14}\coordinate (\i) at (90+36*\i:2);
	\foreach \i [evaluate=\i as \j using \i+1, evaluate=\i as \k using \i+5] in {0,...,9}{
\draw (\i)--(\j);
\draw (\i)--(\k);
\fill (\i) circle (1.5pt);	
}
	\end{tikzpicture}
		\end{subfigure}
	\hfill
	\begin{subfigure}[b]{0.47\textwidth}
		\centering
		\begin{tikzpicture}
			\def\d{1}
			\foreach \i in {0,...,4}{
		\coordinate (a\i) at (\i*\d, 0);
		\coordinate (b\i) at (\i*\d,\d)	;
			\draw (a\i)--(b\i);
		\fill (a\i) circle (1.5pt);
		\fill (b\i) circle (1.5pt);
	}
\draw (a0)--(a4);
\draw (b0)--(b4);
\draw (b0) [out=40, in=90] to (4.4*\d,.6*\d );
\draw (4.4*\d,.6*\d ) [out=270, in =30] to (a4);
\draw (b4) [out=140, in=90] to (-.4*\d,.6*\d );
\draw (-.4*\d,.6*\d ) [out=270, in =150] to (a0);

\phantom{\fill (0,-1) circle (1pt);}
		\end{tikzpicture}
	\end{subfigure}
	\caption{Two drawings of the graph $M$}
	\label{fig29}
	\end{figure}
 
It should be pointed out that Lee and Sch\"ulke~\cite{LS21} refuted a natural 
strengthening of Sidorenko's conjecture for this graph $M$. Nevertheless, the 
conjecture itself is still open and we refer 
to~\cites{CFS10, CKLL18, CL17, CL21, Lov11} 
for some of the most recent contributions to this problem. 

Returning to our main story we observe that a homomorphic image of the path $P_s$
with~$s$ edges in a graph $G$ is the same as a walk of length $s$ in $G$. Thus the 
next statement agrees with the special case $F=P_s$ of Sidorenko's conjecture. 

\begin{thm}[Blakley \& Roy]\label{thm:BR}
   For every $n$-vertex graph $G$ with average degree $d$ and every positive
   integer $s$ there are at least $d^sn$ walks of length $s$ in $G$.
\end{thm}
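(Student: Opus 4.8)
My plan is to use the entropy method. The idea is to exhibit a probability distribution supported on the set of walks of length $s$ in $G$ whose Shannon entropy is at least $\log(d^sn)$ (all logarithms to base $2$), and then to invoke the standard fact that a random variable taking values in a finite set $S$ has entropy at most $\log|S|$. One may assume $e(G)\ge 1$, since otherwise $d=0$ and the statement is vacuous; write $e=e(G)$, so $d=2e/n$.

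The distribution I would use is the \emph{degree-biased stationary random walk} $(X_0,X_1,\dots,X_s)$ on $G$: let $X_0$ have law $\pi(v)=\deg(v)/2e$, and, given $X_i$, let $X_{i+1}$ be uniformly random among the neighbours of $X_i$. A one-line computation shows that one step of this Markov chain sends $\pi$ to itself, so each $X_i$ has law $\pi$. Since $(X_0,\dots,X_s)$ is always a walk of length $s$, the number of such walks is at least $2^{H(X_0,\dots,X_s)}$, and by the chain rule together with the Markov property
\[
	H(X_0,\dots,X_s)=H(X_0)+\sum_{i=1}^{s}H(X_i\mid X_{i-1})=H(\pi)+sL\,,\qquad
	L:=\sum_{v}\pi(v)\log\deg(v)\,,
\]
because conditionally on $X_{i-1}=v$ the vertex $X_i$ is uniform on a set of size $\deg(v)$.

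It then remains to check that $H(\pi)+sL\ge\log(d^sn)=\log n+s\log d$. Substituting $\pi(v)=\deg(v)/2e$ gives $H(\pi)=\log(2e)-L$, hence $H(\pi)+sL=\log(2e)+(s-1)L$, and the same substitution gives
\[
	L-\log d=\log n-H(\pi)\,,
\]
the right-hand side being exactly the relative entropy of $\pi$ with respect to the uniform distribution on $V(G)$. Thus $L\ge\log d$ is equivalent to $H(\pi)\le\log n$, the trivial upper bound on the entropy of a distribution on $n$ points (equivalently, Gibbs' inequality), so $L\ge\log d$ holds. Combined with $s\ge 1$ this yields $H(\pi)+sL=\log(2e)+(s-1)L\ge\log(2e)+(s-1)\log d=\log n+s\log d$, as wanted; for $s=1$ one gets equality, in agreement with the exact count $2e=nd$ of walks of length one.

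I do not expect a genuine obstacle once the distribution is chosen correctly, and that choice is the only delicate point. Had I started $X_0$ from the uniform distribution, the analogous computation would have needed $\frac{1}{n}\sum_v\log\deg(v)\ge\log d$, whereas Jensen's inequality for the concave function $\log$ gives the reverse inequality; it is precisely the degree-biasing of $X_0$ — the same thing that makes $\pi$ stationary — that repairs this. This is the sense in which, as anticipated above, the argument is a close relative of the Alon--Hoory--Linial proof of Theorem~\ref{thm:AHL2}, where a random walk adapted to the degree sequence takes the place of the uniform one and a convexity estimate plays the role of the bound $H(\pi)\le\log n$ used here.
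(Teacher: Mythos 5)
Your proof is correct and is essentially the paper's own argument in its native entropy formulation: the paper presents exactly this proof (attributed to Fitch and to Lee) but phrased via iterated weighted AM--GM applied to the walk counts $W^{(t)}_x$, and your quantity $L$ is precisely $\log\Psi$ there, with your appeal to $H(\pi)\le\log n$ (Gibbs) playing the role of the paper's AM--GM verification that $\Psi\ge d$. The chain-rule/Markov decomposition $H(X_0,\dots,X_s)=H(\pi)+sL$ corresponds line by line to the paper's induction on $t$, so there is no substantive difference between the two write-ups.
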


The original proof of Blakley and Roy~\cite{BR} used linear algebra and spectral 
properties of the adjacency matrix of $G$. Later Alon and Ruzsa~\cite{AR}*{Lemma~3.8} 
developed a different approach using vertex deletions followed by the tensor power 
trick, which has the advantage that it generalises more readily to hypergraphs (see 
e.g.,~\cite{tyh}*{Lemma~2.8}). A third proof motivated by the entropy method was 
worked out by Fitch~\cite{Fitch}*{Lemma 7} and by 
Lee~\cite{Lee}*{Theorems~2.6 and~2.7} (see also~\cite{LS11}). 
Below we tell this argument with the connection 
to the theorem of Alon, Hoory, and Linial in mind. In fact, both proofs 
rely on iterated applications of the weighted inequality between the arithmetic
and the geometric mean, which states that all nonnegative reals $a_1, \dots, a_n$
and $\lambda_1, \dots, \lambda_n$ with $\lambda_1+\dots+\lambda_n=1$ satisfy 
\begin{equation}\label{eq:WAG1}
	a_1^{\lambda_1}\cdots a_n^{\lambda_n}
	\le
	\lambda_1a_1+\dots+\lambda_na_n
\end{equation}
or, equivalently, 
\begin{equation}\label{eq:WAG2}
	\lambda_1\log a_1+\dots+\lambda_n\log a_n
	\le
	\log(\lambda_1a_1+\dots+\lambda_na_n)\,.
\end{equation}

\begin{proof}[Proof of Theorem~\ref{thm:BR}]
	For standard reasons we can assume that $G=(V, E)$ has no isolated vertices, so that 
	all vertex degrees are positive. 
	We begin by observing that 
		\[
		\Psi=\prod_{x\in V}d(x)^{d(x)/dn}
	\]
		is at least $d$, because 
		\[
		\frac1\Psi
		=
		\prod_{x\in V}\Bigl(\frac1{d(x)}\Bigr)^{d(x)/dn}
		\overset{\eqref{eq:WAG1}}{\le}
		\sum_{x\in V}\frac{d(x)}{dn}\cdot\frac1{d(x)}
		=
		\frac1d\,.
	\]
		
	Now for every vertex $x$ and every positive integer $t$ we denote the
	number of $t$-walks in $G$ starting at $x$ by $W^{(t)}_x$. Due to~\eqref{eq:WAG2}
	we have 
		\begin{align*}
		\sum_{x\in V}d(x)\log\frac{W^{(t+1)}_x}{d(x)}
		&\ge
		\sum_{x\in V}\sum_{y\in N(x)}\log W^{(t)}_y
		=
		\sum_{y\in V}d(y)\log W^{(t)}_y \\
		&=
		dn\log\Psi + \sum_{x\in V}d(x)\log\frac{W^{(t)}_x}{d(x)}\,.
	\end{align*}
		In view of $\sum_{x\in V}d(x)\log\bigl(W^{(1)}_x/d(x)\bigr)=0$ this yields 
	inductively
		\[
		\sum_{x\in V}d(x)\log\frac{W^{(s)}_x}{d(x)}
		\ge
		(s-1)dn\log\Psi
		\ge 
	 	(s-1)dn\log d\,.
	\]
		For the total number $W^{(s)}$ of $s$-walks in $G$ we thus obtain
		\[
		\log\frac{W^{(s)}}{dn}
		=
		\log\sum_{x\in V}\frac{d(x)}{dn}\cdot\frac{W^{(s)}_x}{d(x)}
		\overset{\eqref{eq:WAG2}}{\ge}
		\sum_{x\in V} \frac{d(x)}{dn}\log\frac{W^{(s)}_x}{d(x)}
		\ge
		(s-1)\log d\,,
	\]
		whence $W^{(s)}\ge d^sn$.
\end{proof}
 
Now it turns out that the same method can be used not only for bounding the 
number of $s$-walks, but also for the number of $s$-arcs in Tutte's sense we
mentioned at the end of the previous subsection. Roughly speaking, this has 
the advantage that in graphs of large girth distinct $s$-arcs starting with 
the same edge need to end in different vertices, which is exactly what we need
for proving Theorem~\ref{thm:AHL2}. 

Let us fix some notation for the ensuing details. Given a graph $G=(V, E)$ 
we write $\olE$ for the set of ordered pairs $(x, y)\in V^2$ with $\{x, y\}\in E$,
so that every edge contributes two pairs to $\olE$. By an {\it $s$-arc} in $G$ 
we shall mean, from now on, a sequence $(\ole_1, \dots, \ole_s)\in\olE^s$ such that 
for every $i\in [s-1]$ the second vertex of $\ole_i$ agrees with the first vertex
of $\ole_{i+1}$, and the underlying edges of $\ole_i$, $\ole_{i+1}$ are distinct.
Given a pair $(x, y)\in\olE$ and a positive integer $s$ we write $A^{(s)}_{xy}$
for the number of $s$-arcs in $G$ starting with $(x, y)$. Finally, $A^{(s)}$ 
denotes the total number of $s$-arcs in $G$. 

\begin{lemma}\label{lem:2140}
	For every $n$-vertex graph $G=(V, E)$ with $\delta(G)\ge 2$ and every 
	positive integer~$s$ we 
	have $A^{(s)}\ge dn\Lambda^{s-1}$, where $d=d(G)$ and $\Lambda=\Lambda(G)$.
\end{lemma}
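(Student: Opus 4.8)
The plan is to mimic the entropy/AM--GM argument used for the Blakley--Roy theorem (Theorem~\ref{thm:BR}), but now counting $s$-arcs instead of $s$-walks and weighting vertices by $(d_i-1)$ rather than $d_i$, so that the relevant geometric mean becomes $\Lambda=\Lambda(G)$. Concretely, for every ordered pair $(x,y)\in\olE$ and every $t\ge 1$ let $A^{(t)}_{xy}$ be the number of $t$-arcs starting with $(x,y)$; the key recursion is that an $(t{+}1)$-arc starting with $(x,y)$ is obtained by choosing a $t$-arc starting with $(y,z)$ where $z\in N(y)\setminus\{x\}$, so that
\[
	A^{(t+1)}_{xy}=\sum_{z\in N(y)\setminus\{x\}}A^{(t)}_{yz}\,,
\]
a sum of exactly $d(y)-1$ terms (here we use $\delta(G)\ge2$, so this index set is nonempty). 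Also $A^{(1)}_{xy}=d(y)-1$.

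\textbf{Key steps.} First I would apply the weighted AM--GM inequality~\eqref{eq:WAG2} to the recursion above, choosing the weights inside the sum over $z\in N(y)\setminus\{x\}$ to be uniform, $1/(d(y)-1)$ each. This gives
\[
	\log\frac{A^{(t+1)}_{xy}}{d(y)-1}\ge\frac1{d(y)-1}\sum_{z\in N(y)\setminus\{x\}}\log A^{(t)}_{yz}\,.
\]
Next I would multiply both sides by a suitable vertex weight and sum over all $(x,y)\in\olE$, so that the double sum on the right telescopes: summing $\sum_{x\in N(y)}\sum_{z\in N(y)\setminus\{x\}}\log A^{(t)}_{yz}$ over $y$ recovers, for each ordered pair $(y,z)$, a coefficient $d(y)-1$ coming from the choices of $x\in N(y)\setminus\{z\}$. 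Choosing to weight the pair $(x,y)$ by $d(y)-1$ in the outer sum makes the bookkeeping clean; one arrives at an inequality of the shape
\[
	\sum_{(x,y)\in\olE}(d(y)-1)\log\frac{A^{(t+1)}_{xy}}{d(y)-1}
	\ \ge\ C+\sum_{(x,y)\in\olE}(d(y)-1)\log\frac{A^{(t)}_{xy}}{d(y)-1}\,,
\]
where $C=\sum_{y\in V}(d(y)-1)d(y)\log(d(y)-1)$ is a fixed quantity, and the base case $t=1$ makes the sum $\sum_{(x,y)}(d(y)-1)\log\bigl(A^{(1)}_{xy}/(d(y)-1)\bigr)$ vanish. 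Iterating $s-1$ times and then applying~\eqref{eq:WAG2} once more to pass from the weighted sum of $\log A^{(s)}_{xy}$ to $\log A^{(s)}$ should yield $A^{(s)}\ge dn\Lambda^{s-1}$ after identifying the accumulated constant $(s-1)C$, divided by the total weight $\sum_{(x,y)}(d(y)-1)=\sum_y d(y)(d(y)-1)$, with $(s-1)\log\Lambda$.

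\textbf{Main obstacle.} The delicate point is the choice of weights. In the walk case the natural weight $d(x)$ on vertices produced exactly the geometric mean $\Psi=\prod d(x)^{d(x)/dn}$; here I need the weights on ordered pairs $(x,y)$ (and the probability weights used inside AM--GM) to be arranged so that the telescoped constant, after normalisation by the total weight, is precisely $\log\Lambda=\sum_i\frac{d_i}{nd}\log(d_i-1)$ and not some other symmetric function of the degrees. Getting the two layers of weighting consistent---the uniform weights $1/(d(y)-1)$ inside the recursion and the weights $d(y)-1$ (or a normalisation thereof) in the outer sum---so that the $(d(y)-1)$ factors cancel against the entropy terms in just the right way is the crux; once the weights are pinned down, the rest is a routine induction plus two invocations of~\eqref{eq:WAG1}--\eqref{eq:WAG2}. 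A secondary check is that $A^{(1)}_{xy}=d(y)-1$ genuinely kills the initial sum, which it does precisely because the ratio inside the logarithm is $1$.
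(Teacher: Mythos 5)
Your overall strategy---an AM--GM induction on the quantities $A^{(t)}_{xy}$ via the recursion $A^{(t+1)}_{xy}=\sum_{z\in N(y)\sm\{x\}}A^{(t)}_{yz}$---is exactly the one the paper uses, but the weighting you propose does not work, and the step you yourself flag as the crux is precisely where the argument breaks. If you weight the pair $(x,y)$ by $d(y)-1$ in the outer sum, then after summing the right-hand sides of your pointwise inequalities you obtain, as you correctly compute, the coefficient $d(y)-1$ on each pair $(y,z)$; that is, the weight attaches to the \emph{first} vertex of the pair, whereas on the left-hand side it attaches to the \emph{second}. The two weighted sums $\sum_{(u,v)\in\olE}\bigl(d(v)-1\bigr)\log A^{(t)}_{uv}$ and $\sum_{(u,v)\in\olE}\bigl(d(u)-1\bigr)\log A^{(t)}_{uv}$ need not coincide for irregular graphs, so the displayed inequality ``of the shape'' you assert does not follow and the induction does not close. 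Moreover, even if it did, your accumulated constant normalises to $\bigl(\sum_y d(y)(d(y)-1)\log(d(y)-1)\bigr)\big/\bigl(\sum_y d(y)(d(y)-1)\bigr)$, which is the $d_i(d_i-1)$-weighted average of $\log(d_i-1)$, not $\log\Lambda=\sum_i\tfrac{d_i}{nd}\log(d_i-1)$; the two agree only for regular graphs.

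The repair is that no outer weight is needed at all: sum the inequality $\log A^{(t)}_{xy}\ge\log\bigl(d(y)-1\bigr)+\frac1{d(y)-1}\sum_{z\in N(y)\sm\{x\}}\log A^{(t-1)}_{yz}$ over $(x,y)\in\olE$ with weight $1$. Then for each fixed $(y,z)$ the $d(y)-1$ admissible choices of $x$ exactly cancel the AM--GM exponents $1/(d(y)-1)$, the sum telescopes cleanly to $\sum_{(y,z)\in\olE}\log A^{(t-1)}_{yz}$, and the per-step constant is $\sum_{(x,y)\in\olE}\log\bigl(d(y)-1\bigr)=\sum_y d(y)\log\bigl(d(y)-1\bigr)=dn\log\Lambda$ on the nose; a final application of~\eqref{eq:WAG2} with uniform weights $1/dn$ then gives $A^{(s)}\ge dn\Lambda^{s-1}$. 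A secondary correction: with the paper's definition of an $s$-arc as a sequence of $s$ ordered pairs, the base case is $A^{(1)}_{xy}=1$ (the arc consisting of $(x,y)$ alone), not $d(y)-1$; your convention shifts the index by one, so even a repaired version of your computation would bound the number of $(s+1)$-arcs rather than $s$-arcs and would not match the exponent $s-1$ in the statement.
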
   

\begin{proof}
	Given any integer $t\ge 2$ the inequality~\eqref{eq:WAG2} yields
		\begin{align*}
		\sum_{(x, y)\in \olE} \log A^{(t)}_{xy}
		&=
		\sum_{(x, y)\in \olE} \log\bigl(d(y)-1\bigr)
		+
		\sum_{(x, y)\in \olE} \log\frac{\sum_{z\in N(y)\sm\{x\}} A^{(t-1)}_{yz}}
			{d(y)-1} \\
		&\ge
		\sum_{y\in V} d(y)\log\bigl(d(y)-1\bigr)
		+
		\sum_{xyz}\frac{\log A^{(t-1)}_{yz}}{d(y)-1}\,,
	\end{align*}
		where the last sum is extended over all triples $(x, y, z)\in V^3$ such that 
	$xy, yz\in E$ and $x\ne z$. This implies
		\[
		\sum_{(x, y)\in \olE} \log A^{(t)}_{xy}
		\ge
		dn\log\Lambda+\sum_{(x, y)\in \olE} \log A^{(t-1)}_{xy}\,,
	\]
		and in view of $A^{(1)}_{xy}=1$ for all $(x, y)\in\olE$ we obtain
		\[
		\sum_{(x, y)\in \olE} \log A^{(s)}_{xy}
		\ge
		(s-1)dn\log\Lambda
	\]
		by induction. Now a final application of~\eqref{eq:WAG2} discloses
		\[
		\log\frac{A^{(s)}}{dn}
		\ge 
		(dn)^{-1}\sum_{(x, y)\in \olE} \log A^{(s)}_{xy}
		\ge
		(s-1)\log\Lambda\,,
	\]
		from which the result follows.
\end{proof}

\begin{proof}[Proof of Theorem~\ref{thm:AHL2}]
	We begin with the easier case that $g=2h$ is even. Due to Lemma~\ref{lem:2140}
	we have 
		\[
		\sum_{xy\in E}\,\sum_{i=1}^h \bigl(A_{xy}^{(i)}+A_{yx}^{(i)}\bigr)
		\ge
		dn\sum_{i=1}^h \Lambda^{i-1}
		=
		|E|n_0(\Lambda+1, g)\,.
	\]
		Thus there exists an edge $xy\in E$ with 
		\begin{equation}\label{eq:2158}
		\sum_{i=1}^h \bigl(A_{xy}^{(i)}+A_{yx}^{(i)}\bigr)
		\ge
		n_0(\Lambda+1, g)\,.
	\end{equation}
		Starting from this edge we build the same tree as in the proof of 
	Theorem~\ref{thm:21} (see Figure~\ref{fig21B}). Because of $\gth(G)\ge g$
	the number of vertices belonging to this tree is exactly the left side 
	of~\eqref{eq:2158} and, therefore, we have indeed $|V(G)|\ge n_0(\Lambda+1, g)$.
	
	It remains to deal with the case that $g=2h+1$ is odd. For every vertex~$y$ 
	and every positive integer $t$ we denote the number of $t$-arcs starting at~$y$
	by $A^{(t)}_y$. A simple counting argument reveals 
	$\bigl(d(y)-1\bigr)A^{(t)}_y=\sum_{x\in N(y)} A^{(t+1)}_{xy}$. Together with 
	Lemma~\ref{lem:2140} this leads to
		\[
		\sum_{y\in V}\bigl(d(y)-1\bigr)\bigl(A^{(1)}_y+\dots+A^{(h)}_y\bigr)
		=
		\sum_{(x, y)\in\olE}\bigl(A^{(2)}_{xy}+\dots+A^{(h+1)}_{xy}\bigr)
		\ge
		dn(\Lambda+\dots+\Lambda^h)\,.
	\]
		
	Since~\eqref{eq:1144} implies $d\Lambda\ge (d-1)(\Lambda+1)$, the right side 
	is at least
		\[
		\sum_{y\in V}\bigl(d(y)-1\bigr)(\Lambda+1)(1+\dots+\Lambda^{h-1})\,.
	\]
		Consequently there exists a vertex $y$ such that 
		\[
		1+A^{(1)}_y+\dots+A^{(h)}_y
		\ge
		1+(\Lambda+1)(1+\dots+\Lambda^{h-1})
		=
		n_0(\Lambda+1, g)\,,
	\]
		and the proof can be completed by drawing the tree in Figure~\ref{fig21A}
	rooted at~$y$. 
\end{proof}

We would finally like to mention that Hoory~\cite{Hoory} suggested very 
recently to study generalised Moore bounds for irregular graphs in terms 
of universal coverings. This gives rise to some interesting open problems 
stated at the end of his manuscript. 

\subsection{Directed graphs}\label{subsec:CH}
Problems of a completely different flavour arise when instead of ordinary graphs we 
consider directed graphs. For definiteness we agree that our directed graphs, or 
{\it digraphs} for short, have no loops or parallel arcs, but we allow cycles of 
length~$2$. For every vertex $x$ of a 
directed graph $G$ we denote its {\it out-degree}, i.e., the number of arcs 
leaving~$x$, by~$d^+(x)$, and we write $\delta^+(G)=\min\{d^+(x)\colon x\in V(G)\}$ 
for the {\it minimum out-degree} of $G$. The {\it girth} of a directed graph $G$, 
denoted again by $\gth(G)$, is the length of a shortest directed cycle in $G$, if 
there exists any. If $G$ contains no directed cycle, or equivalently if $G$ is a 
subdigraph of a transitive tournament, we set $\gth(G)=\infty$. In analogy with the 
Moore bound for undirected graphs, it is natural to ask for a strong lower bound on 
$|V(G)|$ in terms of $\delta^+(G)$ and $\gth(G)$. Here is a construction due to 
Behzad, Chartrand, and Wall~\cite{BCW}.
    
\begin{exmp}\label{ex:1}
	Let integers $d\ge 1$ and $g\ge 2$ be given, and set $n=d(g-1)+1$. Let
	$G$ be the directed graph on $\ZZ/n\ZZ$ whose arcs are all pairs of the form 
	$(x, x+i)$, where $x\in V(G)$ and $i\in [d]$. Clearly we have $\delta^+(G)=d$
	and it is not difficult to verify $\gth(G)=g$.
\end{exmp}

A famous conjecture of Caccetta and H\"{a}ggkvist~\cite{CH78} asserts that this 
construction is optimal.

\begin{conj}[Caccetta \& H\"{a}ggkvist]\label{conj:CH}
	If $g, n\ge 2$, then every directed graph~$G$ on~$n$ vertices with 
	$\delta^+(G)\ge n/g$ satisfies $\gth(G)\le g$.
\end{conj}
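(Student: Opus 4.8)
\emph{This is one of the most famous open problems in combinatorics, so what follows is not a proof but an outline of the natural attack and of where it stalls.} By contraposition it suffices to show that a digraph $G$ on $n$ vertices with $\gth(G)>g$ has $\delta^+(G)<n/g$, i.e.\ that some vertex of $G$ has out-degree at most $\lceil n/g\rceil-1$. Almost all known progress concerns the emblematic case $g=3$: \emph{does $\delta^+(G)\ge n/3$ force a directed triangle?} Since a $2$-cycle already has length at most $3$, this case reduces to oriented graphs, and I would begin there.

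The standard strategy is a local counting argument. Assume $G$ is an oriented graph with $\delta^+(G)\ge cn$ containing no directed triangle, and try to bound $c$ from above. Triangle-freeness says that for every arc $(u,v)$ the out-neighbourhood $N^+(v)$ is disjoint from $N^-(u)\cup\{u\}$; sharper versions track how $N^+$, $N^-$ and second out-neighbourhoods interfere along short walks. Summing such constraints over all arcs and feeding them into a double-counting inequality yields an estimate $c\le\gamma$ for some explicit $\gamma$, and the game is to push $\gamma$ down to $1/3$. This genuinely works, but only partway: hand-crafted inequalities (Caccetta--H\"aggkvist, Bondy, Shen, Hamburger--Haxell--Kostochka) get $\gamma$ down to about $0.353$, while the current record $\gamma<0.3465$ of Hladk\'y, Kr\'al' and Norin comes from a flag-algebra computation rather than from a transparent argument. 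A parallel route via Seymour's second-neighbourhood conjecture (every oriented graph has a vertex $v$ with $|N^{++}(v)|\ge|N^+(v)|$), which would imply the case $g=3$, is equally stuck.

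For general $g$ even the linear dependence on $1/g$ is out of reach: the best unconditional statement (Chv\'atal--Szemer\'edi, sharpened by Nishimura and by Shen) is merely that $\gth(G)\le\lceil n/\delta^+(G)\rceil+c$ for an absolute constant $c$ --- short of the conjecture by an additive, rather than multiplicative, error that nobody can remove. The real obstacle, to my mind, is structural: the extremal Example~\ref{ex:1} is a single rigid circulant, and there is no known ``stability'' mechanism forcing a near-extremal digraph toward it, so every counting argument leaves a stubborn gap that flag algebras narrow numerically without closing. Accordingly I would not expect a short proof; I would instead regard the identification of the right stability phenomenon --- or of a genuinely new degree-type invariant replacing $\delta^+$, in the spirit of the quantity $\Lambda(G)$ from Theorem~\ref{thm:AHL2} --- as the problem one actually has to solve.
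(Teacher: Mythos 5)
This item is a conjecture, not a theorem: the paper states the Caccetta--H\"aggkvist problem as an open question and offers no proof, so there is nothing for your argument to be measured against, and you were right not to manufacture one. Your outline of the standard attack (local counting over arcs, flag algebras, the additive-error results of Chv\'atal--Szemer\'edi type) is consistent with the survey's own discussion in \S\ref{subsec:CH}.

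Two small factual corrections. First, the numerical record you quote is out of date relative to the paper: the survey reports that de Joannis de Verclos, Sereni, and Volec have pushed the admissible constant to $\gamma=0.3388$, below the Hladk\'y--Kr\'al'--Norin value $0.3465$ you cite. Second, your remark that Seymour's second-neighbourhood conjecture ``would imply the case $g=3$'' is an overstatement: the standard implication (via the disjointness of $\{v\}$, $N^-(v)$, $N^+(v)$, and $N^{++}(v)$ in a digraph of girth greater than $3$) requires a minimum \emph{in}-degree hypothesis as well, so it only yields the symmetric-degree variant of Conjecture~\ref{conj:CH3}, not the conjecture as stated with an out-degree condition alone. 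Otherwise your assessment of where the difficulty lies --- the profusion of extremal configurations (cf.\ the recursive blow-up constructions of Bondy and Razborov described in the paper) and the absence of a stability mechanism --- matches the paper's framing of why the problem is hard.
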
 

For $g=2$ an easy application of the box principle (Schubfachprinzip) 
shows that this is indeed true. So far most of the effort devoted to 
the Caccetta-H\"{a}ggkvist conjecture has revolved around the case $g=3$,
which seems to be both the most approachable and the most plausible one. 
Let us restate this case as follows. 

\begin{conj}[Caccetta \& H\"{a}ggkvist, $g=3$]\label{conj:CH3}
	Every directed graph $G$ on $n$ vertices without $2$-cycles 
	which satisfies $\delta^+(G)\ge n/3$ contains a directed $3$-cycle.
\end{conj}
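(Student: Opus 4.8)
Conjecture~\ref{conj:CH3} is of course open; what follows is therefore not a proof but the line of attack that has produced all the known partial results, and the one I would pursue. The first move is a reduction to a tractable class: by a standard argument (delete superfluous arcs, equalise out-degrees, and pass to a suitable blow-up) it suffices to prove the statement when $G$ is $d$-out-regular and $n$ is as large as the hypothesis allows, namely $n=3d$. So one may assume $G$ is a triangle-free oriented graph on $3d$ vertices in which every vertex has out-degree exactly $d$, and try to reach a contradiction.

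The only structural information available is local. Fix a vertex $v$ and split $V(G)\setminus\{v\}$ into its out-neighbourhood $N^+(v)$, its in-neighbourhood $N^-(v)$, and the set $R$ of vertices not adjacent to $v$; these are disjoint because $G$ has no $2$-cycle. Since $G$ has no directed triangle there is \emph{no arc from $N^+(v)$ to $N^-(v)$}, because an arc $a\to b$ with $a\in N^+(v)$, $b\in N^-(v)$ would close the triangle $v\to a\to b\to v$. Consequently every out-arc of a vertex $a\in N^+(v)$ lands in $N^+(v)\setminus\{a\}$ or in $R$, which forces $R$ to be large, and symmetrically for $N^-(v)$. The plan from here is to sum such relations over all choices of $v$ and to double-count directed paths of length two, second out-neighbourhoods, the quantities $\sum_v|N^+(v)|^2$, and the forbidden directed triangles, and then to optimise the resulting inequalities. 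This is precisely how Caccetta and H\"{a}ggkvist reached a constant near $0.3820$, and how Bondy, then Shen (obtaining $3-\sqrt7$), and then Hamburger, Haxell, and Kostochka improved it further.

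The strongest bound to date instead runs through Razborov's flag-algebra machinery, applied to this problem by Hladk\'y, Kr\'al', and Norin, who brought the constant below $0.3466$: one regards the densities of all small sub-digraphs as unknowns, encodes ``no directed triangle'' and the out-degree hypothesis as linear constraints, and produces (numerically, then rigorously rounded) a sum-of-squares certificate forcing the triangle density to be positive. I would start from such a certificate and attempt to combine it with a stability analysis centred on the extremal configuration --- the circulant digraph on $\ZZ/(3d+1)\ZZ$ with out-set $\{1,\dots,d\}$, which is Example~\ref{ex:1} with $g=4$ and is believed to be essentially the unique near-extremal example --- in the hope of driving the constant down to $1/3$.

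That final step is the crux, and it is where every approach has stalled. On the one hand the extremal configuration is conjecturally unique and completely rigid, so there is no slack for a soft compactness or stability argument; on the other hand the semidefinite certificates plateau well above $1/3$, and it is not understood what additional structural input could close the remaining gap. There are alternative points of entry --- the $g=3$ case is implied by Seymour's second-neighbourhood conjecture, and the problem admits several equivalent reformulations --- but none of them has so far broken through either.
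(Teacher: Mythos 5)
This statement is an open conjecture, and the paper accordingly offers no proof of it: the end-of-proof symbol never appears, and the surrounding text only surveys partial results. You correctly recognise this, and your sketch of the standard lines of attack (the local analysis around a vertex using the absence of arcs from $N^+(v)$ to $N^-(v)$, Bondy-style subgraph counting, and Razborov's flag-algebra method) matches the paper's own discussion of the state of the art. Two factual points are worth correcting. First, the paper reports the current record as $\gamma=0.3388$ (de Joannis de Verclos, Sereni, and Volec), so the Hladk\'y--Kr\'al'--Norin bound you quote is no longer the strongest. Second, and more substantively, your proposed endgame --- a stability analysis ``centred on the extremal configuration,'' which you describe as conjecturally unique --- runs directly against what the paper identifies as the chief obstacle: besides the circulant of Example~\ref{ex:1}, there is a large family of further extremal configurations obtained by recursively substituting extremal digraphs into the blocks of other extremal digraphs (Bondy's lexicographic products, generalised by Razborov; see Figure~\ref{fig26} already at $n=16$). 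Because the extremal examples are plentiful and structurally heterogeneous, a stability argument anchored to a single rigid extremiser cannot close the gap to $1/3$; any successful approach must accommodate this whole nested hierarchy, which is exactly what Razborov's Theorem~\ref{thm:raz13} (forbidding the three induced subdigraphs absent from all these configurations) is designed to probe.
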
 

An often cited reason for the enormous difficulty of this problem is that,
apart from the construction described in Example~\ref{ex:1}, it has a large 
number of further extremal configurations. This can already be seen for $n=16$,
where a second construction is obtained by starting with four blocks containing
four vertices each. Into every block we insert a directed four-cycle and then 
the blocks themselves are joined cyclically to each other (see Figure~\ref{fig26}).

	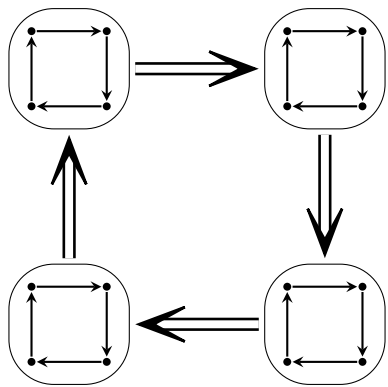
\begin{figure}[h!]
	\begin{tikzpicture}
		
		\def\jj{1.7}  		\def\ii{.5}  		\coordinate (a) at (-\jj,-\jj);
		\coordinate (b) at (-\jj, \jj);
		\coordinate (c) at (\jj, \jj);
		\coordinate (d) at (\jj,-\jj);
		\foreach \k in {a,b,c,d}{
			\coordinate (\k1) at ($(\k) +(-\ii,-\ii)$);
			\coordinate (\k2) at ($(\k) +(-\ii,\ii)$);
			\coordinate (\k3) at ($(\k) +(\ii,\ii)$);
			\coordinate (\k4) at ($(\k) +(\ii,-\ii)$);
			\coordinate (\k5) at ($(\k) +(-1.6*\ii,-1.6*\ii)$);
			\coordinate (\k6) at ($(\k) +(-1.6*\ii,1.6*\ii)$);
			\coordinate (\k7) at ($(\k) +(1.6*\ii,1.6*\ii)$);
			\coordinate (\k8) at ($(\k) +(1.6*\ii,-1.6*\ii)$);
			\foreach \s in {1,...,4}{\fill (\k\s) circle (1.5pt);}
			
			\draw [thick, shorten <= 2pt, shorten >=2pt, -stealth] (\k1)--(\k2);
			\draw [thick, shorten <= 2pt, shorten >=2pt, -stealth] (\k2)--(\k3);
			\draw [thick, shorten <= 2pt, shorten >=2pt, -stealth] (\k3)--(\k4);
			\draw [thick, shorten <= 2pt, shorten >=2pt, -stealth] (\k4)--(\k1);
			
			\draw [rounded corners=17] (\k5)--(\k6)--(\k7)--(\k8)--cycle;
}
	\foreach \i/\j in {a/b, b/c, c/d, d/a}
	\draw [line width = 1pt, double distance=3pt, {-Stealth[length=25pt, inset=17pt]}, shorten <= 25pt, shorten >=25pt] (\i)--(\j);

	\end{tikzpicture}
	\caption{A digraph $G$ with $|V(G)|=16$, $\delta^+(G)=5$, and $\gth(G)>3$. Each of the four 
	double-arrows represents $4\cdot 4=16$ arcs.}
	\label{fig26}
	\end{figure}
 
More generally, we can recursively do the following: Our building blocks are  
the digraphs provided by the case $g=3$ of Example~\ref{ex:1}; for every
integer $n\ge 4$ with $n\equiv 1\pmod{3}$ there is one of them on $n$ vertices 
with $\delta^+(G)\ge (n-1)/3$ and $\gth(G)>3$.
Now suppose that two integers $m, n\ge 4$ with $m, n\equiv 1\pmod{3}$ are given.
Take $m$ disjoint blocks consisting of $n$
vertices. Put into every block a digraph~$G$ with $\delta^+(G)\ge (n-1)/3$ and 
$\gth(G)>3$ (there is no need to take isomorphic digraphs for different 
blocks). Then join the blocks to each other according to a digraph~$H$ 
on $m$ vertices with $\delta^+(H)\ge (m-1)/3$ and $\gth(H)>3$. More explicitly,
this means that we replace the vertices of $H$ by the blocks and every arc $(x, y)$
of~$H$ by the $n^2$ arcs from the vertices in the block replacing~$x$ to the block 
replacing~$y$. Clearly the resulting digraph~$K$ has~$mn$ vertices, its 
minimum out-degree is at least $(m-1)n/3+(n-1)/3=(mn-1)/3$, and by inspection we 
see $\gth(K)>3$. At this level of generality the construction is due to 
Razborov~\cite{Raz13}, but the special case where in each step one inserts mutually 
isomorphic digraphs $G$ into the blocks can already be found in the work of 
Bondy~\cite{Bondy} (who framed it as taking the lexicographic product of $G$ and $H$).  

Partial results towards Conjecture~\ref{conj:CH3} are mostly of one of two kinds. 
First, many authors have proved the conjecture under the more restrictive minimum 
degree condition ${\delta^+(G)\ge \gamma n}$ for smaller and smaller values of 
$\gamma>\frac13$. This line of research was initiated by Caccetta and 
H\"{a}ggkvist~\cite{CH78} themselves, who obtained such a result for
$\gamma=(3-\sqrt{5})/2\approx 0.3820$.    
A numer\-ically negligible improvement to~$\gamma=(2\sqrt{6}-3)/5\approx 0.3798$ 
was reached by Bondy~\cite{Bondy}. Nevertheless the subgraph counting strategy
Bondy introduced turned out to have far-reaching consequences. In fact, it can 
be viewed as an important precursor of Razborov's influential flag algebra 
method~\cite{Raz07}. Most of the subsequent progress depends heavily on 
Razborov's ideas and on massive electronic computations. The current world record is 
an unpublished result of de Joannis de Verlos, Sereni, and Volec, who showed 
that $\gamma=0.3388$ is admissible (as reported in~\cite{GV}).   

The second group of partial results towards Conjecture~\ref{conj:CH3} addresses 
special classes of digraphs. Perhaps the most promising among them is due to 
Razborov~\cite{Raz13}. To provide some context, we remark that the extremal digraphs
described above contain no induced copies of the three digraphs drawn in 
Figure~\ref{fig27}.

\begin{thm}[Razborov]\label{thm:raz13}
	Let $G$ be a digraph on $n$ vertices satisfying $\delta^+(G)\ge n/3$.
	If $G$ contains no induced copies of the three digraphs in Figure~\ref{fig27}, 
	then $\gth(G)\le 3$. \qed
\end{thm}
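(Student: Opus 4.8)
\emph{Proof proposal.} At present this is the kind of statement that is only known through Razborov's flag--algebra (semidefinite) method~\cite{Raz07}, so the plan is to follow that route. Suppose for contradiction that $G$ has no directed triangle; since a digon would already force $\gth(G)=2\le 3$, we may assume $G$ has no digons, hence is an oriented graph. First I would pass to a density formulation: for any sequence of such $G$ with $n\to\infty$, take a convergent subsequence and obtain a limit functional $\phi$ on the flag algebra $\mathcal{A}^0$ of digon--free digraphs satisfying $\phi(\vec{C}_3)=0$, $\phi(F_i)=0$ for each of the three forbidden digraphs $F_i$ of Figure~\ref{fig27} (equivalently, $\phi$ vanishes on the ideal generated by $F_1,F_2,F_3$, so we may work modulo that ideal and simply never write down a flag containing some $F_i$), and — the limiting form of $\delta^+(G)\ge n/3$ — that $\phi$ is nonnegative on every element of the form ``average of $p^2\cdot g$'', where $g$ is the rooted flag ``labelled vertex $\to$ unlabelled vertex'' shifted by $-\tfrac13$ and $p$ is an arbitrary rooted flag of the appropriate type.

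The heart of the argument is to exhibit a positivity certificate witnessing that these constraints are inconsistent. Concretely: fix a small truncation order $N$; for each relevant rooted type $\sigma$ pick finitely many $\sigma$--flags together with a positive semidefinite matrix $Q_\sigma\succeq 0$; pick finitely many flags $p_k$ and nonnegative reals $c_k$; and arrange that, as an identity in $\mathcal{A}^0$ modulo the ideal generated by $F_1,F_2,F_3$ and modulo flags on more than $N$ vertices, the sum of the unlabelled ``squares'' $\sum_\sigma Q_\sigma$ plus $\sum_k c_k\,[p_k^2 g]$ equals a strictly negative constant plus a nonnegative multiple of $\vec{C}_3$. Evaluating at $\phi$ makes the left side $\ge 0$ and the right side negative — the desired contradiction, which translates back to a quantitative lower bound on the directed--triangle density of every large $G$ with $\delta^+(G)\ge n/3$ (small $n$ being trivial). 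It is precisely the exclusion of $F_1,F_2,F_3$ that makes this feasible: one may delete every flag, and every row and column of every $Q_\sigma$, in which some $F_i$ occurs, and this reduction shrinks the semidefinite program enough that a certificate exists at the critical value $1/3$. Since a floating--point solution is not a proof, I would finally round the $Q_\sigma$ to exact rationals, re--verify $Q_\sigma\succeq 0$ and the exact flag identity symbolically, and invoke the Cauchy--Schwarz calculus of~\cite{Raz07}.

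The main obstacle is the middle step, and it is a genuine one: there is no a priori reason to know at which order $N$ a certificate exists, nor that these three particular configurations are exactly the ones pulling the threshold all the way down to $1/3$ rather than to $1/3+\epsilon$; finding the right $N$, the right set of types, and a well--conditioned solution that rounds cleanly is the computationally heavy and conceptually delicate part — which is why the theorem is recorded here with a $\Box$. A secondary, much easier task is the bookkeeping: enumerating the handful of digon--free digraphs on three vertices, matching them against Figure~\ref{fig27}, and checking that the iterated blow--up constructions of Figure~\ref{fig26} and of~\cite{Raz13} omit all three, which is what guarantees that the hypothesis is not vacuous and that $n/3$ is the honest threshold. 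I do not see a purely combinatorial proof: extracting usable global structure from the three excluded subdigraphs alone looks hard.
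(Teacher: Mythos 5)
The survey itself gives no proof of this theorem --- by the convention announced in the introduction, the terminal $\Box$ signals that the argument is too long to reproduce --- so the comparison has to be with Razborov's original proof in \cite{Raz13}. Measured against that, your proposal has a gap which is not the ``computationally heavy'' certificate search you flag, but something fatal to the whole plan: the certificate you propose to find provably does not exist. The paragraph of the survey immediately preceding the theorem constructs, for all $m,n\equiv 1\pmod 3$, digraphs $K$ on $mn$ vertices with every out-degree equal to $(mn-1)/3$, with $\gth(K)>3$, and --- as the survey states explicitly --- with no induced copies of the three digraphs of Figure~\ref{fig27}. Passing to the limit along these examples yields a functional $\phi$ with $\phi(\vec C_3)=0$, $\phi$ vanishing on the ideal generated by the $F_i$, and out-degree exactly $1/3$ almost everywhere, so that your shifted degree flags $[p_k^2 g]$ also evaluate to $0$. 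Plugging this $\phi$ into the identity you want (``sum of squares plus degree terms $=$ negative constant plus a nonnegative multiple of $\vec C_3$'') gives ``nonnegative $=$ negative''. The source of the trouble is that $\delta^+\ge n/3$ and $\delta^+\ge (n-1)/3$ collapse to the same asymptotic constraint, so the exact threshold cannot be certified by a purely limiting positivity argument; this is the standard obstruction to proving sharp results by semidefinite flag-algebra certificates in the presence of a rich extremal family, and the survey's own emphasis on the abundance of extremal configurations is pointing at exactly this. Your ``secondary, much easier'' observation that the blow-ups avoid the $F_i$ is correct, but you drew the wrong conclusion from it: it does not merely show the hypothesis is non-vacuous, it kills the certificate.

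What \cite{Raz13} actually does is, in Razborov's own description, an interplay of finite combinatorial arguments with flag-algebra computations: the exclusion of the three digraphs rigidifies the out-neighbourhoods and their overlaps enough to support an induction on $n$, and it is the finite, per-$n$ part of the argument that retains the discrete slack between $\lceil n/3\rceil$ and $(n-1)/3$ which your density formulation throws away. To salvage your approach you would have to either work at finite $n$ throughout, or supplement the asymptotic certificate with a stability-and-perturbation step showing that near-extremal $G$ are structurally close to the iterated blow-ups and then exploit the extra unit of out-degree directly; as written, the ``heart of the argument'' cannot be carried out.
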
 

\usetikzlibrary {arrows.meta} 

	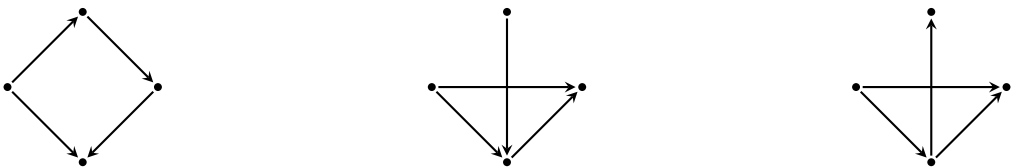
\begin{figure}[h!]
		\begin{subfigure}[b]{0.32\textwidth}
			\centering
			\begin{tikzpicture}
				\def\ii{1}
		\coordinate (a) at (-\ii,0);
		\coordinate (b) at (0, \ii);
		\coordinate (c) at (\ii,0);
		\coordinate (d) at (0,-\ii);
		\foreach \i in {a,b,c,d} \fill (\i) circle (1.5pt);
		
		\draw [thick, shorten <=2.5pt, shorten >=2.5pt, -stealth ](a) -- (b);
		\draw [thick, shorten <=2.5pt, shorten >=2.5pt, -stealth ](b) -- (c);
		\draw [thick, shorten <=2.5pt, shorten >=2.5pt, -stealth ](c) -- (d);
		\draw [thick, shorten <=2.5pt, shorten >=2.5pt, -stealth ](a) -- (d);

	\end{tikzpicture}
		\end{subfigure}
	\hfill
	\begin{subfigure}[b]{0.32\textwidth}
		\centering
		\begin{tikzpicture}
			\foreach \i in {a,b,c,d} \fill (\i) circle (1.5pt);
			\draw [thick, shorten <=2.5pt, shorten >=2.5pt, -stealth ](a) -- (c);
			\draw [thick, shorten <=2.5pt, shorten >=2.5pt, -stealth ](a) -- (d);
			\draw [thick, shorten <=2.5pt, shorten >=2.5pt, -stealth ](d) -- (c);
			\draw [thick, shorten <=2.5pt, shorten >=2.5pt, -stealth ](b) -- (d);
		
		\end{tikzpicture}
			\end{subfigure}
\hfill
\begin{subfigure}[b]{0.32\textwidth}
	\centering
	\begin{tikzpicture}
		\foreach \i in {a,b,c,d} \fill (\i) circle (1.5pt);
		\draw [thick, shorten <=2.5pt, shorten >=2.5pt, -stealth ](a) -- (c);
		\draw [thick, shorten <=2.5pt, shorten >=2.5pt, -stealth ](a) -- (d);
		\draw [thick, shorten <=2.5pt, shorten >=2.5pt, -stealth ](d) -- (c);
		\draw [thick, shorten <=2.5pt, shorten >=2.5pt, -stealth ](d) -- (b);
		
	\end{tikzpicture}
		\end{subfigure}
	\caption{Razborov's forbidden subdigraphs}
	\label{fig27}
	\end{figure}
 
Next we come to some selected partial results towards the general version 
of the problem, Conjecture~\ref{conj:CH}. Chv\'atal and Szemer\'edi~\cite{CS83}
showed $\gth(G)\le |V(G)|/\delta^+(G)+2500$ for every digraph $G$. The explicit 
constant $2500$ was later lowered to $73$ by Shen~\cite{Shen02}. Earlier, 
Shen had already resolved the case 
$|V(G)|\ge (\delta^+(G)-1)(2\delta^+(G)-1)$ in~\cite{Shen00},
but it should be mentioned that in this regime the conjectured nested nature 
of the extremal configurations is irrelevant.
In a completely different direction Hamidoune proved the Caccetta-H\"{a}ggkvist
conjecture for vertex transitive digraphs~\cite{Ham81a}.

There are also quite a few problems on digraphs motivated by or related to the
Caccetta-H\"{a}ggkvist conjecture. Here we would like to offer two of them, chosen 
for aesthetic reasons alone. The first is from~\cite{CSS}.

\begin{conj}[Chudnovsky, Seymour \& Sullivan]
	Every digraph $G$ with $\gth(G)>3$ satisfies $\beta(G)\le \gamma(G)/2$,
	where $\beta(G)$ denotes the least number of arcs of $G$ whose deletion 
	yields an acyclic digraph, and $\gamma(G)$ is the number of non-adjacent 
	pairs of vertices of $G$.
\end{conj}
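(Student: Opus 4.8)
The plan is to treat the bound as an extremal statement and attack a vertex-minimal counterexample $G$. A first normalisation: deleting a source or a sink changes neither $\beta$ nor the acyclicity structure, and can only decrease $\gamma$, so it suffices to consider digraphs in which every vertex has positive in- and out-degree; one may further assume $G$ is strongly connected. Fix a linear order $\pi=(v_1,\dots,v_n)$ of $V(G)$ realising $\beta(G)$, i.e.\ with exactly $\beta(G)$ backward arcs; the forward arcs then span a DAG whose topological order is $\pi$.

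The heart of the matter is to convert the optimality of $\pi$ together with the hypothesis $\gth(G)>3$ into local inequalities. For a backward arc $e=(v_j,v_i)$ with $i<j$ write $I_e=\{v_{i+1},\dots,v_{j-1}\}$. Triangle-freeness forbids, for every $w\in I_e$, the simultaneous presence of the forward arcs $v_i\to w$ and $w\to v_j$, since those would close a directed $3$-cycle through $v_i,w,v_j$; thus the forward-neighbourhood sets $\{w\in I_e:v_i\to w\}$ and $\{w\in I_e:w\to v_j\}$ are disjoint subsets of $I_e$. Complementing this, the optimality of $\pi$ — applied by sliding $v_i$ rightwards past $I_e\cup\{v_j\}$, or $v_j$ leftwards past $\{v_i\}\cup I_e$ — forces these forward sets to be at least as large as the corresponding backward sets $\{w\in I_e:w\to v_i\}$ and $\{w\in I_e:v_j\to w\}$. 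Carrying this bookkeeping out for all backward arcs, and assigning each backward arc to the non-adjacent pairs it is thereby made responsible for (a discharging/weighting scheme), I would expect to recover first the easy estimate $\beta(G)\le\gamma(G)$, and then, by arranging that each non-adjacent pair receives charge from at most two backward arcs, an improvement of the form $\beta(G)\le(1-c)\gamma(G)$ for some explicit $c>0$.

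The step I expect to be the genuine obstacle is closing the gap from such a constant down to the sharp value $\tfrac12$. Both the local charging above and the obvious probabilistic alternative (a random order has expected backward weight $e(G)/2$, which one would bias towards a maximum acyclic subdigraph) seem to stall at a constant bounded away from $\tfrac12$; this is consistent with the published state of the art, where only bounds $\beta\le c\,\gamma$ with $c\approx0.88$ are known unconditionally. Reaching exactly $\tfrac12$ looks like it needs a stability analysis: show that simultaneous near-equality in all the local inequalities pins $G$ down to a very restricted extremal family, and then verify the inequality on that family by hand. The delicate point is ruling out near-extremal examples that are globally, but not locally, close to the extremal configurations; this is where I would anticipate a genuinely new idea. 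A complementary line worth pursuing in parallel is to cast the whole inequality as a semidefinite/flag-algebra program over digraphs of bounded order — the same machinery that has driven recent progress on the Caccetta--H\"{a}ggkvist problem discussed above — which should at least sharpen the constant and might, with a sufficiently rich set of forbidden configurations, settle the conjecture outright.
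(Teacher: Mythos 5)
The statement you are addressing is an open conjecture, not a theorem: the survey records that Chudnovsky, Seymour, and Sullivan proved it only for circular interval digraphs, that the unconditional bounds known are the linear estimate $\beta(G)\le\gamma(G)$ and its improvement $\beta(G)\le 0.88\,\gamma(G)$ by Dunkum, Hamburger, and P\'or, and that the constant $\tfrac12$ remains out of reach. Your proposal does not close this gap, and to your credit you say so explicitly: the local exchange inequalities you extract from an optimal ordering $\pi$ (triangle-freeness making the forward out- and in-neighbourhoods inside $I_e$ disjoint, and optimality of $\pi$ bounding the backward sets by the forward ones) are essentially the ingredients of the known proofs of the linear bound, and the discharging scheme you sketch is only claimed to yield $\beta(G)\le(1-c)\gamma(G)$ for some unspecified $c>0$. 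The passage from such a constant to the sharp value $\tfrac12$ is precisely the content of the conjecture, and your text offers no mechanism for it beyond the hope that a stability analysis or a flag-algebra computation might succeed. That is a research programme, not a proof.

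Two smaller cautions on the parts you do spell out. First, the "sliding" argument needs more care than stated: moving $v_i$ rightwards past the whole block $I_e\cup\{v_j\}$ flips the orientation status of \emph{every} arc between $v_i$ and that block simultaneously, so what optimality gives you is an inequality between the \emph{totals} of forward and backward arcs across the block, not the termwise containments your charging scheme seems to want; one must also track the arcs from $v_i$ to vertices outside $I_e$ whose status is unaffected. Second, when you assign each backward arc to non-adjacent pairs inside $I_e$, distinct backward arcs can be responsible for the same pair many times over (not just twice), since the intervals $I_e$ can nest arbitrarily deeply; controlling this multiplicity is exactly where the known arguments lose their constant. Neither point is fatal to recovering $\beta\le\gamma$, but both illustrate why the approach stalls where you predict it will.
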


Equality holds for digraphs obtained from balanced blow-ups of the directed
four-cycle by inserting transitive tournaments into the four vertex classes. 
Chudnovsky, Seymour, and Sullivan themselves proved their conjecture
for a natural class of digraphs containing these examples, called {\it circular 
interval digraphs}. These are the digraphs whose vertex sets can be enumerated 
in such a way as $\{v_i\colon i\in \ZZ/n\ZZ\}$ that every vertex $v_i$ has an 
out-neighbourhood of the form $\{v_{i+1}, \dots, v_{i+j(i)}\}$ and an in-neighbourhood 
of the form $\{v_{i-1}, \dots, v_{i-k(i)}\}$. Furthermore they proved the linear
bound $\beta(G)\le \gamma(G)$ for all digraphs $G$ with $\gth(G)>3$, which 
was strengthened to $\beta(G)\le 0.88 \gamma(G)$ by Dunkum, Hamburger, 
and P\'or~\cite{DHP}.

The next problem is due to Seymour and Spirkl~\cite{SS20}. They call a digraph
{\it bipartite} if its underlying graph is bipartite; similarly, by a {\it bipartition}
of a bipartite digraph they mean a bipartition of its underlying graph. 

\begin{conj}[Seymour \& Spirkl]
	Let $k$ be a positive integer, and let $\alpha$, $\beta$ be positive 
	reals such that $k\alpha+\beta\ge 1$. Further, let $(A, B)$ be a bipartition 
	of a bipartite digraph $G$. If every vertex in $A$ has out-degree at least 
	$\beta|B|$ and every vertex in $B$ has out-degree at least~$\alpha |A|$,
	then $\gth(G)\le 2k$.
\end{conj}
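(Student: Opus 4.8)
The plan is to argue by contradiction: assume $G=(A\dcup B,E)$ satisfies the two out-degree hypotheses but $\gth(G)>2k$, so that $G$ has no directed cycle of length at most $2k$. Write $p=|A|$ and $q=|B|$. Two easy observations get us started: every directed walk of length at most $2k$ is then a directed path (a repeated vertex would close up a directed cycle of length at most $2k$), and, $G$ being bipartite, every directed cycle alternates between $A$ and $B$, so what has to be found is a closed alternating sequence $a_1\to b_1\to a_2\to\dots\to a_k\to b_k\to a_1$. The case $k=1$ is a one-line count: at least $\beta pq$ ordered pairs $(a,b)\in A\times B$ satisfy $a\to b$ and at least $\alpha pq$ satisfy $b\to a$, so when $\alpha+\beta>1$ some pair carries both arcs and $G$ has a directed $2$-cycle. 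The task is to make an argument of this shape survive for larger $k$.

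It is natural to pass to the auxiliary digraph $G^{(2)}_A$ on vertex set $A$, with $a\to a'$ whenever $a\to b\to a'$ for some $b\in B$, and symmetrically to $G^{(2)}_B$ on $B$; a directed $j$-cycle in either of them with $j\le k$ gives a closed walk of length $2j\le 2k$ in $G$, hence a short directed cycle. But since the out-neighbourhoods of the $b$'s may collapse onto the same $\alpha p$ vertices, $G^{(2)}_A$ only inherits $\delta^+(G^{(2)}_A)\ge\alpha p$, and likewise $\delta^+(G^{(2)}_B)\ge\beta q$, and neither $\alpha$ nor $\beta$ need be as large as $1/k$. Using only the pigeonhole bound for $2$-cycles, this reduction settles $\max(\alpha,\beta)\ge\tfrac12$ when $k\ge2$, and it would settle more of the range if one were willing to grant Conjecture~\ref{conj:CH3}; but the substantive instances, such as $\alpha=\beta=1/k$, are exactly those in which a single side carries too little information and one must exploit how the hypothesis $k\alpha+\beta\ge1$ couples $A$ and $B$. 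Any proof has to expand on the $B$-side and on the $A$-side in tandem rather than iterate one of them alone.

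The route I would pursue is an averaging argument in the spirit of the proofs of Theorems~\ref{thm:BR} and~\ref{thm:AHL2}, which hinge on iterated weighted AM--GM, carried out now on alternating walks. Writing $W^{(t)}_v$ for the number of directed walks of length $t$ from $v$, the two degree bounds give $\sum_{a\in A}W^{(2j)}_a\ge p(\alpha\beta pq)^{j}$, since each step out of $A$ has at least $\beta q$ and each step out of $B$ at least $\alpha p$ continuations, while $\gth(G)>2k$ forces every such walk of length at most $2k$ to be a path. The missing ingredient is a distinctness statement strong enough to consume the product on the left: one would like alternating walks that share their first arc out of $A$, or that have different lengths at most $2k$, to be forced apart at their later vertices, and this is exactly where the bipartite structure and the precise form of the threshold have to enter---note that $\beta$ occurs in $k\alpha+\beta$ only once, which suggests one pays for the first step separately and then iterates the $\alpha$-expansion $k$ times. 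If a bare walk count does not suffice, the fallback is the Bondy/flag-algebra style of subgraph counting behind essentially all progress on Conjecture~\ref{conj:CH}: record the local densities available in a hypothetical $2k$-cycle-free bipartite digraph and derive a linear or semidefinite inequality contradicting the degree conditions, with the circulant-type extremal bipartite digraphs showing up as the equality cases the argument must degenerate to.

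The main obstacle is the one that makes Conjecture~\ref{conj:CH} so stubborn: in a digraph out-neighbourhoods need not grow, so the layer-by-layer argument behind the Moore bound (Theorem~\ref{thm:21})---where one gets to say that $D_0,\dots,D_h$ are disjoint and almost all of them independent---has no direct counterpart, and a hypothetical short-cycle-free $G$ may keep all its iterated out-neighbourhoods small. One must design a weighting of alternating walks that is at once bounded below by the two out-degree conditions and, by the absence of any short directed cycle, kept from collapsing, and it is not clear a priori that $2k$ steps of alternation suffice rather than more. I would expect the honest route to run through a structural analysis of the extremal configurations, much as the $g=3$ case of the undirected conjecture (Conjecture~\ref{conj:CH3}) does, and I would not be surprised if the full bipartite statement turned out to be interreducible with the Caccetta--H\"aggkvist conjecture itself.
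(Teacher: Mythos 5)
The statement you are trying to prove is presented in the paper as an \emph{open conjecture}: the survey records only that Seymour and Spirkl verified the case $k=2$ and that the full statement would imply the Caccetta--H\"aggkvist conjecture (Conjecture~\ref{conj:CH}). There is no proof in the paper to compare against, and a complete argument here would be a major result, not an exercise. Your proposal, to its credit, essentially admits this in its final paragraph; but as submitted it is a research plan, not a proof, and the gap is the entire substance of the problem. Concretely, the walk-counting step cannot close on its own: if $\gth(G)>2k$ then every directed walk of length $2j\le 2k$ is indeed a path, but the number of such paths starting in $A$ is trivially at most $p^{j+1}q^{j}$, while your lower bound is $p(\alpha\beta pq)^{j}=p^{j+1}q^{j}(\alpha\beta)^{j}$ with $\alpha\beta\le 1$, so no contradiction arises. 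The ``distinctness statement strong enough to consume the product'' that you invoke is precisely the missing idea, and the circulant-type extremal examples (the bipartite analogues of Example~\ref{ex:1}) show that iterated out-neighbourhoods really can stay small, so no purely local expansion argument of Moore-bound type can succeed.

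Two smaller points. First, even your ``one-line'' base case $k=1$ only handles $\alpha+\beta>1$: with $\alpha+\beta=1$ the union bound permits the two arc sets to partition $A\times B$, and indeed the directed $4$-cycle with $|A|=|B|=2$ and $\alpha=\beta=\tfrac12$ meets the hypotheses as literally stated while having girth $4$, so the boundary case of the inequality needs care (and suggests the intended reading involves a strict inequality or a floor). Second, your reduction to the auxiliary digraphs $G^{(2)}_A$, $G^{(2)}_B$ and the observation that the hypothesis $k\alpha+\beta\ge 1$ weights the two sides asymmetrically are sensible first steps and consistent with how one would attack the problem, but they do not advance beyond what is already implicit in~\cite{SS20}. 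The correct conclusion to draw is the one you reach yourself: this conjecture is at least as hard as Caccetta--H\"aggkvist-type problems, and no proof is being offered.
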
 

As observed in~\cite{SS20}, this would imply Conjecture~\ref{conj:CH}. 
Seymour and Spirkl proved their conjecture for $k=2$. 
We would finally like to mention that Grzesik and Volec~\cite{GV}
have strong results on the problem where one wants to use
a minimum out-degree condition to enforce a directed cycle of given length
(rather than bounded length).

\section{The chromatic number}\label{sec:3}

\subsection{A theorem of Erd\H{o}s}\label{subsec:41}
A colouring of the vertices of a graph is said to be {\it proper} if any two 
adjacent vertices receive distinct colours.
The {\it chromatic number} of a graph $G$, denoted by $\chi(G)$, is the least natural 
number~$r$ such that there exists a proper $r$-colouring of $G$.
For reasons that will become apparent in \S\ref{subsec:35} and 
Section~\ref{sec:grt} this is a Ramsey theoretic invariant of~$G$. 
The question motivating us here is which graphs $F$ appear 
in all graphs whose chromatic number is sufficiently large.

\begin{fact}\label{f:F}
	For every forest $F$ there is a natural number $r$ such that every graph $G$
	with $\chi(G)>r$ has a subgraph isomorphic to $F$. 
\end{fact}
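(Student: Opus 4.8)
The plan is to prove that every forest $F$ embeds into any graph of sufficiently large chromatic number by induction on $|V(F)|$, using the standard fact that a graph of large chromatic number contains a subgraph of large minimum degree. First I would recall that every graph $G$ has a subgraph $G'$ with $\delta(G')\ge\chi(G)-1$: indeed, repeatedly deleting vertices of degree less than $\chi(G)-1$ cannot lower the chromatic number below $\chi(G)$ (a proper colouring of the remainder extends to the deleted vertex), so the process terminates at a nonempty subgraph with the stated minimum degree. Thus it suffices to show that every graph $G'$ with $\delta(G')\ge|V(F)|-1$ contains $F$ as a subgraph; then $r=|V(F)|-1$ works.

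To embed a forest $F$ with $k$ vertices into a graph $G'$ with $\delta(G')\ge k-1$, I would proceed greedily. Since $F$ is a forest, order its vertices $v_1,\dots,v_k$ so that each $v_j$ with $j\ge 2$ has at most one neighbour among $v_1,\dots,v_{j-1}$ (take a reverse order of a leaf-removal sequence, one tree-component at a time). Now define an embedding $\phi$ one vertex at a time: having placed $v_1,\dots,v_{j-1}$, we must place $v_j$ at a vertex of $G'$ that is distinct from the $j-1$ already-used vertices and, if $v_j$ has a (unique) earlier neighbour $v_i$, adjacent to $\phi(v_i)$. If $v_j$ has such a neighbour $v_i$, then $\phi(v_i)$ has at least $k-1$ neighbours in $G'$, of which at most $j-2\le k-2$ are already used, leaving a free choice; if $v_j$ has no earlier neighbour we only need to avoid $j-1\le k-1$ used vertices, and $|V(G')|\ge\delta(G')+1\ge k$ suffices. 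This completes the embedding.

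The main point requiring a little care — and the only place where the forest hypothesis is genuinely used — is the ordering step: it is precisely because $F$ is acyclic that we can order the vertices so that each new vertex has at most one back-neighbour, which is what keeps the greedy count ($j-2$ forbidden neighbours of $\phi(v_i)$) below the available degree. For a graph with a cycle the analogous greedy argument fails, since closing a cycle imposes two simultaneous adjacency constraints that a bounded-degree environment need not satisfy; this is exactly why the statement is restricted to forests. I do not anticipate any serious obstacle beyond making the leaf-ordering precise and checking the arithmetic in the two greedy cases.
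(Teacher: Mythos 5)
Your proof is correct and follows essentially the same route as the paper: extract a subgraph of minimum degree at least about $|V(F)|$ from any graph of large chromatic number (the paper does this via a minimal subgraph with $\chi>r$, you via iterated deletion of low-degree vertices — the same standard lemma), then embed the forest greedily along a leaf-ordering. Your version just spells out the greedy step in more detail and shaves the constant to $r=|V(F)|-1$.
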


\begin{proof}
	Set $r=|V(F)|$. Choose a minimal subgraph $G'$ of $G$ such that $\chi(G')>r$.
	For every vertex $x$ of $G'$ there is a proper $r$-coloring of $G'-x$; if~$x$
	had fewer than $r$ neighbours in~$G'$, then we had a free colour for $x$, 
	thus getting a proper $r$-colouring of $G'$. This proves $\delta(G')\ge r$ 
	and, consequently, we can embed $F$ greedily into $G'$. 
\end{proof}  

A famous result of Erd\H{o}s~\cite{Erd59} endows this observation with an aura
of optimality: large chromatic number is compatible with the absence of short cycles. 

\begin{thm}[Erd\H{o}s]\label{thm:41}
	For all natural numbers $g$ and $r$ there exists a graph $G$ such that $\gth(G)>g$ 
	and $\chi(G)>r$.
\end{thm}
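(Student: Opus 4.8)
The plan is to use the probabilistic method of Erd\H{o}s, deleting short cycles from a random graph. Fix $g$ and $r$, and set $\theta < 1/g$ (say $\theta = 1/(2g)$). Consider the random graph $G \in G(n, p)$ with edge probability $p = n^{\theta - 1}$, for $n$ large. First I would bound the expected number of short cycles: the number of potential cycles of length $i$ is at most $n^i$, each present with probability $p^i$, so the expected number $X$ of cycles of length at most $g$ satisfies $\EE[X] \le \sum_{i=3}^{g} n^i p^i = \sum_{i=3}^g n^{\theta i} = o(n)$, since $\theta i \le \theta g < 1$. By Markov's inequality, with probability at least $\tfrac12$ (for $n$ large) we have $X \le n/2$. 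Second, I would show that with high probability $\alpha(G)$, the independence number, is small: by a union bound the probability that some set of $\lceil a \rceil$ vertices is independent is at most $\binom{n}{a}(1-p)^{\binom{a}{2}} \le \bigl(n e^{-p(a-1)/2}\bigr)^a$, and choosing $a = \lceil (3/p)\ln n\rceil = O(n^{1-\theta}\ln n)$ makes this $o(1)$. So with probability close to $1$, $\alpha(G) < a$.

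Now I would combine the two events: for $n$ large enough there is a graph $G$ on $n$ vertices with at most $n/2$ cycles of length $\le g$ and with $\alpha(G) < a$. From each short cycle delete one vertex; let $G'$ be the resulting graph on $n' \ge n/2$ vertices. Then $G'$ has girth greater than $g$ by construction, and $\alpha(G') \le \alpha(G) < a$. Since in any proper colouring each colour class is independent, we get
\[
	\chi(G') \ge \frac{n'}{\alpha(G')} > \frac{n/2}{a} = \frac{n/2}{O(n^{1-\theta}\ln n)} = \Omega\!\left(\frac{n^{\theta}}{\ln n}\right),
\]
which tends to infinity with $n$. Hence for $n$ sufficiently large $\chi(G') > r$ and $\gth(G') > g$, as desired.

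I expect the main obstacle — or rather the main point requiring care — to be the interplay between the two thresholds: one must pick $p$ small enough (equivalently $\theta$ small enough relative to $1/g$) that the short cycles are genuinely sparse, yet the independence number bound still forces the chromatic number to grow. The computation of $\alpha(G)$ via the union bound must be done with the right choice of $a$ so that the deletion of $o(n)$ vertices does not spoil the estimate $\chi \ge n'/\alpha$; this is where the inequality $\theta g < 1$ is used essentially. None of the individual estimates is deep, but the argument is genuinely non-constructive, which is worth flagging as the reason Erd\H{o}s exhibited no explicit examples.
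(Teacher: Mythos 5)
Your argument is correct: the first-moment bound on short cycles, the union bound on the independence number with $a=\lceil(3/p)\ln n\rceil$, and the deletion step all check out, and the condition $\theta g<1$ is indeed exactly what makes the cycle count $o(n)$. This is Erd\H{o}s's original deletion method, which the paper acknowledges only in a one-sentence sketch (with the slightly different choice $p=(\log n)/n$, which also works) before deliberately presenting a different proof in detail, namely R\"odl's variant. That argument is not probabilistic in the same sense: one fixes a large vertex set, starts with the empty graph, and adds edges one at a time, each addition chosen so that the number of proper $r$-colourings drops by a factor $(1-\alpha)$, while a maximum degree condition $\Delta(G)\le C$ together with $\gth(G)>g$ is maintained to guarantee that enough candidate edges remain available at every step; after roughly $(n\log r)/\alpha$ steps no proper $r$-colouring survives. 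What your route buys is brevity and an explicit quantitative relation between $|V(G)|$, the girth, and the chromatic number (your $\chi=\Omega(n^{\theta}/\ln n)$ is essentially optimal for this method); what R\"odl's route buys is a scheme that transfers to other settings where independence-number concentration is unavailable, which is why the paper chose to present it. One small stylistic remark: you invoke Markov's inequality to get probability at least $\tfrac12$ for the cycle event, but since $\EE[X]=o(n)$ you in fact get probability $1-o(1)$, so the intersection with the independence event is immediate.
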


Erd\H{o}s' own proof was probabilistic and has been repeated in 
many textbooks (see e.g., Bollob\'as~\cite{Boll-Mod}*{Theorem VII.4}),
so we can be very brief about it: for a large number of vertices~$n$ and 
probability $p=(\log n)/n$ (say) one considers the random graph~$G(n, p)$. 
With positive probability (in fact almost surely), it contains $o(n)$ short 
cycles and has no independent set of size $\Omega(n)$. So by deleting all vertices 
in cycles of length at most $g$ one obtains a graph on more than $n/2$
vertices whose chromatic number exceeds $r$. 

There is a less well-known variant of this argument, due to R\"odl~\cite{Rodl90}, 
which we would like to describe in more detail, because it is sometimes quite useful
in other contexts (see e.g.,~\cites{pisier, Rodl90}). 
The basic idea is that we start with a large set of vertices,
which does not have any edges yet, and keep adding edges one by one. 
In each step we want to decrease the number of proper $r$-colourings still available
by a constant proportion, so that after not too many steps all potential colourings 
have been `killed'. 
The only thing we need to avoid is that at some moment we cannot continue 
because too many candidate edges would close a short cycle. To exclude this 
outcome, we shall maintain a maximum degree condition, which will ensure that the 
number of unavailable edges stays under control.

\begin{proof}[Proof of Theorem~\ref{thm:41}]
	Given $g$ and $r$ we choose auxiliary constants $\alpha>0$ and $C, n\in\NN$ 
	according to the hierarchy 
		\[
		n\gg C\gg\alpha^{-1}\gg g, r\,.
	\]
		For instance, all of our estimates go through for  
		\[
		\alpha=(3r)^{-1}\,,
		\quad
		C=\lceil 48r^2\log r\rceil\,,
		\quad \text{ and } \quad 
		n=24rC^{g-1}\,.
	\]
	
	Given a graph $G$ we denote the set of its proper $r$-colourings 
	$\phi\colon V(G)\lra [r]$ by $B(G)$. Fix a set $V$ of $n$ vertices.  
	We call a graph $G$ on $V$ {\it good}, if 
		\begin{enumerate}[label=\rmlabel]
		\item\label{it:gi} $\gth(G)> g$;
		\item\label{it:gii} $\Delta(G)\le C$;
		\item\label{it:giii} and $|B(G)|\le (1-\alpha)^{e(G)}r^n$.
	\end{enumerate}
 		E.g., the edgeless graph on $V$ is good. Pick a good graph $G$ such that $e(G)$
	is maximal. If $G$ has more than 
		\[
		q=\frac{n\log r}{\alpha}
	\]
		edges, then~\ref{it:giii} yields $|B(G)|<\exp(-q\alpha+n\log r)=1$,
	which means that $G$ has no proper $r$-colouring. So in this case $G$ 
	has the desired properties. 
	Now suppose towards a contradiction that $G$ has at most $q$ edges. 
	
	\begin{claim}
		There are at most $n^2/6r$ pairs $e\in V^{(2)}$ such that the graph $G+e$
		violates~\ref{it:gi} or~\ref{it:gii}.
	\end{claim}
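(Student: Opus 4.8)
The plan is to split the dangerous pairs into two families according to which condition they would spoil, and to bound each family separately by a union bound. Call a pair $e=\{u,v\}\in V^{(2)}$ of \emph{type~I} if $G+e$ violates~\ref{it:gi} and of \emph{type~II} if $G+e$ violates~\ref{it:gii}; the number of pairs we must bound is at most (number of type~I pairs) $+$ (number of type~II pairs), and we shall see that these two terms are at most $\frac{n^2}{24r}$ and $\frac{n^2}{8r}$, whose sum is $\frac{n^2}{6r}$.

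For type~II pairs the argument is immediate. Since $G$ is good, $\Delta(G)\le C$, so adjoining a single edge can make~\ref{it:gii} fail only if one of its two endpoints already has degree exactly~$C$ in~$G$. Writing $T=\{x\in V\colon d_G(x)=C\}$, we have $C\,|T|\le\sum_{x\in V}d_G(x)=2e(G)\le 2q$ (we are in the case $e(G)\le q$), so $|T|\le 2q/C$; as every type~II pair meets~$T$, there are at most $|T|\cdot n\le 2qn/C$ of them. Now $\alpha^{-1}=3r$ gives $q=3rn\log r$, and $C\ge 48r^2\log r$ then yields $2qn/C\le\frac{n^2}{8r}$.

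For type~I pairs, note first that since $\gth(G)>g$, every cycle of length at most~$g$ in $G+e$ has to use the new edge~$e$; hence $e=\{u,v\}$ is of type~I exactly when $u$ and~$v$ are joined in~$G$ by a path of length at most $g-1$, i.e.\ $d_G(u,v)\le g-1$. Using $\Delta(G)\le C$, the ball of radius $g-1$ about any vertex of~$G$ contains at most
\[
	1+C+C(C-1)+\dots+C(C-1)^{g-2}\le\sum_{i=0}^{g-1}C^i\le 2C^{g-1}
\]
vertices, the final inequality holding for every $C\ge 2$. Summing over all starting vertices and dividing by~$2$, since each unordered pair is counted twice, we find at most $nC^{g-1}$ type~I pairs; and as $n=24rC^{g-1}$ this equals $\frac{n^2}{24r}$.

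Putting the two estimates together gives the claimed bound. There is no genuine obstacle beyond bookkeeping; the two points that have to be handled with care are counting \emph{unordered} pairs in the type~I estimate (the factor $\tfrac12$, without which the bound would fail), and checking that the explicit choices $\alpha=(3r)^{-1}$, $C=\lceil 48r^2\log r\rceil$, $n=24rC^{g-1}$ are calibrated precisely so that the type~II and type~I contributions come out as $\frac{n^2}{8r}$ and $\frac{n^2}{24r}$, together exhausting the budget $\frac{n^2}{6r}$.
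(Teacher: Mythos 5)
Your proof is correct and follows essentially the same route as the paper: both arguments split the bad pairs according to which of~\ref{it:gi} or~\ref{it:gii} would fail, bound the first family by counting pairs at distance at most $g-1$ via the maximum degree (giving $n^2/24r$), and bound the second by counting vertices of degree exactly $C$ using $\sum_x d(x)\le 2q$ (giving $n^2/8r$). The only difference is that you spell out the calibration of $\alpha$, $C$, $n$ explicitly, which the paper leaves to the reader.
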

	
	\begin{proof} 
		A pair of nonadjacent vertices is excluded by~\ref{it:gi} if and only if 
		these two vertices have distance at most $g-1$ in $G$. Because of the maximum 
		degree condition, there are at most 
				\[
			\tfrac12 (C+\dots+C^{g-1})n
			\le 
			C^{g-1}n
			\le 
			\frac{n^2}{24r}
		\]
				such pairs. Analysing~\ref{it:gii} we observe that
		due to $\sum_{x\in V}d(x)=2e(G)\le 2q$ the set 
				\[
			U=\{x\in V\colon d(x)=C\}
		\]
				has at most the size $|U|\le 2q/C$. Therefore, there are at most
				\[
			|U|n
			\le 
			\frac{2qn}{C}
			=
			\frac{2(\log r)n^2}{\alpha C}
			\le
			\frac{n^2}{8r}
		\]
				pairs whose addition to $G$ would cause the failure of~\ref{it:gii}.
		Since $1/24+1/8=1/6$, the claim follows. 
	\end{proof}
	
	Let us now consider an arbitrary colouring $\phi\in B(G)$. There are at least 
	$r\binom{n/r}2>\frac{n^2}{3r}$ pairs of vertices receiving the same colour 
	with respect to $\phi$. Among them, there are by our claim at 
	least $\frac{n^2}{6r}>\alpha\binom n2$
	pairs that could be added to $G$ without harming~\ref{it:gi} or~\ref{it:gii}.
	Using a double counting argument we conclude that there is a pair $e$ such that
	$G+e$ satisfies~\ref{it:gi} and~\ref{it:gii}, and $e$ is monochromatic for at 
	least $\alpha|B(G)|$ colourings in $B(G)$. But now
		\[
		|B(G+e)|\le (1-\alpha)|B(G)|\le (1-\alpha)^{e(G+e)}r^n
	\]
		shows that the graph $G+e$ is good, contrary to the maximality of $G$.  
\end{proof}

The next two subsections deal with explicit constructions of graphs 
and hypergraphs with large chromatic number and large girth. 
In~\S\ref{subsec:cages} we already came quite close to seeing a number theoretic
example. Suppose that we change the assumptions of Theorem~\ref{thm:lps} to~$p$ 
being a quadratic residue modulo $q$. Then 
$G_{p, q}=\Cayley(\PSL(2, \FF_q), \olS_{p, q})$ is well-defined and an argument
similar to the one we have seen shows $\gth(G_{p, q})\ge 2\log_p q$.
Lubotzky, Philipps, and Sarnak~\cite{LPS88}*{p.263} have further established 
$\chi(G_{p, q})\ge (p+1)/2\sqrt{p}$. 
In particular, by choosing $p$ and $q$ appropriately, the chromatic number and 
girth of $G_{p, q}$ can both be made arbitrarily large. From now on, we confine 
ourselves to `combinatorial' constructions.

\subsection{Historical constructions}\label{subsec:32}
Most of the earliest protagonists in the study of graphs of large chromatic number 
and large girth were young researchers, who did not know much about each other's
work. Of course in the 1950s and 1960s, when these developments happened, 
information did usually not travel with the speed of light, and borders still 
meant something. 

The first relevant reference was written by Zykov at the age of 24. 
In~\cite{Zykov}*{\rn{Glava 3}, \S3} he compares two graph parameters, which 
he calls `rank' (\rn{rang}) and `density' (\rn{plotnostp1}). Today one would 
speak of the chromatic number and clique number,\footnote[1]{The clique number 
of a graph $G$, denoted by $\omega(G)$, is the largest natural number $d$ such 
that $G$ contains a clique of order $d$.} respectively. After 
observing the trivial estimate $\chi(G)\ge \omega(G)$ he shows that, sort of 
conversely, for all pairs of natural numbers $(r, d)$ 
with $r\ge d\ge 2$ there exists a graph $G$ such that $\chi(G)=r$ 
and $\omega(G)=d$. In particular, to $d=2$ there correspond 
triangle-free graphs of arbitrarily large chromatic number. 

For fixed $d\ge 2$ Zykov argues by induction on $r$, starting with the 
clique~$K_d$ as his base case.
Now suppose that for some $r\ge d$ a graph $G$ satisfying $\chi(G)=r$ 
and $\omega(G)=d$ has already been found. Let $G_1, \dots, G_r$ be 
vertex-disjoint copies of $G$. By a {\it transversal} we shall mean a set 
of $r$ vertices, one from each of these graphs. For every transversal~$T$ 
we take a new vertex $x_T$ and join it 
to the members of $T$ (see Figure~\ref{fig41}).

	\begin{figure}[h!]
	\begin{tikzpicture}
		
		\def\h{1}
		
		\draw [thick] (0,3*\h) -- (5.5,3*\h);
		\draw [thick] (0,2.2*\h) -- (5.5,2.2*\h);
		\draw [thick] (0,0) -- (5.5,0);
		
		\coordinate (a) at (.5,3*\h);
		\coordinate (c) at (3.5,0);
		\coordinate (b) at ($(a)!.265!(c)$);
		\coordinate (1) at (4.5,.4*\h);
		\coordinate (2) at (4.5, 1.1*\h);
		\coordinate (3) at (4.5, 1.8*\h);
		
		\qedge{(a)}{(b)}{(c)}{5pt}{.6pt}{black}{white, opacity=0};
	
		\coordinate (x) at (-2,1.3*\h);
		\draw (a)--(x)--(b)--(x)--(c);
				
		\foreach \i in {a,b,c,x, 1, 2,3} \fill (\i) circle (1.2pt); 
		
		\node at (6,3*\h) {$G_1$};
		\node at (6,2.2*\h) {$G_2$};
		\node at (6,0) {$G_r$};
		\node [left] at (x) {$x_T$};
		\node at (2.8,1.4*\h) {$T$};

	\end{tikzpicture}
	\caption{Zykov's construction}
	\label{fig41}
	\end{figure}
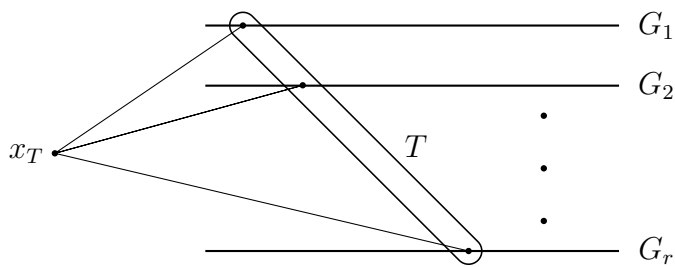
 
The resulting graph $G'$ clearly has clique number $d$. 
By $r$-colouring the graphs $G_1, \dots, G_r$
with the same $r$ colours, and assigning a new colour to all vertices $x_T$
we see the upper bound $\chi(G')\le r+1$. 
Now assume for the sake of contradiction that some proper $r$-colouring 
of~$G'$ existed. Without loss of generality, our set of colours is $[r]$. 
Due to $\chi(G)=r$ there is for every index $i\in [r]$ a vertex $x_i\in V(G_i)$
receiving the colour $i$. These vertices form a transversal $T=\{x_1, \dots, x_r\}$,
but there is no free colour for $x_T$. This proves $\chi(G')=r+1$ and the 
induction is complete. 

A few years after Zykov's work, Ungar, who was apparently unaware of it, 
posed the problem to construct triangle-free graphs of arbitrarily large 
chromatic number in the American mathematical monthly~\cite{UD54}. 
The editors received three solutions (including one from Ungar himself),
but only the submission of Descartes (a pseudonym of Tutte) got printed. 
Tutte's graphs have girth at least six; they are constructed recursively 
as follows. 

Start, for instance, with a cycle of length $6$, which has chromatic number $2$. 
Now suppose inductively that you already have a graph $G$ with $\chi(G)\ge r\ge 2$
and $\gth(G)\ge 6$. Set $n=|V(G)|$, take an independent set~$Y$ of size 
$(n-1)r+1$, and join each $n$-element subset of~$Y$ to its own copy of~$G$ 
by means of a matching (see Figure~\ref{fig42A}). 

\begin{figure}[h!]
		\begin{subfigure}[b]{0.47\textwidth}
			\centering
			\begin{tikzpicture}[scale=.5]
	
	\foreach \i in {-3,...,3}{
\coordinate (a\i) at (\i,1);
\coordinate (b\i) at (\i,5);
\fill (a\i) circle (1.8pt);	
}

\coordinate (c2) at (-3.5,4.6);
\coordinate (c1) at (-4.45,4.4);
\coordinate (c3) at (-2.55,4.8);

\coordinate (d2) at (3.5,4.6);
\coordinate (d1) at (4.45,4.4);
\coordinate (d3) at (2.55,4.8);

\draw (0,1) ellipse (3.8cm and 1.1cm);
\draw (0,1) ellipse (1.4cm and .5cm);
\draw (-2,1) ellipse (1.4cm and .5cm);
\draw (2,1) ellipse (1.4cm and .5cm);
\draw (0,5) ellipse (1.4cm and .5cm);
\draw [rotate around={10:(-3.5,4.6)}](-3.5,4.6) ellipse (1.4cm and .5cm);
\draw  [rotate around={-10:(3.5,4.6)}](3.5,4.6) ellipse (1.4cm and .5cm);

	\draw (b-1)--(b1);
	\draw (c1)--(c3);
	\draw (d1)--(d3);
	\foreach \i in {1,2,3}{
		\fill (c\i) circle (1.8pt);
		\fill (d\i) circle (1.8pt);
	}
\draw (c1)--(a-3);
\draw (c2)--(a-2);
\draw (c3)--(a-1);
\draw (d1)--(a3);
\draw (d2)--(a2);
\draw (d3)--(a1);
	\foreach \i in {-1,0,1} {
		\fill (b\i) circle (1.8pt);
		\draw (a\i)--(b\i);}
	
	\draw [thick, -stealth] (0,7)--(0,5.8);
	\draw [thick, -stealth] (-1.5,7) [out = 200, in=70] to (-3,5.4);
	\draw [thick, -stealth] (1.5,7) [out = -20, in=110] to (3,5.4);
	
	\node at (4.9,1) {$Y$};
	\node at (0,7.5) {copies of $G$};
	
	\end{tikzpicture}
\vskip -.1cm
\caption{Tutte's construction}\label{fig42A}
		\end{subfigure}
	\hfill
	\begin{subfigure}[b]{0.47\textwidth}
		\centering
		\begin{tikzpicture}[scale=.7]
		
			\coordinate (a1) at (-.5,1.5);
			\coordinate (a2) at (.5,1.5);
			\coordinate (c1) at (-2.1,4.5);
			\coordinate (c2) at (-1.1,4.7);
			
			\coordinate (d2) at (2.1,4.5);
			\coordinate (d1) at (1.1,4.7);

			\draw (0,1.5) ellipse (2.5cm and .6cm);
			\draw (-.5,1.5) ellipse (1.2cm and .3cm);
			\draw (.5,1.5) ellipse (1.2cm and .3cm);
			\draw [rotate around={10:(-1.6,4.6)}](-1.6,4.6) ellipse (1.1cm and .3cm);
				\draw [rotate around={-10:(1.6,4.6)}](1.6,4.6) ellipse (1.1cm and .3cm);

			\draw (c1)--(c2);
			\draw (d1)--(d2);
			\foreach \i in {1,2}{
				\fill (c\i) circle (1.5pt);
				\fill (d\i) circle (1.5pt);
				\fill (a\i) circle (1.5pt);
				\draw (c\i)--(a\i)--(d\i);
			}
					
			\node at (3.5,1.5) {$Y$};

		\end{tikzpicture}
		\vskip -.1cm
		\caption{A cycle of length 6}\label{fig42B}
	\end{subfigure}
	\caption{}	\end{figure}
 
It is easy to see that the resulting graph $G'$ satisfies $\gth(G')\ge 6$. Moreover, for every $r$-colouring of~$G'$ there needs to be a 
monochromatic $n$-set $X\subseteq Y$ (by the box principle), and the colour of $X$ 
is then unavailable for the copy of $G$ attached to $X$. 
Thus we have $\chi(G')\ge r+1$ and the induction continues. 

At this juncture, Jarik enters our story. As reported in~\cite{Ne09}, 
he enrolled at Charles University in Prague in the middle of the 1960s. 
Almost immediately he began to contemplate research problems in graph theory. 
This quickly led to his first publication~\cite{Ne66} written at the age 
of~$20$. Therein he studies the problem of generalising Tutte's construction
and manages to exclude cycles of lengths six and seven as well. Interestingly,
and perhaps even fortunately, the knowledge that in the meantime Erd\H{o}s had 
already proved Theorem~\ref{thm:41} had not arrived in Prague yet. 

To get some first ideas, suppose that for some integer $r\ge 2$ we already have 
a graph~$H$ with $\chi(H)\ge r$ and $\gth(H)\ge 8$. If we applied Tutte's 
construction directly to $H$, then the appearance of $6$-cycles would be hard to 
avoid (see Figure~\ref{fig42B}).

Jarik's plan to get around this difficulty is that he considers a `cleverly selected' 
independent set $I\subseteq V(H)$ and joins only the copies of $I$ to 
the subsets~$X\subseteq Y$. More explicitly, writing $|I|=n$ he takes again 
an independent set $Y$ of size $(n-1)r+1$ and for every $n$-element 
subset $X\subseteq Y$ he creates its own copy $(H_X, I_X)$ of the pair $(H, I)$ 
such that $Y$ and all $\binom{|Y|}{n}$ sets $V(H_X)$ are mutually disjoint. 
Now he joins every set~$X$ to the corresponding set~$I_X$ by a matching, 
thus arriving at a graph~$H'$ with $\gth(H')\ge 8$ (see Figure~\ref{fig43}).

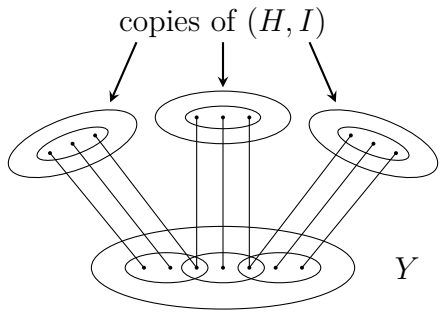
\begin{figure}[h!]
			\centering
			\begin{tikzpicture}[scale=0.5]
	
	\foreach \i in {-3,...,3}{
\coordinate (a\i) at (\i*.7,1);
\coordinate (b\i) at (\i*.7,5);
\fill (a\i) circle (1.5pt);	
}

\coordinate (c2) at (-4,4.3);
\coordinate (c1) at (-4.6,4.05);
\coordinate (c3) at (-3.4,4.52);

\coordinate (d2) at (4,4.3);
\coordinate (d1) at (4.6,4.05);
\coordinate (d3) at (3.4,4.52);

\draw (0,1) ellipse (3.5cm and 1.1cm);

\draw (0,1) ellipse (1.1cm and .4cm);
\draw (-1.5,1) ellipse (1.1cm and .4cm);
\draw (1.5,1) ellipse (1.1cm and .4cm);

\draw (0,5) ellipse (1cm and .3cm);
\draw [rotate around={20:(-4,4.3)}](-4,4.3) ellipse (1cm and .3cm);
\draw  [rotate around={-20:(4,4.3)}](4,4.3) ellipse (1cm and .3cm);

\draw (0,5) ellipse (1.8cm and .7cm);
\draw [rotate around={20:(-4,4.3)}](-4,4.3) ellipse (1.8cm and .7cm);
\draw  [rotate around={-20:(4,4.3)}](4,4.3) ellipse (1.8cm and .7cm);

	\foreach \i in {1,2,3}{
		\fill (c\i) circle (1.5pt);
		\fill (d\i) circle (1.5pt);
	}
\draw (c1)--(a-3);
\draw (c2)--(a-2);
\draw (c3)--(a-1);
\draw (d1)--(a3);
\draw (d2)--(a2);
\draw (d3)--(a1);
	\foreach \i in {-1,0,1} {
		\fill (b\i) circle (1.5pt);
		\draw (a\i)--(b\i);}
	
	\draw [thick, -stealth] (0,7)--(0,5.8);
	\draw [thick, -stealth] (-2.3,7) -- (-3,5.4);
	\draw [thick, -stealth] (2.3,7)-- (3,5.4);

	\node at (0,7.5) {copies of $(H,I)$};
	\node at (4.9,1) {$Y$};
	
	\end{tikzpicture}
	\caption{Jarik's idea to avoid $6$- and $7$-cycles} 
	\label{fig43}
	\end{figure}
 
The only problem we are facing now is that it is less clear 
whether $\chi(H')\ge r+1$ can still be proved. Given a proper $r$-colouring 
of $H'$ it remains true that there is a monochromatic $n$-set $X\subseteq Y$
and that the colour of $X$ is blocked on $I_X$. This would lead to a contradiction
if we could guarantee that for every proper $r$-colouring of $H$ all
colours had to appear on~$I$. In other words, we need to assume a strong form of 
the induction hypothesis, notably the existence of an appropriate pair $(H, I)$.
Thus the usual question arises whether this 
extra strength is maintainable in the induction. 
However, the most obvious candidate for the new set $I'$, namely the union of 
all sets $I_X$, does not seem viable.

Jarik solves this problem by adding an `inner induction' on a new parameter $k$. 
Given two integers $r\ge k\ge 1$ he considers the following statement.
\begin{quotation}
	$(J_{k, r})$ There is a pair $(H, I)$ consisting of a graph $H$ with $\chi(H)\ge r$
	and $\gth(H)\ge 8$, and an independent set $I\subseteq V(H)$ such that 
	for every proper $r$-colouring of $H$ at least $k$ colours appear on $I$. 
\end{quotation}

Notice that $J_{1, r}$ is equivalent to the existence of a graph $H$ with 
$\chi(H)\ge r$ and $\gth(H)\ge 8$.
Moreover, Jarik's modification of Tutte's construction establishes the implication 
\[
	J_{r, r} 
	\,\,\, \Longrightarrow \,\,\,
	J_{1, r+1}
	\qquad \text{for every integer $r\ge 2$.}
\]
So to complete the entire argument it suffices to prove 
\[
	J_{k, r}
	\,\,\, \Longrightarrow \,\,\,
	J_{k+1, r}
	\qquad
	\text{ whenever } 1\le k<r\,.
\]

To this end Jarik employs the following construction. 
Let $(H, I)$ be a pair exemplifying~$J_{k, r}$, set $n=|I|$, and 
let $H_1, \dots, H_n$ be vertex-disjoint copies of $H$. By a {\it transversal}
we shall again mean a set $T$ consisting of one vertex from each of these graphs. 
For every transversal $T$ let $(H_T, I_T)$ be a pair isomorphic to $(H, I)$
such that all graphs $H_T$ are mutually vertex-disjoint and vertex-disjoint 
to $H_1, \dots, H_n$. Next, we connect every set $I_T$ with a matching to the 
corresponding transversal $T$, thereby obtaining a graph $H_\star$. Finally, 
we let $I_\star$ be the union of the sets $I_T$ over all transversals $T$ 
(see Figure~\ref{fig44}). 

\usetikzlibrary{intersections}

	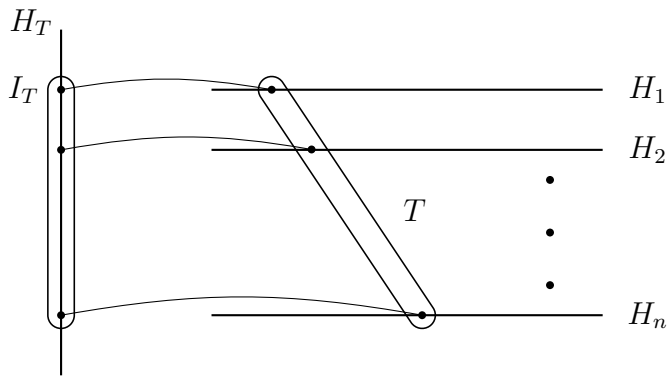
\begin{figure}[h!]
	\begin{tikzpicture}
		
		\def\h{1}
		
		\draw [thick] (0,3*\h) -- (5.2,3*\h);
		\draw [thick] (0,2.2*\h) -- (5.2,2.2*\h);
		\draw [thick] (0,0) -- (5.2,0);
		\draw [thick] (-2,3.8*\h)--(-2,-.8*\h);
		
		\coordinate (a) at (.8,3*\h);
		\coordinate (c) at (2.8,0);
		\coordinate (b) at ($(a)!.265!(c)$);
		\coordinate (1) at (4.5,.4*\h);
		\coordinate (2) at (4.5, 1.1*\h);
		\coordinate (3) at (4.5, 1.8*\h);
		\coordinate (x) at (-2, 3*\h);
		\coordinate (y) at (-2, 2.2*\h);
		\coordinate (z) at (-2, 0);
		
		\qedge{(a)}{(b)}{(c)}{5pt}{.6pt}{black}{red, opacity=0};
		\qedge{(x)}{(y)}{(z)}{5pt}{.6pt}{black}{red, opacity=0};
		
	\draw (a) edge [bend right=10] (x);
	\draw (b) edge [bend right=10] (y);
	\draw (c) edge [bend right=10] (z);
				
		\foreach \i in {a,b,c, x,y,z,1, 2,3} \fill (\i) circle (1.5pt); 
		
		\node at (5.8,3*\h) {$H_1$};
		\node at (5.8,2.2*\h) {$H_2$};
		\node at (5.8,0) {$H_n$};
		\node at (2.7,1.4*\h) {$T$};
		\node at (-2.5, 3*\h) {$I_T$};
		\node at (-2.4, 3.9*\h) {$H_T$};

	\end{tikzpicture}
	\caption{The proof of $J_{k,r}\Rightarrow J_{k+1,r}$.}
	\label{fig44}
	\end{figure}
 
We contend that the pair $(H_\star, I_\star)$ is as required by $J_{k+1, r}$.
The demands $\chi(H_\star)\ge r$ and $\gth(H_\star)\ge 8$ are clear, and $I_\star$ 
is obviously independent. Now we assume for the sake of contradiction that there 
is a proper $r$-colouring of $H_\star$ such that at most $k$ distinct colours appear
on~$I_\star$. Let $\alpha$ be any of these colours. Due to $\chi(H)\ge r$ there 
is for every $i\in [n]$ a vertex $x_i\in V(H_i)$ receiving the colour $\alpha$. 
The set $T=\{x_1, \dots, x_n\}$ is a transversal and due to our matchings the 
colour $\alpha$ cannot appear on $I_T$. Thus there is a proper $r$-colouring 
of $H_T$ such that less than $k$ colours occur on $I_T$. This contradiction 
to the choice of the pair $(H, I)$ concludes our description of Jarik's argument.  

Almost immediately after the appearance of this work, the 19-year old 
Lov\'asz discovered a general construction of hypergraphs with large chromatic number
and large girth~\cite{Lov68}. Let us briefly pause to explain the terms involved 
here. By a {\it proper colouring} of a hypergraph~$H$ we again mean a colouring 
of $V(H)$ without monochromatic edges, and the {\it chromatic number}~$\chi(H)$
is the least natural number $r$ such that some proper $r$-colouring of $H$ exists.
For $n\ge 2$ a {\it cycle of length n} in a hypergraph $H$ is a cyclic sequence 
$e_1v_1\dots e_nv_n$ consisting of distinct edges $e_1, \dots, e_n\in E(H)$ and 
distinct vertices $v_1, \dots, v_n$ such that $v_i\in e_i\cap e_{i+1}$ holds for 
every $i\in\ZZ/n\ZZ$. As expected, $\gth(H)$ denotes the least $n$ such that $H$ 
contains some cycle of length~$n$, if there exists any; otherwise we call $H$ a 
{\it forest} and set $\gth(H)=\infty$.   
A hypergraph is said to be {\it linear} if any two distinct edges intersect in 
at most one vertex. Notice that a hypergraph contains a cycle of length $2$ if and 
only if it is not linear. 

It would take us too far afield to describe the details of Lov\'asz' construction,
but it has one remarkable aspect that deserves being pointed out. 
There are now three parameters in the statement. Given $g$, $k$, and $r$
we seek a $k$-uniform hypergraph $H$ with $\gth(H)>g$ and $\chi(H)>r$. 
Lov\'asz obtains such hypergraphs by an outer induction on $g$, 
and in the induction step he performs an inner induction on $r$. While all this 
happens, the value of $k$ is not kept fixed. Rather, Lov\'asz exploits the 
possibility to obtain girth increments by looking at auxiliary $k'$-uniform 
hypergraphs, where $k'$ is quite huge in comparison to $k$. In particular, one cannot 
simply ``focus on the graph case'' when studying Lov\'asz's article. 
This idea of controlling girth by means of higher-order structures is still 
of key importance in current research and we shall encounter it again when 
talking about the girth Ramsey theorem later.
As it can be done without much effort, we would briefly like 
to illustrate how hypergraphs can assist us when constructing graphs of large 
chromatic number and large girth. In Tutte's construction, we can view the 
collection of all $n$-element subsets $X$ of $Y$ as a complete $n$-uniform 
hypergraph $K=K^{(n)}_{r(n-1)+1}$ of order $|Y|=r(n-1)+1$. Our use of the box 
principle corresponds to the fact that the chromatic number of $K$ 
exceeds~$r$. The problem that we cannot avoid $6$-cycles (see Figure~\ref{fig42B}) is 
caused by the fact that $K$ is not linear. 
If instead of $K$ we take a linear $n$-uniform hypergraph~$L$
with $\chi(L)>r$ and attach our copies of the previous graph $G$ only to the edges 
of $L$, then we can even maintain the condition $\gth(G)\ge 9$. As  
the linearity of~$L$ is equivalent to $\gth(L)\ge 3$, we see that edge-size 
can indeed be traded for girth. More generally, Tutte's 
construction shows that if for all~$k\ge 2$ we can construct $k$-uniform 
hypergraphs~$H$ of arbitrarily large chromatic number with $\gth(H)\ge g$, 
then there are graphs $G$ of arbitrarily large chromatic number with $\gth(G)\ge 3g$. 
Further properties and variants of Tutte's graphs were discovered by Kostochka 
and Jarik~\cite{KN99}.

Before moving on to a different hypergraph construction in the next subsection, 
we would like to mention that the problem of finding an `explicit, purely graph 
theoretic, hypergraph-free' construction of graphs with large chromatic 
number and large girth was popularised a lot by Jarik, until it was finally solved by 
his student K\v{r}\'{\i}\v{z}~\cite{Kriz}. A perhaps more transparent alternative 
construction has recently been provided by Alon et al.\ in~\cite{AKRWZ16}. 

\subsection{The partite construction method}\label{subsec:33}
Our next goal is to describe, in a very simple scenario, the partite construction 
method invented by Jarik and R\"{o}dl. The result we shall 
prove in this manner is originally due to Erd\H{o}s and Hajnal, who notice
in~\cite{EH66}*{Corollary~13.4} that Erd\H{o}s' probabilistic argument for the graph 
case generalises straightforwardly to hypergraphs. It is probably clear that 
R\"odl's proof we saw in \S\ref{subsec:41} transfers to hypergraphs as well.    

\begin{thm}[Erd\H{o}s \& Hajnal]\label{thm:EH}
	For all integers $g, k, r\ge 2$ there exists a $k$-uniform hypergraph~$H$
	such that $\gth(H)>g$ and $\chi(H)>r$.
\end{thm}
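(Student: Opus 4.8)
The plan is to give a partite construction, using the reformulation that $\chi(H)>r$ simply means that every $r$-colouring of $V(H)$ produces a monochromatic edge. Thus the task is to take an \emph{easy} witness of non-$r$-colourability that happens to have small girth --- for definiteness the complete $k$-uniform hypergraph $\mathcal K=K^{(k)}_{(k-1)r+1}$, which is not $r$-colourable by the box principle --- and ``unfold'' it into a hypergraph $H$ that is still not $r$-colourable but now satisfies $\gth(H)>g$. The whole construction keeps track of a projection: one works with \emph{partite systems}, i.e.\ $k$-uniform hypergraphs $P$ equipped with a map $\pi$ onto an auxiliary ``skeleton'' (initially $\mathcal K$ itself) that sends each edge of $P$ bijectively onto an edge of the skeleton, so that the fibres $\pi^{-1}(v)$ are independent sets.

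Concretely, I would build a finite chain $P^0=\mathcal K,\,P^1,\dots,P^N=H$ of partite systems in which $P^{i}$ arises from $P^{i-1}$ by an \emph{amalgamation}: one locates a portion of $P^{i-1}$ that still supports a cycle of length $\le g$, introduces a fresh auxiliary hypergraph $\mathcal F_i$ of girth $>g$ into which that portion is spread out, and attaches over each location inside $\mathcal F_i$ its own isomorphic copy of $P^{i-1}$, all newly created vertices being kept private; the projections are updated compatibly. As in Lov\'asz' construction it is essential that the auxiliary hypergraphs $\mathcal F_i$ may be far richer --- in particular of much higher uniformity --- than the pieces they absorb, since this extra room is exactly what lets one route potential short cycles through structures that contain none. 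The length $N$ of the chain and all the sizes involved are bounded in terms of $g$, $k$, $r$.

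Two properties then have to be verified, and they behave very differently. Non-$r$-colourability propagates along the chain by a one-line induction: an $r$-colouring of $V(P^{i})$ restricts to an $r$-colouring of every attached copy of $P^{i-1}$, which therefore already contains a monochromatic edge, while at the bottom $P^0=\mathcal K$ admits no $r$-colouring at all. (This is where the general ``partite lemma'' of the method would enter; for the present target --- a single edge --- it is essentially vacuous, which is precisely why this is a \emph{simple} illustration of the partite construction.) The serious point is to show that each amalgamation step genuinely enlarges the short-cycle-free part, so that $\gth(P^N)>g$: one argues by induction that a cycle of length $\le g$ in $P^{i}$ must lie entirely inside a single attached copy of $P^{i-1}$ --- impossible by the inductive hypothesis --- or else, after projecting through the skeleton and the fibre structure, yield a cycle of length $\le g$ inside some $\mathcal F_i$, which has none. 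Making this dichotomy rigorous --- bounding how often a short cycle can leave and re-enter the attached copies, and securing the existence of the high-girth auxiliary hypergraphs $\mathcal F_i$ with enough room for the required spreading-out --- is the crux and the main obstacle; by contrast, the bare existence of hypergraphs of girth $>g$ with small chromatic number, needed for the $\mathcal F_i$, is the easy direction and can be arranged directly or by a simpler instance of the same scheme.
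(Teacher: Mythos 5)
You have the right ingredients in view (partite systems with a projection onto a skeleton, amalgamations over auxiliary hypergraphs of much larger uniformity and high girth, the two-case analysis of a short cycle), but the engine of your argument is internally inconsistent, and the inconsistency sits exactly where you declare the proof ``essentially vacuous''. Your chromatic-number induction requires each $P^i$ to contain genuine isomorphic copies of $P^{i-1}$ as subhypergraphs, with base $P^0=\mathcal{K}=K^{(k)}_{(k-1)r+1}$; unwinding the chain, $H=P^N$ then contains a copy of $\mathcal{K}$, which has girth $2$ for $k\ge 3$ (two edges of the clique share $k-1\ge 2$ vertices) and girth $3$ for $k=2$. So $\gth(H)>g$ is impossible in every non-trivial case. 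Put differently, your girth induction needs the attached copies of $P^{i-1}$ to be free of short cycles, while your colouring induction needs them to be faithful copies of objects that ultimately contain $\mathcal{K}$; the ``spreading out'' of offending portions either destroys the isomorphism type (killing the colouring argument) or does not (killing the girth argument). The two invariants you propagate cannot coexist.

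The paper resolves this by never placing the non-colourable witness inside $H$. Picture zero is a \emph{matching} --- one disjoint edge for each edge of $G=K^{(k)}_{(k-1)r+1}$ --- together with a projection onto $G$; it is $2$-colourable and has infinite girth. Non-$r$-colourability of the final picture is not inherited from any subhypergraph: one shows that every $r$-colouring forces some copy of picture zero to have all $n$ of its music lines monochromatic, and the resulting $n$ colours project to a vertex colouring of $G$, which must have a monochromatic edge since $\chi(G)>r$. The step forcing a monochromatic music line is precisely the pigeonhole you dismissed as vacuous, and it is where the induction hypothesis enters: processing the $i^{\mathrm{th}}$ music line of size $n_i$ requires an $n_i$-uniform hypergraph with girth $>g$ \emph{and} chromatic number $>r$ --- the theorem itself at the previous girth, carried simultaneously for all uniformities. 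This also shows that your closing remark is backwards: auxiliary hypergraphs of high girth and \emph{small} chromatic number would force no monochromatic structure at all, so the ``easy direction'' you invoke cannot supply the horizontal ingredient.
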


As in Lov\'asz's construction mentioned in the previous subsection, there is 
an induction on~$g$. To keep the exposition as simple as possible, we 
shall first explain how one would handle the case $g=2$ by partite 
construction. So given $k$ and $r$ we are aiming for a linear, $k$-uniform 
hypergraph $H$ such that $\chi(H)>r$. Without the linearity constraint, we 
could simply take the clique $G=K^{(k)}_{(k-1)r+1}$. It will be convenient 
to write $n=(k-1)r+1$ and to suppose $V(G)=[n]$ for notational simplicity.

The partite construction produces a sequence of so-called 
{\it pictures}, which in the present case are just $n$-partite $k$-uniform hypergraphs.
It is customary to draw the vertex classes of pictures, which are called {\it music 
lines}, horizontally; the hypergraph $G$ is then drawn vertically next to the picture
(see Figure~\ref{fig45}) so that a bijective correspondence between music lines and the
vertices of $G$ is set  up. In other words, the projection $\psi$ `to the left side' 
is a hypergraph homomorphism from the picture to~$G$. Due to $V(G)=[n]$ we can 
speak of the first, second, etc.\ music line of a picture. 

\usetikzlibrary {arrows.meta,bending,positioning}

	\begin{figure}[h!]
	
	\begin{tikzpicture}

			\foreach \j in {1,...,5}
	{	\draw (0,\j*.5)--(8.8,\j*.5);
		\foreach \i in {-4,...,10}
		{\coordinate (a\i\j) at (\i*.8,\j*.5);
		}
}
\qedge{(a13)}{(a14)}{(a15)}{5pt}{.6pt}{red!80!black}{red, opacity=.3};
\qedge{(a22)}{(a24)}{(a25)}{5pt}{.6pt}{red!80!black}{red, opacity=.3};
\qedge{(a31)}{(a34)}{(a35)}{5pt}{.6pt}{red!80!black}{red, opacity=.3};
\qedge{(a42)}{(a44)}{(a45)}{5pt}{.6pt}{red!80!black}{red, opacity=.3};
\qedge{(a51)}{(a54)}{(a55)}{5pt}{.6pt}{red!80!black}{red, opacity=.3};
\qedge{(a61)}{(a64)}{(a65)}{5pt}{.6pt}{red!80!black}{red, opacity=.3};
\qedge{(a72)}{(a73)}{(a74)}{5pt}{.6pt}{red!80!black}{red, opacity=.3};
\qedge{(a81)}{(a83)}{(a84)}{5pt}{.6pt}{red!80!black}{red, opacity=.3};
\qedge{(a91)}{(a93)}{(a94)}{5pt}{.6pt}{red!80!black}{red, opacity=.3};
\qedge{(a101)}{(a102)}{(a103)}{5pt}{.6pt}{red!80!black}{red, opacity=.3};

\draw [Stealth-, thick] (-2.6,1.5)--(-.5,1.5);
\node at (-1.5,1.7) {$\psi$};

		\foreach \i in {a13,a14,a15,a22,a24,a25,a31,a34,a35,a42,a43,a45,a51,a53,a55,a61,a62,a65,a74,a73,a72,a84,a83,a81,a92,a94,a91,a103,a102,a101} \fill [black] (\i) circle (1.5pt);
		
		\qedge{(a-41)}{(a-42)}{(a-45)}{5pt}{.6pt}{violet!80!black}{violet, opacity=0.3};
		\foreach \i in {1,...,5} \fill (a-4\i) circle (1.5pt);
		
		\node [violet] at (-4.6,1.5) {$G=K^{(3)}_5$};
		
	\end{tikzpicture}
	\caption{Picture zero for $k=3$, $r=2$, and $n=5$.}\label{fig45}
	
	\end{figure}
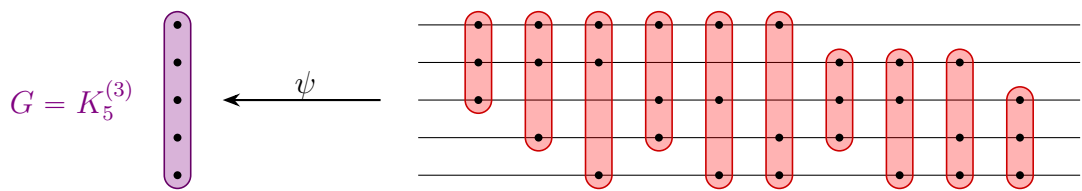
 
Every partite construction is initialised with its {\it picture zero}, typically 
denoted by $\Pi_0$. In the case at hand, picture zero is a matching consisting of 
$e(G)=\binom nk$ edges. Their vertices are to be positioned on the music lines in 
such a way that to every edge $e$ of $G$ there corresponds a unique edge of $\Pi_0$ 
projected to $e$ by $\psi$ (see Figure~\ref{fig45}). Clearly $\Pi_0$ has infinite girth and, 
in particular, it is linear. 

We shall now construct iteratively a sequence of linear pictures 
$\Pi_1, \dots, \Pi_n$. The last picture~$\Pi_n$ is going to be the desired 
linear $k$-uniform hypergraph, whose chromatic number exceeds $r$. In general, 
the construction of $\Pi_i$ will `process' the $i^{\mathrm{th}}$ music line. 

Let us first explain the formation of $\Pi_1$ (see Figure~\ref{fig46a}). If the 
first music line of $\Pi_0$ has~$n_1$ vertices, then the first 
music line of $\Pi_1$ has $r(n_1-1)+1$ vertices. 
Moreover, each set of~$n_1$ vertices from this music line 
is extended to its own copy of $\Pi_0$. These copies of $\Pi_0$ are to be drawn as 
disjointly as possible, so that copies corresponding to different sets intersect 
only on the first music line of $\Pi_1$. 
This ensures that distinct edges of $\Pi_1$ can only intersect on the first music line 
and, therefore, $\Pi_1$ is indeed linear. Notice that for every $r$-colouring 
of $\Pi_1$ there are $n_1$ vertices on the first music line receiving the same colour; 
the copy of $\Pi_0$ attached to these $n_1$ vertices has the property that its first 
music line is monochromatic. 

	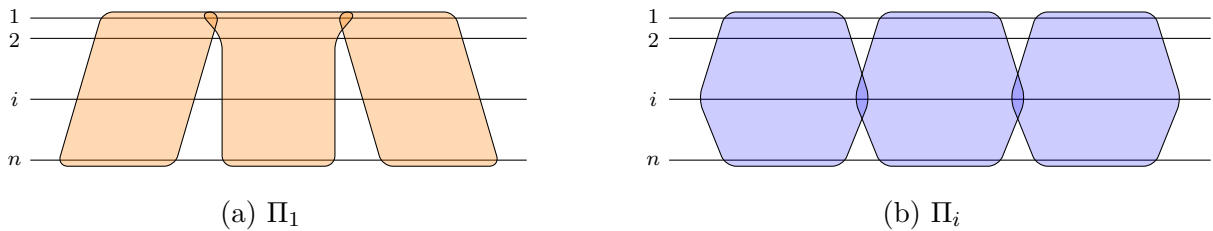
\begin{figure}[h!]
	
\begin{subfigure}[b]{.47\textwidth}
	\centering
		\begin{tikzpicture}

\def\h{.27}
\def\w{.3}

\draw (-4*\w,8*\h)--(18*\w,8*\h);
\draw (-4*\w,7*\h)--(18*\w,7*\h);
\draw (-4*\w,4*\h)--(18*\w,4*\h);
\draw (-4*\w,1*\h)--(18*\w,1*\h);

\fill [ orange, opacity=.3,rounded corners] (3.5*\w,8.3*\h) --(4.5*\w,7*\h)--(4.5*\w,0.7*\h)--(9.5*\w,0.7*\h)--(9.5*\w,7*\h)-- (10.5*\w,8.3*\h) -- cycle;
\fill [orange, opacity=.3, rounded corners] (16.8*\w,0.7*\h)--(11.6*\w,0.7*\h)--(9.6*\w,8.3*\h)--(14.8*\w,8.3*\h)--cycle;
\fill [orange, opacity=.3,rounded corners] (2.4*\w,0.7*\h)--(-2.8*\w,0.7*\h)--(-.8*\w,8.3*\h)--(4.4*\w,8.3*\h)--cycle;

\draw [rounded corners] (3.5*\w,8.3*\h) --(4.5*\w,7*\h)--(4.5*\w,0.7*\h)--(9.5*\w,0.7*\h)--(9.5*\w,7*\h)-- (10.5*\w,8.3*\h) -- cycle;
\draw [rounded corners] (16.8*\w,0.7*\h)--(11.6*\w,0.7*\h)--(9.6*\w,8.3*\h)--(14.8*\w,8.3*\h)--cycle;
\draw [rounded corners] (2.4*\w,0.7*\h)--(-2.8*\w,0.7*\h)--(-.8*\w,8.3*\h)--(4.4*\w,8.3*\h)--cycle;

\node at (-4.7*\w,8.1*\h) {\tiny $1$};
\node at (-4.7*\w,6.9*\h) {\tiny $2$};
\node at (-4.7*\w,4*\h) {\tiny $i$};
\node at (-4.7*\w,1*\h) {\tiny $n$};
		
	\end{tikzpicture}
\caption{$\Pi_1$}\label{fig46a}
\end{subfigure}
\hfill
\begin{subfigure}[b]{.47\textwidth}
	\centering
	\begin{tikzpicture}
		
		\def\h{.27}
		\def\w{.3}
		
		\draw (-12*\w,8*\h)--(12*\w,8*\h);
		\draw (-12*\w,7*\h)--(12*\w,7*\h);
		\draw (-12*\w,4*\h)--(12*\w,4*\h);
		\draw (-12*\w,1*\h)--(12*\w,1*\h);
		
	\fill [blue, opacity=.2,rounded corners] (-2.6*\w,0.7*\h)--(2.6*\w,0.7*\h)--(3.8*\w,3.6*\w)--(2.6*\w,8.3*\h)--(-2.6*\w,8.3*\h)--(-3.8*\w,3.6*\w)--cycle;
	\fill [blue, opacity=.2,rounded corners] (9.5*\w,0.7*\h)--(4.3*\w,0.7*\h)--(3.1*\w,3.6*\w)--(4.3*\w,8.3*\h)--(9.5*\w,8.3*\h)--(10.7*\w,3.6*\w)--cycle;	
	\fill [blue, opacity=.2,rounded corners] (-9.5*\w,0.7*\h)--(-4.3*\w,0.7*\h)--(-3.1*\w,3.6*\w)--(-4.3*\w,8.3*\h)--(-9.5*\w,8.3*\h)--(-10.7*\w,3.6*\w)--cycle;
		
	\draw [rounded corners] (-2.6*\w,0.7*\h)--(2.6*\w,0.7*\h)--(3.8*\w,3.6*\w)--(2.6*\w,8.3*\h)--(-2.6*\w,8.3*\h)--(-3.8*\w,3.6*\w)--cycle;
	\draw [rounded corners] (9.5*\w,0.7*\h)--(4.3*\w,0.7*\h)--(3.1*\w,3.6*\w)--(4.3*\w,8.3*\h)--(9.5*\w,8.3*\h)--(10.7*\w,3.6*\w)--cycle;
	\draw [rounded corners] (-9.5*\w,0.7*\h)--(-4.3*\w,0.7*\h)--(-3.1*\w,3.6*\w)--(-4.3*\w,8.3*\h)--(-9.5*\w,8.3*\h)--(-10.7*\w,3.6*\w)--cycle;

	\node at (-12.7*\w,8.1*\h) {\tiny $1$};
	\node at (-12.7*\w,6.9*\h) {\tiny $2$};
	\node at (-12.7*\w,4*\h) {\tiny $i$};
	\node at (-12.7*\w,1*\h) {\tiny $n$};
		
	\end{tikzpicture}
	\caption{$\Pi_i$}\label{fig46b}
\end{subfigure}
	\caption{The recursive construction of $\Pi_1, \Pi_2, \dots, \Pi_n$.
	The orange and blue shapes indicate copies of $\Pi_0$ and $\Pi_{i-1}$, respectively.}
	\label{fig46}
	\end{figure}
 
Now suppose inductively that for some $i\in [n]$ the linear picture $\Pi_{i-1}$ 
has already been defined and that it has the following property: for 
every $r$-colouring of $\Pi_{i-1}$ there is a copy of~$\Pi_0$ each of whose 
first $i-1$ music lines is monochromatic (but different music lines may 
have different colours). Let the $i^\mathrm{th}$ music line of $\Pi_{i-1}$
have $n_i$ vertices. Then the $i^\mathrm{th}$ music line of $\Pi_i$ is 
constructed to have $(n_i-1)r+1$ vertices and every set consisting of $n_i$ 
of them is extended to its own copy of~$\Pi_{i-1}$ (see Figure~\ref{fig46b}). Again we 
perform these extensions as disjointly as possible, thereby guaranteeing that~$\Pi_i$
is again linear. For every $r$-colouring of~$\Pi_i$ there is a copy of~$\Pi_{i-1}$
whose~$i^{\mathrm{th}}$ music line is monochromatic; so by our above hypothesis there 
is a copy of~$\Pi_0$ whose first~$i$ music lines are monochromatic.

Ultimately we reach a final picture~$\Pi_n$, which is a linear $k$-uniform hypergraph.
For every $r$-colouring of $\Pi_n$ there is a copy $\widetilde{\Pi}_0$ of picture zero 
all of whose music lines are monochromatic. The $n$ colours we see on the music lines 
of $\widetilde{\Pi}_0$ correspond, via the projection $\psi$, to a vertex colouring 
of the vertical hypergraph $G$. Because of $\chi(G)>r$ some edge of $G$ needs to 
be monochromatic with respect to this auxiliary colouring. The corresponding 
edge of $\widetilde{\Pi}_0$ is the desired monochromatic edge of $\Pi_n$. 
Thus we have indeed $\chi(\Pi_n)>r$. We leave it to the reader's curiosity to check 
that~$\Pi_n$ is not only linear, but also free of $3$-cycles (this fact is not going 
to used later).  

Before generalising this argument to larger girth, we would like to offer some brief 
remarks. The projection argument in the last paragraph essentially establishes the 
implication 
\[
	\chi(G)>r \,\,\, \Longrightarrow \chi(\Pi_n)>r\,.
\]
In principle, any other $k$-uniform hypergraph $G'$ with $\chi(G')>r$ could have
been employed vertically; the corresponding picture zero would again have~$e(G')$ 
edges, so that some of its naturally induced $k$-partite $k$-uniform subhypergraphs 
were edgeless. The freedom to do something smart vertically adds considerably to 
the power and flexibility of the partite construction method. It is often exploited 
very successfully in the current research literature (e.g., by Hubi\v{c}ka and 
Jarik~\cite{HN19}), but for the purposes of the current subsection there is no need 
for clever vertical decisions.  

Horizontally we appealed to the box principle when arguing that for 
every $r$-colouring of~$\Pi_i$ there is a copy of~$\Pi_{i-1}$ whose~$i^{\mathrm{th}}$ 
music line is monochromatic. We can view this step also as follows. 
The $i^{\mathrm{th}}$ music line
of $\Pi_{i-1}$ is essentially the same as an $n_i$-uniform edge. The $n_i$-uniform 
clique~$K^{(n_i)}_{(r-1)n_i+1}$ is our standard example of an $n_i$-uniform hypergraph
whose chromatic number exceeds $r$, and the copies of~$\Pi_{i-1}$ in Figure~\ref{fig46b}
should be thought of as corresponding to its edges. Any other choice of a $n_i$-uniform 
hypergraph that fails to be $r$-colourable would work here as well. 
This possibility certainly needs 
to be exploited when proving Theorem~\ref{thm:EH}, because as long as two copies 
of~$\Pi_1$ can intersect in more than one vertex it is difficult to avoid four-cycles 
in~$\Pi_2$. 

Having thus laid a solid foundation we can prove Theorem~\ref{thm:EH} rather easily. 
Fix $r\ge 2$ and assume, as an induction hypothesis, that for some $g\ge 2$ we already 
have a sequence of hypergraphs $(H^{(k)}_g)_{k\ge 2}$ such that $H^{(k)}_g$ 
is $k$-uniform, $\gth(H^{(k)}_g)>g$, and $\chi(H^{(k)}_g)>r$. 
Given any integer $k\ge 2$ we need to 
construct an appropriate hypergraph $H^{(k)}_{g+1}$. To this end we set $n=(k-1)r+1$ 
and run a partite construction, thereby generating a sequence of pictures 
$\Pi_0, \Pi_1, \dots, \Pi_n$.

We start with the same picture zero $\Pi_0$ as before (see Figure~\ref{fig45}).
Now suppose that for some positive integer $i\le n$ we have already obtained 
the picture $\Pi_{i-1}$ with $\gth(\Pi_{i-1})>g+1$. Let $n_i$ denote the number of 
vertices on the $i^\mathrm{th}$ music line of $\Pi_{i-1}$. 
Draw the hypergraph~$H^{(n_i)}_g$ horizontally and extend each of its edges to a 
separate copy of $\Pi_{i-1}$, thus obtaining the next picture $\Pi_i$. 
For clarity we point out that there are $|V(H^{(n_i)}_g)|$ vertices on 
the~$i^{\mathrm{th}}$ music line of~$\Pi_i$ and that 
$e(\Pi_i)=e(\Pi_{i-1})\cdot e(H^{(n_i)}_g)$. It is important to ensure that 
our $e(H^{(n_i)}_g)$ so-called {\it standard copies} of $\Pi_{i-1}$ (visualised 
by blue shapes in Figure~\ref{fig46b}) are only intersecting each other on 
the~$i^{\mathrm{th}}$ music line. 

We contend that $\gth(\Pi_i)>g+1$. Assume contrariwise that for some $n\in [2, g+1]$
there is an $n$-cycle $e_1v_1\dots e_nv_n$ in $\Pi_i$. For every $j\in\ZZ/n\ZZ$ let 
$f_j$ be the edge of $H^{(n_i)}_g$ whose extension led to the standard copy 
of~$\Pi_{i-1}$ containing $e_j$. By our disjointness requirement, if $f_j\ne f_{j+1}$, 
then $v_j\in f_j\cap f_{j+1}$. So unless the edges $f_1, \dots, f_n$ are identical, 
some of them form a cycle. Owing to $\gth(H^{(n_i)}_g)>g$ this shows that 
\begin{enumerate}[label=\nlabel]
	\item\label{it:pc1} either $f_1=\dots=f_n$;
	\item\label{it:pc2} or $f_1, \dots, f_n$ are distinct and $f_1v_1\dots f_nv_n$ 
	is a cycle in $H^{(n_i)}_g$.
\end{enumerate}  
But~\ref{it:pc1} contradicts $\gth(\Pi_{i-1})>g+1$ and~\ref{it:pc2} implies that 
all of $v_1, \dots, v_n$ are on the $i^{\mathrm{th}}$ music line of $\Pi_i$. Due to 
$v_1, v_n\in e_1$ it follows that $e_1$ intersects this music line at least twice,
which is absurd. We have thereby established $\gth(\Pi_i)>g+1$ and the partite 
construction goes on. 

As in the linear case we see that for every $r$-colouring of the last picture there 
is a copy of~$\Pi_0$ whose music lines are monochromatic, which in turn shows that
there is a monochromatic edge. This confirms $\chi(\Pi_n)>r$ and the proof of 
Theorem~\ref{thm:EH} by partite construction is complete. Another account of 
this argument can be found in the original source~\cite{NeRo79}. 

\subsection{A conjecture of Erd\H{o}s}\label{sec:EC}
We proceed with some results related to a famous problem of 
Erd\H{o}s~\cite{Erd68}.

\begin{conj}[Erd\H{o}s]\label{conj:E}
	Given any two natural numbers $g, r\ge 2$ there exists a natural number $k$ 
	such that every graph $G$ with $\chi(G)> k$ has a subgraph $F$ 
	with $\gth(F)>g$ and $\chi(F)> r$.
\end{conj}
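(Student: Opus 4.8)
The plan is to argue by induction on $g$. The case $g=2$ is vacuous, since every simple graph has girth at least $3$, so $k=r$ and $F=G$ suffice. The case $g=3$ is the theorem of R\"odl that every graph of large chromatic number has a triangle-free subgraph of large chromatic number. Assume now the statement for some $g\ge 3$ and fix $r\ge 2$. First I would introduce an auxiliary parameter $r'=r'(g,r)$, to be pinned down at the end, let $k'$ be the constant the inductive hypothesis supplies for the pair $(g,r')$, and set $k:=k'$. Given $G$ with $\chi(G)>k'$, the inductive hypothesis produces a subgraph $H\subseteq G$ with $\gth(H)>g$ and $\chi(H)>r'$, so the whole problem reduces to a \emph{cycle-removal lemma}: there is a function $\rho$ such that every graph $H$ with $\gth(H)\ge g+1$ and $\chi(H)>\rho(g,r)$ contains a subgraph $F$ with $\gth(F)\ge g+2$ and $\chi(F)>r$. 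One then puts $r'=\rho(g,r)$.

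Next I would attack this lemma. Because $H$ already has girth at least $g+1$ it is sparse and locally tree-like, so I would first trade $\chi(H)$ for the sturdier lower bound $|V(H)|/\alpha(H)$, and---by invoking the inductive hypothesis once more or by a Ramsey-type cleaning---pass to an induced subgraph that is essentially vertex-homogeneous, in the sense that discarding a small proportion of its vertices changes neither its order nor its independence number by more than a constant factor. Inside such an $H$ I would try to destroy every $(g+1)$-cycle by deleting one carefully chosen edge from it; since the short cycles of a locally tree-like graph are sufficiently sparse, a random sparsification---keep each edge with a small probability $p$, then remove one edge per surviving $(g+1)$-cycle---should delete only a small fraction of the vertices, so that $|V(H)|$ is essentially preserved.

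The hard part will be the independence number. Deleting edges can only increase $\alpha$, and there is no bound on $\alpha(H-D)$ in terms of $\alpha(H)$ and $|D|$ alone---even a matching-like deleted set $D$ can in principle inflate $\alpha$ arbitrarily---so the edges marked for removal must not be chosen arbitrarily from the $(g+1)$-cycles but from a structurally distinguished, sufficiently spread-out part of $H$ whose deletion is provably harmless for the chromatic number. Producing such a part is exactly where I would deploy the partite-construction machinery of \S\ref{subsec:33} together with the Lov\'asz-type high-girth, high-chromatic hypergraphs of \S\ref{subsec:32}: one sets up a hypothetical extremal graph $G$ in which every subgraph of girth at least $g+2$ has chromatic number at most $r$, amalgamates many copies of $G$ along the edges of a high-girth hypergraph of chromatic number exceeding $r$, and forces a contradiction, in the same spirit that made Theorem~\ref{thm:EH} and Theorem~\ref{thm:grt} work. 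The delicate point throughout is that, unlike girth, the chromatic number is neither monotone nor locally checkable, so certifying that it stays large after all these amalgamations and sparsifications is the crux on which the whole argument turns.
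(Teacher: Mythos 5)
The statement you are trying to prove is Conjecture~\ref{conj:E}, which the paper records as a famous \emph{open} problem of Erd\H{o}s: only the case $g=3$ (triangle-free subgraphs, Theorem~\ref{thm:rodl}) is known, and for every $g\ge 4$ the conjecture is wide open. So there is no proof in the paper to compare against, and your proposal does not close the gap. Your induction on $g$ merely relocates the entire difficulty into the ``cycle-removal lemma'': the assertion that a graph of girth greater than $g$ and huge chromatic number contains a subgraph of girth greater than $g+1$ and large chromatic number \emph{is} the open content of the conjecture (indeed the passage from girth $>3$ to girth $>4$, i.e.\ destroying the four-cycles that R\"odl's triangle-free subgraph is riddled with, is exactly where all known arguments stop). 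Calling it a lemma and setting $r'=\rho(g,r)$ does not prove it.

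The tools you then gesture at do not supply the missing step. Replacing $\chi(H)$ by $|V(H)|/\alpha(H)$ passes to a strictly weaker invariant, and after deleting one edge per $(g+1)$-cycle there is no mechanism for bounding the chromatic number of what remains from below --- as you yourself concede, this is ``the crux on which the whole argument turns,'' and no argument is given. More fundamentally, the partite construction of \S\ref{subsec:33} and the Lov\'asz-type hypergraphs are \emph{construction} methods: they build specific graphs and hypergraphs with prescribed girth and chromatic number, but they give no purchase on an \emph{arbitrary} graph $G$ of large chromatic number, which is what the conjecture quantifies over; amalgamating copies of a hypothetical extremal $G$ along a high-girth hypergraph does not obviously produce any contradiction, and R\"odl's proof of the $g=3$ case proceeds by a completely different route (the left-neighbourhood dichotomy together with Zykov's submultiplicativity, Fact~\ref{f:zykov}, versus growing a large clique), a route that is known not to control cycles of length four or more. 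In short, the base cases are fine, but the inductive step is precisely the unsolved problem, and the sketch offers no new idea for it.
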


The special case $g=3$ was solved in~\cite{Rodl77}, while for every $g\ge 4$ the 
conjecture is wide open. 

\begin{thm}[R\"{o}dl]\label{thm:rodl}
	Given $r\ge 2$ every graph whose chromatic number is sufficiently large 
	has a triangle-free subgraph whose chromatic number exceeds $r$. 
\end{thm}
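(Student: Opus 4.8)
The plan is to prove the equivalent statement that there is a function $k(r)$ such that $\chi(G) > k(r)$ forces a triangle-free subgraph of chromatic number $> r$, by induction on $r$. The base case $r = 1$ is trivial ($k(1) = 1$, since a single edge is a triangle-free subgraph of chromatic number $2$), so fix $r \ge 1$ and a graph $G$ with $\chi(G) > k(r+1)$, where $k(r+1)$ will be chosen enormous in terms of $k(r)$. First I would pass to a colour-critical subgraph, so that $\delta(G) \ge \chi(G) - 1$ is huge, and observe that if $\gth(G) \ge 4$ we are already done; hence assume $G$ has triangles. The target is a triangle-free subgraph of chromatic number $> r+1$, to be obtained by realising a Tutte-type boosting gadget inside $G$: an independent set $Y$ together with pairwise non-adjacent, vertex-disjoint triangle-free subgraphs $F_X \subseteq G$ of chromatic number $> r$ (supplied by the induction hypothesis, since $\chi(G) > k(r)$), one for each $t$-subset $X$ of $Y$, each joined to $X$ by a matching. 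Such a gadget is triangle-free — a triangle would need an edge of some $F_X$ plus two matching edges landing on distinct vertices of $Y$ — and in any $r$-colouring some $X$ is monochromatic by pigeonhole and blocks a colour on $F_X$, so its chromatic number exceeds $r+1$.

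The obstacle, which is the whole content of R\"odl's theorem, is that a graph of merely large chromatic number need not contain this gadget as a subgraph; indeed a triangle-free graph of huge minimum degree and girth can contain essentially no prescribed triangle-free graph at all. So the gadget must be manufactured adaptively from $G$ rather than located in it. The way I would do this is by a partite construction internal to $G$, in the spirit of \S\ref{subsec:33}: fix a triangle-free skeleton $G_0$ with $\chi(G_0) > r+1$ (Theorem~\ref{thm:41}), enumerate its vertices, and build pictures $\Pi_0, \Pi_1, \dots$ where at stage $i$ one processes the $i$th music line by replacing it with a high-chromatic structure and attaching many disjoint copies of $\Pi_{i-1}$ glued only along that line — exactly the mechanism used in the proof of Theorem~\ref{thm:EH}. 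That mechanism keeps the construction free of short cycles for the usual reason (distinct copies meet only on the processed line), so triangle-freeness is automatic, and $\chi(\Pi_n) > r+1$ is read off by projecting a monochromatic copy of $\Pi_0$ onto $G_0$. The point where $G$ enters is that each gluing step must be carried out using structure that actually occurs inside $G$, and here the induction hypothesis and the large minimum degree of $G$ are invoked to guarantee that the needed disjoint triangle-free high-chromatic sub-pieces are present at every stage.

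The step I expect to be the real difficulty is precisely this interface between the abstract amalgamation and the fixed host $G$: arranging the bookkeeping so that each of the boundedly many stages consumes only a controlled portion of the chromatic resources of $G$, so that enough chromatic number survives to feed the next stage and, at the end, to run the projection argument. This forces $k(r+1)$ to be taken of tower type in $k(r)$ and $|V(G_0)|$, chosen so that the loss at each stage is, say, a fixed power; I would expect that accounting to be delicate but routine. (An alternative line, closer to Erd\H{o}s's original probabilistic argument, would be to take a random sparse subgraph of $G$ and delete an edge from each surviving triangle; there the crux shifts to showing that the sparsification retains large chromatic number, which is again exactly where $\chi(G)$ being large must be exploited non-trivially.)
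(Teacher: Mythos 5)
The proposal does not close the step that you yourself identify as ``the whole content of R\"odl's theorem''. You say the Tutte-type gadget must be ``manufactured adaptively from $G$'' by a partite construction ``internal to $G$'', with the induction hypothesis and the large minimum degree ``invoked to guarantee that the needed disjoint triangle-free high-chromatic sub-pieces are present at every stage''. No mechanism for this is given, and none of the invoked tools supplies one. The induction hypothesis produces a single triangle-free subgraph of chromatic number exceeding $r$ inside any graph of chromatic number exceeding $k(r)$; it says nothing about finding many pairwise non-adjacent such subgraphs, each joined by a matching of actual $G$-edges to a prescribed independent set $Y$, with all the disjointness the amalgamation needs. Those matching edges and non-adjacencies form precisely the kind of prescribed non-forest configuration that large chromatic number (or large minimum degree) does not force --- compare Fact~\ref{f:F} and Theorem~\ref{thm:41}. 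The partite construction is a free amalgamation that builds new abstract graphs; its gluing steps cannot in general be realised inside a fixed host. So the argument is circular at its core: it reduces the theorem to embedding a specific gadget into $G$, which is a harder, unproved statement. The probabilistic alternative you mention in parentheses has the same status, since you leave open exactly the point where $\chi(G)$ must be exploited.

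The paper's (R\"odl's) proof avoids constructing anything inside $G$ and is much shorter. Fix an ordering $<$ of $V(G)$ and a large $n$. Either $\chi(G[N_<(x)])\le n$ for every vertex $x$, in which case proper $n$-colourings $f_x$ of the left neighbourhoods partition $E(G)$ into classes $E_1,\dots,E_n$ (put $xy$ with $y<x$ into $E_{f_x(y)}$); each $(V,E_i)$ is triangle-free because a triangle $x>y>z$ with $xy,xz\in E_i$ would force $f_x(y)=f_x(z)$ although $yz\in E$, and then by Fact~\ref{f:zykov} either some $\chi(V,E_i)>r$ (done) or $\chi(G)\le r^n$ (bounded). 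Or else some left neighbourhood has chromatic number exceeding $n$, and iterating this alternative inside successive left neighbourhoods builds a clique $K_m$ for any prescribed $m$, which for $m$ large trivially contains a triangle-free subgraph of chromatic number exceeding $r$. If you want to rescue your approach, you would have to prove the gadget-embedding claim, and I see no reason to believe that is easier than the theorem itself.
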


The argument exploits that the chromatic number is submultiplicative. This was first 
observed by Zykov~\cite{Zykov}*{\rn{Teorema}~2} and can be proved using a product 
colouring. 

\begin{fact}[Zykov]\label{f:zykov}
	Let $G=(V, E)$ be a graph. If $E=\bigcup_{i\in I}E_i$, then 
		\[
		\pushQED{\qed} 
		\chi(G)\le \prod_{i\in I} \chi(V, E_i)\,. \qedhere
		\popQED
	\]
	\end{fact}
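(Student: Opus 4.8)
The plan is to use a \emph{product colouring}, which is precisely the device Zykov introduced for this purpose. For each index $i\in I$ I would fix a proper colouring $c_i\colon V\to C_i$ of the graph $(V, E_i)$ that uses $|C_i|=\chi(V, E_i)$ colours, and then assemble them into the single map
\[
	c\colon V\longrightarrow \prod_{i\in I} C_i, \qquad c(v)=\bigl(c_i(v)\bigr)_{i\in I}\,,
\]
which assigns to every vertex the vector of its colours in the various subgraphs.

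The verification that $c$ is a proper colouring of $G$ is immediate, and this is the whole argument: if $uv\in E$, then by hypothesis $uv\in E_i$ for at least one $i\in I$, and since $c_i$ is proper on $(V, E_i)$ we have $c_i(u)\ne c_i(v)$; hence the tuples $c(u)$ and $c(v)$ already disagree in their $i$-th coordinate, so $c(u)\ne c(v)$. Consequently $c$ witnesses $\chi(G)\le\bigl|\prod_{i\in I} C_i\bigr|=\prod_{i\in I}\chi(V, E_i)$, which is the asserted inequality.

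The only point that deserves a word is the case of an infinite index set~$I$. If infinitely many of the $E_i$ are non-empty, then $|V|\ge 2$ and each such factor is at least~$2$, so the product on the right-hand side is infinite and the inequality holds vacuously; and if only finitely many $E_i$ are non-empty, one may simply discard the indices with $E_i=\varnothing$ — this changes neither $\bigcup_{i\in I}E_i$ nor the value of the product, since it merely deletes factors equal to~$1$ — thereby reducing to a finite $I$. So I do not expect any genuine obstacle here: the entire content is the observation that colouring each $(V, E_i)$ optimally and recording the resulting vector of colours already yields a proper colouring of $G$, and submultiplicativity drops out of the size of the product palette.
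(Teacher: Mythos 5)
Your product colouring is exactly the argument the paper has in mind: the sentence preceding the Fact says it ``can be proved using a product colouring'' and then omits the proof, and your verification that adjacent vertices receive distinct tuples is complete and correct. The closing discussion of infinite $I$ is superfluous (and the ``holds vacuously'' claim is not fully justified for arbitrary cardinals), since the product colouring you constructed already works verbatim for any index set.
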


Now the idea of R\"{o}dl's proof is the following. Suppose that for some integer $n$
(that will later be allowed to grow) we consider a graph $G=(V, E)$ whose chromatic 
number is much bigger than $n$. Fix an arbitrary ordering $<$ of $V$, so that for 
every vertex $x\in V$ we can consider its {\it left neighbourhood} 
\[
	N_<(x)=\{y\in V\colon y<x \text{ and } xy\in E\}\,.
\]
There are two possibilities. Either
\begin{enumerate}[label=\nlabel]
	\item\label{it:nr1} $\chi(G[N_<(x)])\le n$ for every $x\in V$
	\item\label{it:nr2} or $\chi(G[N_<(x)])>n$ for some $x\in V$. 
\end{enumerate}

Let us first consider the case that~\ref{it:nr1} holds. Fix for 
every vertex $x\in V$ a proper $n$-colouring $f_x\colon N_<(x)\lra [n]$
of $G[N_<(x)]$. The sequence of colourings $(f_x)_{x\in V}$ can equivalently 
be described by a partition $E=\bigdcup_{i\in [n]} E_i$, where an edge $xy\in E$
with $y<x$ is put into a set $E_i$ if and only if $f_x(y)=i$. As the colourings $f_x$
are proper, the graphs $(V, E_i)$ are triangle-free. 
So if $\chi(V, E_i)>r$ holds for some $i\in [n]$, then we have found the desired 
subgraph of $G$; otherwise Fact~\ref{f:zykov} tells us $\chi(G)\le r^n$, so that 
the chromatic number of $G$ is `bounded'.

Intuitively the argument from the previous paragraph tells is that if $\chi(G)$
is sufficiently large, then only case~\ref{it:nr2} is relevant. But the same 
observation can then be applied to $G[N_<(x)]$ in place of $G$, thus starting 
an iteration. Given any number $m$ in advance, we can assume that $\chi(G)$ is 
so large that $m$ iteration steps are possible, which allows us to build a 
clique $K_m$ in $G$.
But clearly, if $m$ itself is chosen sufficiently large, then this clique contains 
a triangle-free subgraph whose chromatic number exceeds $r$. For further details 
on the proof of Theorem~\ref{thm:rodl} we refer to~\cite{Rodl77}.

The triangle-free subgraph provided by this proof is usually not induced. This is 
quite manifest in the `second case', where such a graph is found inside a big clique; 
but also if at some step along the iteration the first case occurs, the subgraph 
obtained after partitioning the edge set is typically non-induced. Nevertheless, 
it is natural to wonder whether, under some additional assumptions, even an induced 
triangle-free subgraph of large chromatic number can be found. 
For instance, Galvin and R\"{o}dl conjectured that it suffices to assume that, in 
addition to having extremely large chromatic number, the given graph is also 
$K_4$-free (see Jarik's graph theory 
textbook~\cite{jarik-book}*{p.293, Probl\'em S}), but this was 
refuted a couple of years ago in~\cite{CHMS}.    
 
\begin{thm}[Carbonero, Hompe, Moore \& Spirkl]\label{thm:CHMS}
	There are $K_4$-free graphs of arbitrarily large chromatic number all of 
	whose induced triangle-free subgraphs are $4$-colourable.
\end{thm}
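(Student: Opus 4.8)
The plan is to exhibit, by recursion on $k$, the explicit family $G_1,G_2,\dots$ of Carbonero, Hompe, Moore, and Spirkl: each $G_k$ should be $K_4$-free, satisfy $\chi(G_k)\to\infty$, and have the property that every triangle-free induced subgraph is $4$-colourable. I would take $G_1$ to be a single triangle and, given $G_k$ on $m$ vertices with $r=\chi(G_k)$, build $G_{k+1}$ by a Zykov-style amalgamation: take $r$ vertex-disjoint copies $G_k^{(1)},\dots,G_k^{(r)}$ of $G_k$ and, for every transversal $T$ (one vertex from each copy), adjoin a small \emph{triangle gadget} attaching a new apex $x_T$ to $T$. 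The exact shape of this gadget --- which is the real content of the construction --- must be engineered so that (i) adjoining it creates no $K_4$, (ii) it prevents $G_{k+1}$ from being properly $r$-coloured, and (iii) whenever a triangle-free induced subgraph of $G_{k+1}$ uses a gadget vertex, the triangles built into the gadget force that subgraph to omit enough of the relevant copy $G_k^{(i)}$ that the gadget vertex can be recoloured from the four colours already spent on the copies.

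\textbf{$K_4$-freeness and unbounded chromatic number.} Both go by induction on $k$. A clique of $G_{k+1}$ either lies inside a single copy $G_k^{(i)}$, where it has at most three vertices by the inductive hypothesis, or it meets a gadget, in which case the local structure of the gadget --- designed for exactly this --- again caps it at three vertices; hence $\omega(G_k)\le 3$ for all $k$. For the chromatic number, the amalgamation only adds vertices and edges, so $\chi(G_{k+1})\ge\chi(G_k)$; and if $G_{k+1}$ had a proper $r$-colouring, then, since $\chi(G_k^{(i)})=r$, every copy would use all $r$ colours, so one could pick $x_i\in V(G_k^{(i)})$ of colour $i$, and the transversal $T=\{x_1,\dots,x_r\}$ would leave $x_T$ facing all $r$ colours, a contradiction. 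Thus $\chi(G_{k+1})>\chi(G_k)$ and $\chi(G_k)\to\infty$.

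\textbf{Triangle-free induced subgraphs are $4$-colourable (the main point).} I would prove by induction on $k$ that every triangle-free induced $H\subseteq G_k$ has $\chi(H)\le 4$. For $G_{k+1}$, write $H=H_0\cup X$, where $X$ is the set of gadget vertices in $H$ and $H_0=H\setminus X$ meets each copy $G_k^{(i)}$ in a triangle-free induced subgraph, hence one that is $4$-colourable by the inductive hypothesis. Since the copies are pairwise non-adjacent, a \emph{single} palette $\{1,2,3,4\}$ properly colours all of $H_0$ at once. The real work is to extend this over $X$ with no fifth colour: here one invokes property (iii), that each gadget vertex surviving in the triangle-free $H$ has its $H$-neighbourhood confined to one copy and, being part of a triangle-free graph, confined there to a piece on which not all four colours can appear, so a free colour is always available --- after which the (sparse) adjacencies inside $X$ and between $X$ and $H_0$ are absorbed to finish a proper $4$-colouring of $H$.

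The main obstacle, and the reason a plain Zykov iteration does not work, is to keep the bound in the last step at the \emph{constant} $4$ rather than letting it grow by one at each level; every naive amalgamation pays one extra colour per apex layer. Making this work is exactly the purpose of the delicate gadget: it must be simultaneously triangle-rich enough that triangle-free induced subgraphs interact with it only in a trivially recolourable way, yet chromatically transparent enough to still inflate $\chi$. Verifying the existence of such a gadget, and checking that the colour-extension in the third step genuinely goes through for it, is the technical heart of the Carbonero--Hompe--Moore--Spirkl argument.
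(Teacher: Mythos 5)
There is a genuine gap: your entire argument is conditional on the existence of a ``triangle gadget'' with properties (i)--(iii), and you never construct one. You correctly identify that a plain Zykov iteration fails (a triangle-free induced subgraph can retain an apex $x_T$ together with its whole independent neighbourhood $T$, on which all four colours may already appear), but the proposal then simply postulates that some gadget repairs this, and defers its construction and verification to ``the technical heart'' of the argument. Since that heart is exactly what a proof must supply, what you have written is a plan, not a proof. It is also far from clear that any such local gadget exists: any recursive apex-attachment scheme must somehow prevent the chromatic number of triangle-free induced subgraphs from growing by one per level, and you give no mechanism for this beyond the hoped-for property (iii).

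The actual construction of Carbonero, Hompe, Moore, and Spirkl is structurally different and avoids the issue entirely. One orients Zykov's graphs to obtain acyclic digraphs $D_r$ in which any two vertices are joined by at most one directed path, and then defines a \emph{new} graph $G_r$ on $V(D_r)$: put a (positive) arc $u\to v$ when $D_r$ has a directed $u$-$v$-path of length $\equiv +1 \pmod 3$, and a (negative) arc $u\to v$ when $D_r$ has a directed $v$-$u$-path of length $\equiv -1\pmod 3$. Concatenating paths shows that two consecutive arcs of the same sign close into a triangle, while two of opposite signs cannot be completed to a transitive tournament; the latter gives $K_4$-freeness, and the former shows that in any triangle-free induced subgraph $H$ neither sign class contains a directed path of length $2$, so each sign class induces a bipartite graph on $V(H)$. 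The bound $4$ then falls out as $2\cdot 2$ from the submultiplicativity of the chromatic number (Fact~\ref{f:zykov}), not from any greedy colour extension over apex vertices. In short, the number $4$ in the theorem is a product of two $2$'s coming from an edge partition, which is a fundamentally different mechanism from the one your sketch envisages.
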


\begin{proof}
	We start by orienting the graphs from Zykov's construction, which we saw 
	in~\S\ref{subsec:32}. This produces a sequence of digraphs $(D_2)_{r\ge 2}$,
	where $D_2$ consists of two vertices joined by an arc. If for some $r\ge 2$ 
	the digraph $D_r$ has just been constructed, we form $D_{r+1}$ as indicated 
	in Figure~\ref{fig41} and direct all `new' edges towards the vertices $x_T$. 
	We already know that the underlying graph of $D_r$ has chromatic number $r$. 
	Moreover, one checks easily that $D_r$ is acyclic and that for all 
	vertices $u, v\in V(D_r)$ there is at most one directed path from $u$ to $v$. 
	These are all properties of $D_r$ we need in the sequel. They guarantee that 
	the concatenation of two directed paths in $D_r$ is again a directed path, 
	i.e., there never arise problems due to repeated vertices. 
	
	Now let $D'_r$ be the digraph on $V(D_r)$ which has the following two kinds 
	of arcs:
		\begin{enumerate}
		\item[$(+)$] arcs $u\lra v$ such that in $D_r$ there is a 
			directed $u$-$v$-path whose length is congruent to~$+1$ modulo $3$; 
		\item[$(-)$] arcs $u\lra v$ such that in $D_r$ there is a 
			directed $v$-$u$-path whose length is congruent to~$-1$ modulo $3$. 
	\end{enumerate}
		The arcs of $D'_r$ corresponding to these two clauses are called {\it positive}
	and {\it negative}, respectively. It will turn out that the underlying graph 
	$G_r$ of $D'_r$ has the required properties. Since every arc of $D_r$ yields 
	a (positive) arc of $D'_r$, we have $\chi(G)\ge r$. Suppose next that 
	$u\lra v\lra w$ is a directed path in $D'_r$. By considering the corresponding 
	directed paths in $D_r$ one sees that 
		\begin{enumerate}
		\item[$\bullet$] if $u\lra v$, $v\lra w$ have the same sign, 
			then $w\lra u$ is an arc of $D'_r$ as well;
		\item[$\bullet$] and if $u\lra v$, $v\lra w$ have opposite signs,
			then $u\lra w$ cannot be an arc of $D'_r$.
	\end{enumerate}
		In particular, $D'_r$ contains no transitive tournament of order $3$
	and, therefore, $G_r$ is $K_4$-free. 
	
	Now let $H$ be a triangle-free induced subgraph of $G_r$. We need to exhibit 
	a proper $4$-colouring of $H$. Owing to Fact~\ref{f:zykov} it suffices to show 
	that the two subgraphs of $H$ corresponding to the positive 
	and negative arcs are bipartite. By the first of the above bullets, both of 
	these graphs have orientations without directed paths of length $2$, and it 
	is an easy exercise to show that all graphs admitting such orientations are 
	bipartite.
\end{proof}

This leaves the following problem open.

\begin{quest}[Davies]\label{quest:D}
	Do there exist $K_4$-free graphs of arbitrarily large chromatic number 
	all of whose induced triangle-free subgraphs are $3$-colourable?
\end{quest} 

Scott's research group~\cite{GIP} found a generalisation of Theorem~\ref{thm:CHMS} 
to arbitrary graphs instead of triangles. 

\begin{thm}[Gir\~{a}o, Illingworth, Powierski, Savery, Scott, Tamitegama \& Tan] 
	\label{thm:GIP}
	For every graph $F$ with at least one edge there exists a natural number~$c(F)$
	such that for every natural number $r$ there exists a graph $G$ with $\chi(G)>r$,
	$\omega(G)=\omega(F)$, 
	and the following property: all induced subgraphs of $G$ without induced 
	subgraphs isomorphic to $F$ have chromatic number at most $c(F)$. \qed
\end{thm}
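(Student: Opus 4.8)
The plan is to generalise the digraph construction behind Theorem~\ref{thm:CHMS}. Write $\omega=\omega(F)$. I would start, exactly as there, from the sequence of acyclic digraphs $D_r$ obtained by orienting Zykov's graphs from~\S\ref{subsec:32}: the underlying graph of $D_r$ has chromatic number $r$, and between any two vertices of $D_r$ there is at most one directed path, so that concatenations of directed paths in $D_r$ are again directed paths. Next one fixes a modulus $m=m(F)$ and a finite family of \emph{arc labels}, each label being a nonempty set of residues modulo $m$ together with a direction; for every label one inserts into a new digraph $D'_r$ on $V(D_r)$ an arc from $u$ to $v$ whenever $D_r$ contains a directed path between $u$ and $v$, in the prescribed direction, whose length falls into the prescribed set of residues. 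Let $G_r$ be the underlying graph of $D'_r$; this is the candidate for $G$. For $F=K_3$ the modulus $3$ together with the two labels given by the residue classes $\pm 1$ recovers the construction in the proof of Theorem~\ref{thm:CHMS}.

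Three things then have to be verified, for a suitable choice of $m$ and of the label family. First, $\chi(G_r)\ge r$: one arranges that a single label captures every arc of $D_r$, so that $G_r$ contains the underlying graph of $D_r$ as a spanning subgraph. Second, $\omega(G_r)=\omega$: a clique in $G_r$ corresponds, through the path-length conditions and the concatenation rule for directed paths, to a system of residues modulo $m$ that can be internally consistent only for sets of bounded size; the labels must be tuned so that this bound is exactly $\omega$, while a clique of size $\omega$ is realised because $D_r$ contains directed paths of every length. Third, and most importantly, the absence of an induced $F$ must force bounded chromatic number: given an induced subgraph $H$ of $G_r$ with no induced copy of $F$, one partitions $E(H)$ according to the labels of its arcs, so that by Fact~\ref{f:zykov} it suffices to bound the chromatic number of each label class by an absolute constant. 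For a fixed label, the concatenation rule shows that a sufficiently long directed path, or a sufficiently complex bounded configuration, inside that class forces $H$ to contain a prescribed small graph as an \emph{induced} subgraph (here one uses that $H$ is induced in $G_r$); with $m$ and the labels chosen appropriately this forced graph is an induced copy of $F$, contradicting the hypothesis on $H$. Hence each label class has bounded structure, and therefore bounded chromatic number --- just as in Theorem~\ref{thm:CHMS}, where the absence of a directed path of length $2$ inside the positive, respectively negative, class forces bipartiteness.

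The hard part is the simultaneous calibration required by the second and third points: the label family must be rich enough that deleting an induced-$F$-free part collapses each class to a $\chi$-bounded graph, yet sparse enough that the cliques of $G_r$ do not grow beyond $\omega(F)$. A second genuine difficulty in the third point is ensuring that the graph forced by a complex configuration is isomorphic to $F$ itself rather than merely containing $F$ --- otherwise one only controls $F'$-free induced subgraphs for some proper supergraph $F'$ of $F$. Finally one should reduce at the outset to the case where $F$ has no isolated vertices, so that the induced copies produced by the argument are genuinely embedded rather than assembled from independent fragments for which $H$ might lack room; this preprocessing, together with the matching bookkeeping needed to handle $\omega(F)$ correctly, is routine but must be carried out with some care.
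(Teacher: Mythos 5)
The survey offers no proof of Theorem~\ref{thm:GIP} (the end-of-proof symbol after the statement signals, by the paper's own convention, that the result is merely cited), so your attempt can only be measured against the proof of Theorem~\ref{thm:CHMS} and against the original article. Your strategy --- orient Zykov's graphs, declare $u$ and $v$ adjacent when the unique directed path between them in $D_r$ has length in a prescribed residue class modulo $m$ for one of finitely many labels, and decompose an $F$-free induced subgraph by label via Fact~\ref{f:zykov} --- is the right generalisation of the CHMS construction. But what you have written is a plan rather than a proof: every step carrying actual mathematical content is deferred to an unspecified ``calibration,'' and that calibration is the theorem.

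Two gaps in particular. First, since between any two vertices of $D_r$ there is at most one directed path, a directed path $u_0\to u_1\to\dots\to u_t$ lying in a single label class induces in $G_r$ the graph on $\{0,\dots,t\}$ in which adjacency of $i<j$ is governed solely by $(j-i)\bmod m$ and the label family. What you must therefore arrange is that $F$ occurs as an induced subgraph of this ``difference pattern'' (this is the only mechanism available for forcing the non-edges of $F$, not just its edges), while simultaneously the clique number of the difference pattern on all of $\ZZ$ --- not merely of the planted copies of $F$ --- equals $\omega(F)$. Exhibiting a modulus and residue sets with both properties for an arbitrary $F$ is the heart of the matter, and the proposal says nothing about how to do it. Second, even granting the calibration, the step ``each label class excludes some bounded configuration, hence has bounded chromatic number'' requires a theorem: in the CHMS case the excluded configuration is a directed path with two arcs and the conclusion is bipartiteness, and in general one wants to exclude a directed path with $\ell$ arcs and invoke the Gallai--Roy bound $\chi\le\ell$. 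You never identify which configuration is excluded, why its presence forces an induced copy of $F$ itself rather than of some other graph realising the same difference pattern, or why its absence is $\chi$-bounding --- and for excluded configurations other than directed paths no such implication is available. Until these points are filled in, the argument is a credible outline of the published proof but not a proof.
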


For girth-enthusiasts the same authors also pose the following intriguing problem. 

\begin{conj}[Gir\~{a}o, Illingworth, Powierski, Savery, Scott, Tamitegama \& Tan]
	If $F$ is not a forest, then Theorem~\ref{thm:GIP} remains valid if we replace 
	the demand $\omega(G)=\omega(F)$ by $\gth(G)=\gth(F)$.
\end{conj}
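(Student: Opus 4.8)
If $\gth(F)=3$ the conjecture is immediate from Theorem~\ref{thm:GIP}: then $\omega(F)\ge 3$, so the graph $G$ it produces has $\omega(G)=\omega(F)\ge 3$ and hence $\gth(G)=3=\gth(F)$, while its colouring property is exactly the one required. The plan is therefore to treat the case $g:=\gth(F)\ge 4$, in which $F$ is triangle-free with $\omega(F)=2$; here Theorem~\ref{thm:GIP} supplies, for every $r$, a \emph{triangle-free} graph $G_0$ with $\chi(G_0)>r$ such that every induced subgraph of $G_0$ without an induced copy of $F$ is $c(F)$-colourable---call this the \emph{$c$-collapse property}, with $c:=c(F)$. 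Since $G_0$ may still carry cycles shorter than $g$, everything comes down to raising its girth to exactly $g$ while keeping the chromatic number large and retaining a $c'$-collapse property for some constant $c'=c'(F)$.

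\paragraph{The construction} I would feed $G_0$ as the vertical template into the induced partite construction that underlies the girth Ramsey theorem~\ref{thm:grt}. Picture zero $\Pi_0$ would carry, for each induced copy of $F$ in $G_0$, an induced copy of $F$ spread across the corresponding music lines, one vertex per line, together with one extra isolated edge projecting onto each edge of $G_0$, everything on pairwise disjoint vertex sets; then $\gth(\Pi_0)=g$ and the only cycles of $\Pi_0$ sit inside its copies of $F$, so the cycles of $G_0$ do not lift. One then processes the music lines one at a time as in \S\ref{subsec:33}, each step amalgamating copies of the previous picture along the processed (independent) music line through a gadget of girth $\ge g$ and large chromatic number, and one checks, exactly as in \S\ref{subsec:33}, that $\gth(\Pi_i)\ge g$ is maintained. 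Put $G:=\Pi_n$. The colour-amalgamation argument then yields $\chi(G)>r$: for any $r$-colouring of $G$ there is a copy of $\Pi_0$ all of whose music lines are monochromatic, and projecting it gives a proper $r$-colouring of $G_0$, which is absurd. Taking $r\ge c$, the collapse property (once established) forces $G$ itself to contain an induced $F$, so $\gth(G)\le g$ and hence $\gth(G)=g$.

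\paragraph{The main obstacle} What remains---and what I expect to be the genuinely hard part---is to bound $\chi(G')$ for every induced subgraph $G'\subseteq G=\Pi_n$ with no induced copy of $F$. There is hope because each partite step amalgamates copies of the previous picture along a music line, and music lines are independent sets; so $G$ is assembled from copies of $\Pi_0$ by amalgamations along independent sets in a tree-like pattern, which can inflate the chromatic number only mildly. Thus $\chi(G')$ ought to be controlled by the maximum of $\chi(G'[C])$ over copies $C$ of $\Pi_0$, up to a bounded additive term from the music-line interfaces, and $G'[C]$ is an induced subgraph of a disjoint union of copies of $F$ (plus a matching) from each of which, since $G'$ omits induced $F$, an induced copy of $F$ is missing. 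But missing an induced $F$ in each copy does \emph{not} by itself bound $\chi(G'[C])$: one really needs a collapse property to be \emph{propagated} through every partite step rather than merely invoked at the template. So the realistic plan is to run a collapse-compatible version of the girth Ramsey partite construction---replacing its Ramsey invariant by ``every induced subgraph of chromatic number exceeding $c$ contains an induced $F$''---and to prove the matching partite amalgamation lemma: if $G_0$ and $\Pi_{i-1}$ have a $c$-collapse property, and the gadget used to process music line $i$ has girth $\ge g$ together with an appropriate collapse property, then so does $\Pi_i$. Producing gadgets that simultaneously have girth $\ge g$, arbitrarily large chromatic number, and such a collapse property---and controlling the growth of $c$ under these amalgamations---is where the substance of the problem lies; in the spirit of the paper's remarks on the local structure of Ramsey graphs, making this work may well require carrying along finer local data than the collapse property alone, which is presumably why the conjecture remains open.
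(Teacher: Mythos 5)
The statement you were asked to prove is not a theorem of the paper at all: it is stated there as an open conjecture of Gir\~{a}o, Illingworth, Powierski, Savery, Scott, Tamitegama and Tan, and the paper offers no proof. Your submission is accordingly not a proof but a programme, and you say so yourself in the final sentence. The reduction to $\gth(F)\ge 4$ is fine, and the girth bookkeeping in the partite construction is plausible, but the document cannot be accepted as establishing the conjecture when its central step is explicitly left open.

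The gap you leave is exactly the substantive one, and one of your intermediate heuristics is in fact false. You write that because $G=\Pi_n$ is assembled from copies of $\Pi_0$ by amalgamations along independent music lines in a tree-like pattern, these amalgamations ``can inflate the chromatic number only mildly,'' so that $\chi(G')$ should be controlled by $\max_C\chi(G'[C])$ up to a bounded additive term. This contradicts the very mechanism of the construction: $\Pi_0$ is a disjoint union of copies of $F$ and single edges, so $\chi(\Pi_0)\le\chi(F)$, yet the whole point of processing the music lines is that $\chi(\Pi_n)>r$ for arbitrarily large $r$. Tutte's and Zykov's constructions in \S\ref{subsec:32} are precisely amalgamations along independent sets whose chromatic number grows without bound, so no additive (or even multiplicative) bound of the kind you invoke can hold in general. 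Consequently the collapse property for $G'$ does not follow from the collapse property of the pieces $G'[C]$ by any soft argument; one would need a genuine partite lemma asserting that the $c$-collapse property is preserved under the amalgamation step, together with gadgets of girth at least $g$, unbounded chromatic number, and their own collapse property. You correctly identify this as the missing ingredient, but you neither construct such gadgets nor prove such a lemma, and it is precisely here that the conjecture remains open.
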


Moreover, there is an optimistic conjecture of Jarik that would yield a positive 
answer to Question~\ref{quest:D}. 

\begin{conj}[Jarik]
	Theorem~\ref{thm:GIP} holds for $c(F)=\chi(F)$. 
\end{conj}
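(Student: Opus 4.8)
The natural line of attack is an induction on the target chromatic number $r$, refining the Zykov-type amalgamation of \S\ref{subsec:32} together with the orientation trick from the proof of Theorem~\ref{thm:CHMS} and whatever structural bookkeeping underlies Theorem~\ref{thm:GIP}, while now squeezing the universal constant $c(F)$ down to $\chi(F)$. Before doing anything one should record how little room there is: a graph $G$ witnessing the statement must contain an induced copy $F'$ of $F$, since otherwise $G$ itself would be an $F$-free induced subgraph of unbounded chromatic number; and deleting a vertex of $F'$ produces an $F$-free induced subgraph of $G$ of chromatic number at least $\chi(F)-1$. Thus $c(F)=\chi(F)$ is within one of being forced, and any proof must control the $F$-free induced subgraphs essentially on the nose.

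That discrepancy of one is exactly the gap between Theorem~\ref{thm:CHMS} and Question~\ref{quest:D}. There, for $F=K_3$, Carbonero--Hompe--Moore--Spirkl obtain the bound $4$ by decomposing a triangle-free induced subgraph $H$ into its positive and negative parts and noting that each carries an orientation with no directed $P_2$, hence is bipartite; Fact~\ref{f:zykov} then costs a factor $2\cdot 2$. Reaching $3$ would require the two parts to cooperate, producing an honest $3$-colouring of $H$ rather than a product colouring, and no one has managed this. The general conjecture is the same phenomenon for arbitrary $F$: the machinery behind Theorem~\ref{thm:GIP} hands one \emph{some} finite bound almost for free, but shaving it to the optimum $\chi(F)$ is the whole difficulty.

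Concretely I would proceed in three stages. (i) For each $r$ build a ``skeleton'' digraph $D_r$ of chromatic number $r$ enjoying a strong path-uniqueness property --- an iterated, oriented Zykov graph, exactly as in the proof of Theorem~\ref{thm:CHMS}, but set up with the parameter $\omega(F)$ in mind (and, for the girth variant conjectured alongside, with $\gth(F)$). (ii) Define $G_r$ on $V(D_r)$ by joining $u$ and $v$ whenever $D_r$ contains a directed path between them whose length lies in a carefully chosen residue set $S\subseteq\ZZ/m\ZZ$; path-uniqueness makes ``following such a path'' deterministic, so that an induced copy of $F$ inside $G_r$ can only arise from a very rigid configuration of directed paths in $D_r$, and a judicious choice of $S$ and of the orientation should rule out the forbidden clique $K_{\omega(F)+1}$. (iii) Show that an induced subgraph $H$ of $G_r$ with no induced copy of $F$ is forced, by the absence of that rigid configuration, onto a structure of chromatic number at most $\chi(F)$; for $F=K_3$ this should specialise to the bound $3$ demanded in Question~\ref{quest:D}.

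The main obstacle is stage (iii): getting the chromatic number of the $F$-free induced subgraphs down to \emph{exactly} $\chi(F)$, with no slack. Every tool currently on the table --- product colourings via Fact~\ref{f:zykov}, the Carbonero--Hompe--Moore--Spirkl decomposition into pieces carrying a $P_2$-free orientation, the iterated-Zykov bookkeeping of~\cite{GIP} --- seems to lose at least one unit in precisely this place, and even the clique cases $F=K_t$, where the assertion is that there exist $K_{t+1}$-free graphs of unbounded chromatic number all of whose induced $K_t$-free subgraphs are $t$-colourable, appear to require a genuinely new idea beyond~\cite{CHMS} and~\cite{GIP}. A sensible first target is therefore Question~\ref{quest:D} itself, i.e.\ the case $F=K_3$ with $c=3$; a construction settling it would plausibly point the way to the general statement.
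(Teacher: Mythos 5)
There is nothing to compare here: the statement you were asked to prove is an open conjecture, and the paper offers no proof of it --- indeed it records immediately afterwards that it is not even known whether $c(F)$ can be bounded by \emph{any} function of $\chi(F)$, which is a much weaker assertion than $c(F)=\chi(F)$. Your text is accordingly not a proof but a research programme, and you say so yourself: stage (iii) of your plan --- forcing every $F$-free induced subgraph of the construction down to chromatic number exactly $\chi(F)$ --- is precisely the open problem, and you concede that every available tool (the product colouring of Fact~\ref{f:zykov}, the positive/negative decomposition behind Theorem~\ref{thm:CHMS}, the bookkeeping of~\cite{GIP}) loses at least one unit at that point. So the verdict is that there is a genuine, and in fact total, gap: stages (i) and (ii) set up a plausible construction, but no argument is given, or currently known, that its $F$-free induced subgraphs are $\chi(F)$-colourable, and without that the conjecture is untouched even in its smallest instance $F=K_3$ (Question~\ref{quest:D}).

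That said, your preliminary observations are sound and worth keeping: a witness $G$ for $r\ge c(F)$ must contain an induced copy of $F$, and deleting one vertex from it yields an $F$-free induced subgraph of chromatic number at least $\chi(F)-1$, so the conjectured value is within one of being forced; and the discrepancy between the bound $4$ of Theorem~\ref{thm:CHMS} and the bound $3$ of Question~\ref{quest:D} is exactly the unit that has to be saved. Your diagnosis that the case $F=K_3$ is the right first target agrees with how the paper frames the matter (Jarik's conjecture is introduced precisely as what would settle Question~\ref{quest:D} affirmatively). But a correct heuristic for where the difficulty lies is not a proof, and you should present this as a conjecture with supporting discussion, not as a proof attempt.
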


Currently, it is not even known whether $c(F)$ can be bounded by a function 
of $\chi(F)$. We conclude this subsection with a result of Erd\H{o}s, Galvin, 
and Hajnal~\cite{EGH}*{Theorem~10.8}, which implies that the natural 
generalisation of Conjecture~\ref{conj:E} to $3$-uniform hypergraphs is false. 
Its proof is somewhat similar to Tutte's construction we encountered 
in~\S\ref{subsec:32}.

\begin{thm}[Erd\H{o}s, Galvin \& Hajnal]
	For every natural number $r$ there exists a $3$-uniform hypergraph $H$ with 
	$\chi(H)\ge r$ such that every linear subhypergraph of $H$ is $2$-colourable. 
\end{thm}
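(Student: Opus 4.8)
The plan is to prove the theorem by a double induction modelled on Ne\v{s}et\v{r}il's construction from \S\ref{subsec:32}. For integers $r\ge k\ge 1$ let $(P_{k,r})$ be the assertion that there exists a pair $(H,I)$, with $H$ a $3$-uniform hypergraph and $I\subseteq V(H)$ an independent set, such that $\chi(H)\ge r$; every proper $r$-colouring of $H$ uses at least $k$ colours on $I$; and every linear subhypergraph of $H$ is $2$-colourable — in the robust sense that, for every linear subhypergraph $L$ and every prescription of the colours on $I$, there is a proper $2$-colouring of $L$ extending it. Forgetting $I$, the case $k=1$ of $(P_{k,r})$ is exactly what the theorem asks for the given $r$, so it suffices to establish $(P_{k,r})$ for all $r\ge k\ge 1$. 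The base case $(P_{1,1})$ is witnessed by a single edge $\{a,b,c\}$ with $I=\{a\}$, and the engine of the induction consists of the two implications
\[
	(P_{r,r})\Longrightarrow(P_{1,r+1})
	\qquad\text{and}\qquad
	(P_{k,r})\Longrightarrow(P_{k+1,r})\quad(1\le k<r),
\]
which between them reach every $(P_{k,r})$.

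For the first implication I would take a set $Y$ of $r+1$ fresh vertices and, for each pair $e=\{x,x'\}\in Y^{(2)}$, a disjoint copy $(H_e,I_e)$ of the given pair; the new hypergraph $H'$ is the disjoint union $\bigcup_e H_e$ together with all edges $\{v,x,x'\}$ for $v\in I_e$. In any proper $r$-colouring of $H'$ some pair $e\subseteq Y$ is monochromatic of a colour $c$ (box principle), the edges through $e$ then force $I_e$ to avoid $c$, and so $H_e$ carries a proper $r$-colouring using at most $r-1$ colours on $I_e$, contradicting $(P_{r,r})$; hence $\chi(H')\ge r+1$, and taking $I'$ to be a single vertex of $Y$ makes the colour-count requirement trivial. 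For the second implication I would reuse Ne\v{s}et\v{r}il's transversal trick verbatim: from $(H,I)$ with $n=|I|$, form disjoint copies $H^{(1)},\dots,H^{(n)}$ of $H$, attach to every transversal $T=\{t_1,\dots,t_n\}$ a fresh copy $(H_T,I_T)$ of $(H,I)$ by the edges $\{w,t_1,t_2\}$ $(w\in I_T)$, and put $I_\star=\bigcup_T I_T$. Since each $H^{(i)}$ already forces all $r$ colours, a colouring placing at most $k$ colours on $I_\star$ would let me pick a colour $\alpha$ on $I_\star$ and a transversal $T$ with every $t_i$ of colour $\alpha$, whereupon $I_T$ must avoid $\alpha$ — contradicting $(P_{k,r})$ for $H_T$. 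So every proper $r$-colouring uses at least $k+1$ colours on $I_\star$, and $\chi(H_\star)\ge r$ because $H_\star\supseteq H$.

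The feature that keeps the linear-subhypergraph condition under control is that in both constructions the failure of linearity is localised: all the new edges attached to a fixed pair of $Y$ (respectively, to a fixed transversal) share two common vertices, so a linear subhypergraph can contain at most one of them, and therefore constrains at most one vertex in each copy of $I$. Exploiting this, I would verify the robust $2$-colourability by first colouring $Y$ (respectively the base copies $H^{(1)},\dots,H^{(n)}$) and then extending over each glued copy by the inductive hypothesis, with only a single interface vertex of that copy pre-constrained. In the first implication this is essentially immediate, because the new interface $I'$ is a single vertex.

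The hard part will be precisely this propagation in the second implication. A prescribed colouring of $I_\star$ descends simultaneously to a prescribed colouring of every $I_T$, and when the base copies are $2$-coloured the colours assigned to the transversal vertices $t_1,t_2$ interact with the attaching edges; one must therefore schedule the base colourings so that, for every relevant transversal $T$, the pair $\{t_1,t_2\}$ is not monochromatic in the colour forbidden on $I_T$. Making sure that no such scheduling is obstructed — that the induced system of ``not both this colour'' constraints never closes up into an unsolvable conflict — is where the genuine combinatorial work lies, and it governs the precise form the robust invariant has to take; the complete argument is in~\cite{EGH}*{Theorem~10.8}.
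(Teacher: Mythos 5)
There is a genuine gap, and you acknowledge it yourself: the entire second implication $(P_{k,r})\Rightarrow(P_{k+1,r})$ rests on a ``robust'' extension property (every prescription of colours on $I$ extends to a proper $2$-colouring of every linear subhypergraph) whose formulation you leave vague and whose verification — the scheduling of base colourings so that no pair $\{t_1,t_2\}$ is monochromatic in the colour forbidden on $I_T$ — you explicitly defer to the literature. Since a prescribed colouring of $I_\star$ constrains \emph{all} of $I_T$ for every transversal $T$ simultaneously, this is a strong statement, and nothing in your outline shows it is maintainable through the induction. As written, the proof is not complete.

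The gap is moreover self-inflicted: the double induction borrowed from Jarik's construction in \S\ref{subsec:32} is unnecessary here, and importing it is what creates the hard extension problem. The paper's proof is a single induction on $r$ in the style of Zykov/Tutte. Take $Y$ with $|Y|=r+1$ and, for each pair $yy'\in Y^{(2)}$, a disjoint copy $H_{yy'}$ of the inductively given hypergraph; add the edges $yy'z$ for \emph{every} $z\in V(H_{yy'})$, not just for $z$ in a distinguished independent set. Then a monochromatic pair $y,y'$ blocks its colour on the whole of $V(H_{yy'})$, so $\chi(H_{yy'})\ge r$ at once gives $\chi(H_\star)\ge r+1$ with no colour-counting invariant at all. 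For the $2$-colouring of a linear subhypergraph $H'$, the point you are missing is that linearity localises the interaction completely: all new edges through a fixed pair $\{y,y'\}$ share two vertices, so $H'$ contains at most one of them, hence at most \emph{one} vertex of each copy $V(H_{yy'})$ is pre-constrained (to the colour opposite to that of $Y$). A proper $2$-colouring of a copy together with its colour-swap always satisfies a single-vertex constraint, so plain $2$-colourability of linear subhypergraphs — with no strengthened inductive invariant — propagates. Your observation that a linear subhypergraph ``constrains at most one vertex in each copy'' is exactly the right one; combined with the simpler construction it finishes the proof, whereas combined with the transversal machinery it does not.
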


\begin{proof} 
	Arguing by induction on $r$ we assume that such a hypergraph $H$ exists 
	for some $r\in\NN$ and explain how to construct an example for $r+1$. To this end 
	we take a set $Y$ of $r+1$ vertices and to every pair $yy'\in Y^{(2)}$ we 
	assign its own copy $H_{yy'}$ of $H$, so that $Y$ and all vertex sets 
	$V(H_{yy'})$ are disjoint. As indicated in Figure~\ref{fig47}, we also add 
	all edges of the form~$yy'z$, where $yy'\in Y^{(2)}$ and $z\in V(H_{yy'})$.
	
	\usetikzlibrary {arrows.meta,bending,positioning}

	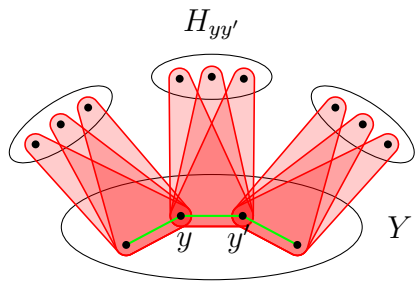
\begin{figure}[h!]
	
	\begin{tikzpicture}

			\foreach \i in {-6,...,6}
			{\coordinate (a\i) at (90+\i*7:3.5);
			\coordinate (b\i) at (90+\i*7:1.7);}

\draw (0,1.5) ellipse (2cm and .7cm);
	\draw [rotate around={-34:(a-5)}] (a-5) ellipse (.8cm and .3cm);
		\draw [rotate around={34:(a5)}] (a5) ellipse (.8cm and .3cm);
			\draw  (a0) ellipse (.8cm and .3cm);
		
		\qedge{(b-2)}{(a-6)}{(b-6)}{4pt}{.6pt}{red}{red, opacity=.2};
			\qedge{(b-2)}{(a-5)}{(b-6)}{4pt}{.6pt}{red}{red, opacity=.2};
				\qedge{(b-2)}{(a-4)}{(b-6)}{4pt}{.6pt}{red}{red, opacity=.2};
				
					\qedge{(b6)}{(a6)}{(b2)}{4pt}{.6pt}{red}{red, opacity=.2};
				\qedge{(b6)}{(a5)}{(b2)}{4pt}{.6pt}{red}{red, opacity=.2};
				\qedge{(b6)}{(a4)}{(b2)}{4pt}{.6pt}{red}{red, opacity=.2};
					\qedge{(b2)}{(a-1)}{(b-2)}{4pt}{.6pt}{red}{red, opacity=.2};
				\qedge{(b2)}{(a0)}{(b-2)}{4pt}{.6pt}{red}{red, opacity=.2};
				\qedge{(b2)}{(a1)}{(b-2)}{4pt}{.6pt}{red}{red, opacity=.2};

		\draw [thick, green] (b-6)--(b-2)--(b2)--(b6);
		
			\foreach \i in {a-6,a-5,a-4,a-1,a0,a1,a4,a5,a6,b-6,b-2,b2,b6} \fill  (\i) circle (1.5pt);
			
			\node at ($(b-2)+(-.05,-.3)$ ){$y'$};
			\node at ($(b2)+(.05,-.35)$ ){$y$};
			\node at (2.5,1.5) {$Y$};
			\node at (0,4.2) {$H_{yy'}$};

	\end{tikzpicture}
	\caption{Construction of $H_\star$.}\label{fig47}
	
	\end{figure}
 	
	For every proper $r$-colouring of the resulting hypergraph $H_\star$ there 
	need to exist two distinct vertices $y, y'\in Y$ of the same colour. 
	But this colour is then unavailable for the vertices of~$H_{yy'}$, so that
	$\chi(H)\ge r$ yields a contradiction; this proves $\chi(H_\star)\ge r+1$. 
	
	Now let $H'$ be any linear spanning subhypergraph of $H_\star$. In order to 
	find the desired proper $2$-colouring of $H'$ we start by assigning the 
	colour {\it blue} to all vertices in $Y$ and the colour {\it yellow} to all 
	vertices $z$ for which $Y\cup \{z\}$ spans an edge of $H'$. Since $H'$ 
	is linear, there can be at most one such vertex $z\in V(H_{yy'})$ for 
	each pair $yy'\in Y^{(2)}$. By our induction hypothesis there are proper  
	blue/yellow colourings of the sets $V(H_{yy'})$ and by switching colours 
	if necessary we can ensure that the vertices which have already been coloured 
	yellow create no conflicts. 
\end{proof}

\subsection{Vertex colourings and Ramsey theory}\label{subsec:35}
Returning to the definition of the chromatic number we can also investigate what 
happens when instead of demanding only a monochromatic edge we want to find a 
larger monochromatic substructure.  
The partition symbols introduced by Erd\H{o}s and Rado~\cite{ER56} provide a 
systematic and concise notation for the kind of statement we have in mind. 

For instance, given a graph or hypergraph $H$ and $r\in\NN$ a lower bound of 
the form $\chi(H)>r$ is written in the form 
\begin{equation}\label{eq:1747}
	H \lra (e)^v_r\,,
\end{equation}
where $v$ and $e$ abbreviate the words `vertex' and `edge', respectively. The 
general pattern is that 
\[
	\text{source} \lra (\text{target})^{A}_{r}
\]
indicates the following statement: If all subobjects of the source 
symbolised by $A$ are coloured with $r$ colours, then some subobject of the 
source isomorphic to the target is monochromatic in the sense that all its 
copies of $A$ have the same colour. Generalising~\eqref{eq:1747} we may thus 
consider for any two graphs (or $k$-uniform hypergraphs) $F$ and $H$ and every
number of colours~$r$ the statement
\begin{equation}\label{eq:1751}
	H\lra (F)^v_r\,.
\end{equation}
It means that for every colouring $f\colon V(H)\lra [r]$ there is an induced 
subgraph of $H$ isomorphic to $F$ whose vertices have the same colour. The negation 
of this statement is indicated by crossing out the arrow. 
E.g., an upper bound $\chi(H)\le r$ can be expressed by $H\nlra (e)^v_r$. 

In connection with~\eqref{eq:1751} the first question one may ask is whether 
given a graph $F$ and~${r\in\NN}$ there always exists a graph $H$ such 
that $H\lra (F)^v_r$ holds. 
This was first settled by Folkman~\cite{Folk}, whose construction was 
called a ``gem of combinatorial ingenuity'' in a review by Graham. Nevertheless,
we resist the temptation of repeating the argument here, because later 
Jarik and R\"{o}dl~\cite{NR76b} found an even more beautiful trick, which gives 
this result almost for free: they take a linear $|V(F)|$-uniform hypergraph $G$ 
with $\chi(G)>r$ and replace 
the edges of~$G$ by copies of~$F$, thereby generating the desired graph $H$. 

This construction has a further interesting property. The system $\ccH$ of all 
copies of $F$ in $H$ corresponding to the edges of $G$ satisfies, in an obvious sense, 
the partition relation $\ccH\lra(F)^v_r$. Moreover, any two distinct copies of~$F$ 
in~$\ccH$ are either disjoint or they intersect in a single vertex.\footnote[1]{The 
reason for introducing $\ccH$ here is that $H$ can also contain other, unintended
copies of $F$. Their possible intersection patterns depend on the structure of $F$,
but it does not seem worthwhile to work out further details.} 
One can gain even more control over the system $\ccH$ by starting with a 
hypergraph $G$ of large girth (cf.\ Theorem~\ref{thm:EH}). In this manner we arrive
at the following conclusion (see~\cite{NR76b}). 

\begin{thm}\label{thm:1535}
	For every graph $F$ and all $r, n\in \NN$ there exists 
	a graph~$H$ together with a system $\ccH$ of induced copies of $F$ in $H$
	such that 
	\begin{enumerate}[label=\rmlabel]
		\item $\ccH\lra (F)^v_r$ and    
  		\item for every $\ccN\subseteq \ccH$ with $|\ccN|\le n$ there exists an 
			enumeration $\ccN=\{F_1, \ldots, F_{|\ccN|}\}$ with the property that for 
			every $j\in [2, |\ccN|]$ the sets $\bigcup_{i<j}V(F_i)$ and $V(F_j)$
			have at most one vertex in common. \qed
	\end{enumerate}
\end{thm}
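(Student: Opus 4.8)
The plan is to carry out exactly the construction indicated just before the statement: replace the edges of a $k$-uniform hypergraph of large girth and large chromatic number by copies of $F$. Set $k=|V(F)|$. If $k\le 1$ then a single copy of $F$ already serves as $H$, and likewise the case $r\le 1$ is trivial, so assume $k,r\ge 2$. First I would invoke Theorem~\ref{thm:EH} to fix a $k$-uniform hypergraph $G$ with $\gth(G)>\max(n,2)$ and $\chi(G)>r$; in particular $G$ is linear, and after deleting isolated vertices we may assume $V(G)=\bigcup_{e\in E(G)}e$. For each edge $e\in E(G)$ choose a bijection between $e$ and $V(F)$, let $F_e$ be the resulting copy of $F$ on vertex set $e$, and put $H=\bigl(V(G),\ \bigcup_{e\in E(G)}E(F_e)\bigr)$ and $\ccH=\{F_e\colon e\in E(G)\}$.

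The copies in $\ccH$ are induced: an edge of $H$ contained in the vertex set $e$ must belong to $E(F_{e'})$ for some $e'$ with $|e'\cap e|\ge 2$, and linearity of $G$ forces $e'=e$, so $H[e]=F_e$; moreover distinct edges of $G$ give distinct copies. For property~(i), let $\phi\colon V(H)=V(G)\lra[r]$ be any colouring; since $\chi(G)>r$ there is an edge $e$ of $G$ all of whose $k$ vertices receive the same $\phi$-colour, and then $F_e\in\ccH$ is a monochromatic induced copy of $F$. This is precisely $\ccH\lra(F)^v_r$.

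Property~(ii) is the only point requiring an argument. Let $\ccN\subseteq\ccH$ with $m:=|\ccN|\le n$ and let $e_1,\dots,e_m$ be the corresponding edges of $G$, spanning a sub-hypergraph $G'$. A cycle of $G'$ would be a cycle of $G$ using at most $m\le n$ edges, contradicting $\gth(G)>n$; hence $G'$ contains no cycle, i.e.\ it is a forest. The key step is the elementary lemma that the edges of a hypergraph forest admit an enumeration $e_{\pi(1)},\dots,e_{\pi(m)}$ with $|e_{\pi(j)}\cap(e_{\pi(1)}\cup\dots\cup e_{\pi(j-1)})|\le 1$ for all $j\in[2,m]$; since $V(F_e)=e$, the copies $F_{e_{\pi(1)}},\dots,F_{e_{\pi(m)}}$ then form the enumeration of $\ccN$ demanded by~(ii). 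To prove the lemma I would pass to the bipartite incidence graph $B$ of $G'$ (the two sides being the vertices and the edges of $G'$, with $v$ joined to $e$ when $v\in e$): cycles of $G'$ correspond to cycles of $B$, so $B$ is a forest. Rooting $B$, an edge-node $e$ of maximal depth has every neighbour other than its parent a leaf of $B$, i.e.\ a vertex lying in no other edge of $G'$; thus $e$ meets $\bigcup_{e'\ne e}e'$ in at most one vertex. Make such an $e$ the last element of the enumeration, delete it together with those private leaves (which leaves a smaller forest), and recurse.

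I do not expect any serious obstacle: all of the combinatorial difficulty has been pushed into Theorem~\ref{thm:EH}, and what remains is the bookkeeping above; the only mildly substantive ingredient is the forest-enumeration lemma underlying property~(ii), and even that is standard.
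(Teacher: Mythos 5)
Your proposal is correct and follows exactly the route the paper indicates (and leaves to the reader with a \qed): replace the edges of a $|V(F)|$-uniform hypergraph of chromatic number greater than $r$ and girth greater than $n$, supplied by Theorem~\ref{thm:EH}, by copies of $F$. The only part you spell out beyond the paper's sketch is the standard forest-enumeration lemma for property~(ii), and your argument for it is sound.
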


Clearly, the same argument works for hypergraphs instead of graphs as well. 
In the special case $F=K_2$ Theorem~\ref{thm:1535} reduces to Erd\H{o}s's 
Theorem~\ref{thm:41}, and both are optimal in the same sense. 
That is, Theorem~\ref{thm:1535} describes all configurations of 
copies of~$F$ that need to be present in systems $\ccH$ satisfying $\ccH\lra (F)^v_r$
for sufficiently large $r$. For a precise statement along these lines we refer 
to the work of Daskin, Hoshen, Krivelevich, and Zhukovskii~\cite{DHKZ}.
   
\subsection{Infinite graphs}\label{subsec:36}
Some unexpected new phenomena arise when one tries to generalise Theorem~\ref{thm:41}
to infinite graphs. Our discussion presupposes some elementary background in set theory 
as it can be found, e.g., in the early chapters of the texts by Jech~\cite{Jech} or 
Kunen~\cite{Kunen}. Sometimes we shall mention certain partition relations involving 
cardinal numbers. Standard references on this topic are the book by Erd\H{o}s,
Hajnal, M\'{a}t\'{e}, and Rado~\cite{EHMR} and the more recent survey by Hajnal and 
Larson~\cite{HL} in the handbook of set theory.

The chromatic number of an infinite graph can be any finite or infinite cardinal. 
However, since cycles are necessarily finite, the girth of an infinite 
graph is still in $\NN_{\ge 3}\dcup\{\infty\}$. It follows immediately from 
Theorem~\ref{thm:41} by taking disjoint unions that graphs of countably infinite 
chromatic number can have arbitrarily large girth. 
Thus the first `new' question is whether 
triangle-free graphs with uncountable chromatic number exist. 
Erd\H{o}s and Rado~\cite{ER59} gave an affirmative answer. 
Shortly afterwards, they realised that an idea of Specker~\cite{Speck}
yields a different construction~\cite{ER60} with the optimal quantitative 
dependence between~$|V(G)|$ and~$\chi(G)$.

\begin{thm}[Erd\H{o}s \& Rado]\label{thm:er60}
	For every infinite cardinal $\kappa$ there exists a triangle-free graph on $\kappa$
	vertices with chromatic number $\kappa$. 
\end{thm}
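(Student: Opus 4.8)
The plan is to reduce to regular uncountable cardinals and then run a transfinite induction whose engine is a Zykov/Tutte-style amalgamation with a carefully maintained ``colour-counting'' hypothesis. First, dispose of the easy cases. If $\kappa$ is singular, write $\kappa=\sup_{i<\cofi(\kappa)}\kappa_i$ with all $\kappa_i<\kappa$ infinite; given triangle-free graphs $G_i$ on $\kappa_i$ vertices with $\chi(G_i)=\kappa_i$, the disjoint union $G=\bigsqcup_i G_i$ is triangle-free, has $\sum_i\kappa_i=\kappa$ vertices, and satisfies $\chi(G)=\kappa$---any proper colouring of $G$ with $\lambda<\kappa$ colours would properly colour some $G_i$ with $\lambda<\kappa_i$ colours, which is impossible, while $\kappa$ colours obviously suffice. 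For $\kappa=\aleph_0$, Theorem~\ref{thm:41} supplies finite triangle-free graphs $M_r$ with $\chi(M_r)>r$, and $\bigsqcup_{r\ge1}M_r$ is a countable triangle-free graph with $\chi=\aleph_0$ (a slick alternative is the shift graph on $\omega^{(2)}$, joining $\{a,b\}$ to $\{b,c\}$ whenever $a<b<c$: colouring $\{a,b\}$ by $a$ shows $\chi\le\aleph_0$, and its finite subgraphs have unbounded chromatic number).

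So the content is the case $\kappa$ regular and uncountable, which I would treat by transfinite induction, assuming the theorem below $\kappa$. The goal is a triangle-free graph on $\kappa$ vertices that is not $\lambda$-colourable for any $\lambda<\kappa$; its chromatic number is then exactly $\kappa$. A single ``block one colour'' move as in the Zykov and Tutte constructions of \S\ref{subsec:32} is useless here, because deleting one colour from a palette of size $\lambda\ge\aleph_0$ still leaves $\lambda$ colours. The fix is an iterated amalgamation in which every attempted $\lambda$-colouring is forced to produce a large monochromatic ``selector'' set which then feeds into a smaller instance of the same obstruction. For $\kappa=\mu^+$, one starts from a triangle-free graph on $\mu$ vertices with $\chi=\mu$ (induction hypothesis) together with a large independent set $Y$ of size $\kappa$; a $\lambda$-colouring of $Y$ (necessarily with $\lambda\le\mu$) has, by regularity of $\kappa$, a monochromatic $Y'\subseteq Y$ of size $\kappa$, and one arranges the incidences between $Y$ and the attached copies so that this is incompatible with those copies being properly $\lambda$-coloured. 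To keep this alive through the recursion one proves, by an inner induction in the spirit of Jarik's parameter $k$ in $(J_{k,r})$, a strengthened statement: there is a triangle-free graph $H$ with a distinguished independent set $I\subseteq V(H)$ such that every proper $\lambda$-colouring of $H$ uses ``many'' colours on $I$, the precise meaning of ``many''---in terms of the cardinal arithmetic below $\kappa$---being exactly what is carried along. Triangle-freeness is automatic throughout: copies are glued only along independent sets, and every new edge joins an old independent set to fresh vertices, so no triangle can be created.

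The hard part will be the bookkeeping---choosing the strengthened hypothesis so that it is simultaneously strong enough for the outermost amalgamation to force $\chi\ge\kappa$ and preserved by the inner amalgamation without the vertex count ever exceeding $\kappa$. This last constraint is where the optimality $|V(G)|=\kappa$ is delicate, since careless gluing inflates the size to $2^\mu$ or beyond. A separate wrinkle is $\kappa$ a regular limit cardinal, with no predecessor $\mu$ to start from; there one instead feeds in, along a selector of size $\kappa$, a cofinal sequence of smaller triangle-free graphs of increasing chromatic number, again tracking the colours forced onto the selector. The original execution of all this is due to Erd\H{o}s and Rado, resting---as the text says---on an idea of Specker; see~\cite{ER60}.
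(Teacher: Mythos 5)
Your reductions (singular $\kappa$ via disjoint unions, $\kappa=\aleph_0$ via finite examples) are correct, but they leave the entire content of the theorem in the regular uncountable case, and there your proposal is a plan rather than a proof. The strengthened induction hypothesis is never stated, the incidence pattern between $Y$ and the attached copies is never specified, and the verification that the hypothesis survives the amalgamation is explicitly deferred as ``the hard part''. The one concrete difficulty you do identify --- that with $\lambda\ge\aleph_0$ colours, blocking a single colour on an attached copy achieves nothing --- is precisely the difficulty the construction must overcome, and saying that one ``arranges the incidences so that this is incompatible'' names the problem without solving it. Worse, the cardinality constraint you flag in passing is not a wrinkle but an apparently fatal obstruction to this route: for $\kappa=\mu^+$ the natural amalgamation attaches a copy to each candidate monochromatic $\mu$-subset of $Y$ (or to each transversal of $\mu$ many copies), and there are $2^{\mu}$ of these, which in ZFC may exceed $\kappa$; nor is there any general principle allowing one to pass from a large graph of chromatic number $\ge\kappa$ to a subgraph on $\kappa$ vertices of chromatic number exactly $\kappa$ (chromatic numbers of subgraphs can consistently behave very incompactly). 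This is exactly why Erd\H{o}s and Rado discarded their first, amalgamation-style construction from~\cite{ER59}, which proves existence but not with $|V(G)|=\chi(G)$, in favour of Specker's idea; your closing citation of~\cite{ER60} for ``the original execution of all this'' is therefore misplaced, since~\cite{ER60} contains no amalgamation at all.

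The paper's proof is a one-shot explicit construction with no induction on cardinals: the vertex set is $\kappa^{(3)}$, and for each increasing sextuple $\alpha_0<\dots<\alpha_5<\kappa$ one joins $\{\alpha_0,\alpha_1,\alpha_3\}$ to $\{\alpha_2,\alpha_4,\alpha_5\}$. Triangle-freeness is a finite case check, $|V(G)|=\kappa$ is automatic, and a proper colouring with $\chi<\kappa$ colours is refuted by a three-stage extraction of ``unboundedly attained'' colours (first for pairs, then for singletons, then globally), where the reduction to $\kappa=\chi^+$ supplies the regularity needed at each stage; unravelling the three choices produces a monochromatic edge. To make your approach work you would have to supply the missing combinatorial mechanism and, in particular, keep the vertex count at $\kappa$, or else settle for the weaker statement without the bound on $|V(G)|$. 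As written, the theorem is not proved.
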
     

\begin{proof}
	Let $G$ be the graph on 
	$\kappa^{(3)}=\{\{\alpha_0, \alpha_1, \alpha_2\}\colon 
	\alpha_0<\alpha_1<\alpha_2<\kappa\}$ which has for every increasing sequence 
	$\alpha_0<\alpha_1<\alpha_2<\alpha_3<\alpha_4<\alpha_5<\kappa$ an edge 
	from $\{\alpha_0, \alpha_1, \alpha_3\}$ to~$\{\alpha_2, \alpha_4, \alpha_5\}$.
	A short finitary consideration discloses that $G$ contains no triangles. 
	
	Now assume for the 
	sake of contradiction that for some cardinal $\chi<\kappa$ there is a proper 
	{$\chi$-colouring} $f$ of $G$. By the uniform construction of our graphs 
	we can suppose $\kappa=\chi^+$ if $\chi$ is infinite. 
	This allows us to assign to every 
	pair of ordinals $\{\alpha_0, \alpha_1\}$ with $\alpha_0<\alpha_1<\kappa$
	an auxiliary colour $g(\{\alpha_0, \alpha_1\})<\chi$ such 
	that $f(\{\alpha_0, \alpha_1, \alpha\})=g(\{\alpha_0, \alpha_1\})$ holds 
	for arbitrarily large ordinals $\alpha<\kappa$. Iterating this once more we find a 
	map $h\colon \kappa\lra\chi$ such that for each $\alpha_0<\kappa$ there 
	are unboundedly many ordinals $\alpha<\kappa$ 
	with $g(\{\alpha_0, \alpha\})=h(\alpha_0)$. Finally, there is a 
	colour $\chi_\star<\chi$ such that $h(\alpha)=\chi_\star$ holds for arbitrarily
	large $\alpha<\kappa$. 
	
	Unravelling these stipulations, we find successively 
	six ordinals $\alpha_0<\dots<\alpha_5<\kappa$ such that 
		\[
		\chi_\star
		=
		h(\alpha_0)
		=
		g(\{\alpha_0, \alpha_1\})
		=
		h(\alpha_2)
		=
		f(\{\alpha_0, \alpha_1, \alpha_3\})
		=
		g(\{\alpha_2, \alpha_4\})
		=
		f(\{\alpha_2, \alpha_4, \alpha_5\})\,,
	\]
		which means that the edge from $\{\alpha_0, \alpha_1, \alpha_3\}$
	to $\{\alpha_2, \alpha_4, \alpha_5\}$ is monochromatic.
\end{proof}

A few years later, Erd\H{o}s and Hajnal~\cite{EH64}*{Theorem 7} extended this 
result to larger odd cycles. 

\begin{thm}[Erd\H{o}s and Hajnal]\label{thm:EHodd}
	For every positive integer $k$ and every cardinal $\kappa$ there is a 
	$\{C_3, C_5, \dots, C_{2k+1}\}$-free graph $G$ with $\chi(G)\ge \kappa$.
\end{thm}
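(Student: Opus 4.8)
The plan is to imitate the Erd\H{o}s--Rado construction from Theorem~\ref{thm:er60}, but to iterate the ``shift'' trick once for each odd cycle we wish to exclude. Fix $k$ and an infinite cardinal $\kappa$; by taking disjoint unions it suffices to realise, for every infinite $\kappa$, a $\{C_3,\dots,C_{2k+1}\}$-free graph on $\kappa$ vertices with chromatic number $\kappa$, and then to note that for finite target chromatic numbers one may pass to a suitable subgraph or again take disjoint unions. So we may assume $\kappa=\chi^+$ for the cardinal $\chi$ we want to beat. The idea is to let the vertex set be $\kappa^{(m)}$ for a suitably large $m=m(k)$ (one expects $m$ roughly linear in $k$), and to declare two increasing $m$-tuples adjacent exactly when their union, listed in increasing order, is an increasing $(2m-t)$-tuple whose two halves are ``interleaved'' according to a fixed pattern; the pattern is chosen so that any short closed walk forces an order-theoretic contradiction. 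Concretely, one wants the incidence structure to be such that a cycle $v_1v_2\dots v_\ell v_1$ with $\ell$ odd and $\ell\le 2k+1$ cannot exist, because walking around the cycle one can extract from the interleaving conditions a strictly increasing-then-strictly-increasing chain of ordinals that must close up, which is impossible. This is exactly the mechanism by which the graph of Theorem~\ref{thm:er60} avoids triangles, now pushed through to longer odd girth.

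The key steps, in order, would be: (1) write down the precise adjacency rule — choose the pattern of how the two sorted $m$-sets interleave inside their sorted union, with the ``low'' endpoints of the two sets placed so that an odd closed walk is combinatorially obstructed; (2) verify the girth condition by a purely finitary argument: suppose $C_\ell\subseteq G$ with $\ell\le 2k+1$ odd, track along the cycle the comparisons between consecutive vertices forced by the pattern, and derive a cyclic chain of strict inequalities among ordinals, a contradiction — this is the combinatorial heart of the construction and should be stated as a small standalone lemma about sequences of ordinals; (3) prove $\chi(G)\ge\kappa$ by the same iterated Fodor/pressing-down (or Erd\H{o}s--Rado ``unbounded colour'' ) argument used in the proof of Theorem~\ref{thm:er60}: given a proper $\chi$-colouring $f$ of $\kappa^{(m)}$, peel off one coordinate at a time, at each stage replacing $f$ by a colouring of $\kappa^{(m-1)}$ that records a colour attained for unboundedly many choices of the last ordinal, using $\kappa=\chi^+$ (so that $\kappa$ is regular and every subset of size $\kappa$ is cofinal) to keep the ``unboundedly often'' property; after $m$ steps one reaches a single colour $\chi_\star$ realised, in a nested way, along an increasing chain of $2m-t$ ordinals, and unravelling the definitions exhibits a monochromatic edge.

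I expect step (2) — engineering the interleaving pattern so that the forbidden odd walks really are obstructed, and no others are — to be the main obstacle, since for $C_3$ alone a three-line check suffices but for $C_5,\dots,C_{2k+1}$ simultaneously one must be careful that the pattern does not accidentally also kill the intended edges or fail to block some intermediate-length odd cycle; getting $m$ as a clean explicit function of $k$ and writing the obstruction lemma cleanly is where the real work lies. Step (3) should then go through essentially verbatim from the proof of Theorem~\ref{thm:er60}, the only new point being bookkeeping over $m$ coordinates rather than $3$, which adds notation but no genuine difficulty; one should remark that the argument only shows $\chi(G)$ is not $\le\chi$, hence $\chi(G)=\kappa$ for the canonical graph, and then the disjoint-union reduction at the start delivers the theorem as stated for arbitrary target cardinals. (An alternative to an explicitly engineered pattern would be to build the graph recursively in $k$, adjoining at each stage a fresh ``layer'' of coordinates to raise the odd girth by two while re-running a chromatic-number argument on the larger structure; this trades an explicit combinatorial lemma for an induction, and either route should work.)
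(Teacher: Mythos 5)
Your plan follows the second of the two routes to Theorem~\ref{thm:EHodd} that exist in the literature, namely a type-graph construction on $\kappa^{(m)}$ in the spirit of the proof of Theorem~\ref{thm:er60}, and the overall architecture (an order-pattern adjacency rule, a finitary obstruction lemma for short odd cycles, an iterated ``unbounded colour'' argument for the chromatic number) is the right one for that route. But as written there is a genuine gap, and you have named it yourself: the adjacency pattern is never specified, and the obstruction lemma --- the claim that every odd closed walk of length at most $2k+1$ forces a cyclic chain of strict inequalities among ordinals --- is exactly the combinatorial heart of the construction and cannot be waved through. For the triangle case of Theorem~\ref{thm:er60} this is a three-line check; for all odd cycles up to length $2k+1$ simultaneously the known pattern lives on $\kappa^{(2k^2+1)}$ (so $m$ is quadratic in $k$, not linear as you guess), and designing it and verifying the girth condition is where essentially all of the work of \cite{EH66}*{Theorem~7.4} lies. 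Step (3) likewise cannot be carried out ``essentially verbatim'' until the rule is fixed, since the final unravelling must produce the two specific interleaved $m$-sets that the rule declares adjacent.

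You also make the problem harder than the statement requires. Theorem~\ref{thm:EHodd} asks only for $\chi(G)\ge\kappa$ with no constraint on $|V(G)|$, and for that the shift graph $\Sh_{k+1}(\lambda)$ suffices: its vertices are the $(k+1)$-element subsets of $\lambda$, with an edge from $\{\alpha_0,\dots,\alpha_k\}$ to $\{\alpha_1,\dots,\alpha_{k+1}\}$ for every increasing $(k+2)$-tuple of ordinals below $\lambda$. Freeness from $C_3, C_5, \dots, C_{2k+1}$ is an easy finitary check, and if $\lambda$ is chosen so large that the Erd\H{o}s--Rado partition relation $\lambda\lra(k+2)^{k+1}_\kappa$ holds, then any proper $\kappa$-colouring of the shift graph would yield a monochromatic $(k+2)$-set and hence a monochromatic edge, so $\chi(\Sh_{k+1}(\lambda))>\kappa$. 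This costs an iterated exponential dependence of $|V(G)|$ on $\chi(G)$ but delivers the theorem with no pattern engineering at all; the type-graph construction you are aiming at is only needed if one insists on the optimal $|V(G)|=\chi(G)=\kappa$.
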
  

Their original proof used so-called shift graphs, which are defined as follows. 
Given a cardinal $\lambda$ and an integer $k\ge 2$ the {\it shift graph} 
$\Sh_k(\lambda)$ has vertex set $\lambda^{(k)}$ and for all ordinals 
$\alpha_0<\dots<\alpha_k<\lambda$ it has an edge from $\{\alpha_0, \dots, \alpha_{k-1}\}$
to $\{\alpha_1, \dots, \alpha_k\}$. It is a finitary matter to check that 
$\Sh_{k+1}(\lambda)$ is always $\{C_3, C_5, \dots, C_{2k+1}\}$-free. Moreover, 
if $\lambda$ is chosen so large that the partition relation 
$\lambda\lra(k+2)^{k+1}_\kappa$
holds, then $\chi(\Sh_{k+1}(\lambda))>\kappa$. 
This argument yields Theorem~\ref{thm:EHodd} with an iterated exponential dependence 
between $\chi(G)$ and $|V(G)|$. 
Shortly afterwards Erd\H{o}s and Hajnal~\cite{EH66}*{Theorem~7.4} found 
a different construction achieving $|V(G)|=\chi(G)$. Similar to the proof of 
Theorem~\ref{thm:er60}, these graphs have vertex set~$\kappa^{(2k^2+1)}$ and there 
is a rule assigning an edge to each increasing sequence of $4k^2+2$ ordinals below 
$\kappa$. In general, graphs on a set of the shape $\kappa^{(m)}$ whose edges are 
determined by certain order patterns are called {\it type graphs}. They have turned 
out to be useful in many other contexts as well, see e.g.\ \cites{Pr86, KS05}. Their 
finite counterparts appear prominently in some of Jarik's and R\"odl's early work on 
structural Ramsey theory~\cite{NR76}; finite type graphs keep being used 
(e.g.~\cite{pisier}) and investigated (e.g.\ \cite{Avart}) until today. 

Concerning even cycles, Erd\H{o}s thought for a long time that graphs of uncountable 
chromatic number and girth $5$ exist and merely awaited their discovery. Thus he was
quite surprised when together with Hajnal~\cite{EH66}*{Corollary~5.6} he proved that,
actually, the chromatic number of $C_4$-free graphs is always at most countable---the 
natural generalisation of Theorem~\ref{thm:41} to infinite graphs is false. In fact, 
they obtained the following much stronger statement. 

\begin{thm}[Erd\H{o}s \& Hajnal]\label{thm:EHC4}
	For every natural number $n$ every graph $G$ of uncountable chromatic number 
	contains the bipartite graph $K_{n, \aleph_1}$.
\end{thm}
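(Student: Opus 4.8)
The plan is to establish the equivalent statement ``if $G$ contains no subgraph isomorphic to $K_{n,\aleph_1}$, then $\chi(G)\le\aleph_0$'' by induction on $n$. It is convenient to begin by reducing to the case $|V(G)|=\aleph_1$: by a standard reflection argument for chromatic number, using a continuous chain of countable elementary submodels, if $\chi(G)\ge\aleph_1$ then some subgraph of $G$ of cardinality~$\aleph_1$ already has uncountable chromatic number, and a copy of $K_{n,\aleph_1}$ in such a subgraph is a copy in $G$; so from now on $V(G)=\omega_1$. For the base case $n=1$ one argues directly: ``$G$ has no $K_{1,\aleph_1}$'' says that every vertex has countable degree, whence every connected component of $G$ is countable (each successive distance class from a fixed vertex is again countable), and a disjoint union of countable graphs is $\aleph_0$-colourable.

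For the inductive step, suppose the statement holds for $n$ and that $G$ contains no $K_{n+1,\aleph_1}$; I want to conclude $\chi(G)\le\aleph_0$. The first observation is that for every vertex $v$ the induced subgraph $G[N(v)]$ contains no $K_{n,\aleph_1}$ — for a copy inside $N(v)$, say an $n$-set $S\subseteq N(v)$ completely joined to an uncountable $W\subseteq N(v)$, would together with $v$ (which is joined to all of $N(v)$) yield a copy of $K_{n+1,\aleph_1}$ in $G$. Hence, by the induction hypothesis, $\chi(G[N(v)])\le\aleph_0$, and I fix a proper colouring $d_v\colon N(v)\to\omega$. The second fact at my disposal is that ``no $K_{n+1,\aleph_1}$'' is precisely the assertion that every set of $n+1$ vertices has a countable common neighbourhood.

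What remains — and this is the technical heart — is to glue the $\aleph_1$ local colourings $(d_v)_{v<\omega_1}$ into a single proper colouring of $G$ with only $\aleph_0$ colours, contradicting $\chi(G)>\aleph_0$. I would colour along the well-order $\in$ of $\omega_1$, so that when a vertex $w$ is reached only the countable set $B_w=\{u<w\colon uw\in E\}$ has been coloured; the danger is that $B_w$ might already carry all of $\omega$ as colours, leaving nothing free for $w$. The codegree hypothesis is exactly what prevents this: since no $n+1$ vertices share uncountably many neighbours, the backward neighbourhoods $B_w$ cannot be too entangled, and by carrying along only finitely much information from the colourings $(d_u)$ at each stage (shrinking to a club or invoking Fodor's lemma as necessary, possibly after refining the well-order) one arranges that each vertex sees only finitely many forbidden colours, so that a colour from $\omega$ is always available. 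Organising this bookkeeping is where the real work lies; I expect it to be the main obstacle, and it is already genuine for $n=2$, where the claim reads ``a graph of uncountable chromatic number contains two vertices with $\aleph_1$ common neighbours'': for triangle-free $G$ (which may have uncountable chromatic number, cf.\ Theorem~\ref{thm:er60}) every neighbourhood is independent, so there is no recursion on ``the chromatic number of a neighbourhood'' to exploit and the codegree condition must be used head-on.
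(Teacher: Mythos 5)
There is a genuine gap, and you have located it yourself: the entire content of the theorem is the step you defer as ``the technical heart'', namely manufacturing a proper $\aleph_0$-colouring out of the hypothesis that no $n$ vertices have uncountably many common neighbours. None of the machinery you assemble beforehand helps with that step. The induction on $n$ together with the local colourings $d_v$ of the neighbourhoods is, as your own remark about triangle-free graphs already shows, a dead end: for $n=2$ and $G$ triangle-free each $d_v$ is trivial, yet the gluing problem is there in full force. The paper's proof uses neither an induction on $n$ nor any colouring of neighbourhoods. Your opening reduction is also shakier than you suggest: whether uncountable chromatic number reflects to a subgraph of cardinality $\aleph_1$ is a delicate set-theoretic matter, not a routine consequence of a chain of countable elementary submodels (incompactness of the chromatic number is a well-studied phenomenon); in any case the reduction is unnecessary.

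What is actually needed is a replacement for your arbitrary well-order of $\omega_1$: a decomposition of $V(G)$ along which backward degrees are \emph{finite} (indeed $<n$), not merely countable. The paper takes a counterexample $G$ of minimal cardinality $\kappa$ and calls $M\subseteq V(G)$ \emph{closed} if no vertex outside $M$ has at least $n$ neighbours in $M$. Since every $n$-set has only countably many common neighbours, every $X$ extends to a closed set of size at most $|X|+\aleph_0$ (iterate the obvious closure operation $\omega$ times), so $V(G)$ is a continuous increasing union of closed sets $M_i$, $i<\cofi(\kappa)$, each of size $<\kappa$. One then builds an increasing chain of proper colourings $f_i\colon M_i\lra\omega$: at a successor step, minimality of $\kappa$ gives a proper colouring $g\colon M_{i+1}\setminus M_i\lra\omega$; partition $\omega$ into infinitely many blocks $A_m$ of size $n$; each $x\in M_{i+1}\setminus M_i$ has fewer than $n$ neighbours in $M_i$ because $M_i$ is closed, so some colour in $A_{g(x)}$ is free for $x$, while adjacent new vertices draw from disjoint blocks. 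This closure trick is precisely the bookkeeping your sketch gestures at but does not supply, and it is where the codegree hypothesis is used ``head-on''.
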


\begin{proof}
	Arguing indirectly we consider for fixed $n$ a counterexample $G$ such that 
	$\kappa=|V(G)|$ is minimal. 
	Call a subset $M\subseteq V(G)$ {\it closed} if there is no vertex 
	$x\in V(G)\sm M$ with at least~$n$ neighbours in $G$.	
	Due to $K_{n, \aleph_1}\not\subseteq G$ every set $X\subseteq V(G)$
	has a closed superset $M$ with $|M|\le |X|+\aleph_0$. 
	This allows us to express $V(G)$ as a union of a continuous increasing chain 
	$\langle M_i\colon i<\cofi(\kappa)\rangle$ of closed sets $M_i$ with $|M_i|<\kappa$. 
	
	We shall construct inductively an increasing chain 
	$\langle f_i\colon i<\cofi(\kappa)\rangle$
	of proper colourings $f_i\colon M_i\lra\omega$ of the graphs $G[M_i]$. Only the 
	successor step is interesting. So suppose that for some $i<\cofi(\kappa)$ we have 
	just selected $f_i$. By the minimality of $\kappa$, there is a proper 
	$\omega$-colouring~$g$ of $G[M_{i+1}\sm M_i]$. 
	Let $\omega=\bigdcup_{m<\omega} A_m$ be a partition 
	of $\omega$ into infinitely many sets of size $n$. 
	Now for every vertex $x\in M_{i+1}$ there is 
	a free colour $f_{i+1}(x)\in A_{g(x)}$, because~$M_i$ is closed. 
	Thus the desired extension 
	$f_{i+1}\supseteq f_i$ does indeed exist.
\end{proof}

Despite the fact that Theorem~\ref{thm:41} does not extend to the transfinite world, 
we can still ponder the same question that motivated us in~\S\ref{subsec:41}. Which 
graphs $F$ appear in all graphs of uncountable chromatic number? 
For finite graphs $F$, the results we have seen so far yield a complete solution. 
By Theorem~\ref{thm:EHC4} all finite bipartite graphs $F$ have this property. On the 
other hand, each non-bipartite graph contains an odd cycle and Theorem~\ref{thm:EHodd} 
yields a negative answer. We summarise this paragraph as follows. 

\vbox{  
\begin{cor}\label{cor:bip}
	For every finite graph $F$, the following statements are equivalent.
		\begin{enumerate}[label=\rmlabel]
			\item $F$ is bipartite.
			\item The chromatic number of every $F$-free graph is at most $\aleph_0$.
			\item There is an absolute bound on the chromatic number of $F$-free 
				graphs. \qed
		\end{enumerate}
\end{cor}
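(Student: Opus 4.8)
The plan is to prove the cyclic chain of implications $(i)\Rightarrow(ii)\Rightarrow(iii)\Rightarrow(i)$. The implication $(ii)\Rightarrow(iii)$ is immediate: if every $F$-free graph has chromatic number at most $\aleph_0$, then $\aleph_0$ itself is an absolute bound. So the substance lies in the two outer implications, and each of them I would extract from a theorem already proved above, together with a routine observation about when one graph occurs as a subgraph of another.

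For $(i)\Rightarrow(ii)$, suppose $F$ is bipartite, say with bipartition $(A,B)$, and put $n=|V(F)|$. Then $F$ is a subgraph of $K_{|A|, |B|}$, hence of $K_{n, n}$, hence of $K_{n, \aleph_1}$. By Theorem~\ref{thm:EHC4}, every graph $G$ of uncountable chromatic number contains $K_{n, \aleph_1}$ and therefore contains a copy of $F$. Contrapositively, every $F$-free graph has chromatic number at most $\aleph_0$, which is $(ii)$.

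For $(iii)\Rightarrow(i)$ I would argue by contraposition. If $F$ is not bipartite, it contains an odd cycle; let $2k+1$ be the length of a shortest one, so that $C_{2k+1}\subseteq F$. Given any cardinal $\kappa$, Theorem~\ref{thm:EHodd} supplies a $\{C_3, C_5, \dots, C_{2k+1}\}$-free graph $G$ with $\chi(G)\ge\kappa$; in particular $G$ has no $C_{2k+1}$ subgraph, hence no $F$ subgraph, so $G$ is $F$-free. As $\kappa$ was arbitrary, there is no absolute bound on the chromatic number of $F$-free graphs, i.e.\ $(iii)$ fails. This closes the cycle.

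I do not anticipate any real obstacle here; the statement is a bookkeeping corollary of Theorems~\ref{thm:EHC4} and~\ref{thm:EHodd}. The only point that needs a moment's care is keeping the direction of the subgraph relations straight---using $F\subseteq K_{n,n}$ in the bipartite case and $C_{2k+1}\subseteq F$ in the non-bipartite case---and recalling that ``$F$-free'' here refers to the absence of a not necessarily induced subgraph isomorphic to $F$, which is exactly the notion in which both invoked theorems are phrased. One might also remark, though it is not part of the claimed equivalence, that for bipartite graphs $F$ that are not forests the bound $\aleph_0$ in $(ii)$ cannot be lowered to a finite number, by Theorem~\ref{thm:41}; this is what makes the distinction between countable and bounded chromatic number the natural dividing line.
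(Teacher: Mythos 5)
Your argument is correct and coincides with the paper's own reasoning: the paper likewise derives (i)$\Rightarrow$(ii) from Theorem~\ref{thm:EHC4} via $F\subseteq K_{n,\aleph_1}$, and the failure of (iii) for non-bipartite $F$ from Theorem~\ref{thm:EHodd} applied to the shortest odd cycle of $F$, with (ii)$\Rightarrow$(iii) being trivial. Nothing is missing.
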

}

There is a substantial body of work on the possibilities for the family of finite 
subgraphs of a graph with uncountable chromatic number. Referring the interested 
reader to a survey by Komj\'ath~\cite{kom-survey} we will only focus on one specific 
result here (see~\cite{EHS}*{Theorem 3} or Thomassen~\cite{Tho83} for an 
alternative proof). 

\begin{thm}[Erd\H{o}s, Hajnal \& Shelah]\label{thm:EHS}
	Every graph of uncountable chromatic number contains odd cycles of all 
	sufficiently large lengths. 
\end{thm}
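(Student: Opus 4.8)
The plan is to pass to a structurally tame subgraph of $G$ and then to read off the cycles from a single explicit configuration inside it. The elementary fact driving the reduction is that if $\chi(H)>\aleph_0$ then $\chi(H-S)>\aleph_0$ for every countable $S\subseteq V(H)$, since $\chi(H)\le\chi(H-S)+|S|$; in particular $H-S$ always has a component of uncountable chromatic number. Using this repeatedly, I would replace $G$ by a subgraph $H$ which is connected, has $\chi(H)>\aleph_0$, in which every vertex has infinite degree (a normalisation due to Thomassen~\cite{Tho83}), and such that for every finite $S\subseteq V(H)$ the graph $H-S$ has a \emph{unique} component $\mathrm{c}(S)$ of uncountable chromatic number, these ``cores'' being nested: $\mathrm{c}(S')\subseteq\mathrm{c}(S)$ whenever $S\subseteq S'$.

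The configuration I am after is a generalised theta graph with cofinal lengths: two vertices $a\neq b$, an integer $L$, and pairwise internally disjoint $a$--$b$ paths $P_L,P_{L+1},P_{L+2},\dots$ with $P_\ell$ of length $\ell$. Granting this, the theorem is immediate: for odd $n\ge 2L+1$ one has $n=L+(n-L)$ with $L$ and $n-L$ of opposite parity and both $\ge L$, so $P_L\cup P_{n-L}$ is a cycle of length $n$ inside $H$, hence inside $G$. Everything is thus reduced to constructing the paths $P_\ell$, which I would do recursively. Having built $P_L,\dots,P_{\ell-1}$, remove their finitely many inner vertices, obtaining a finite set $F$; then $H-F$ still has uncountable chromatic number and, thanks to the core structure above, one can arrange that $a$ and $b$ lie in a common uncountably chromatic piece of it. Now route an $a$--$b$ path of length \emph{exactly} $\ell$ through $H-F$: first produce an $a$--$b$ path of some length $\ge\ell$ of the appropriate parity, and then, using that vertices have infinite degree and that an uncountably chromatic graph abounds in cycles of both parities (it contains $4$-cycles by Theorem~\ref{thm:EHC4} and, not being bipartite, odd cycles), re-route and shorten it down to length exactly $\ell$ without leaving $H-F$.

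The hard part will be exactly this last step: producing, between a \emph{fixed} pair of vertices, a path of a \emph{prescribed} length rather than merely some long length, all the while keeping $a$ and $b$ inside an uncountably chromatic part of what survives the deletion of the previously built paths. Handling the two parities simultaneously — which is what ``all sufficiently large \emph{odd} lengths'' ultimately rests on, and what makes this strictly harder than the even-cycle statement already implicit in Theorem~\ref{thm:EHC4} — is the genuinely delicate issue, and making the bookkeeping of the path lengths compatible with the shrinking cores is where the proofs of Erd\H{o}s--Hajnal--Shelah~\cite{EHS} and of Thomassen~\cite{Tho83} do their real work.
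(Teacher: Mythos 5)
Your outline defers precisely the step that carries the whole proof, and the mechanism you sketch for it does not work as stated. Everything hinges on producing, between a \emph{fixed} pair $a,b$, internally disjoint paths of \emph{every} prescribed length $\ell\ge L$. You propose to first find some long $a$--$b$ path of the right parity and then ``re-route and shorten it down to length exactly $\ell$'' using infinite degrees and the presence of cycles of both parities. But the existence of $4$-cycles and odd cycles \emph{somewhere} in an uncountably chromatic graph gives no handle on the length of a path between two specific vertices: to shorten a path by exactly one or two edges you need a chord or a short bypass on that particular path, and nothing in your reduction provides one. Likewise, the normalisation to a unique nested ``core'' component surviving every finite deletion, and the claim that $a$ and $b$ can be kept attached to it, are asserted rather than proved (there is no minimal uncountably chromatic subgraph to pass to, so uniqueness of the core itself needs an argument). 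As written, the proposal is a precise statement of the difficulty rather than a resolution of it.

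The paper's proof avoids the prescribed-length problem altogether. Fix a vertex $x$ and let $D_n$ be the distance classes; some $G[D_n]$ has uncountable chromatic number. Every edge $uv$ of $G[D_n]$ admits a detour $P_{uv}$ through earlier levels whose length is an even number $2m_{uv}$ with $m_{uv}\le n$, and Zykov's submultiplicativity (Fact~\ref{f:zykov}) lets one pass to a spanning subgraph $H\subseteq G[D_n]$ of uncountable chromatic number on which this length is a \emph{constant} $2m$. Theorem~\ref{thm:EHC4} then gives $K_{k,\aleph_1}\subseteq H$, hence $C_{2k}\subseteq H$ for every $k\ge 2$, and replacing a single edge of such a cycle by its detour (whose inner vertices avoid $D_n$) yields $C_{2m+2k-1}\subseteq G$. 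The variable part of the length thus comes from even cycles, which are available in all lengths for free, and the parity flip comes from one detour of fixed even length; no path of prescribed length between fixed endpoints is ever constructed. To salvage your approach you would have to import the path-building machinery of~\cite{EHS} or~\cite{Tho83} in full --- the gap sits exactly where you locate it.
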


\begin{proof}	Without loss of generality we can assume that the graph $G$ under consideration 
	is connected. Let $x\in V(G)$ be arbitrary. For each $n<\omega$ let $D_n$ be the 
	set of vertices at distance~$n$ from~$x$. Since $V(G)=\bigdcup_{n<\omega} D_n$,
	there exists some $n<\omega$ such that the chromatic number of the induced 
	subgraph $G[D_n]$ is uncountable. For every edge $uv$ connecting two vertices 
	in $D_n$ there is a $u$-$v$-path $P_{uv}$ whose inner vertices are not in $D_n$
	and whose length is some even number $2m_{uv}$ with $m_{uv}\le n$. Indeed, such 
	a path can be found by going from $u$ to $x$ in $n$ steps, in another $n$ steps
	to $v$, and removing all detours. By Fact~\ref{f:zykov} there exists some $m\le n$
	such that the spanning subgraph $H$ of $G[D_n]$ whose edges $uv$ satisfy $m_{uv}=m$
	has uncountable chromatic number. For every $k\ge 2$ Theorem~\ref{thm:EHC4} yields 
	a copy of $C_{2k}$ in~$H$. Replacing one edge of such a cycle by a path of 
	length $2m$ we obtain $C_{2m+2k-1}\subseteq G$.  
\end{proof}

\begin{cor}
	For all graphs $G$, $G'$ of uncountable chromatic number there is a 
	finite graph~$F$
	with $\chi(F)=3$ such that both $G$ and $G'$ have subgraphs isomorphic to $F$. 
\end{cor}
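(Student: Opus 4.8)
The plan is to deduce this immediately from Theorem~\ref{thm:EHS}. First I would apply that theorem to $G$, obtaining an integer $n_0$ such that $G$ contains $C_{2k+1}$ for every $k$ with $2k+1\ge n_0$; applying it again to $G'$ yields a corresponding integer $n_0'$. I would then fix any odd number $m\ge\max(n_0,n_0',3)$, say $m=2k+1$, and set $F=C_m$. By construction both $G$ and $G'$ contain a subgraph isomorphic to $F$, and since $m$ is odd and at least $3$ we have $\chi(C_m)=3$, so $F$ has the required properties.

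The only point that needs a word of care is ensuring that the common length can be taken to be at least $3$ (so that it is genuinely a cycle with chromatic number exactly $3$ rather than, say, a triangle being forced or the statement being vacuous); this is automatic because we may always increase $n_0$ and $n_0'$.

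There is essentially no obstacle here: all the difficulty is hidden inside Theorem~\ref{thm:EHS}, whose proof was already given above, and the corollary is a one-line consequence. If one wanted, the same argument shows more, namely that there is a single finite graph $F$ with $\chi(F)=3$ that embeds into \emph{every} graph of uncountable chromatic number once we additionally fix an upper bound, but for the statement as given the deduction above suffices.
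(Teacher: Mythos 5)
Your proposal is correct and matches the paper's proof, which is exactly the one-line observation that every sufficiently large odd cycle works by Theorem~\ref{thm:EHS} and has chromatic number $3$. (The speculative final remark about a single $F$ working for \emph{every} graph of uncountable chromatic number is not justified as stated, since the threshold length depends on the graph, but it is an aside and does not affect the proof of the corollary.)
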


\begin{proof}
	Every sufficiently large odd cycle $F$ has this property. 
\end{proof}

As noted in~\cite{kom-survey} it is unknown whether this holds for $4$ instead of $3$ as well.

\begin{quest}
	Is it true that any two graphs of uncountable chromatic number have a finite
	subgraph of chromatic number four in common?
\end{quest}

\subsection{Obligatory hypergraphs}\label{subsec:37}
Much less is known about the analogous questions for hypergraphs. 
For concreteness we shall only consider the $3$-uniform case here. 
The subject begins with an unfortunate oversight, which caused
Erd\H{o}s and Hajnal to believe for a while that no $3$-uniform hypergraph
of uncountable chromatic number could be 
linear\footnote[1]{In~\cite{EH66}*{Theorem~12.1} the assumption $\alpha=\beta^+$ 
is missing.}.
The argument they had in mind was supposed to be similar to the proof of 
Theorem~\ref{thm:EHC4}. However, it only shows that linear $3$-uniform hypergraphs 
on $\aleph_1$ vertices are indeed $\aleph_0$-colourable. In joint work with 
Rothschild~\cite{EHRoth}*{Theorem~2} they then found the following counterexample. 
Set $\lambda=(2^\omega)^+$ and consider the $3$-uniform hypergraph $H$ 
on $\lambda^{(2)}$ whose edges are all triples of the form 
$\bigl\{\{\alpha, \beta\}, \{\alpha, \gamma\}, \{\beta, \gamma\}\bigr\}$, 
where $\alpha<\beta<\gamma<\lambda$. Clearly, $H$ is linear and the partition 
relation $\lambda\lra (3)^2_\omega$ entails $\chi(H)\ge \aleph_1$.

A finite $3$-uniform hypergraph $F$ is called {\it obligatory} if it is contained
in every $3$-uniform hypergraph whose chromatic number is uncountable. 
Define for every $n\ge 2$ the cycle $C_n^{(3)}$ to be the hypergraph with
$2n$ vertices $x_i$, $y_i$ and $n$ edges $x_ix_{i+1}y_i$, where $i\in \ZZ/n\ZZ$
(see Figure~\ref{fig48}). 
We have just seen that the cycle $C_2^{(3)}$ is not obligatory and by a result of
Erd\H{o}s, Galvin, and Hajnal~\cite{EGH}*{Theorem~11.6} neither is $C_3^{(3)}$.

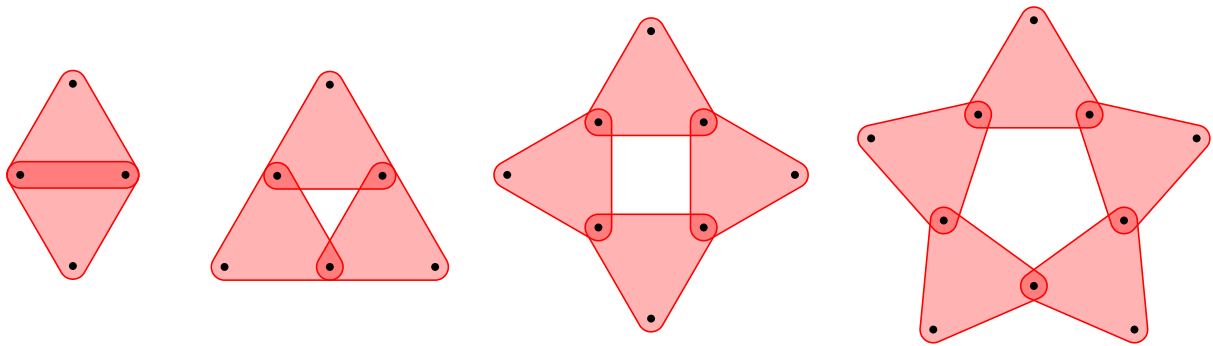
\begin{figure}[h!]
			\def\b{.7}
			\def\h{1.732*\b}
	\begin{subfigure}[c]{.16\textwidth}
		\centering
	\begin{tikzpicture}

		\coordinate (a12) at (0,0);
		\coordinate (a21) at (-\b, 1.732*\b);
		\coordinate (a22) at (\b, 1.732*\b);
		\coordinate (a31) at (0, 3.464*\b);
		
	\qedge{(a12)}{(a21)}{(a22)}{5pt}{.6pt}{red}{red, opacity=.3};
			\qedge{(a21)}{(a31)}{(a22)}{5pt}{.6pt}{red}{red, opacity=.3};
		
		\foreach \i in {12, 21, 22, 31} \fill (a\i) circle (1.5pt);
		
	\end{tikzpicture}
	\end{subfigure}
\hfill
	\begin{subfigure}[c]{.2\textwidth}
	\centering
	\begin{tikzpicture}
		\coordinate (a12) at (0,0);
		\coordinate (a11) at (-2*\b,0);
		\coordinate (a13) at (2*\b,0);
		\coordinate (a21) at (-\b, 1.732*\b);
		\coordinate (a22) at (\b, 1.732*\b);
		\coordinate (a31) at (0, 3.464*\b);
		
		\qedge{(a11)}{(a21)}{(a12)}{5pt}{.6pt}{red}{red, opacity=.3};
		\qedge{(a12)}{(a22)}{(a13)}{5pt}{.6pt}{red}{red, opacity=.3};
		\qedge{(a21)}{(a31)}{(a22)}{5pt}{.6pt}{red}{red, opacity=.3};
		
		\foreach \i in {11, 12, 13, 21, 22, 31} \fill (a\i) circle (1.5pt);
		
	\end{tikzpicture}
\end{subfigure}
\hfill
	\begin{subfigure}[c]{.26\textwidth}
	\centering
	\begin{tikzpicture}
		\coordinate (a11) at (0,-\b-\h);
		\coordinate (a21) at (-\b,-\b);
		\coordinate (a22) at (\b,-\b);
		\coordinate (a31) at (-\b-\h, 0);
		\coordinate (a32) at (\b+\h, 0);
		\coordinate (a41) at (-\b, \b);
			\coordinate (a42) at (\b, \b);
			\coordinate (a51) at (0, \b+\h);
		
		\qedge{(a11)}{(a21)}{(a22)}{5pt}{.6pt}{red}{red, opacity=.3};
		\qedge{(a31)}{(a41)}{(a21)}{5pt}{.6pt}{red}{red, opacity=.3};
		\qedge{(a42)}{(a32)}{(a22)}{5pt}{.6pt}{red}{red, opacity=.3};
		\qedge{(a41)}{(a51)}{(a42)}{5pt}{.6pt}{red}{red, opacity=.3};
		
		\foreach \i in {11, 21, 22, 31, 32, 41,42,51} \fill (a\i) circle (1.5pt);
		
	\end{tikzpicture}
\end{subfigure}
\hfill
	\begin{subfigure}[c]{.3\textwidth}
	\centering
	\begin{tikzpicture}
		\foreach \i in {1,...,5}{
		\coordinate (a\i) at (-90 + \i*72:1.8*\b);
		\coordinate (b\i) at (90+\i*72:3.25*\b);
	}
		
		\qedge{(a1)}{(a2)}{(b4)}{5pt}{.6pt}{red}{red, opacity=.3};
		\qedge{(a2)}{(a3)}{(b5)}{5pt}{.6pt}{red}{red, opacity=.3};
		\qedge{(a3)}{(a4)}{(b1)}{5pt}{.6pt}{red}{red, opacity=.3};
		\qedge{(a4)}{(a5)}{(b2)}{5pt}{.6pt}{red}{red, opacity=.3};
		\qedge{(a5)}{(a1)}{(b3)}{5pt}{.6pt}{red}{red, opacity=.3};

		\foreach \i in {1,...,5} {
			\fill (a\i) circle (1.5pt);
			\fill (b\i) circle (1.5pt);}
		
	\end{tikzpicture}
\end{subfigure}
\caption{The cycles $C_2^{(3)}$, $C_3^{(3)}$, $C_4^{(3)}$, and $C_5^{(3)}$.}
\label{fig48}
\end{figure}

Komj\'ath~\cite{kom01} proved that every obligatory hypergraph is $3$-partite.
Moreover the class of obligatory hypergraphs is closed under taking disjoint 
unions and one-point amalgamations; consequently, all forests are obligatory. 
Until very recently no further examples of obligatory hypergraphs were known 
and it was open whether, consistently or even provably, a hypergraph is 
obligatory if and only if it is a forest. 

This possibility was recently ruled out in~\cite{OH}, where the following examples 
are proposed. For every positive integer $n$
let $H^{(3)}_n$ be the hypergraph with $n^2+2n$ vertices $x_i$, $y_i$, $z_{ij}$
and~$n^2$ edges $x_iy_jz_{ij}$ (where $i,j\in [n]$). Thus $H^{(3)}_n$ arises from
the bipartite graph $K_{n, n}$ by adding a new vertex to every edge (see Figure~\ref{fig49}).

\begin{thm}
	For every natural number $n$ the hypergraph $H^{(3)}_n$ is obligatory. \qed
\end{thm}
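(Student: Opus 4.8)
The goal is to show that every $3$-uniform hypergraph $G$ with $\chi(G)>\aleph_0$ contains $n^2+2n$ distinct vertices $x_i,y_j,z_{ij}$ $(i,j\in[n])$ with $\{x_i,y_j,z_{ij}\}\in E(G)$. First I would pass to the $2$-shadow $G_1$ of $G$, i.e.\ the graph on $V(G)$ in which $u$ and $v$ are joined whenever some edge of $G$ contains both of them; any proper colouring of $G_1$ leaves no edge of $G$ monochromatic, so $\chi(G_1)\ge\chi(G)>\aleph_0$. For $uv\in E(G_1)$ write $N(u,v)=\{w\colon\{u,v,w\}\in E(G)\}$ for the link, and let $G^\infty\subseteq G_1$ be the spanning subgraph formed by those pairs whose link is infinite. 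The proof then splits according to whether $\chi(G^\infty)$ is countable.

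If $\chi(G^\infty)>\aleph_0$, Theorem~\ref{thm:EHC4} applied to $G^\infty$ supplies vertices $x_1,\dots,x_n$ and an uncountable set $Y_0$ with $x_iy\in E(G^\infty)$ for all $i\in[n]$ and $y\in Y_0$. Pick $y_1,\dots,y_n\in Y_0$ distinct from one another and from all $x_i$, and then, running through the pairs $(i,j)$ in any order, choose $z_{ij}\in N(x_i,y_j)$ different from every $x_{i'}$, from every $y_{j'}$, and from every previously selected $z_{i'j'}$; this is possible because $N(x_i,y_j)$ is infinite while only finitely many vertices are ever forbidden. The vertices $x_i,y_j,z_{ij}$ now span a copy of $H^{(3)}_n$.

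So assume $\chi(G^\infty)\le\aleph_0$ and fix a proper $\omega$-colouring $c$ of $G^\infty$; thus $c(u)=c(v)$ forces $N(u,v)$ to be finite. Splitting $E(G)=E^{\mathrm{in}}\dcup E^{\mathrm{cr}}$ into the edges all of whose vertices receive the same $c$-colour and the rest, Fact~\ref{f:zykov} together with $\aleph_0\cdot\aleph_0=\aleph_0$ shows that one of $(V,E^{\mathrm{in}})$, $(V,E^{\mathrm{cr}})$ has uncountable chromatic number. In the first case $(V,E^{\mathrm{in}})$ is the disjoint union of the hypergraphs $G[c^{-1}(k)]$, so one of them has uncountable chromatic number and, crucially, all of its links are finite. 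In the second case one splits further into the edges meeting exactly three colour classes — which $c$ itself properly $\omega$-colours, so they contribute nothing — and the edges meeting exactly two, on which one iterates the whole construction with a fresh colouring. The aim of this bookkeeping is to reduce to the \emph{finite-link case}, where every pair has finite link. There one applies Theorem~\ref{thm:EHC4} to $G_1$, obtaining $x_1,\dots,x_N$ and an uncountable $\{w_\alpha\colon\alpha<\omega_1\}$ with all $x_iw_\alpha\in E(G_1)$; since each vertex lies in $N(x_i,w_\alpha)$ for only finitely many $\alpha$, a transfinite recursion thins $\{w_\alpha\}$ so that the finite sets $\bigcup_{i\le N}N(x_i,w_\alpha)$ become pairwise disjoint and avoid $\{x_1,\dots,x_N\}$; one then keeps $n$ of the $w_\alpha$ as the $y_j$ and assembles the $x_i$ and $z_{ij}$ among what remains.

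The hard parts are concentrated in the finite-link case and in making the above reduction unconditional. Even with the links disjointified across $\alpha$, the $n$ links $N(x_1,w_\alpha),\dots,N(x_n,w_\alpha)$ sitting over a single $w_\alpha$ need not be pairwise disjoint, so a system of distinct representatives for the $z_{ij}$ above a fixed $y_j$ can fail to exist. The repair I would attempt is to start from more than $n$ left vertices, exploit that $N(w_\alpha,z)$ is finite to bound how many $x_i$ can have a prescribed third vertex $z$, and then pigeonhole over the $\omega_1$ many $\alpha$ to pick $n$ left vertices for which Hall's condition holds simultaneously — all the while keeping the $n^2+2n$ vertices genuinely distinct. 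Showing that the $E^{\mathrm{cr}}$-iteration terminates (or circumventing it) and carrying out this SDR-extraction uniformly is where the real difficulty lies.
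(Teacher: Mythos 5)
The paper itself gives no proof of this theorem (it is stated with a closing box and attributed to the unpublished manuscript \cite{OH}), so your argument has to be judged on its own merits. Your first case and your reduction are sound: if the graph $G^\infty$ of pairs with infinite link has uncountable chromatic number, Theorem~\ref{thm:EHC4} plus a greedy choice of the $z_{ij}$ from the infinite links does produce a copy of $H^{(3)}_n$; otherwise a proper $\omega$-colouring $c$ of $G^\infty$ makes every monochromatic pair have finite link. But the reduction is cleaner than you make it: a proper colouring of a hypergraph only has to avoid \emph{monochromatic} edges, so $c$ itself properly colours all of $(V,E^{\mathrm{cr}})$ --- an edge meeting exactly two colour classes is just as harmless as one meeting three. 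Hence $\chi(V,E^{\mathrm{cr}})\le\aleph_0$ always, Fact~\ref{f:zykov} forces $\chi(V,E^{\mathrm{in}})>\aleph_0$, and a single colour class already yields an uncountably chromatic sub-hypergraph with all links finite. The $E^{\mathrm{cr}}$-iteration whose termination worries you never has to be performed.

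The genuine gap is the finite-link case, and the repair you sketch does not close it. Suppose that for uncountably many $w$ the bipartite graph between $\{x_1,\dots,x_N\}$ and $\bigcup_{i}N(x_i,w)$ has no matching of size $n$. K\"onig's theorem gives a cover $A_w\dcup B_w$ with $|A_w|+|B_w|<n$, so for every $i\notin A_w$ the nonempty link $N(x_i,w)$ is contained in the set $B_w$ of size $<n$; pigeonholing repeatedly (over $A_w$, over a popular element $z_w\in B_w$, and over the $m$-set of indices $i$ with $z_w\in N(x_i,w)$, where $m\ge (N-n+1)/(n-1)$) and then disjointifying the pairs $\{w,z_w\}$ via the finite-link hypothesis, the most this yields is a fixed set $U$ of $m$ of the $x_i$ together with uncountably many pairwise disjoint pairs $\{w_\beta,z_\beta\}$ such that $\{u,w_\beta,z_\beta\}\in E(G)$ for every $u\in U$ and every $\beta$. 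This configuration is not a copy of $H^{(3)}_n$: for fixed $\beta$ the third vertex $z_\beta$ is the \emph{same} for all $u\in U$, whereas $H^{(3)}_n$ needs $n^2$ pairwise distinct third vertices. Nor does it contradict anything --- it is compatible with all links being finite, and as a standalone hypergraph it is $2$-colourable --- and interchanging the roles of $U$ and the pairs merely reproduces the same SDR problem one level down. So the branch in which Hall's condition fails, which you correctly identify as the crux, is exactly where the proposal stalls; a genuinely new idea is needed there, and without it the finite-link case, and hence the theorem, remains unproved.
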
   

In particular, for every even $n\ge 4$ the cycle $C^{(3)}_n$ is obligatory.

\begin{figure}[h!]
		\def\h{1.3} 	\begin{subfigure}[c]{.05\textwidth}
		\centering
	\begin{tikzpicture}
	\coordinate (a) at (0 ,0);
	\coordinate (b) at (0,\h);		
	\coordinate (c) at (0,2*\h);
	
	\qedge{(a)}{(b)}{(c)}{4pt}{.6pt}{red}{red, opacity=.3};
	
	\foreach \i in {a,b,c} \fill (\i) circle (1.5pt);
		
	\end{tikzpicture}
	\end{subfigure}
\hfill
\begin{subfigure}[c]{.28\textwidth}
	\centering
\begin{tikzpicture}
		\def\w{.8}
		\def\x{1.9}  			
	\coordinate (a22) at (-\w, \h);
	\coordinate (a23) at (\w, \h);
	\coordinate (a32) at (-\w, 2*\h);
	\coordinate (a33) at (\w, 2*\h);

	\coordinate (a11) at  ($(a33)!\x!(a22)$);
	\coordinate (a14) at  ($(a32)!\x!(a23)$);
		\coordinate (a12) at  ($(a32)!\x!(a22)$);
	\coordinate (a13) at  ($(a33)!\x!(a23)$);
	
	\qedge{(a12)}{(a22)}{(a32)}{4pt}{.6pt}{red}{red, opacity=.3};
	\qedge{(a13)}{(a23)}{(a33)}{4pt}{.6pt}{red}{red, opacity=.3};
	\qedge{(a11)}{(a22)}{(a33)}{4pt}{.6pt}{red}{red, opacity=.3};
	\qedge{(a14)}{(a23)}{(a32)}{4pt}{.6pt}{red}{red, opacity=.3};

	\foreach \i in {12,13,22,23,32,33,11, 14} \fill (a\i) circle (1.5pt);				
\end{tikzpicture}

\end{subfigure}
\hfill
\begin{subfigure}[c]{.6\textwidth}
	\centering
	\begin{tikzpicture}
	
		\def\w{1.3}
		\def\x{1.9} 
\coordinate (a31) at (-.8*\w, 2*\h);
\coordinate (a32) at (0, 2*\h);
\coordinate (a33) at (.8*\w, 2*\h);
\coordinate (a21) at (-\w, \h);
\coordinate (a22) at (0, \h);
\coordinate (a23) at (\w, \h);

			\coordinate (a11) at ($(a33)!\x!(a21)$);
			\coordinate (a12) at ($(a32)!\x!(a21)$);
			\coordinate (a13) at ($(a31)!\x!(a21)$);
			\coordinate (a14) at ($(a33)!\x!(a22)$);
			\coordinate (a15) at ($(a32)!\x!(a22)$);
			\coordinate (a16) at ($(a31)!\x!(a22)$);
			\coordinate (a17) at ($(a33)!\x!(a23)$);
			\coordinate (a18) at ($(a32)!\x!(a23)$);
			\coordinate (a19) at ($(a31)!\x!(a23)$);

			\qedge{(a33)}{(a21)}{(a11)}{4pt}{.6pt}{red}{red, opacity=.3};
			\qedge{(a32)}{(a21)}{(a12)}{4pt}{.6pt}{red}{red, opacity=.3};
			\qedge{(a31)}{(a21)}{(a13)}{4pt}{.6pt}{red}{red, opacity=.3};
			\qedge{(a33)}{(a22)}{(a14)}{4pt}{.6pt}{red}{red, opacity=.3};
			\qedge{(a32)}{(a22)}{(a15)}{4pt}{.6pt}{red}{red, opacity=.3};
			\qedge{(a31)}{(a22)}{(a16)}{4pt}{.6pt}{red}{red, opacity=.3};
			\qedge{(a33)}{(a23)}{(a17)}{4pt}{.6pt}{red}{red, opacity=.3};
			\qedge{(a32)}{(a23)}{(a18)}{4pt}{.6pt}{red}{red, opacity=.3};
			\qedge{(a31)}{(a23)}{(a19)}{4pt}{.6pt}{red}{red, opacity=.3};

		\foreach \i in {11,12,13,14,15,16,17,18,19,21,22,23,31,32,33} \fill (a\i) circle (1.5pt);				
	\end{tikzpicture}
\end{subfigure}
\caption{The hypergraphs $H^{(3)}_1$, $H^{(3)}_2$, and $H_3^{(3)}$.}
\label{fig49}
\end{figure}
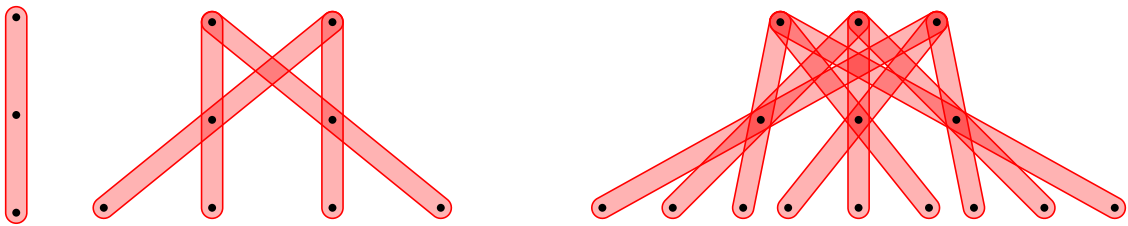

Let us finally introduce a related concept, which seems equally interesting. 
We call a {$3$-uniform} hypergraph $F$ {\it linearly obligatory} if every 
linear $3$-uniform hypergraph of uncountable chromatic number has a subhypergraph 
isomorphic to $F$. It has been shown by Hajnal and Komj\'ath~\cite{HK08}
that for $n\ne 2, 3, 5$ the cycle $C_n^{(3)}$ is linearly obligatory. 
This is complemented by a result of Komj\'ath~\cite{kom08}, which asserts 
that consistently there exists a linear hypergraph of uncountable chromatic 
number containing neither $C_3^{(3)}$ nor $C_5^{(3)}$.
Of course, every obligatory hypergraph is linearly obligatory as well, 
but the reverse implication is consistently false. This follows from results 
of Hajnal and Komj\'ath in~\cite{HK08}. 

\section{The girth Ramsey Theorem}\label{sec:grt}

\subsection{The induced Ramsey theorem}\label{subsec:61}
The question how the results in~\S\ref{subsec:35} generalise from vertex colourings 
to edge colourings motivated a lot of research in structural Ramsey theory during the 
last five decades. For graphs (and linear hypergraphs) a satisfactory understanding
has been reached only very recently~\cite{girth}, but for general hypergraphs there 
is still room for further investigations. In the remaining pages of this survey we 
can hardly do more than to scratch the surface of this fascinating area. 

We commence with the simplest existence question: given a graph $F$ and a number of 
colours $r$, does there exist a graph $H$ such that 
\[
	H\lra (F)^e_r\,?
\] 
This would mean that for every $r$-colouring of $H$ there is a monochromatic induced 
copy of~$F$ in~$H$. An affirmative answer has been obtained independently at about the 
same time by Deuber~\cite{Deuber75}, by Erd\H{o}s, Hajnal, and P\'osa~\cite{EHP75},
and by R\"{o}dl in his master thesis~\cites{Rodl73, Rodl76}. 

\begin{thm}[Induced Ramsey theorem for graphs]\label{thm:61}
	Given a graph $F$ and a number of colours $r$ there exists a graph $H$ such that 
	no matter how the edges of $H$ get coloured with $r$ colours, there is always a 
	monochromatic induced copy of $F$ in $H$. 
\end{thm}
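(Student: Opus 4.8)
The plan is to run a partite construction, exactly in the spirit of the one in~\S\ref{subsec:33} for hypergraphs of large girth, but adapted to edge colourings. Fix $F$ and $r$, set $a=|V(F)|$, and enumerate $V(F)=[a]$. By the ordinary Ramsey theorem there is an $N$ with $K_N\lra(K_a)^e_r$, hence $K_N\lra(F)^e_r$ in the \emph{non-induced} sense; this $K_N$ will serve as the ``vertical'' graph $G$. For picture zero $\Pi_0$ I would take a disjoint union of induced copies of $F$, one copy for every embedding $\sigma\colon V(F)\hookrightarrow V(G)$, placed on the music lines $\sigma(1),\dots,\sigma(a)$; this equips $\Pi_0$ with the projection $\psi\colon\Pi_0\to G$, and---crucially---since $\Pi_0$ is a disjoint union of $F$'s, every induced copy of $F$ in $\Pi_0$ is one of these \emph{standard} copies.

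The engine of the induction is a \textbf{Partite Lemma} of the following kind: for every graph $A$ carrying a partition of its vertices into parts that respects a fixed pattern, and every $r$, there is a graph $B$ with a compatible partition such that every $r$-colouring of $E(B)$ contains a partition-preserving copy of $A$ all of whose edges receive the same colour. I would prove this by induction on the number of pairs of parts between which $A$ has edges: the one-pair case is bipartite Ramsey (a consequence of the Hales--Jewett theorem, or of the ordinary Ramsey theorem), and the step is a product construction freezing the colour on one pair of parts while passing to a pattern with one fewer such pair. Granting this, I would build pictures $\Pi_1,\dots,\Pi_N$ by processing the music lines one at a time: at step $i$, I apply the Partite Lemma to the structure formed by the $i^{\mathrm{th}}$ music line of $\Pi_{i-1}$ together with the edges meeting it, and assemble $\Pi_i$ by gluing copies of $\Pi_{i-1}$ onto the resulting graph, as disjointly as possible. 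This is arranged so that for every $r$-colouring of $\Pi_i$ there is a copy of $\Pi_{i-1}$ inside it in which, within every standard copy of $F$, the edges meeting the $i^{\mathrm{th}}$ music line have been made monochromatic; after all $N$ steps, every $r$-colouring of $H:=\Pi_N$ leaves a copy $\widetilde\Pi_0$ of picture zero whose standard copies of $F$ are monochromatic. Projecting the resulting colouring along $\psi$ and invoking $G\lra(F)^e_r$ then yields a standard copy $\widetilde F\subseteq\widetilde\Pi_0\subseteq H$ all of whose edges share one colour---a monochromatic induced copy of $F$.

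The step I expect to be the main obstacle is \emph{not} producing a monochromatic \emph{transversal} copy of $F$---that is precisely what the Partite Lemma and the vertical projection deliver---but the disjointness bookkeeping needed to guarantee, inductively through $\Pi_0,\Pi_1,\dots,\Pi_N$, that \emph{every} induced copy of $F$ in every picture is a standard one. One must see to it that no gluing ever creates a new edge inside (or between the vertices of) a standard copy, and that no unintended induced copy of $F$ is assembled from vertices spread across several glued copies of $\Pi_{i-1}$. Controlling these ``collapsed'' and ``diagonal'' copies is exactly what makes the induced statement genuinely deeper than the non-induced one, which is immediate from Ramsey's theorem; it is also the reason for working with partitioned graphs throughout, since the permitted edge set already excludes most spurious copies before the disjointness argument enters. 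A secondary technical point is checking that the Partite Lemma really applies, at each step, to the very specific partitioned structure that $\Pi_{i-1}$ presents around its $i^{\mathrm{th}}$ music line.
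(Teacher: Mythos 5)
Your overall architecture is the right one and matches the paper's in outline: a vertical clique $G=K_n$ chosen by Ramsey's theorem, a picture zero consisting of disjoint induced copies of $F$ projecting onto $G$, an iterated amalgamation driven by a partite lemma, and a final projection. But the decisive organisational choice for edge colourings is to process the \emph{constituents} $\psi^{-1}(e)$, one for each $e\in E(G)$ (so $\binom n2$ steps), not the music lines (so $n$ steps): a coloured edge of a picture lives over an edge of $G$, not over a vertex, so the object that can be made monochromatic and then projected is a constituent. Your music-line step amalgamates copies of $\Pi_{i-1}$ along copies of the entire star around the $i^{\mathrm{th}}$ line, a structure that spreads over all music lines; two standard copies of $\Pi_{i-1}$ then overlap in vertices on every line, and each copy contributes its own edges \emph{not} meeting line $i$ among these shared vertices, so one copy can impose new adjacencies inside a standard copy of $F$ belonging to another, destroying exactly the inducedness you need. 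When one processes a single constituent, the shared vertices lie on just two music lines and every edge among them already belongs to the induced, monochromatic partite copy of that constituent supplied by Lemma~\ref{lem:62}, so no new adjacency can arise. Relatedly, your endgame does not parse: if every standard copy of $F$ in $\widetilde\Pi_0$ is already monochromatic, there is nothing left to project and $G\lra(F)^e_r$ is never used; conversely, a well-defined colouring of $E(G)$ only emerges when the invariant is ``each constituent of $\widetilde\Pi_0$ is monochromatic'', in which case a single standard copy of $F$ is typically \emph{not} monochromatic and the vertical Ramsey property is precisely what finishes the proof. You cannot have both invariants at once.

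Two further points. The ``main obstacle'' you single out---that every induced copy of $F$ in every picture be standard---is neither achievable nor needed: the proof exhibits one monochromatic induced copy, namely a standard one, and unintended induced copies of $F$ elsewhere in $H$ are harmless. What must actually be checked is that the standard copies stay induced through the amalgamations, and for that the partite lemma has to return \emph{induced} partite copies (as Lemma~\ref{lem:62} does), a requirement your statement of the Partite Lemma omits. Finally, your proposed proof of the multipartite Partite Lemma ``by induction on the number of pairs of parts carrying edges, freezing one pair at a time'' is, once unwound, nothing other than the constituent-by-constituent partite construction itself; making that induction precise---in particular controlling the intersections of the copies produced at each stage---is the entire content of the argument, not a preliminary to it.
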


Today several further proofs of this result are known, the most transparent 
of which are based on the partite construction method~\cite{NR81}, which we have 
already encountered in~\S\ref{subsec:33}. Here one starts with the observation that 
without the requirement that the monochromatic copy of $F$ needs to be induced one 
could simply take a sufficiently large clique. Indeed, the theorem of 
Ramsey~\cite{Ramsey30} allows us to fix an integer $n$ which is so large that for 
every $r$-colouring of $E(K_n)$ there is a monochromatic copy of $K_{|V(F)|}$ and,
a fortiori, a monochromatic (usually non-induced) copy of $F$. We shall now run 
a partite construction over $G=K_n$. Its pictures are $n$-partite graphs $\Pi$ 
accompanied by graph homomorphisms $\psi\colon \Pi\lra G$. 
Picture zero, denoted again by $\Pi_0$, consists of lots of vertex-disjoint copies 
of $F$, one for every copy of $F$ in~$G$. Thus it looks somewhat like 
Figure~\ref{fig45}, but with copies of $F$ instead of edges. 

Let us recall that in~\S\ref{subsec:33} we were colouring vertices, and in each of 
the pictures constructed after $\Pi_0$ one music line was processed. In some sense 
the entire construction reflected the fact that the vertex set of a picture is 
the disjoint union of its music lines. Now we are colouring edges, and the entire 
edge set of a picture can be expressed as a disjoint union of certain bipartite 
graphs, namely the preimages of the edges of $G$ with respect to the 
projection $\psi$. These bipartite graphs are called the {\it constituents} of the 
picture. For every picture $\Pi$ and every edge $e\in E(G)$ the 
constituent $\psi^{-1}(e)$ is denoted by $\Pi^e$.

Preparing the partite construction we fix an enumeration $E(G)=\{e(1), \dots, e(N)\}$,
where, in the present case, $N=\binom n2$. Starting with picture zero we intend to
define recursively a sequence of pictures $(\Pi_i)_{0\le i\le N}$, where in the 
formation of $\Pi_i$ we want to `process' the~$i^{\mathrm{th}}$ constituent of the 
previous picture. In~\S\ref{subsec:33} this `processing' involved an appeal to an 
induction hypothesis (or to the fact that hypergraph cliques have arbitrarily large 
chromatic number). In general, the r\^{o}le of such statements is played by so-called 
{\it partite lemmata}. For edge-colourings of graphs the simplest partite lemma 
imaginable reads as follows. 

\begin{lemma}\label{lem:62}
	For every bipartite graph $B=(X_B, Y_B, E_B)$ and every number of colours $r$
	there exists a bipartite graph $H=(X_H, Y_H, E_H)$ with the following property:
	no matter how~$E_H$ gets $r$-coloured, there exist sets $X'_B\subseteq X_H$
	and $Y'_B\subseteq Y_H$ such that the induced subgraph $H[X'_B, Y'_B]$ 
	is monochromatic and isomorphic to $B$.   
\end{lemma}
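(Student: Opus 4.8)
The plan is to prove the lemma by induction on $a=|X_B|$; throughout, write $X_B=\{x_1,\dots,x_a\}$, $Y_B=\{y_1,\dots,y_b\}$, and set $N_i=\{j\in[b]\colon x_iy_j\in E_B\}$ for $i\in[a]$.

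The base case is a single pigeon-hole step. If $B$ has no edges, take $H=B$. Otherwise $a=1$; put $p=|N_1|\ge 1$, let $X_H$ consist of one vertex $x$, and let $Y_H$ be the disjoint union of a pool $P$ of $r(p-1)+1$ vertices, each joined to $x$, and a set $Z$ of $b-p$ vertices joined to nothing. Every $r$-colouring of the $r(p-1)+1$ edges at $x$ has a colour class of size at least $p$; the $p$ vertices of $P$ in that class, together with the $b-p$ vertices of $Z$, span an induced copy of $B$ all of whose edges receive that one colour.

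For the inductive step I would put $B^-=B-x_a$ and take, by the induction hypothesis, a bipartite graph $H^-=(X^-,Y^-,E^-)$ such that every $r$-colouring of $E^-$ contains a monochromatic induced copy of $B^-$. Note that the $Y$-side of $B^-$ still carries all of $y_1,\dots,y_b$, so such a copy already supplies a full $b$-element ``$Y$-block'' and only the vertex $x_a$, with its $|N_a|$ edges, remains to be attached. The graph $H$ is obtained from $H^-$ by an amalgamation argument in the spirit of the partite construction method: $H$ is assembled from many mutually parallel copies of $H^-$, glued along a common part, and alongside these copies one installs --- for the prospective role of $x_a$ --- pools of candidate vertices, attached to the relevant $Y$-vertices and large enough that the $|N_a|$ neighbours $x_a$ requires can always be found inside a single colour class, exactly as in the base case. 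Iterating the amalgamation one arranges the successive pictures so that, in every $r$-colouring of $E_H$, the colour in which the embedded copy of $B^-$ is forced to be monochromatic is pinned down on a part small enough that the pigeon-hole step producing $x_a$ can be performed in that same colour; the last picture then contains the desired monochromatic induced copy of $B$, and the quantitative bounds (on the pool sizes and on the number of parallel copies) drop out of unwinding the recursion.

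The hard part is precisely this synchronisation of the colour of the $B^-$-part with the colour of $x_a$. A naive ``blow-up of $B$'' --- replacing each $x_i$ and each $y_j$ by a large cloud, with clouds joined completely or not at all according to $E_B$ --- fails, because an adversary may colour every edge meeting the cloud of $x_i$ with a colour $\sigma(i)$ depending only on $i$, after which no transversal is monochromatic; and since a single vertex of degree at least $2$ cannot be forced to be monochromatic, the colour of a copy of $B^-$ simply cannot be dictated to $x_a$ after it has been located. Defeating this adversary is exactly what the iterated amalgamation achieves, by building the construction one stage at a time so that the colouring already present on earlier stages is ``frozen'' when later stages are added. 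Carrying out that bookkeeping is the main technical content of the proof; the remaining steps are routine verifications that the amalgamations preserve bipartiteness and the prescribed adjacency pattern.
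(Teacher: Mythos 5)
Your base case is correct, but the inductive step contains a genuine gap rather than a proof. You correctly isolate the crux---after a monochromatic copy of $B^-=B-x_a$ has been located, its colour cannot be dictated to the pigeon-hole step that is supposed to supply $x_a$---and you correctly observe that a naive blow-up of $B$ fails for exactly this reason. But the paragraph meant to overcome this (``iterating the amalgamation one arranges the successive pictures so that \dots the colour \dots is pinned down on a part small enough that the pigeon-hole step \dots can be performed in that same colour'') asserts the conclusion instead of arguing for it. In particular, the claim that ``the colouring already present on earlier stages is frozen when later stages are added'' has no content in this setting: the adversary colours all of $E_H$ only after the construction is complete, so nothing is ever frozen; and invoking the partite construction method as a black box here would be circular, since this partite lemma is precisely the engine that drives the partite construction. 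The synchronisation of the colour of the $B^-$-part with the colour of the $|N_a|$ edges at $x_a$---which you yourself identify as the main technical content---is simply not carried out, and it is the whole lemma.

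The paper avoids the difficulty by synchronising all $X$-vertices simultaneously rather than one at a time. One first reduces to the universal bipartite graphs $B(m,t)$ with vertex classes $[m]^{(t)}$ and $[m]$ and edges $Aa$ for $a\in A$, and takes $H=B(m_\star,t_\star)$ with $t_\star=r(t-1)+1$ and $m_\star$ large. Given an $r$-colouring of $E(H)$, each $X$-vertex $A\in[m_\star]^{(t_\star)}$ determines the colour pattern on its $t_\star$ edges; this is an auxiliary colouring of $[m_\star]^{(t_\star)}$ with $r^{t_\star}$ colours, and Ramsey's theorem yields a large set $Z\subseteq[m_\star]$ on which this pattern is the same for every $A$. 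Only now does the pigeon-hole principle enter, applied once to the $t_\star$ positions rather than to candidate vertices: it selects $t$ positions carrying a common colour, and these positions yield a monochromatic induced partite copy of $B(m,t)$ using all $X$-vertices at once. If you wish to salvage an inductive scheme, you would need to strengthen the induction hypothesis so that the monochromatic copy of $B^-$ comes already equipped with a large family of candidate extensions whose colour patterns have been homogenised in advance---which is in effect what the single Ramsey step above accomplishes.
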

  
In practice one usually abbreviates the conclusion of this lemma to `there is 
a monochromatic {\it partite} copy of $B$'.\footnote[1]{In principle, there can 
also be monochromatic copies of $B$ not respecting the bipartite structure; 
but they are useless for the partite construction.} Postponing the proof of 
Lemma~\ref{lem:62} to a later moment, we proceed with our explanation 
how one proves Theorem~\ref{thm:61} by means of the partite construction method. 

Recall that we already have chosen picture zero and now we want to define a 
sequence of further pictures $\Pi_1, \dots, \Pi_N$. When for some $i\in [N]$ the 
picture~$\Pi_{i-1}$ has just been constructed, we apply Lemma~\ref{lem:62} to its 
constituent $B_i=\Pi_{i-1}^{e(i)}$, thus obtaining some bipartite graph $H_i$.
Now we extend all partite copies of $B_i$ in $H_i$ to its own copy of $\Pi_{i-1}$
and, as usual, while doing so we ensure that distinct standard copies of $\Pi_{i-1}$
generated in this manner are as disjoint as possible (see Figure~\ref{fig61}). 
In other words they are only allowed to intersect in the constituent~$\Pi_i^{e(i)}$ 
of the resulting picture $\Pi_i$. This completes our description 
of $\Pi_1, \dots, \Pi_N$.

\usetikzlibrary {arrows.meta}
\usetikzlibrary{decorations.pathreplacing}

\begin{figure}[ht]
	\centering	
	\def\h{2}
	\def\w{1}
	
			\begin{tikzpicture}[scale=1]
				\coordinate  (a3) at (-\w,\h);
				\coordinate (a4) at (\w,\h);
				\coordinate (b3) at (-\w,-\h);
				\coordinate (b4) at (\w,-\h);
					\coordinate  (a1) at (-3.9*\w,\h);
				\coordinate (a2) at (-1.9*\w,\h);
				\coordinate (b1) at (-3.9*\w,-\h);
				\coordinate (b2) at (-1.9*\w,-\h);
				\coordinate  (a5) at (3.9*\w,\h);
				\coordinate (a6) at (1.9*\w,\h);
				\coordinate (b5) at (3.9*\w,-\h);
				\coordinate (b6) at (1.9*\w,-\h);
				\coordinate (x3) at (-1.45*\w, .29*\h);
				\coordinate (x4) at (1.45*\w, .29*\h);
				\coordinate (y3) at (-1.45*\w,- .29*\h);
				\coordinate (y4) at (1.45*\w,- .29*\h);
				\coordinate (x1) at (-4.35*\w, .29*\h);
				\coordinate (x2) at (-1.45*\w, .29*\h);
				\coordinate (y1) at (-4.35*\w,- .29*\h);
				\coordinate (y2) at (-1.45*\w,- .29*\h);
					\coordinate (x6) at (4.35*\w, .29*\h);
				\coordinate (x5) at (1.45*\w, .29*\h);
				\coordinate (y6) at (4.35*\w,- .29*\h);
				\coordinate (y5) at (1.45*\w,- .29*\h);
				
		\fill [green!70!black] (-6*\w,0) ellipse (2pt and 17pt);
			\draw (-6*\w,\h)--(-6*\w,-\h);
			
			\draw [thick](-4.7*\w, .295*\h)--(4.7*\w, .295*\h);
			\draw [thick](-4.7*\w, -.295*\h)--(4.7*\w, -.295*\h);
			
			\draw [thick, red!80!black, decorate,
			decoration = {brace}] (4.9*\w, .295*\h) --  (4.9*\w, -.295*\h);
			
		\draw (a3)--(a4) [out=-65, in=65] to (b4)--(b3)[out=115, in=-115] to
		(a3);
			 \draw (a1)--(a2) [out=-65, in=65] to (b2)--(b1)[out=115, in=-115] to	(a1);
			  \draw (a6)--(a5) [out=-65, in=65] to (b5)--(b6)[out=115, in=-115] to	(a6);
			  
	\fill [red, opacity=.3](x3)--(x4) [out=-82, in=82] to (y4)--(y3)[out=100, in=-100] to (x3);
	\draw [thick,red!80!black](x3)--(x4) [out=-82, in=82] to (y4)--(y3)[out=98, in=-98] to (x3);
		\fill [red, opacity=.3](x1)--(x2) [out=-82, in=82] to (y2)--(y1)[out=100, in=-100] to (x1);
	\draw [thick,red!80!black](x1)--(x2) [out=-82, in=82] to (y2)--(y1)[out=98, in=-98] to (x1);
		\fill [red, opacity=.3](x5)--(x6) [out=-82, in=82] to (y6)--(y5)[out=100, in=-100] to (x5);
	\draw [thick,red!80!black](x5)--(x6) [out=-82, in=82] to (y6)--(y5)[out=100, in=-100] to (x5);

	\draw [thick, red!80!black, -Stealth] (-2.6*\w, -1.4*\h) -- (-3*\w, 0); 
	\draw [thick, red!80!black, -Stealth] (0, -1.4*\h) -- (0, 0); 
	\draw [thick, red!80!black, -Stealth] (2.6*\w, -1.4*\h) -- (3*\w, 0); 
		\draw [thick,  -Stealth] (-1.6*\w, 1.25*\h) -- (-2.8*\w, .7*\h); 
	\draw [thick,  -Stealth] (0, 1.25*\h) -- (0, .7*\h); 
	\draw [thick,  -Stealth] (1.6*\w, 1.25*\h) -- (2.8*\w, .7*\h);

\node [green!70!black] at (-6.5*\w,0) {$e(i)$};
\node at (-6*\w,-1.2*\h) {$G=K_n$};
			
\node [red!80!black] at (0,-1.6*\h) {partite copies of $B_i=\Pi_{i-1}^{e(i)}$ in $H_i$};	
\node at (0,1.4*\h) {standard copies of $\Pi_{i-1}$};			

\node [red!80!black] at (5.4*\w,0) {$H_i$};

		\end{tikzpicture}

				\caption{The construction of $\Pi_i$.}
				\label{fig61}

	\end{figure}
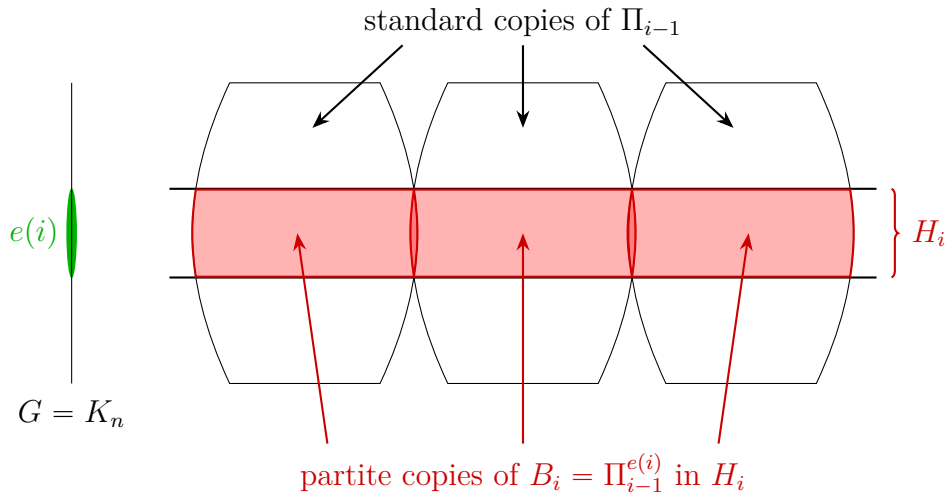 
Here is their most important property: Whenever $i\in [N]$ and 
${f\colon E(\Pi_i)\lra [r]}$ is a colouring, there is an induced 
copy~$\widetilde{\Pi}_0$ of picture zero such that the 
constituents $\widetilde{\Pi}_0^{e(1)}, \dots, \widetilde{\Pi}_0^{e(i)}$ are 
monochromatic. As usual, this can be shown by a straightforward induction on~$i$. 

Let us now check that the final picture $H=\Pi_N$ is as required 
by Theorem~\ref{thm:61}. Given any colouring $f\colon E(H)\lra [r]$ the result 
of the previous paragraph yields an induced copy $\widetilde{\Pi}_0$ of picture zero 
all of whose constituents are monochromatic. The colour pattern we see on these 
constituents projects to an auxiliary colouring $f_\star\colon E(G)\lra [r]$. 
By our sufficiently large choice of $G$ there is a (presumably non-induced) 
copy of $F$ in $G$ which is monochromatic with respect to $f_\star$. 
Now the corresponding copy of $F$ in $\widetilde{\Pi}_0$ is induced in $H$ and 
monochromatic with respect to $f$. The only step in the proof of Theorem~\ref{thm:61}
still missing is that we need to address the partite lemma. 

\begin{proof}[Proof of Lemma~\ref{lem:62}]
	For all integers $m\ge t\ge 1$ let $B(m, t)$ be the bipartite graph with 
	vertex classes $[m]^{(t)}$ and $[m]$ whose edges are all pairs $Aa$ with 
	$a\in A$, where $A\in [m]^{(t)}$ and $a\in [m]$. For every bipartite graph $B$
	there exist integers $m\ge t\ge 1$ such that~$B(m, t)$ contains a partite 
	copy of~$B$. Thus it suffices to prove the partite lemma for $B=B(m, t)$. 
	
	Given $m$, $r$, and $t$ one can show that for every 
	sufficiently large integer $m_\star$ and $t_\star=r(t-1)+1$ the bipartite 
	graph $H=B(m_\star, t_\star)$ is as required for $B=B(m, t)$ and $r$ colours. 
	The main idea here is that every 
	$r$-colouring of $E(H)$ induces an auxiliary $(r^{t_\star})$-colouring 
	of $[m_\star]^{(t_\star)}$ recording for every vertex $A\in [m_\star]^{(t_\star)}$
	the colour pattern we see on its neighbourhood. Ramsey's theorem yields arbitrarily 
	large subsets $Z\subseteq [m_\star]$ such that $Z^{(t_\star)}$ is monochromatic 
	with respect to this auxiliary colouring. The common auxiliary colour of the 
	vertices in $Z^{(t_\star)}$ can be viewed as a colouring $[t_\star]\lra [r]$, 
	which has a monochromatic $t$-subset $T\subseteq [t_\star]$ owing to the box 
	principle. Using $Z$ and $T$ one can now build the desired monochromatic 
	partite copy of $B$ in $H$.   	
\end{proof}

Full details on the material presented so far can be found in~\cite{NR81}.
After the first proofs of Theorem~\ref{thm:61} had been discovered, it 
was an open problem for a few years to extend the result to hypergraphs.
Eventually the following statement has been proved independently
by Abramson and Harrington~\cite{AH78}, and by Jarik and R\"{o}dl~\cite{NR77}.

\begin{thm}[Induced Ramsey theorem for hypergraphs]\label{thm:63}
	For every $k$-uniform hypergraph~$F$ and every number of colours $r$ there 
	exists a $k$-uniform hypergraph~$H$ such that $H\lra (F)^e_r$. Explicitly,
	this partition symbol means that for every $r$-colouring of $E(H)$ there 
	exists a monochromatic induced copy of $F$ in $H$.
\end{thm}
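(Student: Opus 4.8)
The plan is to mimic the partite-construction proof of Theorem~\ref{thm:61} almost verbatim, systematically replacing ``graph'' by ``$k$-uniform hypergraph'' and ``bipartite graph'' by ``$k$-partite $k$-uniform hypergraph''. First I would invoke the hypergraph version of Ramsey's theorem~\cite{Ramsey30} to fix an integer $n$ so large that every $r$-colouring of $E(K^{(k)}_n)$ yields a monochromatic copy of $K^{(k)}_{|V(F)|}$, and hence a monochromatic --- generally non-induced --- copy of $F$. With $G=K^{(k)}_n$ as the vertical hypergraph, picture zero $\Pi_0$ is again a disjoint union of copies of $F$, one for every copy of $F$ in $G$, laid out $n$-partitely together with a homomorphism $\psi\colon\Pi_0\lra G$; for any picture $\Pi$ equipped with $\psi\colon\Pi\lra G$ the constituents $\Pi^e=\psi^{-1}(e)$, $e\in E(G)$, partition $E(\Pi)$ into $k$-partite $k$-uniform hypergraphs, the $k$ relevant music lines being the preimages of the $k$ vertices of $e$.

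The iteration then runs exactly as in the proof of Theorem~\ref{thm:61}. Enumerate $E(G)=\{e(1),\dots,e(N)\}$ with $N=\binom{n}{k}$; having built $\Pi_{i-1}$, set $B_i=\Pi_{i-1}^{e(i)}$, apply the $k$-uniform partite lemma (below) to obtain a $k$-partite $k$-uniform hypergraph $H_i$, and extend every partite copy of $B_i$ inside $H_i$ to its own standard copy of $\Pi_{i-1}$, making these standard copies as disjoint as possible, so that they meet only in the constituent $\Pi_i^{e(i)}$ of the new picture $\Pi_i$. A routine induction on $i$ shows that every $r$-colouring of $E(\Pi_i)$ admits an induced copy $\widetilde{\Pi}_0$ of picture zero whose constituents over $e(1),\dots,e(i)$ are each monochromatic. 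Taking $H=\Pi_N$, an arbitrary $r$-colouring of $E(H)$ produces such a $\widetilde{\Pi}_0$ with all constituents monochromatic; pushing the colours of these constituents forward along $\psi$ gives an auxiliary $r$-colouring of $E(G)$, which by the choice of $n$ contains a monochromatic copy of $F$; the corresponding copy of $F$ in $\widetilde{\Pi}_0$ is induced in $H$ --- the usual disjointness bookkeeping, identical to the graph case --- and monochromatic for the original colouring, so $H\lra(F)^e_r$.

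The one genuinely new ingredient is the $k$-uniform \emph{partite lemma}, the exact analogue of Lemma~\ref{lem:62}: for every $k$-partite $k$-uniform hypergraph $B=(X_1,\dots,X_k,E_B)$ and every $r$ there is a $k$-partite $k$-uniform hypergraph $H=(Y_1,\dots,Y_k,E_H)$ such that every $r$-colouring of $E_H$ admits a monochromatic partite copy of $B$. This is where essentially all the work lies. My plan is to deduce it from the Hales--Jewett theorem (equivalently, from the Graham--Rothschild parameter-set theorem): by a substitution argument one reduces to the case where $B$ is a suitably ``generic'' $k$-partite $k$-uniform hypergraph built from a product/parameter structure, so that an $r$-colouring of its edges induces a colouring of that structure to which Hales--Jewett applies and returns a monochromatic combinatorial subspace large enough to host the required partite copy. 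An alternative --- the route of Abramson and Harrington~\cite{AH78} and of Jarik and R\"{o}dl~\cite{NR77} --- is to imitate the $B(m,t)$ construction from the proof of Lemma~\ref{lem:62}, replacing it by a $(k-1)$-fold nested analogue and threading an iterated application of the hypergraph Ramsey theorem through the $k$ parts. Either way, the main obstacle is not the partite-construction scaffolding --- which transfers mechanically, since no girth constraint is imposed here --- but this control of arbitrary $r$-colourings of a genuinely $k$-dimensional partite object; once the partite lemma is in hand, the remaining checks, in particular that the final monochromatic copy of $F$ is induced, are routine and identical to the graph case.
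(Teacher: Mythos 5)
Your plan coincides with the paper's: the partite construction over $G=K^{(k)}_n$ transfers verbatim from the graph case, and the only genuinely new ingredient is the $k$-partite partite lemma, which the paper (following Ne\v{s}et\v{r}il and R\"odl) likewise derives from the Hales--Jewett theorem. The paper's application of Hales--Jewett is even more direct than the reduction you sketch: for an arbitrary $k$-partite $B$ one takes $H=B^n$ with $V_i(H)=V_i(B)^n$ and edges indexed by the cube $E(B)^n$, where $n$ is a Hales--Jewett number for alphabet $E(B)$ and $r$ colours, and monochromatic combinatorial lines in $E(B)^n$ are precisely monochromatic induced partite copies of $B$ --- no preliminary substitution into a ``generic'' host is needed.
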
 

When one tries to adapt the above proof by partite construction to the hypergraph
setting, the only step that is not immediately clear is how one establishes the 
natural generalisation of the partite lemma. In this statement we view every 
$k$-partite, $k$-uniform hypergraph $B$ as being equipped with a distinguished vertex 
partition $V(B)=V_1(B)\dcup \dots\dcup V_k(B)$ such that $|V_i(B)\cap e|=1$ holds 
for all $i\in [k]$ and $e\in E(B)$. As in the case of bipartite graphs, {\it partite
copies} are required to respect this partite structure. Here is the partite lemma 
required for the proof of Theorem~\ref{thm:63}.

\begin{lemma}
	Given a $k$-partite, $k$-uniform hypergraph $B$ and a number of colours $r$
	there exists a $k$-partite, $k$-uniform hypergraph $H$ such that for every 
	$r$-colouring of $E(H)$ there exists a monochromatic, induced, partite copy 
	of $B$. 
\end{lemma}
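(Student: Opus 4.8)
The plan is to lift the proof of Lemma~\ref{lem:62} one level up in uniformity: first reduce the arbitrary $B$ to a single universal family of $k$-partite $k$-uniform hypergraphs, and then establish the arrow relation for that family by a Ramsey argument on parameter sets.

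\textbf{Reduction to a universal object.} For each integer $m$ let $U_k(m)$ be the $k$-partite $k$-uniform hypergraph whose first $k-1$ vertex classes are disjoint copies $[m]_1, \dots, [m]_{k-1}$ of $[m]$, whose last class consists of all subsets $S$ of $[m]^{k-1}$, and whose edges are all $k$-tuples $(a_1, \dots, a_{k-1}, S)$ with $(a_1, \dots, a_{k-1})\in S$. Every $k$-partite $k$-uniform hypergraph $B$ with classes $V_1, \dots, V_k$ embeds, for $m$ large enough, as an \emph{induced} partite subhypergraph of $U_k(m)$: choose non-surjective injections $V_i\hookrightarrow [m]_i$ for $i<k$, and send each $w\in V_k$ to the set consisting of all tuples $(a_1, \dots, a_{k-1})$ in the image grid for which the corresponding edge lies in $B$, together with one extra ``tag'' cell outside the grid that encodes $w$ (so that distinct $w$'s receive distinct sets); one checks that membership of a grid tuple in such a set reproduces exactly the incidences of $B$, and that there are no further edges inside the image. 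Since inducedness is transitive and preserved by isomorphism, an induced partite copy of $U_k(m)$ inside any hypergraph $H$ contains an induced partite copy of every such $B$. Hence it suffices to prove the lemma for $B=U_k(m)$; that is, given $m$ and $r$, to produce $m_\star$ such that every $r$-colouring of $E(U_k(m_\star))$ admits a monochromatic induced partite copy of $U_k(m)$.

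\textbf{The Ramsey step.} An $r$-colouring $c$ of $E(U_k(m_\star))$ is the same thing as a family $(c_S)_S$ of colourings, where $c_S$ colours $S\subseteq [m_\star]^{k-1}$ by $c_S(a_1,\dots,a_{k-1})=c(a_1,\dots,a_{k-1},S)$. What is needed is $m$-element sets $Z_1, \dots, Z_{k-1}\subseteq [m_\star]$, a single colour $c_0$, and for every ``pattern'' $S_0\subseteq Z_1\times\dots\times Z_{k-1}$ an extension $\widehat S$ of $S_0$ with $\widehat S\cap(Z_1\times\dots\times Z_{k-1})=S_0$ on which $c_{\widehat S}$ is constant equal to $c_0$; the $\widehat S$ then form the last class of the desired copy of $U_k(m)$, with $Z_1,\dots,Z_{k-1}$ as its first $k-1$ classes. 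When $k=2$ this is precisely the configuration isolated in the proof of Lemma~\ref{lem:62} — one applies Ramsey's theorem to the auxiliary colouring of $t_\star$-subsets and then the box principle — but there the neighbourhoods are linearly ordered sets of fixed size, which is what makes a single Ramsey application enough. For $k\ge 3$ I would instead feed the induced colouring of the relevant parameter sets of a sufficiently high-dimensional grid into a parameter-set Ramsey theorem, namely the Graham--Rothschild theorem (equivalently, an iteration of Hales--Jewett), obtaining a homogeneous combinatorial subspace, and then apply the box principle to extract the common colour $c_0$. Alternatively, to stay within the toolkit of this survey, one can prove the lemma by induction on $k$, the base case being Lemma~\ref{lem:62}: the inductive step is a miniature partite construction processing the vertices of the last class one by one, at each step gluing in ``standard copies'' delivered by the $(k-1)$-uniform partite lemma (applied with an inflated number of colours that records both the colour seen and which link is involved) and amalgamating them via Ramsey's theorem.

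The hard part will be the Ramsey step, and within it the demand that a \emph{single} colour $c_0$ serve simultaneously for all $2^{m^{k-1}}$ patterns $S_0$ while the emerging copy of $U_k(m)$ stays both partite and induced. This is exactly where the $k\ge 3$ case outgrows the bipartite one: controlling one pattern at a time is ordinary hypergraph Ramsey, but controlling them all at once with a common colour is the content of the parameter-set theorems — or, in the inductive route, the content of the bookkeeping that keeps the partite-construction amalgamations from introducing unwanted edges.
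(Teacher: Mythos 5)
Your reduction to the universal hypergraph $U_k(m)$ is sound as far as it goes, but it does not advance the proof: all of the content of the lemma is concentrated in the ``Ramsey step'', and that step is exactly what your proposal leaves open. You yourself identify the required configuration --- sets $Z_1,\dots,Z_{k-1}$, a single colour $c_0$, and for each of the $2^{m^{k-1}}$ patterns $S_0$ an extension $\widehat S$ with prescribed trace on the grid and with $c_{\widehat S}$ constant on $S_0$ --- but you neither exhibit a colouring of parameter sets to which the Graham--Rothschild theorem could be applied, nor explain how a homogeneous combinatorial subspace would deliver extensions $\widehat S$ satisfying the exact trace condition needed for inducedness. The difficulty is not cosmetic: the analogue of the auxiliary colouring from the proof of Lemma~\ref{lem:62} (recording the colour pattern on the neighbourhood of a vertex of the last class) no longer has a bounded palette, because the neighbourhoods of the vertices $S$ of $U_k(m_\star)$ have unbounded size and carry no canonical linear structure of fixed length $t_\star$. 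The alternative route by induction on $k$ is likewise only gestured at; ``processing the last vertex class one by one'' runs into the problem that edges through distinct apex vertices share their first $k-1$ coordinates, so the $(k-1)$-uniform partite lemma cannot be applied to the links independently, and the proposal does not say how the amalgamation avoids creating conflicts or unwanted edges.

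The argument of Ne\v{s}et\v{r}il and R\"{o}dl~\cite{NR82} that the survey has in mind is considerably more direct and bypasses the universal object entirely: one takes $H=B^n$ with $V_i(H)=V_i(B)^n$ and one edge of $H$ for each $n$-tuple in $E(B)^n$, where $n$ is a Hales--Jewett number for the alphabet $E(B)$ and $r$ colours. An $r$-colouring of $E(H)$ is then literally an $r$-colouring of the cube $E(B)^n$, a monochromatic combinatorial line exists by the Hales--Jewett theorem~\cite{HJ63}, and such a line is readily checked to span a monochromatic induced partite copy of $B$. Your parenthetical ``equivalently, an iteration of Hales--Jewett'' points in the right direction, but the theorem must be applied to the edge set of $B$ itself, not to parameter sets of a grid extracted after a universality reduction.
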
    
 
Jarik and R\"odl~\cite{NR82} found an extremely elegant proof of this lemma 
based on the Hales-Jewett theorem~\cite{HJ63} (see also Shelah~\cite{Sh329}). 
The idea is that we want to take $H=B^n$ for a certain Hales-Jewett number~$n$. 
More precisely, we first fix an integer~$n$ which is so large that for every 
$r$-colouring of the Hales-Jewett cube $E(B)^n$ there is a monochromatic 
combinatorial line. Now we set $V_i(H)=V_i(B)^n$ for every $i\in [k]$, and for 
every $n$-tuple in~$E(B)^n$ we put the expected edge into $H$. It can then be 
confirmed straightforwardly that the combinatorial lines in $E(B)^n$ yield 
induced partite copies of $B$ in $H$. This is explained, for instance, in each 
of the references~\cites{BNRR, NR82, NR87, girth}.
         
\subsection{Three theorems}\label{subsec:62}
The construction by means of which we proved Theorem~\ref{thm:61} has several 
desirable properties going beyond $H\lra (F)^e_r$, two of which we would like 
to point out. First, the graphs $F$ and $H$ have the same clique number. This 
can easily be seen by an argument called ``induction along the partite construction''.
Since picture zero is just a disjoint union of copies of $F$, we 
have $\omega(\Pi_0)=\omega(F)$. Moreover, the disjointness requirement in the 
formation of each new picture $\Pi_i$ yields $\omega(\Pi_i)=\omega(\Pi_{i-1})$ for 
every $i\in [N]$, so that altogether we have indeed 
$\omega(H)=\omega(\Pi_N)=\omega(\Pi_0)=\omega(F)$.   

The second property deals with the system of copies of $F$ constructed along the 
way. Given two graphs (or $k$-uniform hypergraphs) $F$ and $H$ we 
write $\binom HF$ for the set of all induced copies of $F$ in $H$. 
With every picture $\Pi_i$ encountered in the partite construction 
we want to associate a system of copies $\ccP_i\subseteq \binom{\Pi_i}F$. 
The system $\ccP_0$ is defined in such a way that $\Pi_0$ is its disjoint union, 
and for every (not necessarily induced) copy of $F$ in $G$ there is a copy in~$\ccP_0$
projecting to it. When for some $i\in [N]$ the system $\ccP_{i-1}$ has just 
been determined, we let $\ccP_i$ be the union of all copies of $\ccP_{i-1}$ 
corresponding to the standard copies of $\Pi_{i-1}$ in $\Pi_i$. 
Roughly speaking, the final system $\ccP_N\subseteq \binom{\Pi_N}F$ consists 
of all copies of $F$ which are `relevant' for the verification of $\Pi_N\lra (F)^e_r$, 
so that in an obvious sense we have $\ccP_N\lra (F)^e_r$. 
An easy induction along the partite construction reveals that any two distinct copies 
in~$\ccP_N$ are either disjoint, or they intersect in a single vertex, or they intersect in two vertices joined by an edge. Summarising the discussion so far, 
we have shown the following.

\begin{prop}\label{prop:65}
	For every graph $F$ and every number of colours $r$, there exists a graph $H$
	together with a system $\ccH\subseteq \binom HF$ such that 
		\begin{enumerate}[label=\rmlabel]
		\item\label{it:65i} $\ccH\lra (F)^e_r$;
		\item\label{it:65ii} $\omega(H)=\omega(F)$;
		\item\label{it:65iii} and any two distinct copies in $\ccH$ are either disjoint, 
			or they intersect in a vertex, or they intersect in an edge. \qed
	\end{enumerate}
\end{prop}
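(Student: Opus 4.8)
The plan is to run exactly the partite construction from the proof of Theorem~\ref{thm:61} that was just sketched, but to do the bookkeeping carefully enough to see that all three conclusions come out of the same induction. The point is that nothing new needs to be invented: the construction already produces the graph $H=\Pi_N$ and the system of copies $\ccH=\ccP_N$; what remains is to verify \ref{it:65i}, \ref{it:65ii}, \ref{it:65iii} by \emph{induction along the partite construction}, i.e.\ by showing that each property is preserved when one passes from $\Pi_{i-1}$ (with its system $\ccP_{i-1}$) to $\Pi_i$ (with $\ccP_i$). So the proof has the shape: set up $\Pi_0,\ccP_0$; state the three invariants; check them for $i=0$; check they propagate from $i-1$ to $i$; read off the conclusion at $i=N$.

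First I would recall the setup precisely. Fix $F$ and $r$, let $n$ be a Ramsey number large enough that $K_n\lra(K_{|V(F)|})^e_r$, put $G=K_n$ and $N=\binom n2$ with a fixed enumeration $E(G)=\{e(1),\dots,e(N)\}$. Picture zero $\Pi_0$ is the disjoint union, over all (not necessarily induced) copies of $F$ in $G$, of one vertex-disjoint copy of $F$, together with the projection $\psi_0\colon\Pi_0\lra G$; and $\ccP_0\subseteq\binom{\Pi_0}{F}$ is the set of these copies. Given $\Pi_{i-1}$ with constituent $B_i=\Pi_{i-1}^{e(i)}$, apply Lemma~\ref{lem:62} to $B_i$ to obtain a bipartite graph $H_i$, and form $\Pi_i$ by gluing onto each partite copy of $B_i$ in $H_i$ its own \emph{standard copy} of $\Pi_{i-1}$, made as disjoint as possible so that distinct standard copies meet only inside the constituent $\Pi_i^{e(i)}$; $\ccP_i$ is the union of the systems $\ccP_{i-1}$ carried along these standard copies.

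Then I would state the invariant to be maintained for each $0\le i\le N$: (a) for every $r$-colouring of $E(\Pi_i)$ there is an induced copy $\widetilde\Pi_0$ of $\Pi_0$ whose constituents $\widetilde\Pi_0^{e(1)},\dots,\widetilde\Pi_0^{e(i)}$ are monochromatic, and the members of $\ccP_i$ inside it form a copy of $\ccP_0$; (b) $\omega(\Pi_i)=\omega(F)$; (c) any two distinct members of $\ccP_i$ are disjoint, or meet in exactly one vertex, or meet in exactly two vertices spanning an edge. For $i=0$ all three are immediate: $\Pi_0$ is a disjoint union of copies of $F$, so (b) and (c) are clear, and (a) is vacuous. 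For the inductive step, (b) is the ``$\omega(\Pi_i)=\omega(\Pi_{i-1})$'' observation — a clique of $\Pi_i$ either lies inside a single standard copy (handled by the hypothesis) or has two vertices in distinct standard copies, forcing those vertices into the single constituent $\Pi_i^{e(i)}$, which is an independent set, a contradiction once the clique has $\ge 3$ vertices; for $\le 2$ vertices there is nothing to prove since $\omega(F)\ge 2$ unless $F$ is edgeless, which is trivial. Invariant (a) propagates exactly as in the proof of Theorem~\ref{thm:61}: a colouring of $E(\Pi_i)$ restricts to each standard copy of $\Pi_{i-1}$, and since every $r$-colouring of $E(H_i)$ yields a monochromatic partite copy of $B_i$ (Lemma~\ref{lem:62}), picking the standard copy sitting on such a partite copy makes its $e(i)$-constituent monochromatic while the earlier constituents are handled by the hypothesis. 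Invariant (c) is the main thing to check: take two distinct $P,P'\in\ccP_i$. If both lie in the same standard copy of $\Pi_{i-1}$, apply the hypothesis. Otherwise they lie in distinct standard copies, whose intersection is contained in the constituent $\Pi_i^{e(i)}$; hence $V(P)\cap V(P')\subseteq V(\Pi_i^{e(i)})$, and since $\Pi_i^{e(i)}$ is bipartite with the two sides sitting over the two endpoints of the edge $e(i)$ of $G$, and both $P$ and $P'$ project \emph{injectively} under $\psi$ onto subgraphs of $G$, their common vertices form a subset of $V(\Pi_i^{e(i)})$ on which the projection is still injective; two vertices of $\Pi_i^{e(i)}$ mapping to the two distinct endpoints of $e(i)$ span an edge (and anything larger would force a repeated image) — so the intersection is empty, a single vertex, or an edge. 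Finally, at $i=N$: $H=\Pi_N$ and $\ccH=\ccP_N$; invariant (a) together with the choice $G=K_n$ with $K_n\lra(K_{|V(F)|})^e_r$ gives, from any $f\colon E(H)\lra[r]$, a monochromatic member of $\ccH$ (project the monochromatic constituents to an auxiliary colouring of $E(G)$, find a monochromatic $K_{|V(F)|}$ there, lift the copy of $F$ it contains), which is \ref{it:65i}; invariant (b) is \ref{it:65ii}; invariant (c) is \ref{it:65iii}.

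\textbf{Main obstacle.} None of the steps is deep, and the whole argument is essentially a careful rerun of the Theorem~\ref{thm:61} construction, so the only place where real care is needed is invariant (c) in the inductive step — specifically the claim that two members of $\ccP_i$ lying in different standard copies can only share a vertex or an edge. This uses two facts in combination: that the standard copies are glued ``as disjointly as possible'' so their pairwise intersections live inside the single constituent $\Pi_i^{e(i)}$, and that each copy in $\ccP_i$ projects injectively to $G$, so it can contain at most two vertices lying over the endpoints of a single edge $e(i)$ of $G$, and such two vertices are automatically adjacent in the constituent. Once that is spelled out, the rest is routine, and \emph{en route} one should remark — as in the paragraph preceding the proposition — that the intersection pattern in (c) is exactly what one expects: a shared vertex comes from the ``music-line'' merging of standard copies, and a shared edge comes from the constituent being a bipartite graph over an edge of $G$.
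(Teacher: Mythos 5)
Your overall strategy---rerunning the partite construction of Theorem~\ref{thm:61} and propagating the three properties by ``induction along the partite construction''---is exactly the argument the paper intends, and your treatment of \ref{it:65i} is fine. Two remarks on \ref{it:65ii}: the constituent $\Pi_i^{e(i)}$ is emphatically \emph{not} an independent set (it is a bipartite graph with many edges); what the clique argument really uses is that its vertex set lies in the union of only two of the $n$ vertex classes of the picture, so a clique meets it in at most two vertices, and one must still argue that the remaining clique vertices, each lying in a unique standard copy, drag the whole clique into a single standard copy. This is repairable, but as written the step is wrong.

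The genuine gap is in \ref{it:65iii}. You assert that ``two vertices of $\Pi_i^{e(i)}$ mapping to the two distinct endpoints of $e(i)$ span an edge''. This is false: the constituent is not complete bipartite, and a copy $P\in\ccP_i$ can contain two \emph{non-adjacent} vertices $u,v$ lying over $x$ and over $y$---this happens precisely when the projection $\psi(P)$ is a copy of $F$ in $K_n$ containing both endpoints of $e(i)$ but not the edge $e(i)$ itself (already in picture zero such copies contribute two \emph{isolated} vertices to the constituent, cf.\ the description of constituents in \S\ref{subsec:63}). If two partite copies of $B_i$ in $H_i$ share such a non-adjacent pair, then the two corresponding members of $\ccP_i$ meet in two vertices not joined by an edge, and the invariant breaks. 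What is actually needed is that the \emph{system of partite copies of $B_i$ in $H_i$ supplied by the partite lemma} itself satisfies the intersection condition of \ref{it:65iii}: any two of them meet in at most one vertex or in an edge of $H_i$. Granted this, the two-vertex case closes because the copies in $\ccP_i$ are induced, so a shared edge of $H$ between two of their common vertices is an edge of both copies. This ``cleanliness'' of the partite lemma is precisely the non-trivial issue the paper discusses in \S\ref{subsec:63}; it follows neither from the statement of Lemma~\ref{lem:62} nor from your injectivity-of-projection argument, and it is the one ingredient your write-up must supply before the induction for \ref{it:65iii} is complete.
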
  

This raises several questions. In view of the topic of this survey, the perhaps 
most immediate one is whether in~\ref{it:65ii} the clique number can be replaced 
by girth (provided that $F$ is not a forest). Such an assertion would certainly 
require a different construction, because even for $F=C_5$ the graph $H$ we 
produced contains lots of four-cycles. For more than a decade, this was a common 
problem of all known proofs of the induced Ramsey theorem. Erd\H{o}s~\cite{Erd75} 
asked whether a graph $H$ with $H\lra (C_5)^e_2$ and $\gth(H)=5$ exists, and expected 
a negative answer. This was due to the fact that, at that time, he believed in some 
kind of meta-conjecture that edge-colourings of finite graphs display phenomena 
similar to vertex-colourings of uncountable graphs. Thus he took the fact that 
no $C_4$-free graphs of uncountable chromatic number exist as an indication that at 
least some graph of girth $5$ should have no Ramsey graph of girth $5$. 
Jarik and R\"odl~\cite{NR87} refuted this suspicion. Their argument is capable 
of controlling cycles of lengths $5$, $6$, and $7$ as well. However, it was 
always clear that excluding $8$-cycles is horrendously difficult. The problem 
remained a central goal of R\"{o}dl's research programme for almost forty years, 
until it was recently solved in~\cite{girth}. 

\begin{thm}[Girth Ramsey theorem, first version]\label{thm:66}
	For every graph $F$ which is not a forest and every number of colours $r$ 
	there exists a graph $H$ such that $H\lra (F)^e_r$ and $\gth(H)=\gth(F)$. \qed
\end{thm}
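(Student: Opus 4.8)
The plan is to prove the theorem by an iterated partite construction, refining the scheme of \S\ref{subsec:61} so as to keep the girth under control at every stage. Fix a graph $F$ that is not a forest, put $g=\gth(F)$, and fix a number of colours $r$. Observe first that once $H\lra(F)^e_r$ has been secured, the inequality $\gth(H)\le g$ is automatic, since a monochromatic copy of $F$ already contains a cycle of length $g$; so the whole task is to realise $H\lra(F)^e_r$ by a graph with no cycle shorter than $g$. There is no obvious reduction to the case $F=C_g$---a monochromatic $C_g$ would not yield a monochromatic $F$---so $F$ itself must drive the construction. I would run a partite construction over a vertical graph $G$ with $G\lra(F)^e_r$; by Ramsey's theorem~\cite{Ramsey30} the clique $G=K_n$ serves for $n$ sufficiently large. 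Picture zero $\Pi_0$ is taken to be the disjoint union of one copy of $F$ for every (not necessarily induced) copy of $F$ in $G$, so that $\gth(\Pi_0)=g$ and every constituent $\Pi_0^e$ is a matching. Fixing an enumeration $E(G)=\{e(1),\dots,e(N)\}$, the aim is to build pictures $\Pi_0,\Pi_1,\dots,\Pi_N$ with $\gth(\Pi_i)\ge g$ throughout and such that every $r$-colouring of $\Pi_i$ admits an induced copy $\widetilde{\Pi}_0$ of picture zero whose first $i$ constituents are monochromatic; the final picture $H=\Pi_N$ then satisfies $H\lra(F)^e_r$ by the projection argument already used for Theorem~\ref{thm:61}, while $\gth(H)=g$ by construction.

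The passage from $\Pi_{i-1}$ to $\Pi_i$ amalgamates many copies of $\Pi_{i-1}$ along the partite copies of its constituent $B_i=\Pi_{i-1}^{e(i)}$ inside a host bipartite graph $H_i$ enjoying the partite Ramsey property for $B_i$, the standard copies being drawn as disjointly as possible so that two of them meet only inside the new constituent $\Pi_i^{e(i)}=H_i$. A cycle of $\Pi_i$ not contained in a single standard copy must therefore be built from arcs, each lying inside one standard copy---hence isomorphic to a path of $\Pi_{i-1}$---with endpoints on $H_i$, possibly together with some edges and paths of $H_i$ itself; consequently the short cycles of $\Pi_i$ are either short cycles of $\Pi_{i-1}$ (excluded inductively) or cycles assembled out of short $\Pi_{i-1}$-paths running between the overlaps. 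Keeping the second kind long forces the host $H_i$ to have large girth, which is why the naive partite lemma of Lemma~\ref{lem:62}---whose graphs $B(m,t)$ are saturated with $4$-cycles---is useless here. What is needed instead is a \emph{girth-preserving} partite lemma, producing a host $H_i$ that is itself of large girth; this in turn is proved by a subsidiary partite construction in which the role played by cliques in Lemma~\ref{lem:62} is taken over by hypergraphs of large girth from Theorem~\ref{thm:EH}, whose uniformity is chosen huge relative to the current girth budget---Lov\'asz's device from \S\ref{subsec:32} of buying girth by passing to higher-order structures, and the reason one cannot ``just do graphs''. The monochromatic partite copy of $B_i$ is then extracted by the Hales-Jewett/box-principle argument of~\cite{NR82}.

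The genuine obstacle is that a cycle travelling between standard copies of $\Pi_{i-1}$ need not stay inside the host $H_i$: it may dip into a standard copy, traverse a short path there, and re-emerge, and such a hybrid cycle can be short even when $H_i$ has enormous girth. The prototype is a short cycle assembled from two standard copies whose shared part inside $B_i$ is slightly too rich---the length-$8$ analogue of the $6$-cycle obstruction that already complicated Jarik's construction in \S\ref{subsec:32} (see Figure~\ref{fig42B}). Excluding all such hybrid cycles simultaneously, over the whole sequence $\Pi_1,\dots,\Pi_N$ and through the interplay between girth and uniformity built into the construction of the hosts, cannot be achieved by merely propagating the invariant $\gth\ge g$; one must carry along the partite construction a far more rigid invariant, recording for each pair of vertices the combinatorial structure of the short paths joining them and guaranteeing that any two overlapping standard copies meet in a configuration too rigid to support a short hybrid cycle, and then verify that this refined invariant is preserved under amalgamation. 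That verification is the heart of the proof, and the step at which the bare girth invariant first provably fails and the refined one becomes unavoidable is exactly the control of $8$-cycles, long recognised as the decisive difficulty. This is the programme carried out in~\cite{girth}.
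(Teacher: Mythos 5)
There is a genuine gap, and to your credit you name it yourself: the final paragraph identifies ``the heart of the proof'' --- designing an invariant rigid enough to exclude short hybrid cycles arising from overlapping standard copies, and verifying that it survives amalgamation --- and then defers it wholesale to~\cite{girth}. Everything before that point is the standard partite-construction frame of \S\ref{subsec:61} plus a correct diagnosis of why the bare invariant $\gth\ge g$ cannot be propagated (the host of Lemma~\ref{lem:62} is full of $4$-cycles, and even a high-girth host admits short cycles that dip in and out of standard copies). But the diagnosis is where the theorem begins, not where it ends; a proposal whose decisive step is ``one must carry along a far more rigid invariant \dots and then verify that this refined invariant is preserved'' has not proved anything. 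For calibration, note that the survey itself states Theorem~\ref{thm:66} with the end-of-proof symbol, which by its announced convention means no proof is attempted there either; so there is no argument in the paper for you to have matched, only the discussion in \S\ref{subsec:63} of what the real proof involves.

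That discussion also shows that your sketch underestimates the machinery the deferred step requires. The actual argument does not merely track short paths between pairs of vertices: it must be carried out for linear hypergraphs rather than graphs (your appeal to Theorem~\ref{thm:EH} inside a ``girth-preserving partite lemma'' is closer in spirit to the $C_4$-free partite lemma, Lemma~\ref{lem:632}, which is obtained via the neighbourhood-hypergraph/extension process rather than by a subsidiary partite construction); the constituents of the pictures have to be analysed as trains, i.e.\ hypergraphs with nested equivalence relations on their edge sets; partite lemmata for trains are manufactured by iterating an abstract extension process on pretrains; and the whole scheme proves strengthened statements (the Roman $\Gth$ and German $\GTH$ formulations) by an induction resembling a transfinite induction up to $\omega^\omega$. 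None of this is recoverable from the invariant you gesture at, and the claim that such an invariant ``cannot be achieved by merely propagating $\gth\ge g$'' is asserted rather than demonstrated. In short: your proposal is a reasonable reading of the problem and of the known obstructions (including the $8$-cycle difficulty), but as a proof it is empty at exactly the point where the theorem is hard.
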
 

Let us next point to another question suggested by Proposition~\ref{prop:65}.
Its clause~\ref{it:65iii} gives complete control over the possible intersections 
of two copies in~$\ccH$. In the nontrivial case $e(F), r\ge 2$ it 
certainly needs to happen from time to time that two copies in~$\ccH$ share an 
edge---otherwise we could colour the copies in~$\ccH$ one by one without making 
any of them monochromatic. In the spirit of Theorem~\ref{thm:1535} it would be even 
more satisfactory to control the possible intersection patterns of more than 
two copies. The best one could hope for is that locally the Ramsey system of 
copies has a forest-like structure in the following sense. 

\begin{dfn}\label{dfn:67}
	Given a graph $F$ we call a set $\ccN$ of graphs isomorphic 
	to $F$ a {\it forest of copies of $F$} if there exists 
	an enumeration 
	$\ccN=\bigl\{F_1, \ldots, F_{|\ccN|}\bigr\}$
	such that for every $j\in [2, |\ccN|]$ the 
	set $z_j=\bigl(\bigcup_{i<j}V(F_i)\bigr)\cap V(F_j)$ satisfies  
		\begin{enumerate}[label=\rmlabel]
		\item\label{it:67i} either $|z_j|\le 1$
		\item\label{it:67ii} or $z_j\in \bigl(\bigcup_{i<j}E(F_i)\bigr)\cap E(F_j)$.
	\end{enumerate}
	We denote the union of a forest of copies $\ccN$ 
	by $\bigcup\ccN$; explicitly, this is the graph
	with vertex set $\bigcup_{F_\star\in \ccN}V(F_\star)$ and edge 
	set $\bigcup_{F_\star\in \ccN}E(F_\star)$. 
	A graph $G$ is said to be a {\it partial $F$-forest} if it is an induced 
	subgraph of $\bigcup\ccN$ for some forest $\ccN$ of copies of~$F$.
\end{dfn}

The following result from~\cite{girth} analyses the local structure 
of Ramsey graphs completely. 

\begin{thm}[Girth Ramsey theorem, second version]\label{thm:19}
	For every graph $F$ and all $r, n\in \NN$ there exists a graph~$H$ 
	with $H\lra (F)^e_r$ such that every set $X\subseteq V(H)$
	whose size it at most~$n$ induces a partial $F$-forest in $H$. \qed
\end{thm}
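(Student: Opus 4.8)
The case where $F$ is a forest needs a separate, comparatively painless argument --- there being no short cycles to avoid --- so assume from now on that $F$ contains a cycle and set $g=\gth(F)$. As in the proof of Theorem~\ref{thm:61}, the engine is the partite construction method of \S\ref{subsec:61}: fix, by Ramsey's theorem, an integer $N$ so large that $K_N\lra(K_{|V(F)|})^e_r$, put $G=K_N$ with its $\binom{N}{2}$ edges enumerated, and run a partite construction over $G$ producing pictures $\Pi_0,\Pi_1,\dots,\Pi_{\binom{N}{2}}$ together with systems of copies $\ccP_i\subseteq\binom{\Pi_i}{F}$ exactly as in the proof of Proposition~\ref{prop:65} --- picture zero a disjoint union of one copy of $F$ per (not necessarily induced) copy of $F$ in $G$, with projection $\psi\colon\Pi_0\lra G$, and $\Pi_i$ obtained from $\Pi_{i-1}$ by processing its $i$-th constituent. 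This will give $\ccH:=\ccP_{\binom{N}{2}}\lra(F)^e_r$, hence $H:=\Pi_{\binom{N}{2}}$ satisfies $H\lra(F)^e_r$. Throughout, two conventions are imposed: each auxiliary graph to which a partite lemma is applied is replaced by the union of its partite copies of the constituent being processed, so that every vertex and edge of every picture --- in particular of $H$ --- lies on a copy in the current system; and the parameter $m:=\binom{n}{2}+n$ is fixed in advance.

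\textbf{What must be added.} With only the bare partite lemma (Lemma~\ref{lem:62}) the construction delivers no more than Proposition~\ref{prop:65}: two copies in the system may share a path of length $2$, three may form a ``theta of copies'', and, most seriously, short closed walks may be created across many standard copies during the amalgamation --- any of which destroys the partial-$F$-forest property (and pushes the girth below $g$). The remedy is to process, in place of a bare bipartite constituent, an \emph{augmented} partite structure: the constituent together with a catalogue of every ``dangerous'' configuration of size at most $m$ --- every closed walk of length less than $g$, and every family of at most $m$ copies of $F$ that is not a forest of copies --- that could be produced once the constituent is blown up into a copy of the previous picture. One then needs an \emph{augmented partite lemma}: for every augmented structure $B$ and every $r$ there is an augmented structure $H$ such that every $r$-colouring of the designated edges yields a monochromatic induced partite copy of $B$, \emph{and} $H$ harbours no dangerous configuration beyond those already catalogued in $B$. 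Proving this augmented partite lemma is where essentially all of the labour in \cite{girth} lies; it is carried out by an induction that steadily raises the ``order'' of the structures under consideration, the low-girth phenomena being controlled by passing to auxiliary higher-uniformity hypergraphs --- the same device behind Lov\'asz's construction in \S\ref{subsec:32} and the partite proof of Theorem~\ref{thm:EH}.

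\textbf{Assembling the theorem.} Granting the augmented partite lemma, an ``induction along the partite construction'' shows that each $\ccP_i$ is \emph{locally a forest of copies up to size $m$}: a dangerous configuration among at most $m$ copies of $\ccP_i$ inside $\Pi_i$ would, by the disjointness of distinct standard copies of $\Pi_{i-1}$, either lie entirely within one standard copy --- excluded by the hypothesis for $\ccP_{i-1}$ --- or project, through the blow-up of the $i$-th constituent, to a configuration already catalogued in the augmented structure processed at step $i$ --- excluded by the lemma. Consequently, in the final graph $H$, every subfamily $\ccN\subseteq\ccH$ with $|\ccN|\le m$ is a forest of copies of $F$. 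Now let $X\subseteq V(H)$ with $|X|\le n$ be arbitrary. Choosing for each edge of $H[X]$ a copy of $\ccH$ containing it, and for each vertex of $X$ a copy of $\ccH$ through it, we obtain a subfamily $\ccN\subseteq\ccH$ with $|\ccN|\le\binom{n}{2}+n=m$ such that $X\subseteq V(\bigcup\ccN)$ and every edge of $H[X]$ lies in $\bigcup\ccN$. Hence $H[X]$ coincides with the induced subgraph $(\bigcup\ccN)[X]$ of $\bigcup\ccN$; since $\ccN$ is a forest of copies of $F$, Definition~\ref{dfn:67} shows that $H[X]$ is a partial $F$-forest. This establishes Theorem~\ref{thm:19}.

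\textbf{Main obstacle.} All the difficulty is concentrated in the augmented partite lemma, and within it in ruling out cycles shorter than $g$ born across many standard copies during the amalgamation --- most notoriously the exclusion of $8$-cycles, which had defeated every earlier attempt (those of \cite{NR87} and their successors reaching only girth $7$). Controlling the growth of the collection of dangerous configurations through the iterated amalgamation is exactly what compels the passage to higher-order auxiliary structures; we refer to \cite{girth} for the details.
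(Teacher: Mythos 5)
The survey offers no proof of Theorem~\ref{thm:19} --- the terminal $\Box$ signals that it is quoted from~\cite{girth} without argument --- so there is no in-text proof to compare yours against; judged on its own terms, your outline has a genuine gap and, separately, one demonstrably false step. The gap is that the entire content of the theorem is packed into the ``augmented partite lemma'', which you neither formulate precisely nor prove: a partite lemma that reproduces a constituent monochromatically \emph{and} creates no new short cycles or bad configurations across the amalgamation is exactly the hard part of~\cite{girth}, and postulating it is not materially different from postulating the theorem. The concretely false step is the invariant you propose to maintain, namely that \emph{every} subfamily of $\ccP_i$ of size at most $m$ is a forest of copies. The discussion surrounding Figure~\ref{fig62} and Theorem~\ref{thm:1522} shows this is unattainable: subfamilies of forests of copies need not themselves be forests, copies in any nontrivial Ramsey system must share edges, and the most one can ask --- with the bound $|\ccX|\le|\ccN|-2$ being best possible --- is that every small $\ccN\subseteq\ccH$ \emph{extends} to a forest of copies $\ccN\cup\ccX$ inside $\ccH$. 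Your catalogue of ``dangerous configurations'' therefore includes configurations that cannot all be excluded, and the claimed induction along the partite construction cannot preserve your invariant.

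Your final reduction does survive once the invariant is corrected: assuming the extendability property of Theorem~\ref{thm:1522} (with $n$ there replaced by $m=\binom n2+n$), one covers $X$ and $E(H[X])$ by at most $m$ copies $\ccN\subseteq\ccH$, extends to a forest $\ccN\cup\ccX$, and observes that $H[X]=\bigl(\bigcup(\ccN\cup\ccX)\bigr)[X]$ is an induced subgraph of the union of a forest of copies, hence a partial $F$-forest in the sense of Definition~\ref{dfn:67}. But extendability-to-a-forest, unlike your absolute condition, is a property of a subfamily \emph{relative to the ambient system}, and propagating it through the amalgamation steps is precisely what forces the apparatus of trains, the extension process, and the long induction of~\cite{girth}; it cannot be dispatched by declaring a list of forbidden patterns and invoking an unproved lemma that excludes them.
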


The proof of Theorem~\ref{thm:19} constructs $H$ together with a distinguished 
system of copies $\ccH\subseteq\binom HF$, which satisfies, in particular, 
the partition relation $\ccH\lra (F)^e_r$. A further interesting claim can be made 
about this system $\ccH$, which seems to be stronger than the conclusion 
that $H$ is locally a partial $F$-forest. Namely, $\ccH$ itself has a comparable 
property. But before making this precise we should emphasise a bizarre difference 
between ordinary forests and $F$-forests. 
Everybody knows that the former are closed under taking subgraphs. 
Subsets of $F$-forest, on the other hand, can fail to be $F$-forests themselves (see 
Figure~\ref{fig62}).

\begin{figure}[ht]
	\centering

	\begin{tikzpicture}[scale=.5]
	
	\coordinate (x1) at (-3, 0);
	\coordinate (x0) at (3,0);
	\coordinate (x2) at (0,5);

\fill [blue!20!white, rounded corners = 8pt, opacity=.5] (.4,5.1) to[out=140,in=60] (-4,4) to[out=-110, in=150] (-2.9,-.4) --cycle;

\draw [thick, rounded corners = 8pt] (.4,5.1) to[out=140,in=60] (-4,4) to[out=-110, in=150] (-2.9,-.4) --cycle;

\fill [blue!20!white, rounded corners = 8pt, opacity=.5] (0,5.4) -- (-3.36,-.2) -- (3.36,-.2)--cycle;

\draw [thick, rounded corners = 8pt] (0,5.4) -- (-3.36,-.2) -- (3.36,-.2)--cycle;

\fill [blue!20!white, rounded corners = 8pt, opacity=.5] (-.4,5.1) to[out=40,in=120] (4,4) to[out=-70, in=30] (2.9,-.4) --cycle;

\draw [thick, rounded corners = 8pt] (-.4,5.1) to[out=40,in=120] (4,4) to[out=-70, in=30] (2.9,-.4) --cycle;

\fill [blue!20!white, rounded corners = 8pt, opacity=.5] (-3.3,.3) to[out=-90,in=175] (0,-3) to[out=5, in=-90] (3.3,.3) --cycle;

\draw [thick, rounded corners = 8pt] (-3.3,.3) to[out=-90,in=175] (0,-3) to[out=5, in=-90] (3.3,.3) --cycle;

		\draw [green!70!black, line width=.5mm] (x0) -- (x1) -- (x2) -- (x0);
	
	\foreach \i in {0,1,2}{
		\fill (x\i) circle (3pt);}
	
	\node at (0,2) {$F$};
	\node at (-2.8,3.3) {$F_0$};
	\node at (2.8,3.3) {$F_1$};
	\node at (0,-1.5) {$F_2$};
	
	\node at (-.05,5.95) {$x_2$};
	\node at (-3.7,-.5) {$x_1$};
	\node at (3.8,-.5) {$x_0$};
	
	\end{tikzpicture}

	\caption{The subforest $\{F_0, F_1, F_2\}$ fails to be a forest.}
	\label{fig62} 
\end{figure}
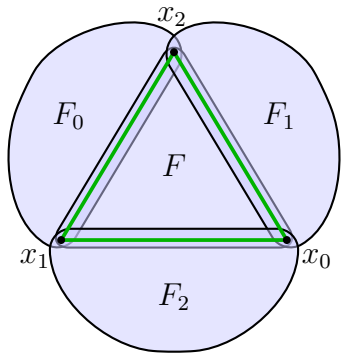 
 
In the example we have chosen $F$ is any graph containing a triangle~$x_0x_1x_2$. 
For every index $i\in\ZZ/3\ZZ$ the copy $F_i$ of $F$ has the edge~$x_{i+1}x_{i+2}$ 
but nothing else in common with $F$. Except for these intersections the copies 
in $\ccN=\{F, F_0, F_1, F_2\}$ are mutually disjoint. This enumeration exemplifies 
that~$\ccN$ is a forest of copies. However its subset $\ccN^-=\ccN\sm\{F\}$ fails 
to be such a forest. For instance, for the enumeration $\ccN^-=\{F_0, F_1, F_2\}$ the 
set $z_2=\bigl(V(F_0)\cup V(F_1)\bigr)\cap V(F_2)=\{x_0, x_1\}$ is certainly 
not in case~\ref{it:67i} of Definition~\ref{dfn:67} and, as it 
fails to be an edge of $F_0$ or $F_1$, it does not satisfy~\ref{it:67ii} either.
By symmetry a similar problem arises when one enumerates~$\ccN^-$ in any other 
way. 

What this shows is that given a graph $F$ we cannot ask for Ramsey systems $\ccH$
such that all `small' subsets of $\ccH$ are forests 
of copies. We can still demand, however, that every `small' 
subset $\ccN\subseteq \ccH$ is contained in a forest of copies that is not much 
larger than $\ccN$. The following result from~\cite{girth} makes this precise.   

\begin{thm}[Girth Ramsey theorem, third version] \label{thm:1522}
	Given a graph $F$ and $r, n\in \NN$ there exists a graph~$H$
	together with a system of copies $\ccH\subseteq\binom{H}{F}$ 
	satisfying not only $\ccH\lra (F)^e_r$ but also the following statement:
	For every $\ccN\subseteq \ccH$ with $|\ccN|\in [2, n]$ there exists 
	a set $\ccX\subseteq \ccH$ such that $|\ccX|\le |\ccN|-2$ and $\ccN\cup\ccX$
	is a forest of copies. \qed
\end{thm}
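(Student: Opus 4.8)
The plan is to prove the third version directly and to read the first and second versions off from it. Following the strategy already illustrated in \S\ref{subsec:61}, I would run a partite construction over a ``vertical'' graph $G$ chosen so that $G\lra(F)^e_r$ holds in the non-induced sense; by Ramsey's theorem~\cite{Ramsey30} we may take $G=K_N$ for a sufficiently large $N$, and work with $N$-partite pictures $\Pi_i$ equipped with projections $\psi_i\colon\Pi_i\lra G$, together with a distinguished system of copies $\ccP_i\subseteq\binom{\Pi_i}{F}$, where $\Pi_0$ is a disjoint union of copies of $F$, one for each copy of $F$ in $G$, and $\ccP_0$ is the obvious system. Fixing an enumeration $E(G)=\{e(1),\dots,e(N')\}$ with $N'=\binom N2$, I would build $\Pi_1,\dots,\Pi_{N'}$ by processing one constituent at a time, set $\ccH=\ccP_{N'}$ and $H=\Pi_{N'}$, and maintain two invariants along the way: (i)~for every $r$-colouring of $E(\Pi_i)$ there is an induced copy of $\Pi_0$ whose first $i$ constituents are monochromatic (the usual Ramsey invariant, which yields $\ccH\lra(F)^e_r$ at the end), and (ii)~the \emph{bounded-slack forest} property, namely that every $\ccN\subseteq\ccP_i$ with $|\ccN|\in[2,n]$ is contained in a forest of copies $\ccN\cup\ccX$ with $\ccX\subseteq\ccP_i$ and $|\ccX|\le|\ccN|-2$. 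The first and second versions then follow by routine adjustments (e.g.\ for the second version, cover a small $X\subseteq V(H)$ by a small $\ccN\subseteq\ccH$, complete it to a forest of copies, and observe that $H[X]$ is an induced subgraph of $\bigcup(\ccN\cup\ccX)$).

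The heart of the matter is a strengthened partite lemma. In the step from $\Pi_{i-1}$ to $\Pi_i$ one takes the constituent $B_i=\Pi_{i-1}^{e(i)}$ and an amalgamation graph $H_i$ into which partite copies of $B_i$ are embedded, each extended to its own ``standard'' copy of $\Pi_{i-1}$. For invariant~(ii) to survive, $H_i$ cannot merely satisfy $H_i\lra(B_i)^e_r$ with monochromatic partite copies; it must also be arranged so that the family of \emph{all} partite copies of $B_i$ used in the extension is itself locally forest-like, so that when the copies of $\Pi_{i-1}$ are glued along them, the only overlaps created among members of $\ccP_i$ are single vertices, single edges, or overlaps that can be completed into a forest by adjoining boundedly many further members of $\ccP_i$. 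I would obtain such an $H_i$ by iterating the construction $B(m,t)$ from the proof of Lemma~\ref{lem:62}, but with a refined auxiliary colouring on $[m_\star]^{(t_\star)}$ that records not only the colour pattern on a vertex's neighbourhood but also enough additional data to pin down, via Ramsey's theorem, the intersection pattern of the selected partite copies; equivalently one may run a small nested partite construction, or invoke a Graham--Rothschild/Hales--Jewett argument, to force the copies of $B_i$ into the desired tree-like position.

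With such an $H_i$ in hand, the verification of (ii) is an ``induction along the partite construction''. Given $\ccN\subseteq\ccP_i$ with $|\ccN|\le n$, each member of $\ccN$ lies in a unique standard copy of $\Pi_{i-1}$ inside $\Pi_i$ (for $|V(F)|\ge3$, since a copy of $F$ projecting to $K_N$ spans at least two vertices of $G$); so partition $\ccN$ accordingly. Within a single standard copy the induction hypothesis supplies a completing set of size at most that part's cardinality minus $2$ (parts of size $\le1$ being trivial); across standard copies, the tree-like position of the partite copies of $B_i$ lets one peel leaves off an auxiliary ``intersection graph'' on the standard copies touched by $\ccN$ and account for the bridging members so that the total number of added copies stays below $|\ccN|-2$. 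This bookkeeping is genuinely delicate precisely because forests of copies are \emph{not} closed under taking subsets (Figure~\ref{fig62}): one cannot restrict a known forest enumeration, so the slack $\ccX$ and a careful leaf-peeling argument (essentially an Euler-characteristic count of vertices, edges and ``anchor'' pairs) are both indispensable. Finally, the girth claim of the first version drops out: a forest of copies of a non-forest $F$ has girth exactly $\gth(F)$, since gluing along a vertex creates no new cycle, while a cycle crossing a shared edge $uv$ has length at least twice the shortest $u$--$v$ path in $F$ avoiding that edge, hence at least $\gth(F)$; so a partial $F$-forest, and thus $H$, has girth $\gth(F)$.

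The main obstacle, where essentially all the difficulty of~\cite{girth} is concentrated, is to carry out the above while \emph{also} keeping $\gth(\Pi_i)$ from ever dropping below $\gth(F)$. Short cycles, and $8$-cycles in particular, are notoriously hard to exclude in partite constructions, and I would expect to need an outer induction on $\gth(F)$ in which the auxiliary ``clique-like'' objects used both vertically and inside the partite lemma (the $B(m,t)$'s, the $K^{(n_i)}$'s of \S\ref{subsec:33}) are replaced by auxiliary \emph{hypergraphs} of large girth and large uniformity, in the spirit of Lov\'asz's construction, so that girth increments can be purchased at the price of higher-order structure. Proving the strengthened partite lemma itself with simultaneous girth and forest-of-copies control is, I expect, the real bottleneck; everything else is careful bookkeeping organised around it.
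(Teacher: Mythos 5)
The survey does not prove this theorem: the terminal $\Box$ invokes the paper's stated convention that the result is ``so deep that we made no effort to describe its proof'', and \S\ref{subsec:63} only sketches the ingredients of the argument in~\cite{girth} (trains, the extension process, Roman $\Gth$ and German $\GTH$, an induction scheme resembling transfinite induction up to $\omega^\omega$). Your outline is consistent with that description at the level of architecture --- a partite construction carrying a distinguished system $\ccP_i$, induction along the construction, the reduction of the second and first versions to the third, and the correct observation that the union of a forest of copies of a non-forest $F$ has girth $\gth(F)$ --- but it is not a proof, because the entire content of the theorem has been pushed into a ``strengthened partite lemma'' that you do not establish and for which the mechanisms you propose demonstrably do not suffice.

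Concretely: the amalgamation step glues standard copies of $\Pi_{i-1}$ along a family of partite copies of the constituent $B_i$ inside $H_i$, and for your invariant (ii) and for girth control to survive, that family must itself be in forest-like position. Neither of your two suggested routes produces this. In $B(m_\star,t_\star)$ the partite copies of $B(m,t)$ arising from overlapping choices of $Z$ and $T$ intersect in large, decidedly non-forest-like configurations, and enriching the auxiliary colouring on $[m_\star]^{(t_\star)}$ cannot help: Ramsey's theorem only homogenises colours, it has no influence on which copies exist in $H_i$ or how they meet. The Hales--Jewett powers $B^n$ suffer from the same problem --- this is precisely why~\cite{girth} must first manufacture a \emph{clean} partite lemma by running a further partite construction as a cleaning device, and why already the preservation of mere linearity in Theorem~\ref{thm:631} required a delicate intersection analysis in~\cite{NR87}. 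Producing a Ramsey family of partite copies of a train in forest position, while keeping the girth of the ambient object at $\gth(F)$, is a statement of essentially the same depth as Theorem~\ref{thm:1522} itself; resolving this apparent circularity is what forces the extension process and the $\omega^\omega$-like induction. Your closing paragraph concedes that this is ``the real bottleneck'', which is exactly right --- but it means what you have written is a plan for a proof rather than a proof. (The leaf-peeling bookkeeping for $|\ccX|\le|\ccN|-2$ across standard copies is likewise asserted rather than verified, and since that bound is tight, the count leaves no room for slack.)
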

     
It deserves to be pointed out that the upper bound $|\ccX|\le |\ccN|-2$ 
is best possible. Roughly this is because it requires $|\ccN|-2$ triangles
to triangulate an $|\ccN|$-gon. If $F=K_3$ and there is some $C_{|\ccN|}$ in $H$
such that every copy in $\ccN$ contains a unique edge of this cycle, then the 
copies in $\ccX$ need to triangulate the cycle (see Figure~\ref{fig63}). 

\begin{figure}[ht]
	\centering

		\begin{subfigure}[b]{0.3\textwidth}
			\centering
	\begin{tikzpicture}[scale=.6]
	
	\def\r{1.8cm};
	\coordinate (x0) at (142:\r);
	\coordinate (x1) at (214: \r);
	\coordinate (x2) at (286: \r);
	\coordinate (x3) at (358: \r);
	\coordinate (x4) at (70: \r);
	\coordinate (x5) at (106:3.5cm);
	\coordinate (x6) at (178:3.5cm);
	\coordinate (x7) at (250:3.5cm);
	\coordinate (x8) at (322:3.5cm);
	\coordinate (x9) at (34:3.5cm);

	\draw [thick] (x0)--(x1) --(x2) --(x3)--(x4) -- cycle;
	\draw [thick] (x4) -- (x5) -- (x0) -- (x6) -- (x1) -- (x7) -- (x2) -- (x8) -- (x3) -- (x9) -- cycle;

	\end{tikzpicture}

	\caption{A cycle of triangles}
	\label{fig63a} 

	\end{subfigure}
	\hfill    
	\begin{subfigure}[b]{0.5\textwidth}
		\centering
		
			\begin{tikzpicture}[scale=.6]
			
			\def\r{1.8cm};
			\coordinate (x0) at (142:\r);
			\coordinate (x1) at (214:\r);
			\coordinate (x2) at (286: \r);
			\coordinate (x3) at (358: \r);
			\coordinate (x4) at (70: \r);
			\coordinate (x5) at (106:3.5cm);
			\coordinate (x6) at (178:3.5cm);
			\coordinate (x7) at (250:3.5cm);
			\coordinate (x8) at (322:3.5cm);
			\coordinate (x9) at (34:3.5cm);

			\draw [thick] (x0)--(x1) --(x2) --(x3)--(x4) -- cycle;
			\draw [thick] (x4) -- (x5) -- (x0) -- (x6) -- (x1) -- (x7) -- (x2) -- (x8) -- (x3) -- (x9) -- cycle;
			\draw [thick] (x3) -- (x0) -- (x2);

			\end{tikzpicture}
				\caption{Adding further triangles creates a forest}
				\label{fig63b} 
				\end{subfigure}    
		\caption{The necessity of $\ccX$ in Theorem~\ref{thm:1522}}	\label{fig63}
		\vspace{-1em}
	\end{figure} 
 
\subsection{Ideas.} \label{subsec:63}
There is not much we can say about the proof of any version of the girth 
Ramsey theorem in a few pages. A general theme is that it is difficult to 
isolate special cases, which are simpler than the general result. It is rather the 
other way around: One has to develop several further concepts, such as trains, 
Roman $\Gth$, and German $\GTH$, which allow the formulation of even more general 
Ramsey theoretic statements (see, e.g.,~\cite{girth}*{\S4.4 and~\S10.3}), which 
can then be proved by an induction scheme resembling a transfinite induction 
up to $\omega^\omega$. 

As in~\S\ref{subsec:33} the proof cannot be understood if one just wants to focus 
on the graph case. In fact, each of our three versions of the girth Ramsey theorem
holds for linear hypergraphs instead of graphs as well, and the proof requires this 
level of generality for roughly the same reason we have already seen. 

Jarik and R\"odl~\cite{NR87} discovered that the proof of Theorem~\ref{thm:63}
we have outlined in~\S\ref{subsec:61} can be used for maintaining linearity. 

\begin{thm}\label{thm:631}
	Given a linear, $k$-uniform hypergraph $F$ and a number of colours $r$ 
	there exists a linear, $k$-uniform hypergraph $H$ such that $H\lra (F)^e_r$.
\end{thm}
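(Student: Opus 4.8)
The plan is to revisit the partite-construction proof of the induced Ramsey theorem for hypergraphs (Theorem~\ref{thm:63}) sketched in~\S\ref{subsec:61} and to show that, when~$F$ is linear, it can be arranged to output a linear hypergraph~$H$. Two points have to be handled with care. First, the vertical hypergraph must be taken to be \emph{linear}: the clique~$K_n^{(k)}$ used for Theorem~\ref{thm:63} will not do, because two edges of~$K_n^{(k)}$ may share as many as $k-1$ vertices, and this already destroys linearity in the very first gluing step. Instead one starts from a linear $k$-uniform hypergraph~$G$ with $G\lra(F)^e_r$; since only a \emph{non-induced} monochromatic copy of~$F$ in~$G$ is needed, the existence of such a~$G$ is the non-induced Ramsey theorem for linear hypergraphs (partial Steiner systems), which we take as a known prerequisite. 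Second, for the partite lemma one uses precisely its Hales--Jewett proof from~\S\ref{subsec:61}, which outputs $H=B^n$ for a suitable Hales--Jewett dimension~$n$, because this construction visibly preserves linearity.

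With these choices fixed, the construction runs exactly as in~\S\ref{subsec:61}: picture zero~$\Pi_0$ is the disjoint union of one copy of~$F$ for every copy of~$F$ in~$G$; after fixing an enumeration $E(G)=\{e(1),\dots,e(N)\}$, the picture~$\Pi_i$ is built from~$\Pi_{i-1}$ by applying the partite lemma to the constituent $B_i=\Pi_{i-1}^{e(i)}$, obtaining a $k$-partite $k$-uniform hypergraph~$H_i$, and then gluing one standard copy of~$\Pi_{i-1}$ onto each partite copy of~$B_i$ inside~$H_i$ so that distinct standard copies meet only inside $H_i=\Pi_i^{e(i)}$. The final picture $H=\Pi_N$ is $k$-uniform, and it satisfies $H\lra(F)^e_r$ by the unchanged projection argument of~\S\ref{subsec:61}: an arbitrary $r$-colouring of $E(H)$ yields, by a routine induction along the construction, an induced copy~$\widetilde\Pi_0$ of picture zero all of whose constituents are monochromatic; the induced auxiliary $r$-colouring of~$E(G)$ has a monochromatic (possibly non-induced) copy of~$F$ because $G\lra(F)^e_r$; and the corresponding copy of~$F$ in~$\widetilde\Pi_0$ is induced in~$H$ and monochromatic.

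The new content is the claim that every~$\Pi_i$ is linear, which I would prove by induction on~$i$. For $i=0$ it holds because~$F$ is linear and~$\Pi_0$ is a disjoint union of copies of~$F$. For the inductive step, first record the elementary fact that~$B^n$ is linear whenever the $k$-partite $k$-uniform hypergraph~$B$ is: an edge of~$B^n$ is indexed by a tuple $(g_1,\dots,g_n)\in E(B)^n$, and its vertex in the $i$-th part is the tuple formed by the $i$-th vertices of the~$g_j$; hence two edges of~$B^n$ indexed by $(g_j)_j$ and $(g_j')_j$ that share vertices in two distinct parts~$i,i'$ would have every~$g_j$ agreeing with~$g_j'$ on its $i$-th and $i'$-th vertex, forcing $g_j=g_j'$ for all~$j$ by linearity of~$B$, i.e.\ the same edge. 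Thus~$H_i=B_i^n$ is linear. Now let~$a,a'$ be distinct edges of~$\Pi_i$. If they lie in a common standard copy of~$\Pi_{i-1}$ we are done by the induction hypothesis; otherwise they lie in standard copies $C\ne C'$ and $a\cap a'\subseteq C\cap C'\subseteq V(H_i)$. Let $e',e''\in E(G)$ be the $\psi$-images of~$a$ and~$a'$. Since $H_i=\psi^{-1}(e(i))$, every vertex of~$a$ lying in~$H_i$ projects into $e'\cap e(i)$, so linearity of~$G$ gives $|a\cap V(H_i)|\le 1$ unless $e'=e(i)$, in which case~$a$ is itself an edge of the linear hypergraph~$H_i$; the same holds for~$a'$. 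A short case check on whether~$e'$ and~$e''$ equal~$e(i)$ now yields $|a\cap a'|\le 1$ in each case --- using linearity of~$H_i$ when both edges lie in it, and $a\cap a'\subseteq a\cap V(H_i)$ otherwise. This closes the induction, and $H=\Pi_N$ is the desired linear $k$-uniform hypergraph.

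I expect the main obstacle to be exactly this interplay: one must recognise that the vertical hypergraph has to be chosen linear --- so that the non-induced linear Ramsey theorem is genuinely being invoked in place of a clique --- and then verify that linearity of~$G$ together with the linearity-preserving Hales--Jewett partite lemma is enough to propagate linearity through all~$N$ gluing steps. Everything else, in particular the verification of the partition relation $H\lra(F)^e_r$, is inherited essentially verbatim from the proof of Theorem~\ref{thm:63}.
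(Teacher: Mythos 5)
Your reduction of the induced statement to a non-induced one, and your verification that linearity propagates through the amalgamation steps once the \emph{vertical} hypergraph $G$ is linear, are both correct; in particular the observation that an edge $a$ of a standard copy with $\psi(a)=e'\ne e(i)$ meets $V(H_i)$ in at most $|e'\cap e(i)|\le 1$ vertices is exactly the right way to rule out $2$-cycles, and your proof that Hales--Jewett powers of linear $k$-partite hypergraphs are linear is fine. The gap is the input you ``take as a known prerequisite'': a linear $k$-uniform hypergraph $G$ with $G\lra (F)^e_r$ in the non-induced sense. For $k\ge 3$ there is no clique-like object that is simultaneously linear and trivially Ramsey, and the non-induced Ramsey theorem for partial Steiner systems is not a classical fact predating this theorem --- it \emph{is} Theorem~\ref{thm:631} up to the (easy) induced refinement, and it is the main content of~\cite{NR87}. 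So your argument reduces the theorem to an equivalent statement rather than proving it; the entire difficulty has been relocated into the assumed vertical object.

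The paper's route avoids this circularity by keeping Ramsey's theorem (i.e.\ the non-linear clique $K_n^{(k)}$) vertically and instead strengthening the \emph{horizontal} ingredient: either, as in~\cite{NR87}, by analysing very carefully how the partite copies of $B_i$ coming from combinatorial lines in $B_i^n$ intersect, or, in the more recent treatment, by first deriving a ``clean partite lemma'' (via a preliminary $k$-partite partite construction using Hales--Jewett both vertically and horizontally) whose output system of partite copies satisfies clause~(iii) of Proposition~\ref{prop:65}. With that clause, two edges $a, a'$ from distinct standard copies can only meet inside the vertex set of a common edge $f$ of $H_i$, whence $|a\cap a'|\ge 2$ forces $a=f=a'$ by the linearity of $\Pi_{i-1}$ --- and no linearity of the vertical hypergraph is ever needed. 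If you want to keep your architecture, you must either supply an independent proof of the non-induced linear Ramsey theorem or replace your vertical choice by the clique and import one of these strengthened partite lemmata.
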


There is no problem with the partite lemma, because for every linear, $k$-partite,
$k$-uniform hypergraph $B$ all Hales-Jewett powers $B^n$ are linear as well. 
What requires some thought when proving Theorem~\ref{thm:631} is that 
no $2$-cycles are introduced in the amalgamation steps (see Figure~\ref{fig61}).  
In~\cite{NR87} Jarik and R\"odl accomplish this by studying the possible intersection
patterns of partite copies of $B$ corresponding to combinatorial lines very carefully. 
More recently (see e.g.~\cites{BNRR, girth}) a different approach to this issue 
became popular. One first runs the partite construction under the additional 
assumption that $H$ is a $k$-partite, $k$-uniform hypergraph as well. This has the 
advantage that vertically we do not have to use Ramsey's theorem. Instead, it is 
preferable to use the Hales-Jewett partite lemma not only horizontally, but also 
vertically. 
Accordingly we end up getting a $k$-partite Ramsey hypergraph again, and in this 
case it is much easier to check that linearity is preserved. So the result is that 
we have a new partite lemma for linear, $k$-partite, $k$-uniform hypergraphs, called 
the {\it clean partite lemma}. In comparison to the Hales-Jewett partite lemma, its 
main advantage is that it generates systems of partite copies satisfying 
clause~\ref{it:65iii} of Proposition~\ref{prop:65}. This renders it rather obvious 
that linearity is preserved when we want to prove Theorem~\ref{thm:631} by a 
partite construction using Ramsey's theorem vertically and the clean partite lemma
horizontally. 

The reason why we have spent so much time on this somewhat subtle point in a 
proof variant of Theorem~\ref{thm:631} is that such usages of the partite 
construction method as a `cleaning device' occur all over the place in the 
proof of the girth Ramsey theorem. Whenever we obtain a Ramsey theoretic result 
with `complicated possible intersections' of copies, we try to clean it by running 
the partite construction once more. Of course this plan also imposes some restrictions 
on the proof strategy:
concepts we introduce and additional properties we acquire can be considered 
useful only when they are `indestructible by partite constructions'. 
For instance, the extension lemma (cf. \cite{girth}*{Lemma~9.1}) and the 
German $\GTH$ iterability lemma (cf. \cite{girth}*{Proposition~9.14}) implement 
this theme. 

Besides Ramsey's theorem, the Hales-Jewett partite lemma, and constructions 
derivable from them by means of the partite construction method, the proof of the 
girth Ramsey theorem also involves a different procedure for obtaining new 
constructions from known ones, called the extension process. The basic idea 
was again pioneered by Jarik and R\"{o}dl, who used it in their work on $C_4$-free
Ramsey graphs~\cite{NR87} mentioned in the previous subsection. A major step in 
their argument is the following $C_4$-free  partite lemma.

\begin{lemma}[Jarik and R\"{o}dl]\label{lem:632}
	For every $C_4$-free bipartite graph $B$ and every number colours~$r$ there
	exists a $C_4$-free bipartite graph $H$ such that for every $r$-colouring 
	of $E(H)$ there exists a monochromatic, induced, partite copy of $B$. 
\end{lemma}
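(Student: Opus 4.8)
One cannot simply quote Lemma~\ref{lem:62}: the graphs $B(m,t)$ it produces carry a $C_4$ through every pair of distinct elements of $[m]$ lying in a common $t$-set, and deleting edges to destroy these cycles destroys the Ramsey property. Nor can one imitate the Hales--Jewett proof of the hypergraph partite lemma, since a Hales--Jewett power $B^n$ of a bipartite graph acquires a $C_4$ as soon as $B$ has a vertex of degree at least $2$ in each of its two classes. So a new construction is required, and the organising principle is that $H$ will be assembled as a \emph{forest of copies of $B$} in the sense of Definition~\ref{dfn:67}: isomorphic copies $B_1,B_2,\dots$ added one at a time, each meeting the union of its predecessors in at most one vertex, or in a single shared edge. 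Allowing two copies to share an edge is unavoidable — if $r\ge 2$ and $B$ has more than one edge, then with only single-vertex overlaps one could $r$-colour the copies one by one without ever making one monochromatic — but no larger overlaps ever occur, and, for exactly the reason behind the general girth Ramsey theorem, a forest of copies of $B$ inherits the girth of $B$; in particular $H$ is $C_4$-free, and each constituent $B_i$ sits in $H$ as an induced partite copy. The whole task is thus to arrange the copies so that in addition $H\lra(B)^e_r$.

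As a first reduction I would \emph{pad} $B$: append to each $x\in X_B$ enough fresh private neighbours in an enlarged second class, obtaining a $C_4$-free bipartite $B^\bullet\supseteq B$ in which every vertex of $X_{B^\bullet}$ has the same degree $k$ and which contains $B$ as an induced partite subgraph. A monochromatic partite copy of $B^\bullet$ contains one of $B$, so it is enough to treat $B=B^\bullet$; in this normal form $X_B$ reads as a linear $k$-uniform hypergraph on the point set $Y_B$, the edges of $B$ being its flags, and a monochromatic partite copy of $B$ is a subconfiguration isomorphic to this hypergraph all of whose flags receive one colour. The base case is the star $B=K_{1,k}$: a vertex of $X_H$ with more than $r(k-1)$ neighbours already yields a monochromatic partite copy by the box principle, and a star is a tree, hence $C_4$-free.

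The core is the \emph{extension process}. Starting from $B$ itself, one repeatedly performs a cloning step: a vertex $v$ of the current graph $P$ is replaced by many clones $v^{(1)},\dots,v^{(M)}$, and to each clone one attaches its own fresh copy of $B$, glued to $P$ only at that clone, or along one prescribed edge through it, so that the result is again a forest of copies of $B$ and hence still $C_4$-free. Iterating this over all vertices of all copies produces a deep rooted tree of copies of $B$. The point of the clones is a pigeonhole: under any $r$-colouring of the edges, the colouring induced on a given copy of $B$ is one of the finitely many $r$-colourings of $B$, and among the $M$ clones of $v$ a large subset must carry copies whose induced colourings agree on the edges at the cloned vertex; feeding this into an outer induction — on the number of colour patterns of $B$ not yet ruled out, in the role played in Lemma~\ref{lem:62} by Ramsey's theorem applied to $B(m,t)$ — one descends the tree, successively eliminating the non-monochromatic patterns until on some copy only a monochromatic one survives. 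One takes $H$ to be the final tree of copies.

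The main obstacle is to make this accounting honest: each cloning round must genuinely reduce the set of colour patterns still attainable on copies deep enough in the tree, so that the descent terminates in an actually monochromatic copy rather than merely in a copy whose colour pattern is ``more regular'', while the forest-of-copies discipline — overlaps of size at most one vertex, or one shared edge — is preserved at every step. The tension is structural: forcing monochromaticity wants many heavily overlapping copies, whereas $C_4$-freeness forbids overlaps of size two that are not edges, and the resolution is to buy back the missing overlap with extra depth, choosing the clone-numbers $M$ at each level large enough to absorb the loss. Turning this into a precise induction — fixing all the quantities and proving that the descent reaches a monochromatic copy rather than stalling — is the technical heart, and is in essence the argument of Ne\v{s}et\v{r}il and R\"odl recorded in~\cite{NR87}.
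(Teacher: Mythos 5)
Your opening moves are exactly the paper's: one regularises the degrees on $X_B$ and reads $B$ as the incidence graph of the linear $t$-uniform hypergraph $F=\NH(B)$, linearity of $F$ being equivalent to $C_4$-freeness of $B$. But the construction you then erect on this foundation cannot work, because ``$H$ is globally a forest of copies of $B$'' is incompatible with the Ramsey property you need. If $H=\bigcup\{B_1,\dots,B_N\}$ for a forest of copies in the sense of Definition~\ref{dfn:67} and $e(B)\ge 2$, colour greedily in the forest order: when $B_j$ is reached, any already-coloured edge of $B_j$ has both endpoints in $z_j$, so at most one edge of $B_j$ (namely $z_j$ itself, when $z_j$ is an edge) is pre-coloured, and one of the remaining $e(B)-1\ge 1$ edges can be given a different colour. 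No designated copy is ever monochromatic, no matter how deep the tree or how large the clone-numbers, so your descent cannot terminate where you say it does; and a monochromatic copy assembled accidentally from fragments of several designated copies is precisely what your pattern-elimination bookkeeping does not control. This is not a fixable technicality but a structural fact: it is why Theorem~\ref{thm:1522} must allow the auxiliary set $\ccX$ and why Figure~\ref{fig63} exhibits a cycle of triangles --- a Ramsey system of copies is forced to contain cyclic, non-forest configurations, whereas your $H$ excludes them by design. (Your reading of the girth Ramsey theorem as saying that Ramsey systems *are* forests of copies is the source of the error; it says only that small subsystems extend to forests after adding further copies.)

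What is missing is the engine that replaces Ramsey's theorem in the proof of Lemma~\ref{lem:62}. There, after passing to neighbourhood hypergraphs, one extends $K_m^{(t)}$ to a $t_\star$-uniform clique with $t_\star=(t-1)r+1$ (this is where your pigeonhole on colour patterns rightly lives) and then applies Ramsey's theorem with $r^{t_\star}$ colours; the monochromatic clique $Z^{(t_\star)}$ is a dense, highly non-tree-like configuration. The $C_4$-free analogue does the same: form a linear $t_\star$-uniform extension $M$ of $F$, apply the induced Ramsey theorem for linear hypergraphs (Theorem~\ref{thm:631}, itself proved by a partite construction with the Hales--Jewett partite lemma) to get a linear $N$ with $N\lra(M)^e_{r^{t_\star}}$, and set $H=\NH^{-1}(N)$; the linearity of $N$ is what makes $H$ $C_4$-free. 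Your proposal never invokes Theorem~\ref{thm:631} or any comparable Ramsey-theoretic input, and pure amalgamation along a tree of copies plus pigeonhole cannot substitute for it.
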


The proof of this lemma has certain similarities with the proof of Lemma~\ref{lem:62}
we sketched in~\S\ref{subsec:61}. Attempting to emphasise the common features of 
both proofs we define for every bipartite graph $B=(X, Y, E)$ with the properties that 
\begin{enumerate}
	\item[$\bullet$] all vertices in $X$ have the same degree $t\ge 2$ 
	\item[$\bullet$] and no two vertices in $X$ have the same neighbourhood 
\end{enumerate}
the $t$-uniform {\it neighbourhood hypergraph} $F=\NH(B)$ by setting
\[
	V(F)=Y
	\quad \text{ and } \quad 
	E(F)=\{N(x)\colon x\in X\}\,.
\]
For instance, the neighbourhood hypergraph of the bipartite graph $B(m, t)$
defined in the proof of Lemma~\ref{lem:62} is the $t$-uniform clique $K_m^{(t)}$.
Roughly speaking the proof of Lemma~\ref{lem:62} consists of the four steps 
\[
	B(m, t)
	\overset{\NH}{\xrightarrow{\hspace*{1cm}}} 
	K_m^{(t)}
	\overset{\mathrm{extension}}{\xrightarrow{\hspace*{2cm}}} 
	K_{|Z|}^{(t_\star)}
	\overset{\mathrm{Ramsey}}{\xrightarrow{\hspace*{2cm}}} 
	K_{m_\star}^{(t_\star)} 
	\overset{\NH^{-1}}{\xrightarrow{\hspace*{1cm}}} 
	B(m_\star, t_\star)\,,
\]
where the first and last arrow indicate the formation of the neighbourhood 
hypergraph and its inverse operation, respectively; the second `extension' 
arrow yields a $t_\star$-uniform hypergraph, where our choice $t_\star=(t-1)r+1$
prepares an application of the box principle; finally, the third arrow indicates 
an application of Ramsey's theorem with $r^{t_\star}$ colours. 

When proving Lemma~\ref{lem:632} we start with some $C_4$-free bipartite graph 
$B=(X, Y, E)$ instead of $B(m, t)$. Without loss of generality we can assume 
that all vertices in $X$ have the same degree $t\ge 2$. Thus $B$ has a $t$-uniform 
neighbourhood hypergraph $F=\NH(B)$. The assumption $C_4\not\subseteq B$ implies 
that $F$ is linear. Without going into a lot of detail here, one then forms 
a linear, $t_\star$-uniform `extension' $M$ of $F$. Instead of Ramsey's theorem  
we employ Theorem~\ref{thm:631}, thus getting a linear, $t_\star$-uniform 
hypergraph $N$
such that $N\lra (M)^e_{r^{t_\star}}$. Finally, the linearity of $N$ implies that 
the bipartite graph $H=\NH^{-1}(N)$ is again $C_4$-free. 

Based on the plan
\[
	B
	\overset{\NH}{\xrightarrow{\hspace*{1cm}}} 
	F
	\overset{\mathrm{extension}}{\xrightarrow{\hspace*{2cm}}} 
	M
	\overset{\mathrm{Thm}~\ref{thm:631}}{\xrightarrow{\hspace*{2cm}}} 
	N 
	\overset{\NH^{-1}}{\xrightarrow{\hspace*{1cm}}} 
	H
\]
it is not too difficult to work out how one needs to define 
the `extension'~$M$ in such a way that $H$ will be as required by Lemma~\ref{lem:632}.
In any case, the curious reader can find full details in~\cite{NR87}.

The abstract version of the extension process defined and studied in the 
proof of the girth Ramsey theorem deals with structures called {\it pretrains}:
these are pairs $(H, \equiv)$ consisting of a hypergraph $H$ and an equivalence 
relation $\equiv$ on $E(H)$. The {\it wagons} of a pretrain $(H, \equiv)$ are the 
equivalence classes of $\equiv$. For instance, with every bipartite 
graph $B=(X, Y, E)$ we can associate a pretrain by declaring two edges to be 
equivalent if and only if they intersect on $X$. The wagons of this pretrain
are stars whose centres are in $X$. In the proofs of Lemma~\ref{lem:62} and 
Lemma~\ref{lem:632} we used the box principle in order to find Ramsey objects 
for the wagons and we applied Ramsey's theorem or Theorem~\ref{thm:631} to the 
hypergraphs describing how the wagons intersect each other. More generally, 
when we have two constructions $\Phi$, $\Psi$ applicable to hypergraphs we 
can similarly define a construction $\Ext(\Phi, \Psi)$ applicable to (certain) 
pretrains (see~\cite{girth}*{Section~6}). 

The way in which the extension process enters the proof of the girth Ramsey 
theorem is quite unrelated to Lemma~\ref{lem:632}. Suppose that we want to perform
any partite construction over a linear hypergraph $G$. Now any two constituents 
of our pictures will either be vertex-disjoint, or they share a unique music 
line. Suppose further that in each step of the construction the partite lemma 
we use delivers a system of partite copies satisfying 
Proposition~\ref{prop:65}\ref{it:65iii}. These mild assumptions already cause 
severe limitations as to how the constituents can `develop' 
in the course of the construction. In picture zero, every constituent is a 
perfect matching (augmented by some isolated vertices). The constituents of the 
next picture are either disjoint unions of such matchings or they arise from 
such unions by identifying some vertices on a common music line, so that they 
look like Figure~\ref{fig64a}. Similarly, the most general constituent of the 
next picture is shown in Figure~\ref{fig64b}.   
     
\begin{figure}[ht]
	\centering	
	
		\begin{subfigure}[b]{0.39\textwidth}
		\centering
	
			\begin{tikzpicture}[scale=.55]
	
\draw (-4.5,1) -- (4.5,1);
\draw (-4.5,0)--(4.5,0);
\draw (-4.5,-1)--(4.5,-1);
\phantom{\draw [ultra thick](-4,-1.35)--(3,1.35);}

\draw [red!80!black, thick ] (0,1)--(0,-1);
\draw [red!80!black, thick ] (1,1)--(1,-1);
\draw [red!80!black, thick ] (-1,1)--(-1,-1);

\draw [red!80!black, thick ] (-1,1)--(-2,-1);
\draw [red!80!black, thick ] (-2,1)--(-3,-1);
\draw [red!80!black, thick ] (-3,1)--(-4,-1);

\draw [red!80!black, thick ] (1,1)--(2,-1);
\draw [red!80!black, thick ] (2,1)--(3,-1);
\draw [red!80!black, thick ] (3,1)--(4,-1);

\draw [rounded corners, green!40!black, ultra thick] (-1.2, 1.15)--(-1.2,-1.15)--(1.2,-1.15)--(1.2,1.15)--cycle;

\draw [rounded corners, green!40!black,ultra thick] (-3.15, 1.15)--(-4.25,-1.15)--(-1.85,-1.15)--(-0.75,1.15)--cycle;

\draw [rounded corners, green!40!black, ultra thick] (3.15, 1.15)--(4.25,-1.15)--(1.85,-1.15)--(0.75,1.15)--cycle;

			\end{tikzpicture}
		 \caption{}
		\label{fig64a} 		
	\end{subfigure}
\hfill    
\begin{subfigure}[b]{0.6\textwidth}
\centering
			\begin{tikzpicture}[scale=.55]

			\draw (-8.5,1) -- (8.5,1);
			\draw (-8.5,0)--(8.5,0);
			\draw (-8.5,-1)--(8.5,-1);
			
	\draw [red!80!black, thick ] (.5,1)--(.5,-1);
	\draw [red!80!black, thick ] (-.5,1)--(-.5,-1);		
	\draw [red!80!black, thick ] (-.5,1)--(-1.5,-1);
	\draw [red!80!black, thick ] (-1.5,1)--(-2.5,-1);		
	\draw [red!80!black, thick ] (.5,1)--(1.5,-1);
	\draw [red!80!black, thick ] (1.5,1)--(2.5,-1);
			
\draw [shift=({5,0}), red!80!black, thick ] (.5,1)--(.5,-1);
\draw [shift=({5,0}), red!80!black, thick ] (-.5,1)--(-.5,-1);		
\draw [shift=({5,0}), red!80!black, thick ] (-.5,1)--(-1.5,-1);
\draw [shift=({5,0}), red!80!black, thick ] (-1.5,1)--(-2.5,-1);		
\draw [shift=({5,0}), red!80!black, thick ] (.5,1)--(1.5,-1);
\draw [shift=({5,0}), red!80!black, thick ] (1.5,1)--(2.5,-1);

\draw [shift=({-5,0}), red!80!black, thick ] (.5,1)--(.5,-1);
\draw [shift=({-5,0}), red!80!black, thick ] (-.5,1)--(-.5,-1);		
\draw [shift=({-5,0}), red!80!black, thick ] (-.5,1)--(-1.5,-1);
\draw [shift=({-5,0}), red!80!black, thick ] (-1.5,1)--(-2.5,-1);		
\draw [shift=({-5,0}), red!80!black, thick ] (.5,1)--(1.5,-1);
\draw [shift=({-5,0}), red!80!black, thick ] (1.5,1)--(2.5,-1);

		\draw [rounded corners, green!40!black, ultra thick] (-.75, 1.15)--(-.75,-1.15)--(.75,-1.15)--(.75,1.15)--cycle;
		\draw [rounded corners, green!40!black,ultra thick] (-1.75, 1.15)--(-2.85,-1.15)--(-1.2,-1.15)--(-0.25,1.15)--cycle;		
		\draw [rounded corners, green!40!black, ultra thick] (1.75, 1.15)--(2.85,-1.15)--(1.2,-1.15)--(0.25,1.15)--cycle;
			
		\draw [shift=({5,0}), rounded corners, green!40!black, ultra thick] (-.75, 1.15)--(-.75,-1.15)--(.75,-1.15)--(.75,1.15)--cycle;
		\draw [shift=({5,0}), rounded corners, green!40!black,ultra thick] (-1.75, 1.15)--(-2.85,-1.15)--(-1.2,-1.15)--(-0.25,1.15)--cycle;		
		\draw [shift=({5,0}), rounded corners, green!40!black, ultra thick] (1.75, 1.15)--(2.85,-1.15)--(1.2,-1.15)--(0.25,1.15)--cycle;
			
		\draw [shift=({-5,0}), rounded corners, green!40!black, ultra thick] (-.75, 1.15)--(-.75,-1.15)--(.75,-1.15)--(.75,1.15)--cycle;
		\draw [shift=({-5,0}), rounded corners, green!40!black,ultra thick] (-1.75, 1.15)--(-2.85,-1.15)--(-1.2,-1.15)--(-0.25,1.15)--cycle;		
		\draw [shift=({-5,0}), rounded corners, green!40!black, ultra thick] (1.75, 1.15)--(2.85,-1.15)--(1.2,-1.15)--(0.25,1.15)--cycle;	
			
			\draw [rounded corners, violet!80!blue, ultra thick] (-1.9,1.35)--(-3.2,-1.35)--(3.2,-1.35)--(1.9,1.35)--cycle;
			\draw [shift=({5,0}), rounded corners, violet!80!blue, ultra thick] (-1.9,1.35)--(-3.2,-1.35)--(3.2,-1.35)--(1.9,1.35)--cycle;
			\draw [shift=({-5,0}), rounded corners, violet!80!blue, ultra thick] (-1.9,1.35)--(-3.2,-1.35)--(3.2,-1.35)--(1.9,1.35)--cycle;
			
		\end{tikzpicture}
	\caption{}
	\label{fig64b} 		
\end{subfigure}

				\caption{Two 3-uniform trains}
				\label{fig64}

	\end{figure}
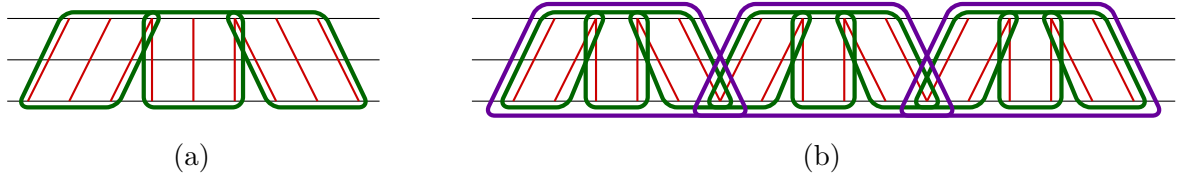 
In general, hypergraphs of this form are called {\it trains}. Officially a train 
is a hypergraph equipped with a nested sequence of equivalence relations 
satisfying some rules on intersections of edges. Since the constituents of pictures
are trains, it suffices to study partite lemmata applicable to trains. These can 
be obtained by iterative applications of the extension process. For further ideas 
and details we refer to~\cite{girth}.  

\subsection{Infinite structural Ramsey theory} \label{subsec:64}
We would finally like to talk about some results on the question whether 
the induced Ramsey theorem generalises to the transfinite setting. Given 
a (finite or infinite) graph $F$ and a cardinal $\mu$ one would like to 
have a graph $H$ such that $H\lra (F)^e_\mu$. An early result of Hajnal 
and Komj\'ath~\cites{HK88, HK92} shows that, consistently, such a graph $H$ does 
not always exist. Notably, they showed that adding a Cohen real also adds
a bipartite graph $F$ on $\aleph_1$ vertices such that $H\nlra (F)^e_2$ 
holds for all graphs~$H$ in the generic extension. Later Komj\'ath~\cite{kom94} 
found a surprisingly simple proof that any non-trivial forcing whose conditions 
form a set adds an uncountable graph $F$ such that for some cardinal~$\mu$ there 
is no graph $H$ with $H\lra (F)^e_\mu$. This is complemented by a deep result of Shelah~\cite{Sh289}, which is proved by means of a difficult proper class forcing. 

\begin{thm}[Shelah]
	It is consistent with $\mathrm{ZFC}$ that for every graph $F$ and every 
	cardinal~$\mu$ there exists a graph $H$ such that $H\lra (F)^e_\mu$. \qed
\end{thm}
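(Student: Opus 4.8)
The plan is to realize the model by a class-length Easton-support (``reverse Easton'') forcing iteration over a ground model $V$ of $\mathrm{GCH}$ carrying a definable global well-ordering, e.g.\ $V=L$. Working in $L$ we also have a global $\diamondsuit$-sequence, which we use as a bookkeeping device: along the iteration $\langle\mathbb{P}_\alpha,\dot{\mathbb{Q}}_\alpha:\alpha\in\mathrm{Ord}\rangle$ the $\alpha$-th coordinate is told, by the $\diamondsuit$-sequence, a pair $(F,\mu)$ with $F$ a graph and $\mu$ a cardinal. If that pair has already been ``realized'', i.e.\ $F\in V^{\mathbb{P}_\alpha}$, we let $\dot{\mathbb{Q}}_\alpha$ be the single-step forcing $\mathbb{Q}_{F,\mu}$ described below, which adds a graph $H=H_{F,\mu}$ with $H\lra(F)^e_\mu$; otherwise $\dot{\mathbb{Q}}_\alpha$ is trivial. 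Direct limits are taken at inaccessibles and full inverse limits elsewhere, so that for every regular $\kappa$ the tail $\mathbb{P}^{[\kappa,\infty)}$ is ${<}\kappa$-closed; under $\mathrm{GCH}$ this keeps $\mathbb{P}$ tame enough that the forcing theorem holds, $\mathrm{ZFC}$ is preserved, and every set of the extension $V^{\mathbb{P}}$ already appears in some $V^{\mathbb{P}_\alpha}$.

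Now for the single step. Given $F$ and $\mu$, the conditions of $\mathbb{Q}_{F,\mu}$ are ``small'' partial objects: a partial graph on an initial segment of the intended vertex set of $H$, together with a small amount of extra data that, for each of finitely (or ${<}\mu$) many partial $\mu$-colourings of the edges specified so far, commits to a partial induced copy of $F$ destined to become monochromatic. Extension is the natural one. The role of the extra data is to drive a density argument: any condition can be extended so as to ``serve'' one further colouring, and it is precisely here that a partite-construction amalgamation in the spirit of Theorem~\ref{thm:61} is hard-wired into the combinatorics of $\mathbb{Q}_{F,\mu}$. Genericity then produces $H=H_{F,\mu}$ such that for \emph{every} $\mu$-colouring of $E(H)$ there is a monochromatic induced copy of $F$, i.e.\ $H\lra(F)^e_\mu$. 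One arranges, in addition, that $\mathbb{Q}_{F,\mu}$ is ${<}\lambda$-closed for a suitable $\lambda$ and has a chain condition bounding $|H|$, so that it can serve as $\dot{\mathbb{Q}}_\alpha$ at a stage $\alpha$ of the iteration without collapsing cardinals.

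Finally one argues preservation. Once $H=H_{F,\mu}$ has been added at stage $\alpha$ and $H\lra(F)^e_\mu$ has been checked in $V^{\mathbb{P}_{\alpha+1}}$, this must survive to $V^{\mathbb{P}}$. By Easton support and $\mathrm{GCH}$ the tail $\mathbb{P}^{[\alpha+1,\infty)}$ is ${<}\kappa$-closed for every regular $\kappa>|H|+\mu$, so it adds no new $\mu$-colouring of $E(H)$; hence no ``bad'' colouring is ever introduced and the partition relation persists. Together with the bookkeeping---every graph $F\in V^{\mathbb{P}}$ lies in some $V^{\mathbb{P}_\alpha}$, so the $\diamondsuit$-sequence eventually hands us $(F,\mu)$ at a stage where it is realized, and $H_{F,\mu}$ is then added---this yields the theorem.

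The main obstacle is to make all of this cohere at once: one must verify that $\mathbb{P}$ is a definable, ``nice'' class forcing (so that $\Vdash$ behaves and $\mathrm{ZFC}$ is not lost), that each $\dot{\mathbb{Q}}_\alpha$ is forced to have the closure and chain-condition properties the Easton bookkeeping demands, and---most delicately---that the density argument for the single step genuinely produces a graph obeying $H\lra(F)^e_\mu$ for \emph{infinite} $F$, where there is no $\mathrm{ZFC}$ construction to fall back on and one is truly exploiting the genericity of $H$. This combinatorial heart of the single-step forcing, and the interplay between its amalgamation structure and the closure needed for preservation, is where the substance of~\cite{Sh289} lies.
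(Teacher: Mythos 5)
The paper itself offers no proof of this theorem: the terminal $\Box$ is the survey's signal that the result is quoted from \cite{Sh289} without argument, the text adding only that it ``is proved by means of a difficult proper class forcing''. Measured against what Shelah actually does, the skeleton you describe is the right one --- a definable class-length iteration with increasingly closed iterands, so that once a Ramsey graph $H$ for a pair $(F,\mu)$ has been added, the tail of the iteration is sufficiently closed to add no new $\mu$-colourings of $E(H)$ and the relation $H\lra(F)^e_\mu$ persists; plus bookkeeping guaranteeing that every pair arising in the final model is treated at some stage. Those preservation and bookkeeping ingredients are standard and essentially sound (modulo routine care in arranging the closure of the tail relative to $|E(H)|$ and $\mu$; a definable enumeration of tasks does this job more naturally than $\diamondsuit$).

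The genuine gap is the single step, and the mechanism you propose for it would not work as stated. You have conditions of $\mathbb{Q}_{F,\mu}$ carry commitments for ``finitely (or $<\mu$) many partial colourings specified so far'' and rely on a density argument in which each condition is extended to ``serve one further colouring''. But the colourings that must be handled are all $\mu$-colourings of $E(H)$ in the \emph{extension}: there are $\mu^{|E(H)|}$ of them, they are not ground-model objects attached to conditions, and they cannot be enumerated or defeated one at a time by dense sets of small conditions (for the same reason one cannot force a graph of uncountable chromatic number by killing its colourings one by one with small conditions). The verification of $H\lra(F)^e_\mu$ for a generic $H$ has to proceed by taking an arbitrary name for a colouring and exploiting amalgamation and homogeneity properties engineered into the poset; designing a poset for which this succeeds for arbitrary infinite $F$ --- in the teeth of the Hajnal--Komj\'ath and Komj\'ath negative results quoted just before the theorem, which show that careless forcing manufactures graphs with no Ramsey partner at all --- is precisely the content of Shelah's paper. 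As your closing paragraph in effect concedes, this combinatorial heart is missing, so what you have is a credible framing of the consistency proof rather than a proof.
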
 

Careful readers will have observed that the aforementioned negative consistency 
results involve uncountable graphs $F$ only. This leaves some room for $\mathrm{ZFC}$
theorems addressing `small graphs'. Building on the ideas in~\cite{EHP75} and 
transferring them into a partite setting, Hajnal~\cite{Hajnal} clarified the 
situation for finite graphs. 

\begin{thm}[Hajnal]\label{thm:Hajnal}
	For every finite graph $F$ and every cardinal $\mu$ there exists a graph~$H$
	such that $H\lra (F)^e_\mu$. \qed
\end{thm}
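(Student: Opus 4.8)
Put $n=|V(F)|$. By the Erd\H{o}s--Rado partition theorem (or the finite Ramsey theorem if $\mu$ is finite) there is a cardinal $\kappa$ with $\kappa\lra(n)^2_\mu$; e.g.\ $\kappa=(2^\mu)^+$ works when $\mu$ is infinite, since then $\kappa\lra(\mu^+)^2_\mu$. Consequently the complete graph $G=K_\kappa$ satisfies $G\lra(K_n)^e_\mu$, hence $G\lra(F)^e_\mu$ --- but a priori only with possibly non-induced monochromatic copies of $F$. This $G$ is going to play the role of the vertical object in a partite construction.

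\paragraph{Making the copies induced}
As in \S\ref{subsec:61}, the plan is to run a partite construction over $G$: the pictures are $\kappa$-partite graphs $\Pi$ equipped with homomorphisms $\psi\colon\Pi\lra G$, picture zero $\Pi_0$ is the disjoint union of one copy of $F$ for every copy of $F$ in $G$, and one processes the constituents $\Pi^e$ ($e\in E(G)$) one by one. Since $|E(G)|=\kappa$, the construction becomes a transfinite recursion $(\Pi_\alpha)_{\alpha\le\kappa}$: at a successor step one applies a partite lemma to the constituent currently being processed and amalgamates standard copies of the previous picture over partite copies of that constituent (cf.\ Figure~\ref{fig61}), while at a limit step one passes to the direct limit of the chain built so far. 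Granting that this can be arranged, the final picture $H=\Pi_\kappa$ has the familiar feature that every $\mu$-colouring of $E(H)$ admits an induced copy $\widetilde\Pi_0$ of picture zero all of whose constituents are monochromatic; projecting the resulting colour pattern along $\psi$ yields a $\mu$-colouring of $E(G)$, which by the choice of $\kappa$ has a monochromatic (possibly non-induced) copy of $F$, and the associated copy of $F$ inside $\widetilde\Pi_0$ is then induced in $H$ and monochromatic. Thus $H\lra(F)^e_\mu$.

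\paragraph{The main obstacle}
The delicate point --- and the reason one must follow the route of Erd\H{o}s, Hajnal and P\'osa~\cite{EHP75} rather than the Ne\v{s}et\v{r}il--R\"odl one --- is the partite lemma. Lemma~\ref{lem:62}, in the form proved there, \emph{fails} for infinitely many colours as soon as the bipartite graph to be processed has vertices of large degree on \emph{both} sides of its bipartition: colouring each edge $(A,a)$ of an incidence graph $B(m_\star,t_\star)$ by the rank of $a$ in $A$ already defeats every such $B(m_\star,t_\star)$, and the finite argument survives only because it may blow the uniformity $t_\star$ up past the number of colours --- impossible when that number is infinite, infinite-exponent partition relations being false. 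The way around this is to reorganise the amalgamation step, in the spirit of~\cite{EHP75}, so that the constituents one ever has to process stay \emph{simple}: one aims to keep each of them a disjoint union of stars (every vertex of degree $>1$ confined to one fixed side), if necessary by inserting auxiliary vertices. For such a star-forest the partite lemma is just an infinitary pigeonhole --- under any $\mu$-colouring of the edges of $K_{1,\mu^+}$, or of a large disjoint union of such stars, some colour class has size $\mu^+$, giving arbitrarily large monochromatic partite stars. The work then splits into: (a) carrying this ``simple'' partite lemma through the amalgamations without ever manufacturing a constituent of the forbidden shape, while still producing induced copies of $F$; and (b) closing the transfinite recursion at limit ordinals, i.e.\ arranging the direct limits coherently so that a constituent, once processed, stays processed in every later picture --- this is what lets the inductive verification of the monochromatic-$\widetilde\Pi_0$ property pass through limit stages. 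Carrying out (a) and (b), with the transfinite Erd\H{o}s--Rado theorem replacing finite Ramsey throughout, is the content of Hajnal's proof; step (b), the transfinite bookkeeping, is in my view the part most likely to hide subtleties.
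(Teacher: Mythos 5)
First, note that the paper offers no proof of Theorem~\ref{thm:Hajnal}: the statement carries the end-of-proof symbol, which by the conventions of the introduction signals a result whose proof is not described, and the surrounding text merely records that Hajnal~\cite{Hajnal} builds on~\cite{EHP75} transferred into a partite setting. Measured against that, your write-up is a well-informed plan rather than a proof. The first two paragraphs are sound: Erd\H{o}s--Rado gives $(2^\mu)^+\lra(\mu^+)^2_\mu$, so $K_\kappa$ with $\kappa=(2^\mu)^+$ serves as the vertical object, and the projection argument at the end is the standard one. Your diagnosis of the obstruction is also correct and is exactly the right thing to emphasise: the proof of Lemma~\ref{lem:62} hinges on choosing $t_\star=r(t-1)+1$ larger than the number of colours, which is impossible for infinite $\mu$, since with $\mu$ colours every $X$-vertex of a finite-uniformity incidence graph can have all of its edges coloured distinctly.

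The gap is that everything after the diagnosis is asserted rather than argued; the steps you label (a) and (b) are not loose ends, they are the theorem. Point (a) is genuinely problematic as stated: in the standard partite construction the constituents of picture zero are matchings, after one amalgamation they become disjoint unions of stars with all centres on the processed music line, but after a second amalgamation over an edge of $G$ sharing a music line with the first one already obtains constituents with large degrees on both sides (cf.\ the discussion of trains in \S\ref{subsec:63}) --- exactly the shape for which your pigeonhole partite lemma is unavailable. You give no mechanism (auxiliary vertices, a changed order of processing, a modified notion of picture) that prevents this from ever happening while still yielding induced copies of $F$, and devising one is precisely where the content of~\cite{EHP75} and~\cite{Hajnal} lies. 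Likewise the coherence of the length-$\kappa$ recursion at limit stages in (b) is gestured at but not set up. So the proposal correctly locates the difficulty and points in the direction the literature actually takes, but it does not contain a proof.
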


But what about countable graphs? Here the case of finitely many colours 
was already addressed in~\cite{EHP75}.

\begin{thm}[Erd\H{o}s, Hajnal \& P\'osa]
	For every countable graph $F$ and every natural number $r$ there is a graph $H$
	such that $H\lra (F)^e_r$. \qed
\end{thm}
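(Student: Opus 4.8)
The plan is to reduce the statement to a finite, coherent assertion and then pass to the limit by K\"onig's lemma. Enumerate $V(F)=\{v_0,v_1,\dots\}$ and put $F_n=F[\{v_0,\dots,v_{n-1}\}]$, so that $F_0\subseteq F_1\subseteq\cdots$ are finite with $F=\bigcup_n F_n$; we may assume $E(F)\ne\varnothing$, the edgeless case being immediate. I would construct a single countable graph $H$ together with, for every $n$, a \emph{finite} family $\ccS_n$ of induced copies of $F_n$ in $H$ such that (a) $\ccS_n\lra (F_n)^e_r$, meaning that every $r$-colouring of $E(H)$ makes some member of $\ccS_n$ monochromatic, and (b) restriction is coherent: $\psi|_{F_n}\in\ccS_n$ for every $\psi\in\ccS_{n+1}$. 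Note that (a) is monotone upward, so it stays true as further edges are added to $H$ at later stages.

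Granting such data, here is why $H\lra (F)^e_r$. Fix an $r$-colouring $\chi$ of $E(H)$ and call a copy $\psi\in\ccS_n$ \emph{good} if $\chi$ is constant on $E(\psi(F_n))$. Build a rooted tree whose level-$n$ vertices are the good members of $\ccS_n$, declaring $\psi\in\ccS_{n+1}$ a child of $\psi|_{F_n}$; this is a well-defined tree by (b), since a subgraph of a monochromatic graph is monochromatic. Every level is nonempty by (a) and finite because each $\ccS_n$ is, so K\"onig's lemma produces an infinite branch $\psi_0,\psi_1,\dots$ with $\psi_{n+1}|_{F_n}=\psi_n$. Its union $\psi=\bigcup_n\psi_n$ is an induced copy of $F$ in $H$, since adjacency and non-adjacency of any two vertices are already decided inside some $F_n$; and $\psi$ is monochromatic, because every edge of $\psi(F)$ lies in $\psi_n(F_n)$ for all large $n$, the graphs $\psi_n(F_n)$ are nonempty and monochromatic for large $n$, and consecutive ones share an edge, so their common colour is eventually some fixed $c$, which is then the colour of every edge of $\psi(F)$.

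The real work is the construction of $H$ and the families $\ccS_n$, which I would carry out in $\omega$ stages of amalgamation, keeping only finitely much of $H$ at each stage. Stage $n+1$ must enlarge the current finite structure to one that is Ramsey for $F_{n+1}=F_n+v_n$ \emph{via} a finite family $\ccS_{n+1}$ of induced $F_{n+1}$-copies whose $F_n$-reducts all already belong to $\ccS_n$. The hard part is exactly this coherence demand. The naive attempt---adjoining, for each $\phi\in\ccS_n$, a fresh batch of vertices joined to $\phi(N_{F_{n+1}}(v_n))$---does not work, because an adversary may colour the old edges first, see which copies in $\ccS_n$ are monochromatic and in which colours, and then colour the new edges so that no extension is monochromatic. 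One must instead force the distinguished $F_n$-copies inside the new gadget to interact in a Ramsey-theoretic fashion; in the simplest cases the new vertices can even be recruited among the vertices of $\ccS_n$-copies and the pigeonhole principle finishes, but in general this is supplied by the partite construction method of \S\ref{subsec:61}, run over a finite graph $G$ with $G\lra (F_{n+1})^e_r$ (available from Theorem~\ref{thm:61}, or just from Ramsey's theorem applied to a large clique, since only non-induced copies are needed vertically), with picture zero built from copies of the current structure rather than from bare copies of $F_{n+1}$ and with the system of distinguished copies carried along as in Proposition~\ref{prop:65}; the disjointness discipline of the amalgamation then keeps the $F_n$-reduct of every distinguished $F_{n+1}$-copy inside a copy of the old structure, hence in $\ccS_n$, while the projection to $G$ provides $\ccS_{n+1}\lra (F_{n+1})^e_r$. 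This coherent finite step is the whole content beyond K\"onig's lemma; it is the substantive part of the original argument of Erd\H{o}s, Hajnal and P\'osa, and it is also where countability of $F$ is used, since for graphs of size $\aleph_1$ even the existence of a Ramsey graph can fail.
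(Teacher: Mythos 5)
Your reduction to a coherent finite step followed by K\"onig's lemma cannot work, for a reason that is independent of how cleverly the amalgamation is performed: a graph built in $\omega$ stages, each adding only finitely many vertices, is countable, and for some countable $F$ no countable Ramsey graph exists at all. Take $F=R$, the Rado graph. Since $R$ is universal for countable graphs, every countable $H$ embeds into $R$ as an induced subgraph, so a $2$-colouring of $E(R)$ admitting no monochromatic induced copy of $R$ pulls back to a colouring of $E(H)$ witnessing $H\nlra (R)^e_2$. Such a colouring exists --- this is precisely the lower bound of Erd\H{o}s, Hajnal, and P\'osa for what is nowadays called the big Ramsey degree of the edge in the Rado graph (Pouzet and Sauer later proved the matching upper bound that two colours always suffice on some copy). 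Hence every Ramsey graph for the Rado graph is uncountable; the construction in~\cite{EHP75} produces a graph of size $2^{\aleph_0}$ and extracts the monochromatic copy by a fusion argument along a tree with continuum many branches, not by a compactness argument over finite levels.

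The place where your scheme breaks is exactly the coherence demand (b), which you correctly identify as the crux but too optimistically declare solvable. If the families $\ccS_n$ are to remain finite and fixed, the finitely many new vertices of stage $n+1$ must be attached to the finitely many old copies in $\ccS_n$; an adversary who colours $E(H_n)$ first, observes which members of $\ccS_n$ have become monochromatic and in which colours, and only then colours the extension edges can be beaten only if the extensions interact Ramsey-theoretically over the fixed set $\ccS_n$, and the counterexample above shows this is impossible in general. The partite construction does not rescue this: it amalgamates many fresh copies of the entire previous structure $H_n$, and the $F_n$-reducts of the new distinguished $F_{n+1}$-copies land inside those fresh copies rather than in the original $\ccS_n$. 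So you must either enlarge $\ccS_n$ retroactively at every later stage --- after which the levels of your tree of good copies become infinite in the limit and K\"onig's lemma no longer yields a branch (a tree with infinite levels and no infinite branch is easy to arrange) --- or abandon coherence. Your limit argument is fine; it is the finite approximation scheme feeding it that provably cannot be realised.
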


For infinitely many colours the problem is open and 
Shelah~\cite{Sh666}*{Question 8.12} calls it a ``mystery''.

\begin{quest}
	Is it provable, in $\mathrm{ZFC}$, that for every countable graph $F$ 
	there exists a graph~$H$ such that $H\lra (F)^e_\omega$?
\end{quest}

We conclude with an old problem of Erd\H{o}s related to 
Proposition~\ref{prop:65}\ref{it:65ii}, whose original source we have forgotten. 
But it is restated in~\cite{Sh666}*{Question~8.11}.

\begin{quest}[Erd\H{o}s]
	Does there provably exist a $K_4$-free graph $H$ such that 
	$H\lra (K_3)^e_\omega$?
\end{quest}

By Shelah~\cite{Sh289} the existence of such graphs is consistent (even 
for arbitrarily many colours). It would also be interesting to derive a 
positive answer from $\mathrm{GCH}$ or from $V=L$. 

Of course the real question is whether 
Theorem~\ref{thm:Hajnal} remains valid when we add the demand $\omega(H)=\omega(F)$.
It is certainly impossible to achieve such a result for girth instead of the clique 
number. For instance, if $F$ fails to be bipartite, then every every $C_4$-free 
graph~$H$ satisfies $H\nlra (F)^e_\omega$; this is because Theorem~\ref{thm:EHC4}
yields $\chi(H)\le \aleph_0$, wherefore $H$ is a union of countably many 
bipartite graphs. However, it still seems conceivable that for every 
cardinal~$\mu$ there could be a graph $H$ such that $H\lra (F)^e_\mu$ and 
the shortest odd cycles in $F$ and $H$ have the same length. 

More generally, one would hope to find a transfinite analogue of 
Theorem~\ref{thm:1522}.
So given a finite graph $F$ the question is which finite configurations 
of copies of $F$ need to be present in systems $\ccH$ with $\ccH\lra (F)^e_\mu$, 
when $\mu$ gets arbitrarily large. This kind of `transfinite girth Ramsey theory' is 
certainly a very challenging subject. Nevertheless, there are no convincing
reasons to believe that it is more difficult than finite girth Ramsey theory.

\subsection*{Acknowledgements} It is a great pleasure to 
thank {\sc Joanna Polcyn} for the wonderful graphical illustrations,
and guest editor {\sc Vojt\v{e}ch R\"{o}dl} for the invitation to 
contribute to this volume. Furthermore, we would like to thank 
{\sc Sevda Guliyeva}~\cite{Sevda}, {\sc Max Pitz}, 
and {\sc Vojt\v{e}ch R\"odl} for interesting discussions.   

\begin{bibdiv}
\begin{biblist}
\bib{AH78}{article}{
   author={Abramson, Fred G.},
   author={Harrington, Leo A.},
   title={Models without indiscernibles},
   journal={J. Symbolic Logic},
   volume={43},
   date={1978},
   number={3},
   pages={572--600},
   issn={0022-4812},
   review={\MR{503795 (80a:03045)}},
   doi={10.2307/2273534},
}

\bib{AHL}{article}{
   author={Alon, Noga},
   author={Hoory, Shlomo},
   author={Linial, Nathan},
   title={The Moore bound for irregular graphs},
   journal={Graphs Combin.},
   volume={18},
   date={2002},
   number={1},
   pages={53--57},
   issn={0911-0119},
   review={\MR{1892433}},
   doi={10.1007/s003730200002},
}

\bib{AKRWZ16}{article}{
   author={Alon, Noga},
   author={Kostochka, Alexandr},
   author={Reiniger, Benjamin},
   author={West, Douglas B.},
   author={Zhu, Xuding},
   title={Coloring, sparseness and girth},
   journal={Israel J. Math.},
   volume={214},
   date={2016},
   number={1},
   pages={315--331},
   issn={0021-2172},
   review={\MR{3540616}},
   doi={10.1007/s11856-016-1361-2},
}

\bib{AR}{article}{
   author={Alon, Noga},
   author={Ruzsa, Imre Z.},
   title={Non-averaging subsets and non-vanishing transversals},
   journal={J. Combin. Theory Ser. A},
   volume={86},
   date={1999},
   number={1},
   pages={1--13},
   issn={0097-3165},
   review={\MR{1682960}},
   doi={10.1006/jcta.1998.2926},
}
	
\bib{Asch71}{article}{
   author={Aschbacher, Michael},
   title={The nonexistence of rank three permutation groups of degree $3250$
   and subdegree $57$},
   journal={J. Algebra},
   volume={19},
   date={1971},
   pages={538--540},
   issn={0021-8693},
   review={\MR{291266}},
   doi={10.1016/0021-8693(71)90087-1},
}

\bib{Avart}{article}{
   author={Avart, Christian},
   author={Kay, Bill},
   author={Reiher, Chr.},
   author={R\"odl, Vojt\v ech},
   title={The chromatic number of finite type-graphs},
   journal={Journal of Combinatorial Theory Series B},
   volume={122},
   date={2017},
   pages={877--896},
   issn={0095-8956},
   review={\MR{3575234}},
   doi={10.1016/j.jctb.2016.10.004},
}

\bib{BI73}{article}{
   author={Bannai, Eiichi},
   author={Ito, Tatsuro},
   title={On finite Moore graphs},
   journal={J. Fac. Sci. Univ. Tokyo Sect. IA Math.},
   volume={20},
   date={1973},
   pages={191--208},
   issn={0040-8980},
   review={\MR{323615}},
}

\bib{BG}{article}{
   author={Baumert, Leonard D.},
   author={Gordon, Daniel M.},
   title={On the existence of cyclic difference sets with small parameters},
   conference={
      title={High primes and misdemeanours: lectures in honour of the 60th
      birthday of Hugh Cowie Williams},
   },
   book={
      series={Fields Inst. Commun.},
      volume={41},
      publisher={Amer. Math. Soc., Providence, RI},
   },
   date={2004},
   pages={61--68},
   review={\MR{2075647}},
}

\bib{BCW}{article}{
   author={Behzad, Mehdi},
   author={Chartrand, Gary},
   author={Wall, Curtiss E.},
   title={On minimal regular digraphs with given girth},
   journal={Fund. Math.},
   volume={69},
   date={1970},
   pages={227--231},
   issn={0016-2736},
   review={\MR{285448}},
   doi={10.4064/fm-69-3-227-231},
}

\bib{Benson}{article}{
   author={Benson, Clark T.},
   title={Minimal regular graphs of girths eight and twelve},
   journal={Canadian J. Math.},
   volume={18},
   date={1966},
   pages={1091--1094},
   issn={0008-414X},
   review={\MR{197342}},
   doi={10.4153/CJM-1966-109-8},
}

\bib{BNRR}{article}{
   author={Bhat, Vindya},
   author={Ne\v set\v ril, Jaroslav},
   author={Reiher, Chr.},
   author={R\"odl, Vojt\v ech},
   title={A Ramsey class for Steiner systems},
   journal={J. Combin. Theory Ser. A},
   volume={154},
   date={2018},
   pages={323--349},
   issn={0097-3165},
   review={\MR{3718069}},
}
	
\bib{Biggs}{book}{
   author={Biggs, Norman},
   title={Algebraic graph theory},
   series={Cambridge Mathematical Library},
   edition={2},
   publisher={Cambridge University Press, Cambridge},
   date={1993},
   pages={viii+205},
   isbn={0-521-45897-8},
   review={\MR{1271140}},
}

\bib{BR}{article}{
	author={Blakley, G. R.},
	author={Roy, Prabir},
	title={A H\"older type inequality for symmetric matrices with nonnegative
					entries},
	journal={Proc. Amer. Math. Soc.},
	volume={16},
	date={1965},
	pages={1244--1245},
	issn={0002-9939},
	review={\MR{0184950}},
}

\bib{Boll-Ex}{book}{
   author={Bollob\'{a}s, B\'{e}la},
   title={Extremal graph theory},
   note={Reprint of the 1978 original},
   publisher={Dover Publications, Inc., Mineola, NY},
   date={2004},
   pages={xx+488},
   isbn={0-486-43596-2},
   review={\MR{2078877}},
}

\bib{Boll-Mod}{book}{
   author={Bollob\'{a}s, B\'{e}la},
   title={Modern graph theory},
   series={Graduate Texts in Mathematics},
   volume={184},
   publisher={Springer-Verlag, New York},
   date={1998},
   pages={xiv+394},
   isbn={0-387-98488-7},
   review={\MR{1633290}},
   doi={10.1007/978-1-4612-0619-4},
}

\bib{Bondy}{article}{
   author={Bondy, J. A.},
   title={Counting subgraphs: a new approach to the Caccetta-H\"{a}ggkvist
   conjecture},
   note={Graphs and combinatorics (Marseille, 1995)},
   journal={Discrete Math.},
   volume={165/166},
   date={1997},
   pages={71--80},
   issn={0012-365X},
   review={\MR{1439261}},
   doi={10.1016/S0012-365X(96)00162-8},
}

\bib{CH78}{article}{
   author={Caccetta, Louis},
   author={H\"{a}ggkvist, R.},
   title={On minimal digraphs with given girth},
   conference={
      title={Proceedings of the Ninth Southeastern Conference on
      Combinatorics, Graph Theory, and Computing},
      address={Florida Atlantic Univ., Boca Raton, Fla.},
      date={1978},
   },
   book={
      series={Congress. Numer., XXI},
      publisher={Utilitas Math., Winnipeg, MB},
   },
   date={1978},
   pages={181--187},
   review={\MR{527946}},
}

\bib{Cameron}{book}{
   author={Cameron, Peter J.},
   title={Permutation groups},
   series={London Mathematical Society Student Texts},
   volume={45},
   publisher={Cambridge University Press, Cambridge},
   date={1999},
   pages={x+220},
   isbn={0-521-65302-9},
   isbn={0-521-65378-9},
   review={\MR{1721031}},
   doi={10.1017/CBO9780511623677},
}

\bib{CHMS}{article}{
   author={Carbonero, Alvaro},
   author={Hompe, Patrick},
   author={Moore, Benjamin},
   author={Spirkl, Sophie},
   title={A counterexample to a conjecture about triangle-free induced
   subgraphs of graphs with large chromatic number},
   journal={J. Combin. Theory Ser. B},
   volume={158},
   date={2023},
   number={part 2},
   part={part 2},
   pages={63--69},
   issn={0095-8956},
   review={\MR{4484828}},
   doi={10.1016/j.jctb.2022.09.001},
}

\bib{CSS}{article}{
   author={Chudnovsky, Maria},
   author={Seymour, Paul},
   author={Sullivan, Blair},
   title={Cycles in dense digraphs},
   journal={Combinatorica},
   volume={28},
   date={2008},
   number={1},
   pages={1--18},
   issn={0209-9683},
   review={\MR{2399005}},
   doi={10.1007/s00493-008-2331-z},
}

\bib{CS83}{article}{
   author={Chv\'{a}tal, V.},
   author={Szemer\'{e}di, E.},
   title={Short cycles in directed graphs},
   journal={J. Combin. Theory Ser. B},
   volume={35},
   date={1983},
   number={3},
   pages={323--327},
   issn={0095-8956},
   review={\MR{735200}},
   doi={10.1016/0095-8956(83)90059-X},
}

\bib{CFS10}{article}{
   author={Conlon, David},
   author={Fox, Jacob},
   author={Sudakov, Benny},
   title={An approximate version of Sidorenko's conjecture},
   journal={Geom. Funct. Anal.},
   volume={20},
   date={2010},
   number={6},
   pages={1354--1366},
   issn={1016-443X},
   review={\MR{2738996}},
   doi={10.1007/s00039-010-0097-0},
}

\bib{CKLL18}{article}{
   author={Conlon, David},
   author={Kim, Jeong Han},
   author={Lee, Choongbum},
   author={Lee, Joonkyung},
   title={Some advances on Sidorenko's conjecture},
   journal={J. Lond. Math. Soc. (2)},
   volume={98},
   date={2018},
   number={3},
   pages={593--608},
   issn={0024-6107},
   review={\MR{3893193}},
   doi={10.1112/jlms.12142},
}

\bib{CL17}{article}{
   author={Conlon, David},
   author={Lee, Joonkyung},
   title={Finite reflection groups and graph norms},
   journal={Adv. Math.},
   volume={315},
   date={2017},
   pages={130--165},
   issn={0001-8708},
   review={\MR{3667583}},
   doi={10.1016/j.aim.2017.05.009},
}

\bib{CL21}{article}{
   author={Conlon, David},
   author={Lee, Joonkyung},
   title={Sidorenko's conjecture for blow-ups},
   journal={Discrete Anal.},
   date={2021},
   pages={Paper No. 2, 13},
   review={\MR{4237083}},
   doi={10.19086/da},
}

\bib{Dalfo}{article}{
   author={Dalf\'{o}, C.},
   title={A survey on the missing Moore graph},
   journal={Linear Algebra Appl.},
   volume={569},
   date={2019},
   pages={1--14},
   issn={0024-3795},
   review={\MR{3901732}},
   doi={10.1016/j.laa.2018.12.035},
}

\bib{Dam}{article}{
   author={Damerell, R. M.},
   title={On Moore graphs},
   journal={Proc. Cambridge Philos. Soc.},
   volume={74},
   date={1973},
   pages={227--236},
   issn={0008-1981},
   review={\MR{318004}},
   doi={10.1017/s0305004100048015},
}

\bib{Davenport}{book}{
   author={Davenport, Harold},
   title={Multiplicative number theory},
   series={Graduate Texts in Mathematics},
   volume={74},
   edition={3},
   note={Revised and with a preface by Hugh L. Montgomery},
   publisher={Springer-Verlag, New York},
   date={2000},
   pages={xiv+177},
   isbn={0-387-95097-4},
   review={\MR{1790423}},
}

\bib{Deuber75}{article}{
   author={Deuber, W.},
   title={Generalizations of Ramsey's theorem},
   conference={
      title={Infinite and finite sets (Colloq., Keszthely, 1973; dedicated
      to P. Erd\H os on his 60th birthday), Vol. I},
   },
   book={
      publisher={North-Holland, Amsterdam},
   },
   date={1975},
   pages={323--332. Colloq. Math. Soc. J\'anos Bolyai, Vol. 10},
   review={\MR{0369127 (51 \#5363)}},
}

\bib{DHKZ}{article}{
	author={Diskin, Sahar},
	author={Hoshen, Ilay},
	author={Krivelevich, Michael},
	author={Zhukovskii, Maksim},
	title={On vertex Ramsey graphs with forbidden subgraphs},
	eprint={2211.13966},
}

\bib{DHP}{article}{
   author={Dunkum, Molly},
   author={Hamburger, Peter},
   author={P\'{o}r, Attila},
   title={Destroying cycles in digraphs},
   journal={Combinatorica},
   volume={31},
   date={2011},
   number={1},
   pages={55--66},
   issn={0209-9683},
   review={\MR{2847876}},
   doi={10.1007/s00493-011-2589-4},
}

\bib{Erd59}{article}{
   author={Erd\H{o}s, P.},
   title={Graph theory and probability},
   journal={Canadian J. Math.},
   volume={11},
   date={1959},
   pages={34--38},
   issn={0008-414X},
   review={\MR{102081}},
   doi={10.4153/CJM-1959-003-9},
}

\bib{Erd68}{article}{
   author={Erd\H{o}s, P.},
   title={Problems and results in chromatic graph theory},
   conference={
      title={Proof Techniques in Graph Theory},
      address={Proc. Second Ann Arbor Graph Theory Conf., Ann Arbor, Mich.},
      date={1968},
   },
   book={
      publisher={Academic Press, New York-London},
   },
   date={1969},
   pages={27--35},
   review={\MR{252273}},
}

\bib{Erd75}{article}{
   author={Erd\H{o}s, P.},
   title={Problems and results on finite and infinite graphs},
   conference={
      title={Recent advances in graph theory},
      address={Proc. Second Czechoslovak Sympos., Prague},
      date={1974},
   },
   book={
      publisher={Academia, Prague},
   },
   date={1975},
   pages={183--192. (loose errata)},
   review={\MR{0389669}},
}

\bib{EGH}{article}{
   author={Erd\H{o}s, P.},
   author={Galvin, F.},
   author={Hajnal, A.},
   title={On set-systems having large chromatic number and not containing
   prescribed subsystems},
   conference={
      title={Infinite and finite sets (Colloq., Keszthely, 1973; dedicated
      to P. Erd\H{o}s on his 60th birthday), Vols. I, II, III},
   },
   book={
      series={Colloq. Math. Soc. J\'{a}nos Bolyai, Vol. 10},
      publisher={North-Holland, Amsterdam-London},
   },
   date={1975},
   pages={425--513},
   review={\MR{398876}},
}
	
\bib{EH64}{article}{
   author={Erd\H{o}s, P.},
   author={Hajnal, A.},
   title={Some remarks on set theory. IX. Combinatorial problems in measure
   theory and set theory},
   journal={Michigan Math. J.},
   volume={11},
   date={1964},
   pages={107--127},
   issn={0026-2285},
   review={\MR{171713}},
}

\bib{EH66}{article}{
   author={Erd\H{o}s, P.},
   author={Hajnal, A.},
   title={On chromatic number of graphs and set-systems},
   journal={Acta Math. Acad. Sci. Hungar.},
   volume={17},
   date={1966},
   pages={61--99},
   issn={0001-5954},
   review={\MR{193025}},
   doi={10.1007/BF02020444},
}

\bib{EHMR}{book}{
   author={Erd\H{o}s, Paul},
   author={Hajnal, Andr\'{a}s},
   author={M\'{a}t\'{e}, Attila},
   author={Rado, Richard},
   title={Combinatorial set theory: partition relations for cardinals},
   series={Studies in Logic and the Foundations of Mathematics},
   volume={106},
   publisher={North-Holland Publishing Co., Amsterdam},
   date={1984},
   pages={347},
   isbn={0-444-86157-2},
   review={\MR{795592}},
}

\bib{EHP75}{article}{
   author={Erd{\H{o}}s, P.},
   author={Hajnal, A.},
   author={P{\'o}sa, L.},
   title={Strong embeddings of graphs into colored graphs},
   conference={
      title={Infinite and finite sets (Colloq., Keszthely, 1973; dedicated
      to P. Erd\H os on his 60th birthday), Vol. I},
   },
   book={
      publisher={North-Holland, Amsterdam},
   },
   date={1975},
   pages={585--595. Colloq. Math. Soc. J\'anos Bolyai, Vol. 10},
   review={\MR{0382049 (52 \#2937)}},
}

\bib{EHRoth}{article}{
   author={Erd\H{o}s, P.},
   author={Hajnal, A.},
   author={Rothchild, B.},
   title={``On chromatic number of graphs and set-systems'' (Acta Math.
   Acad. Sci. Hungar. {\bf 17} (1966), 61--99) by Erd\H{o}s and Hajnal},
   conference={
      title={Cambridge Summer School in Mathematical Logic},
      address={Cambridge},
      date={1971},
   },
   book={
      series={Lecture Notes in Math., Vol. 337},
      publisher={Springer, Berlin-New York},
   },
   date={1973},
   pages={531--538},
   review={\MR{387103}},
}

\bib{EHS}{article}{
   author={Erd\H{o}s, P.},
   author={Hajnal, A.},
   author={Shelah, S.},
   title={On some general properties of chromatic numbers},
   conference={
      title={Topics in topology},
      address={Proc. Colloq., Keszthely},
      date={1972},
   },
   book={
      series={Colloq. Math. Soc. J\'{a}nos Bolyai, Vol. 8},
      publisher={North-Holland, Amsterdam-London},
   },
   date={1974},
   pages={243--255},
   review={\MR{357194}},
}

\bib{ER56}{article}{
   author={Erd\H{o}s, P.},
   author={Rado, R.},
   title={A partition calculus in set theory},
   journal={Bull. Amer. Math. Soc.},
   volume={62},
   date={1956},
   pages={427--489},
   issn={0002-9904},
   review={\MR{81864}},
   doi={10.1090/S0002-9904-1956-10036-0},
}

\bib{ER59}{article}{
   author={Erd\H{o}s, P.},
   author={Rado, R.},
   title={Partition relations connected with the chromatic number of graphs},
   journal={J. London Math. Soc.},
   volume={34},
   date={1959},
   pages={63--72},
   issn={0024-6107},
   review={\MR{101845}},
   doi={10.1112/jlms/s1-34.1.63},
}

\bib{ER60}{article}{
   author={Erd\H{o}s, P.},
   author={Rado, R.},
   title={A construction of graphs without triangles having preassigned
   order and chromatic number},
   journal={J. London Math. Soc.},
   volume={35},
   date={1960},
   pages={445--448},
   issn={0024-6107},
   review={\MR{0140433 (25 \#3853)}},
}

\bib{ES63}{article}{
   author={Erd\H{o}s, Paul},
   author={Sachs, Horst},
   title={Regul\"{a}re Graphen gegebener Taillenweite mit minimaler Knotenzahl},
   language={German},
   journal={Wiss. Z. Martin-Luther-Univ. Halle-Wittenberg Math.-Natur.
   Reihe},
   volume={12},
   date={1963},
   pages={251--257},
   issn={0138-1504},
   review={\MR{165515}},
}

\bib{EJS}{article}{
   author={Exoo, Geoffrey},
   author={Jajcay, Robert},
   title={Dynamic cage survey},
   journal={Electron. J. Combin.},
   volume={DS16},
   date={2008},
   number={Dynamic Surveys},
   pages={48},
   review={\MR{4336218}},
}
	
\bib{FH}{article}{
   author={Feit, Walter},
   author={Higman, Graham},
   title={The nonexistence of certain generalized polygons},
   journal={J. Algebra},
   volume={1},
   date={1964},
   pages={114--131},
   issn={0021-8693},
   review={\MR{170955}},
   doi={10.1016/0021-8693(64)90028-6},
}

\bib{Fitch}{article}{
   author={Fitch, Matthew},
   title={Rational exponents for hypergraph Tur\'{a}n problems},
   journal={J. Comb.},
   volume={10},
   date={2019},
   number={1},
   pages={61--86},
   issn={2156-3527},
   review={\MR{3890916}},
   doi={10.4310/joc.2019.v10.n1.a3},
}

\bib{Folk}{article}{
   author={Folkman, Jon},
   title={Graphs with monochromatic complete subgraphs in every edge
   coloring},
   journal={SIAM J. Appl. Math.},
   volume={18},
   date={1970},
   pages={19--24},
   issn={0036-1399},
   review={\MR{268080}},
   doi={10.1137/0118004},
}
	
\bib{GIP}{article}{
	author={Gir\~{a}o, A.},
	author={Illingworth, F.},
	author={Powierski, E.},
	author={Savery, M.},
	author={Scott, A.},
	author={Tamitegama, Y.},
	author={Tan, J.},
	title={Induced subgraphs of induced subgraphs of large chromatic number},
	eprint={2203.03612},
}
		
\bib{Gowers}{article}{
   author={Gowers, W. T.},
   title={Probabilistic combinatorics and the recent work of Peter Keevash},
   journal={Bull. Amer. Math. Soc. (N.S.)},
   volume={54},
   date={2017},
   number={1},
   pages={107--116},
   issn={0273-0979},
   review={\MR{3584100}},
   doi={10.1090/bull/1553},
}

\bib{GV}{article}{
   author={Grzesik, Andrzej},
   author={Volec, Jan},
   title={Degree conditions forcing directed cycles},
   journal={Int. Math. Res. Not. IMRN},
   date={2023},
   number={11},
   pages={9711--9753},
   issn={1073-7928},
   review={\MR{4597217}},
   doi={10.1093/imrn/rnac114},
}

\bib{Sevda}{article}{
	author={Guliyeva, Sevda},
	title={Sykow's Beweis des Satzes von Tur\'an},
	note={Bachelor thesis written at the University of Hamburg},
	year={2019},
}

\bib{Hajnal}{article}{
   author={Hajnal, A.},
   title={Embedding finite graphs into graphs colored with infinitely many
   colors},
   journal={Israel J. Math.},
   volume={73},
   date={1991},
   number={3},
   pages={309--319},
   issn={0021-2172},
   review={\MR{1135220}},
   doi={10.1007/BF02773844},
}

\bib{HK88}{article}{
   author={Hajnal, A.},
   author={Komj\'{a}th, P.},
   title={Embedding graphs into colored graphs},
   journal={Trans. Amer. Math. Soc.},
   volume={307},
   date={1988},
   number={1},
   pages={395--409},
   issn={0002-9947},
   review={\MR{936824}},
   doi={10.2307/2000770},
}

\bib{HK92}{article}{
   author={Hajnal, A.},
   author={Komj\'{a}th, P.},
   title={Corrigendum to: ``Embedding graphs into colored graphs''},
   journal={Trans. Amer. Math. Soc.},
   volume={332},
   date={1992},
   number={1},
   pages={475},
   issn={0002-9947},
   review={\MR{1140915}},
   doi={10.2307/2154043},
}
	
\bib{HK08}{article}{
   author={Hajnal, A.},
   author={Komj\'{a}th, P.},
   title={Obligatory subsystems of triple systems},
   journal={Acta Math. Hungar.},
   volume={119},
   date={2008},
   number={1-2},
   pages={1--13},
   issn={0236-5294},
   review={\MR{2400791}},
   doi={10.1007/s10474-007-6231-2},
}
	
\bib{HL}{article}{
   author={Hajnal, Andr\'{a}s},
   author={Larson, Jean A.},
   title={Partition relations},
   conference={
      title={Handbook of set theory. Vols. 1, 2, 3},
   },
   book={
      publisher={Springer, Dordrecht},
   },
   date={2010},
   pages={129--213},
   review={\MR{2768681}},
   doi={10.1007/978-1-4020-5764-9\_3},
}
	
\bib{HJ63}{article}{
   author={Hales, A. W.},
   author={Jewett, R. I.},
   title={Regularity and positional games},
   journal={Trans. Amer. Math. Soc.},
   volume={106},
   date={1963},
   pages={222--229},
   issn={0002-9947},
   review={\MR{143712}},
   doi={10.2307/1993764},
}

\bib{Ham81a}{article}{
   author={Hamidoune, Yahya Ould},
   title={An application of connectivity theory in graphs to factorizations
   of elements in groups},
   journal={European J. Combin.},
   volume={2},
   date={1981},
   number={4},
   pages={349--355},
   issn={0195-6698},
   review={\MR{638410}},
   doi={10.1016/S0195-6698(81)80042-X},
}

\bib{HW}{book}{
   author={Hardy, G. H.},
   author={Wright, E. M.},
   title={An introduction to the theory of numbers},
   edition={6},
   note={Revised by D. R. Heath-Brown and J. H. Silverman;
   With a foreword by Andrew Wiles},
   publisher={Oxford University Press, Oxford},
   date={2008},
   pages={xxii+621},
   isbn={978-0-19-921986-5},
   review={\MR{2445243}},
}
		
\bib{HS60}{article}{
   author={Hoffman, A. J.},
   author={Singleton, R. R.},
   title={On Moore graphs with diameters $2$ and $3$},
   journal={IBM J. Res. Develop.},
   volume={4},
   date={1960},
   pages={497--504},
   issn={0018-8646},
   review={\MR{140437}},
   doi={10.1147/rd.45.0497},
}

\bib{Hoory}{article}{
	author={Hoory, Shlomo},
	title={On the girth of graph lifts},
	eprint={2401.01238},
}

\bib{HN19}{article}{
   author={Hubi\v{c}ka, Jan},
   author={Ne\v{s}et\v{r}il, Jaroslav},
   title={All those Ramsey classes (Ramsey classes with closures and
   forbidden homomorphisms)},
   journal={Adv. Math.},
   volume={356},
   date={2019},
   pages={106791, 89},
   issn={0001-8708},
   review={\MR{4001036}},
   doi={10.1016/j.aim.2019.106791},
}

\bib{Jacobi}{book}{
   author={Jacobi, Carl Gustav Jacob},
   title={Fundamenta nova theoriae functionum ellipticarum},
   date={1829},
   note={Regiomonti, sumtibus fratrum Borntraeger},
}

\bib{Jech}{book}{
   author={Jech, Thomas},
   title={Set theory},
   series={Springer Monographs in Mathematics},
   note={The third millennium edition, revised and expanded},
   publisher={Springer-Verlag, Berlin},
   date={2003},
   pages={xiv+769},
   isbn={3-540-44085-2},
   review={\MR{1940513}},
}

\bib{Ted}{article}{
   author={Kaczynski, T. J.},
   title={Mathematical Notes: Another Proof of Wedderburn's Theorem},
   journal={Amer. Math. Monthly},
   volume={71},
   date={1964},
   number={6},
   pages={652--653},
   issn={0002-9890},
   review={\MR{1532764}},
   doi={10.2307/2312328},
}

\bib{Kart}{article}{
   author={K\`arteszi, Francesco},
   title={Piani finiti ciclici come risoluzioni di un certo problema di
   minimo},
   language={Italian},
   journal={Boll. Un. Mat. Ital. (3)},
   volume={15},
   date={1960},
   pages={522--528},
   review={\MR{145511}},
}

\bib{kom94}{article}{
   author={Komj\'{a}th, P.},
   title={Ramsey theory and forcing extensions},
   journal={Proc. Amer. Math. Soc.},
   volume={121},
   date={1994},
   number={1},
   pages={217--219},
   issn={0002-9939},
   review={\MR{1169039}},
   doi={10.2307/2160385},
}

\bib{kom01}{article}{
   author={Komj\'{a}th, P.},
   title={Some remarks on obligatory subsystems of uncountably chromatic
   triple systems},
   note={Paul Erd\H{o}s and his mathematics (Budapest, 1999)},
   journal={Combinatorica},
   volume={21},
   date={2001},
   number={2},
   pages={233--238},
   issn={0209-9683},
   review={\MR{1832448}},
   doi={10.1007/s004930100021},
}
	
\bib{kom08}{article}{
   author={Komj\'{a}th, P.},
   title={An uncountably chromatic triple system},
   journal={Acta Math. Hungar.},
   volume={121},
   date={2008},
   number={1-2},
   pages={79--92},
   issn={0236-5294},
   review={\MR{2463251}},
   doi={10.1007/s10474-008-7179-6},
}

\bib{kom-survey}{article}{
   author={Komj\'{a}th, P.},
   title={The chromatic number of infinite graphs---a survey},
   journal={Discrete Math.},
   volume={311},
   date={2011},
   number={15},
   pages={1448--1450},
   issn={0012-365X},
   review={\MR{2800970}},
   doi={10.1016/j.disc.2010.11.004},
}

\bib{KS05}{article}{
   author={Komj{\'a}th, P.},
   author={Shelah, Saharon},
   title={Finite subgraphs of uncountably chromatic graphs},
   journal={J. Graph Theory},
   volume={49},
   date={2005},
   number={1},
   pages={28--38},
   issn={0364-9024},
   review={\MR{2130468 (2005k:05096)}},
   doi={10.1002/jgt.20060},
}

\bib{KN99}{article}{
   author={Kostochka, A. V.},
   author={Ne\v{s}et\v{r}il, J.},
   title={Properties of Descartes' construction of triangle-free graphs with
   high chromatic number},
   journal={Combin. Probab. Comput.},
   volume={8},
   date={1999},
   number={5},
   pages={467--472},
   issn={0963-5483},
   review={\MR{1731981}},
   doi={10.1017/S0963548399004022},
}

\bib{Kriz}{article}{
   author={K\v{r}\'{\i}\v{z}, Igor},
   title={A hypergraph-free construction of highly chromatic graphs without
   short cycles},
   journal={Combinatorica},
   volume={9},
   date={1989},
   number={2},
   pages={227--229},
   issn={0209-9683},
   review={\MR{1030376}},
   doi={10.1007/BF02124683},
}

\bib{Kunen}{book}{
   author={Kunen, Kenneth},
   title={Set theory},
   series={Studies in Logic (London)},
   volume={34},
   publisher={College Publications, London},
   date={2011},
   pages={viii+401},
   isbn={978-1-84890-050-9},
   review={\MR{2905394}},
}

\bib{LUW}{article}{
   author={Lazebnik, F.},
   author={Ustimenko, V. A.},
   author={Woldar, A. J.},
   title={New upper bounds on the order of cages},
   note={The Wilf Festschrift (Philadelphia, PA, 1996)},
   journal={Electron. J. Combin.},
   volume={4},
   date={1997},
   number={2},
   pages={Research Paper 13, approx. 11},
   review={\MR{1444160}},
   doi={10.37236/1328},
}
	
\bib{Lee}{article}{
	author={Lee, Joonkyung}, 
	title={On some graph densities in locally dense graphs}, 
		note={Random Structures \& Algorithms},
	volume={58},
	date={2021},
    number={2},
    pages={322--344},
    issn={1042-9832},
	doi={10.1002/rsa.20974},
}

\bib{LS21}{article}{
   author={Lee, Joonkyung},
   author={Sch\"{u}lke, Bjarne},
   title={Convex graphon parameters and graph norms},
   journal={Israel J. Math.},
   volume={242},
   date={2021},
   number={2},
   pages={549--563},
   issn={0021-2172},
   review={\MR{4282091}},
   doi={10.1007/s11856-021-2112-6},
}

\bib{LS11}{article}{
	author={Li, J.~L.~Xiang},
	author={Szegedy, Balazs},
	title={On the logarithmic calculus and Sidorenko's conjecture},
	eprint={1107.1153},
}

\bib{Longyear}{article}{
   author={Longyear, Judith Q.},
   title={Regular $d$-valent graphs of girth $6$ and $2(d^{2}-d+1)$
   vertices},
   journal={J. Combinatorial Theory},
   volume={9},
   date={1970},
   pages={420--422},
   issn={0021-9800},
   review={\MR{278983}},
}

\bib{Lov68}{article}{
   author={Lov\'{a}sz, L.},
   title={On chromatic number of finite set-systems},
   journal={Acta Math. Acad. Sci. Hungar.},
   volume={19},
   date={1968},
   pages={59--67},
   issn={0001-5954},
   review={\MR{220621}},
   doi={10.1007/BF01894680},
}

\bib{Lov11}{article}{
   author={Lov\'{a}sz, L.},
   title={Subgraph densities in signed graphons and the local
   Simonovits-Sidorenko conjecture},
   journal={Electron. J. Combin.},
   volume={18},
   date={2011},
   number={1},
   pages={Paper 127, 21},
   review={\MR{2811096}},
   doi={10.37236/614},
}

\bib{LPS88}{article}{
   author={Lubotzky, A.},
   author={Phillips, R.},
   author={Sarnak, P.},
   title={Ramanujan graphs},
   journal={Combinatorica},
   volume={8},
   date={1988},
   number={3},
   pages={261--277},
   issn={0209-9683},
   review={\MR{963118}},
   doi={10.1007/BF02126799},
}

\bib{MS10}{article}{
   author={Ma\v{c}aj, Martin},
   author={\v{S}ir\'{a}\v{n}, Jozef},
   title={Search for properties of the missing Moore graph},
   journal={Linear Algebra Appl.},
   volume={432},
   date={2010},
   number={9},
   pages={2381--2398},
   issn={0024-3795},
   review={\MR{2599868}},
   doi={10.1016/j.laa.2009.07.018},
}

\bib{Ma07}{article}{
   author={Mantel, W.},
   title={Vraagstuk {\rm XXVIII}},
   journal={Wiskundige Opgaven},
   volume={10},
   date={1907},
   pages={60--61},
}

\bib{Ne66}{article}{
   author={Ne\v{s}et\v{r}il, Jaroslav},
   title={$K$-\rn{khromaticheskie grafy bez tsiklov dliny} $\leq 7$},
   language={Russian},
   journal={Comment. Math. Univ. Carolinae},
   volume={7},
   date={1966},
   pages={373--376},
   issn={0010-2628},
   review={\MR{201346}},
}

\bib{jarik-book}{book}{
   author={Ne\v{s}et\v{r}il, Jaroslav},
   title={Theorie graf\r{u}},
   note={Vyd.~1, St\'atn\'i Nakladatelstv\'i Technick\'e Literatury, Praha},
   date={1979},
}

\bib{Ne09}{article}{
   author={Ne\v{s}et\v{r}il, Jaroslav},
   title={A surprising permanence of old motivations (a not-so-rigid story)},
   journal={Discrete Math.},
   volume={309},
   date={2009},
   number={18},
   pages={5510--5526},
   issn={0012-365X},
   review={\MR{2567953}},
   doi={10.1016/j.disc.2008.04.055},
}

\bib{NR76b}{article}{
   author={Ne\v{s}et\v{r}il, J.},
   author={R\"{o}dl, V.},
   title={Partitions of vertices},
   journal={Comment. Math. Univ. Carolinae},
   volume={17},
   date={1976},
   number={1},
   pages={85--95},
   issn={0010-2628},
   review={\MR{412044}},
}

\bib{NR76}{article}{
   author={Ne\v{s}et\v{r}il, J.},
   author={R\"{o}dl, V.},
   title={The Ramsey property for graphs with forbidden complete subgraphs},
   journal={J. Combinatorial Theory Ser. B},
   volume={20},
   date={1976},
   number={3},
   pages={243--249},
   review={\MR{0412004 (54 \#133)}},
}	

\bib{NR77}{article}{
   author={Ne\v{s}et\v{r}il, J.},
   author={R\"{o}dl, V.},
   title={Partitions of finite relational and set systems},
   journal={J. Combinatorial Theory Ser. A},
   volume={22},
   date={1977},
   number={3},
   pages={289--312},
   review={\MR{0437351 (55 \#10283)}},
}

\bib{NeRo79}{article}{
   author={Ne\v{s}et\v{r}il, J.},
   author={R\"{o}dl, V.},
   title={A short proof of the existence of highly chromatic hypergraphs
   without short cycles},
   journal={J. Combin. Theory Ser. B},
   volume={27},
   date={1979},
   number={2},
   pages={225--227},
   issn={0095-8956},
   review={\MR{546865}},
   doi={10.1016/0095-8956(79)90084-4},
}

\bib{NR81}{article}{
   author={Ne\v{s}et\v{r}il, J.},
   author={R\"{o}dl, V.},
   title={Simple proof of the existence of restricted Ramsey graphs by means
   of a partite construction},
   journal={Combinatorica},
   volume={1},
   date={1981},
   number={2},
   pages={199--202},
   issn={0209-9683},
   review={\MR{625551 (83a:05101)}},
   doi={10.1007/BF02579274},
}

\bib{NR82}{article}{
   author={Ne\v{s}et\v{r}il, J.},
   author={R\"{o}dl, V.},
   title={Two proofs of the Ramsey property of the class of finite
   hypergraphs},
   journal={European J. Combin.},
   volume={3},
   date={1982},
   number={4},
   pages={347--352},
   issn={0195-6698},
   review={\MR{687733 (85b:05134)}},
   doi={10.1016/S0195-6698(82)80019-X},
}

\bib{NR87}{article}{
   author={Ne\v{s}et\v{r}il, J.},
   author={R\"{o}dl, V.},
   title={Strong Ramsey theorems for Steiner systems},
   journal={Trans. Amer. Math. Soc.},
   volume={303},
   date={1987},
   number={1},
   pages={183--192},
   issn={0002-9947},
   review={\MR{896015 (89b:05127)}},
   doi={10.2307/2000786},
}

\bib{Peluse}{article}{
   author={Peluse, Sarah},
   title={An asymptotic version of the prime power conjecture for perfect
   difference sets},
   journal={Math. Ann.},
   volume={380},
   date={2021},
   number={3-4},
   pages={1387--1425},
   issn={0025-5831},
   review={\MR{4297189}},
   doi={10.1007/s00208-021-02188-5},
}

\bib{tyh}{article}{
   author={Polcyn, Joanna},
   author={Reiher, Chr.},
   author={R\"{o}dl, Vojt\v{e}ch},
   author={Sch\"{u}lke, Bjarne},
   title={On Hamiltonian cycles in hypergraphs with dense link graphs},
   journal={J. Combin. Theory Ser. B},
   volume={150},
   date={2021},
   pages={17--75},
   issn={0095-8956},
   review={\MR{4250648}},
   doi={10.1016/j.jctb.2021.04.001},
}

\bib{Pr86}{article}{
   author={Preiss, David},
   author={R{\"o}dl, Vojt{\v{e}}ch},
   title={Note on decomposition of spheres in Hilbert spaces},
   journal={J. Combin. Theory Ser. A},
   volume={43},
   date={1986},
   number={1},
   pages={38--44},
   issn={0097-3165},
   review={\MR{859294 (87k:05083)}},
   doi={10.1016/0097-3165(86)90020-8},
}

\bib{Ramsey30}{article}{
   author={Ramsey, Frank Plumpton},
   title={On a problem of formal logic},
   journal={Proceedings London Mathematical Society},
   volume={30},
   date={1930},
   number={1},
   pages={264--286},
   issn={0024-6115},
   review={\MR{1576401}},
   doi={10.1112/plms/s2-30.1.264},
}

\bib{Raz07}{article}{
   author={Razborov, Alexander A.},
   title={Flag algebras},
   journal={J. Symbolic Logic},
   volume={72},
   date={2007},
   number={4},
   pages={1239--1282},
   issn={0022-4812},
   review={\MR{2371204}},
   doi={10.2178/jsl/1203350785},
}

\bib{Raz13}{article}{
   author={Razborov, Alexander A.},
   title={On the Caccetta-H\"{a}ggkvist conjecture with forbidden subgraphs},
   journal={J. Graph Theory},
   volume={74},
   date={2013},
   number={2},
   pages={236--248},
   issn={0364-9024},
   review={\MR{3090720}},
   doi={10.1002/jgt.21707},
}

\bib{OH}{article}{
	author={Reiher, Chr.},
	title={Obligatory hypergraphs},
	note={Unpublished manuscript (5 pages)},
} 
	
\bib{girth}{article}{ 
	author={Reiher, Chr.},
	author={R\"{o}dl, Vojt\v{e}ch},
	title={The girth Ramsey theorem}, 
	eprint={2308.15589},
	note={Submitted}, }

\bib{pisier}{article}{
	author={Reiher, Chr.},
   author={R\"{o}dl, Vojt\v{e}ch},
   author={Sales, Marcelo},
   title={Colouring versus density in integers and Hales-Jewett cubes},
   eprint={2311.08556},
   note={Submitted}, }

\bib{Rodl73}{unpublished}{
	author={R\"{o}dl, V.}, 
	title={The dimension of a graph and generalized Ramsey numbers}, 
	note={Master's Thesis, Charles University, Praha, Czechoslovakia},
	date={1973},
}

\bib{Rodl76}{article}{
    author = {R\"{o}dl, V.},
    title = {A generalization of the Ramsey theorem},
    conference={
    		title={Graphs, Hypergraphs, Block Syst.},
			address={Proc. Symp. comb. Anal., Zielona Gora},
			date={1976},
	},
    date={1976},
    pages={211--219},
    review={\,Zbl. 0337.05133},
}
	
\bib{Rodl77}{article}{
   author={R\"{o}dl, V.},
   title={On the chromatic number of subgraphs of a given graph},
   journal={Proc. Amer. Math. Soc.},
   volume={64},
   date={1977},
   number={2},
   pages={370--371},
   issn={0002-9939},
   review={\MR{469806}},
   doi={10.2307/2041460},
}

\bib{Rodl90}{article}{
   author={R\"{o}dl, V.},
   title={On Ramsey families of sets},
   journal={Graphs Combin.},
   volume={6},
   date={1990},
   number={2},
   pages={187--195},
   issn={0911-0119},
   review={\MR{1073689}},
   doi={10.1007/BF01787730},
}

\bib{Sachs}{article}{
   author={Sachs, H.},
   title={Regular graphs with given girth and restricted circuits},
   journal={J. London Math. Soc.},
   volume={38},
   date={1963},
   pages={423--429},
   issn={0024-6107},
   review={\MR{158390}},
   doi={10.1112/jlms/s1-38.1.423},
}

\bib{SS20}{article}{
   author={Seymour, Paul},
   author={Spirkl, Sophie},
   title={Short directed cycles in bipartite digraphs},
   journal={Combinatorica},
   volume={40},
   date={2020},
   number={4},
   pages={575--599},
   issn={0209-9683},
   review={\MR{4150883}},
   doi={10.1007/s00493-019-4065-5},
}

\bib{Sh289}{article}{
   author={Shelah, Saharon},
   title={Consistency of positive partition theorems for graphs and models},
   conference={
      title={Set theory and its applications},
      address={Toronto, ON},
      date={1987},
   },
   book={
      series={Lecture Notes in Math.},
      volume={1401},
      publisher={Springer, Berlin},
   },
   date={1989},
   pages={167--193},
   review={\MR{1031773}},
   doi={10.1007/BFb0097339},
}

\bib{Sh329}{article}{
   author={Shelah, Saharon},
   title={Primitive recursive bounds for van der Waerden numbers},
   journal={J. Amer. Math. Soc.},
   volume={1},
   date={1988},
   number={3},
   pages={683--697},
   issn={0894-0347},
   review={\MR{929498}},
   doi={10.2307/1990952},
}

\bib{Sh666}{article}{
   author={Shelah, Saharon},
   title={On what I do not understand (and have something to say). I},
   note={Saharon Shelah's anniversary issue},
   journal={Fund. Math.},
   volume={166},
   date={2000},
   number={1-2},
   pages={1--82},
   issn={0016-2736},
   review={\MR{1804704}},
   doi={10.4064/fm-166-1-2-1-82},
}

\bib{Shen00}{article}{
   author={Shen, Jian},
   title={On the girth of digraphs},
   journal={Discrete Math.},
   volume={211},
   date={2000},
   number={1-3},
   pages={167--181},
   issn={0012-365X},
   review={\MR{1735347}},
   doi={10.1016/S0012-365X(99)00323-4},
}

\bib{Shen02}{article}{
   author={Shen, Jian},
   title={On the Caccetta-H\"{a}ggkvist conjecture},
   journal={Graphs Combin.},
   volume={18},
   date={2002},
   number={3},
   pages={645--654},
   issn={0911-0119},
   review={\MR{1939082}},
   doi={10.1007/s003730200048},
}

\bib{Sid}{article}{
   author={Sidorenko, Alexander},
   title={A correlation inequality for bipartite graphs},
   journal={Graphs Combin.},
   volume={9},
   date={1993},
   number={2},
   pages={201--204},
   issn={0911-0119},
   review={\MR{1225933}},
   doi={10.1007/BF02988307},
}
	
\bib{Sim}{article}{
   author={Simonovits, Mikl{\'o}s},
   title={Extremal graph problems, degenerate extremal problems, and
   supersaturated graphs},
   conference={
      title={Progress in graph theory},
      address={Waterloo, Ont.},
      date={1982},
   },
   book={
      publisher={Academic Press, Toronto, ON},
   },
   date={1984},
   pages={419--437},
   review={\MR{776819}},
}

\bib{Singer}{article}{
   author={Singer, James},
   title={A theorem in finite projective geometry and some applications to
   number theory},
   journal={Trans. Amer. Math. Soc.},
   volume={43},
   date={1938},
   number={3},
   pages={377--385},
   issn={0002-9947},
   review={\MR{1501951}},
   doi={10.2307/1990067},
}

\bib{Sing62}{book}{
   author={Singleton, Robert},
   title={On minimal graphs of maximum even girth},
   note={Thesis (Ph.D.)--Princeton University},
   publisher={ProQuest LLC, Ann Arbor, MI},
   date={1962},
   pages={43},
   review={\MR{2613719}},
}
		
\bib{Sing66}{article}{
   author={Singleton, Robert},
   title={On minimal graphs of maximum even girth},
   journal={J. Combinatorial Theory},
   volume={1},
   date={1966},
   pages={306--332},
   issn={0021-9800},
   review={\MR{201347}},
}

\bib{Speck}{article}{
   author={Specker, Ernst},
   title={Teilmengen von Mengen mit Relationen},
   language={German},
   journal={Comment. Math. Helv.},
   volume={31},
   date={1957},
   pages={302--314},
   issn={0010-2571},
   review={\MR{0088454 (19,521b)}},
}

\bib{Tho83}{article}{
   author={Thomassen, Carsten},
   title={Cycles in graphs of uncountable chromatic number},
   journal={Combinatorica},
   volume={3},
   date={1983},
   number={1},
   pages={133--134},
   issn={0209-9683},
   review={\MR{716429}},
   doi={10.1007/BF02579349},
}

\bib{Tutte47}{article}{
   author={Tutte, W. T.},
   title={A family of cubical graphs},
   journal={Proc. Cambridge Philos. Soc.},
   volume={43},
   date={1947},
   pages={459--474},
   issn={0008-1981},
   review={\MR{21678}},
   doi={10.1017/s0305004100023720},
}

\bib{UD54}{article}{
   author={Ungar, Peter},
   author={Descartes, Blanche},
   title={Advanced Problems and Solutions: Solutions: 4526},
   journal={Amer. Math. Monthly},
   volume={61},
   date={1954},
   number={5},
   pages={352--353},
   issn={0002-9890},
   review={\MR{1528740}},
   doi={10.2307/2307489},
}

\bib{VY1}{book}{
   author={Veblen, Oswald},
   author={Young, John Wesley},
   title={Projective geometry. Vol. 1},
   publisher={Blaisdell Publishing Co. [Ginn and Co.], New
   York-Toronto-London},
   date={1965},
   pages={x+345},
   review={\MR{179666}},
}

\bib{VY2}{book}{
   author={Veblen, Oswald},
   author={Young, John Wesley},
   title={Projective geometry. Vol. 2 (by Oswald Veblen)},
   publisher={Blaisdell Publishing Co. [Ginn and Co.], New
   York-Toronto-London},
   date={1965},
   pages={x+511},
   review={\MR{179667}},
}

\bib{Wedderburn}{article}{
   author={Wedderburn, J. H. M.},
   title={A theorem on finite algebras},
   journal={Trans. Amer. Math. Soc.},
   volume={6},
   date={1905},
   number={2},
   pages={349--352},
   issn={0002-9947},
   doi={10.2307/1988750},
}

\bib{Witt}{article}{
   author={Witt, Ernst},
   title={\"{U}ber die Kommutativit\"{a}t endlicher Schiefk\"{o}rper},
   language={German},
   journal={Abh. Math. Sem. Univ. Hamburg},
   volume={8},
   date={1931},
   number={1},
   pages={413},
   issn={0025-5858},
   review={\MR{3069571}},
   doi={10.1007/BF02941019},
}
	
\bib{Zykov}{article}{
   author={Zykov (\rn{Zykov}), A. A.},
   title={\rn{O nekotorykh svoi0stvakh linei0nykh kompleksov}},
   language={Russian},
   journal={\rn{Mat. sbornik}},
   volume={24(66)},
   date={1949},
   pages={163--188},
   review={\MR{35428}},
}		
\end{biblist}
\end{bibdiv}
\end{document}